\newtheorem{theorem}{Theorem}[section]
 \newtheorem{corollary}[theorem]{Corollary}
 \newtheorem{lemma}[theorem]{Lemma}
 \newtheorem{proposition}[theorem]{Proposition}
 \theoremstyle{definition}
 \newtheorem{definition}[theorem]{Definition}
 \theoremstyle{remark}
 \newtheorem{remark}[theorem]{Remark}
 \newtheorem*{acknowledgements}{\bf{Acknowledgements}}
 \numberwithin{equation}{section}
\newcommand{\RR}{\mathbb{R}}      
\newcommand{\ZZ}{\mathbb{Z}}      
\newcommand{\CC}{\mathbb{C}}
\newcommand{\Rpos}{\mathbb{R}^{+}}  
\newcommand{\edge}{\mathcal{E}}
\newcommand{\cone}{\mathcal{X}^{\wedge}}
\newcommand{\MM}{\mathbb{M}}
\newcommand{\smooth}{C^{\infty}}
\newcommand{\local}{(r,\sigma_{k},u_{l})}
\newcommand{\Xcal}{\mathcal{X}}
\newcommand{\scone}{\mathcal{K}^{s,\gamma}(\cone)}
\newcommand{\CA}{\mathcal{A}}
\newcommand{\CB}{\mathcal{B}_k}
\newcommand{\cC}{\mathcal{C}_l}
\newcommand{\coordsimple}{(r,\sigma,u)}
\newcommand{\xy}{\mathbb{R}_{x}^{n}\oplus\mathbb{R}_{y}^{n}}
\newcommand\scalemath[2]{\scalebox{#1}{\mbox{\ensuremath{\displaystyle #2}}}}
\begin{document}

\author{Josue Rosario-Ortega}
\affil{The University of Western Ontario\\
Huron University College\\
London, Ontario, Canada\\
jrosari3@alumni.uwo.ca\\
Spring 2017} 
\date{}

%----------additions

%%% ----------------------------------------------------------------------
\title
{Moduli space and deformations of special Lagrangian submanifolds with edge singularities}

\maketitle

\begin{abstract}
Special Lagrangian submanifolds are submanifolds of a Calabi -Yau manifold calibrated by the real part of the holomorphic volume form.  In this paper we use elliptic theory for edge-degenerate differential operators on singular manifolds  to study the moduli space of deformations of special Lagrangian submanifolds with edge singularities. We obtain a general theorem describing the local structure of the moduli space. When the obstruction space vanishes the moduli space is a smooth, finite dimensional manifold. 
\end{abstract}

%%% ----------------------------------------------------------------------

%%% ----------------------------------------------------------------------
\tableofcontents

\section{Introduction} \label{intro}

The study of moduli spaces of deformations of a special Lagrangian submanifold  in a Calabi-Yau manifold started with the work of McLean \cite{mclean}, where he studied the deformation of compact special Lagrangian submanifolds (without boundary). He proved that the moduli space is a finite dimensional, smooth manifold with dimension equal to the dimension of its space of harmonic 1-forms. The role of special Lagrangian fibrations of Calabi-Yau manifolds in mirror symmetry and especially the presence of singular fibers, motivated the study of special Lagrangian submanifolds with conical singularities/ends \cite{Marshall02,pacini}, \cite{pacini2,joyceII}. Moreover in the simplest example of a Calabi-Yau manifold, $\CC^{n}$, the fact that special Lagrangian submanifolds are minimal implies the nonexistence of compact special Lagrangian submanifolds. Hence the search for special Lagrangian submanifolds in $\CC^{n}$ must be done in the category of non-compact or singular spaces.

Broadly speaking, the study of moduli spaces of special Lagrangian deformations is performed by identifying nearby special Lagrangian submanifolds with elements in the zero set of a non-linear elliptic partial differential operator that governs the deformations. By means of the Implicit Function theorem for Banach spaces, this is reduced to the analysis of the linearised equation. Hence the study of moduli spaces of deformations of a special Lagrangian submanifold requires a good understanding of elliptic equations on the base space. 

The theory of linear elliptic partial differential equations on smooth, compact manifolds without boundary is well-developed: the construction of parametrices of inverse order, a complete calculus of elliptic $\Psi DOs$, elliptic regularity, the equivalence between ellipticity and the existence of an a priori  estimate, the equivalence of ellipticity and Fredholmness and finally the celebrated Atiyah-Singer index formula. All these elements are already classical tools when studying elliptic equations on compact manifolds. 

In contrast, in  non-compact or singular spaces there is no canonical approach or methods  to study elliptic equations. Even the concept of ellipticity on a non-compact or singular manifold is not canonical as it is in the compact case. The basic model of singularity in the theory of PDEs on singular manifolds is the conical singularity. Near a vertex, a manifold with conical singularities looks like $\frac{\overline{\RR}^{+}\times \Xcal}  { \{0\}\times\Xcal}$, where $\Xcal$ is a compact manifold without boundary. The usual approach in this case is to blow-up the vertices to obtain a compact manifold with boundary, the stretched manifold, with collar neighborhood  $\overline{\RR}^{+}\times \Xcal$. 

The most common type of degenerate differential operator studied on the collar neighborhood is the Fuchs type operator
\begin{center}
$\operatorname{P}=r^{-m}\sum\limits_{j\leq m}a_{j}(r)\left(-r\partial_{r}\right)^{j}$
\end{center}
with coefficients $a_{j }\in C^{\infty} (\overline\RR^{+}, \operatorname{Diff}^{m-  j }\left(\mathcal{X}\right) )$, where $\operatorname{Diff}^{m-  j }\left(\mathcal{X}\right)$ is the set of classical differential operators of order $m-j$ on the compact manifold $\Xcal$.

 Many authors have studied this type of equation with different approaches: Lockhart and McOwen \cite{lockhart},\cite{lockhart2}, Melrose \cite{aps}, Schulze \cite{schulze1},\cite{schulze2}, \cite{schulze3}, Kozlov, Mazya and Rossmann \cite{kozlov}, among possibly others. The b-calculus of Melrose \cite{aps} and the cone algebra of Schulze \cite{schulze1} are robust and systematic approaches to Fuchs operators with the goal of constructing a calculus or an algebra of $\Psi DOs$ that contains parametrices of Fusch type operators (see \cite{lauter} for a comparison of both approaches).
 
 The edgification of a manifold with conical singularities produces a manifold with edge singularities, where locally near the singularity it looks like the $\RR^{n}\times\left(\frac{\overline{\RR}^{+}\times \Xcal}  { \{0\}\times\Xcal}\right)$. Here we also have a class of edge-degenerate differential operators. A typical example is the Laplace-Beltrami operator associated to an edge metric $r^{2}g_{_{\Xcal}}+dr^{2}+g_{_{\edge}}$ where $g_{_{\edge}}$ is a Riemannian metric on a smooth manifold $\edge$ (the edge) without boundary (see section~\ref{examples}).

This paper is concerned  with deformations of special Lagrangian submanifolds with edge singularity (see section~\ref{examples} for the precise definition of a manifold with edge singularity). The motivations to study special Lagrangian submanifolds with edge singularities are the following: on one hand,  it is a natural next step in the category of singularities where there is  a well-developed elliptic theory \cite{schulze1,schulze2,schulze3}, hence the analysis of the linearised equation that governs the deformations is accessible. An alternative approach to
study edge-degenerate operators developed by R. Mazzeo and B. Vertman can be found
in \cite{mazzeo90}, \cite{vertman}.

On the other hand we are interested in the deformation of calibrated vector bundles, specially, special Lagrangian submanifolds obtained as a conormal bundle $\mathcal{N}^{\ast}(M)\subset T^{\ast}(\RR^{n})\cong\CC^{n}$ of an austere submanifold $M$ in $\RR^{n}$. In this direction, Karigiannis and Leung \cite{leung} obtained special Lagrangian deformations of  $\mathcal{N}^{\ast}(M)$ by affinely translating the fibers. One of the main examples of austere submanifolds in $\RR^{n}$ is the class of austere cones \cite{bryant}. These cones are of the form $\Rpos\times\Xcal\subset \RR^{n}$ where $\Xcal\subset S^{n-1}$ is an austere submanifold of the sphere.  If we assume that the conormal bundle is a trivial bundle near the vertex of the cone (for example if $\Rpos\times\Xcal$ is an orientable hypersurface in $\RR^{n}$ then the conormal bundle is trivial)  then it is diffeomorphic to $\Rpos\times\Xcal\times\RR^{q}$  where $q$ is the codimension of $\Rpos\times\Xcal$ in $\RR^{n}$. This implies that we can consider  $\mathcal{N}^{\ast}(M)$ as a manifold with an edge singularity. More general, we can consider special Lagrangian submanifolds $M$ with edge singularity $\edge$ in a Calabi-Yau manifold $\mathfrak{X}$. For example if $\mathfrak{X}$ is a Calabi-Yau manifold, $\edge$ a compact special Lagrangian submanifold in $\mathfrak{X}$ and $M$ a special Lagrangian submanifold with conical singularity in $\CC^{n}$ then $M\times\edge$ is a special Lagrangian submanifold in $\CC^{n}\times\mathfrak{X}$ singular along $\edge$.

In \cite{schulze1,schulze2,schulze3}, B.-W. Schulze and coauthors have developed a comprehensive elliptic theory of edge-degenerate differential operators:
  \begin{center}
$\operatorname{P}=r^{-m}\sum\limits_{j+\abs{\alpha}\leq m}a_{j\alpha}(r,y)\left(-r\partial_{r}\right)^{j}\left(rD_{s}\right)^{\alpha}.$
\end{center} 
with coefficients $a_{j \alpha}\in C^{\infty} (\overline\RR^{+}\times \Omega, \mathit{,} \operatorname{Diff}^{m- \left( j+\abs{\alpha}\right)}\left(\mathcal{X}\right) ).$

In this paper we use Schulze's approach to study deformation of special Lagrangian submanifolds with edge singularities. We use Schulze's approach to analyze the Hodge-Laplace $\Delta$ and Hodge-deRham $d+d^{\ast}$ operators acting on sections of differential forms induced by edge-degenerate vector fields on $M$, see section~\ref{linear operator}. 

In previous works of Joyce, McLean, Marshall and  Pacini, see \cite{joyceII}, \cite{mclean}, \cite{Marshall02}, \cite{pacini},\cite{pacini2}, finite dimension of the moduli space follows from the Fredholmness of the Hodge-Laplace operator acting on (weighted) Sobolev spaces. As it was mentioned above, there is no canonical notion of ellipticity in non-compact or singular manifolds, however in most approaches, once suitable Banach spaces have been defined, the ellipticity of an operator is defined in such a way that it implies the Fredholm property of the operator acting between those Banach spaces.  In manifolds with conical singularities with local model $\Rpos\times\Xcal$, the concept of ellipticity  is based on the symbolic structure of the Fuchs operator. This is given by two symbols $(\sigma^{m}_{b}(\operatorname{P}),\sigma^{m}_{M}(\operatorname{P})(z))$. The first symbol $\sigma^{m}_{b}(\operatorname{P})$ is the homogeneous boundary principal symbol and $\sigma^{m}_{M}(\operatorname{P})(z)$ is the Mellin conormal symbol. The symbol $\sigma^{m}_{M}(\operatorname{P})(z)$ is an operator-valued symbol given by a holomorphic family of continuous operators parametrized by $z\in\CC$ and acting on the base of the cone $\Xcal$, $\sigma^{m}_{M}(\operatorname{P})(z):H^{s}(\Xcal)\rightarrow H^{s-m}(\Xcal)$. The ellipticity of $\operatorname{P}$ on $\cone:=\Rpos\times\Xcal$ implies that   $\sigma^{m}_{M}(\operatorname{P})(z)$ is a family of isomorphisms for all $z\in \Gamma_{\frac{n+1}{2}-\gamma}=\lbrace z\in\CC : \operatorname{Re}(z)=\frac{n+1}{2}-\gamma\rbrace$ for some weight $\gamma\in\RR$. In the approaches of Melrose and Lockhart-McOwen similar symbolic structures are used to define ellipticity. See section~\ref{symbolsection} for a complete discussion of the symbolic structure. 

In the analysis of edge-degenerate operators, the symbolic structure for an adequate notion of ellipticity involves the edge symbol $\sigma^{m}_{\wedge}(u,\eta)$. This is an operator-valued symbol  given by a  family of continuous operators acting on cone-Sobolev spaces $\mathcal{K}^{s,\gamma}(\cone)$ (see definition~\ref{conespace}) and parametrized by the cotangent bundle of the edge. For   $(u,\eta)\in T^{\ast}\edge\setminus \{0\}$ we have a continuous operator
\begin{center}
$\sigma^{m}_{\wedge}(u,\eta):\mathcal{K}^{s,\gamma}(\cone)\longrightarrow\mathcal{K}^{s-m,\gamma-m}(\cone).$
\end{center}
Analogous to the conical case, a necessary condition for the ellipticity of $\operatorname{P}$ is that $\sigma^{m}_{\wedge}(u,\eta)$ is an isomorphism for every $ (u,\eta)\in T^{\ast}\edge\setminus \lbrace 0 \rbrace$. However, this is rarely the case (for example, in general, the Laplace-Beltrami operator induced by an edge metric does not satisfy this condition). It is more natural to expect the family $\sigma^{m}_{\wedge}(u,\eta)$ to be only Fredholm for every $ (u,\eta)\in T^{\ast}\edge\setminus \{ 0\}$.

 In this case, in order to have a family of isomorphisms, we need to complete the edge symbol with boundary and coboundary conditions.The need for completing the symbols means that $\operatorname{P}$ is not Fredholm unless we impose complementary edge boundary conditions. Moreover boundary and coboundary conditions are an essential part of the regularity of solutions of elliptic edge-degenerate equations (see section~\ref{ellipticsection}). Therefore, if we are interested in studying moduli spaces of deformations of a special Lagrangian submanifold with edge singularities, we need to consider deformations with boundary conditions in the edge in order to obtain regular enough deformations that allow the existence of a smooth, finite dimensional moduli space of deformations. These boundary conditions are given by the trace pseudo-differential operator  that appears in the completion of the symbol. Moreover, solutions of elliptic equations near singularities have a well-known conormal asymptotic expansion. Our case is not an exception and our deformations have conormal asymptotic expansions near the edge, see section~\ref{regularity}.

Once the symbol is completed (this is possible because, in our case, the topological obstruction vanishes, see \ref{symbol-linearop}) we obtain a Fredholm operator in the edge algebra with a parametrix (with asymptotics) of inverse order. At this point we want to use the Implicit Function theorem for Banach spaces to obtain finite dimensionality and smoothness of the moduli space of deformations. However, the possible non-surjectivity of the linearised deformation map produces an obstruction space. The presence of an obstruction space is not unexpected because even in the case of a compact manifold with isolated conical singularities each of the singular cones contributes to the obstruction space. This was studied in detail by Joyce \cite{joyceII}.

Given a special Lagrangian submanifold  in $\CC^{n}$ with edge singularity, $\Phi:M\longrightarrow\CC^{n}$, our moduli space has as parameters an admissible weight $\gamma>\frac{\operatorname{dim}{\Xcal+3}}{2}$ and a trace pseudo-differential operator, $\mathcal{T}$, such that it belongs to a set of boundary condition for an elliptic edge boundary value problem for the Hodge-deRham operator on $M$.

Our main result, theorem~\ref{main} in section~\ref{sectionTheModuliSpace}, is a theorem describing the local structure of the moduli space  $\mathfrak{M}(M, \Phi, \mathcal{T},\gamma)$ considering the possible obstructions (see section~\ref{sectionTheModuliSpace} for the precise definition of the moduli space and further details).
\begin{theorem}
Locally near $M$ the moduli space $\mathfrak{M}(M, \Phi, \mathcal{T},\gamma)$ is homeomorphic to the zero set of a smooth map $\mathfrak{G}$ between smooth manifolds $\mathcal{M}_1 $, $\mathcal{M}_2 $  given as  neighborhoods of zero in finite dimensional Banach spaces. The map $\mathfrak{G}: \mathcal{M}_1 \longrightarrow \mathcal{M}_2$ satisfies $\mathfrak{G}(0)=0$ and $\mathfrak{M}(M, \Phi, \mathcal{T},\gamma)$ near $M$ is a smooth manifold of finite dimension when $\mathfrak{G}$ is the zero map.
\end{theorem}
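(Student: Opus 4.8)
The plan is to realize the moduli space $\mathfrak{M}(M,\Phi,\mathcal{T},\gamma)$ as the zero set of the nonlinear deformation map and then apply the standard Lyapunov--Schmidt reduction. First I would set up the deformation problem precisely: nearby submanifolds $\Phi_v$ are parametrized by sections $v$ of a suitable edge-Sobolev space of normal vector fields (identified, via the Lagrangian neighbourhood theorem, with $1$-forms on $M$), together with the imposed edge boundary condition coming from the trace operator $\mathcal{T}$. The special Lagrangian condition on $\Phi_v$ is equivalent to the vanishing of a pair of forms (the restriction of the Kähler form and of $\operatorname{Im}\Omega$), which McLean's computation repackages as a single nonlinear equation $\mathcal{F}(v)=0$ whose linearisation at $v=0$ is the Hodge--deRham operator $d+d^{\ast}$ acting between the appropriate cone-Sobolev spaces. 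The key analytic input, already established earlier in the paper, is that once the edge symbol is completed by the boundary and coboundary conditions associated to $\mathcal{T}$, the resulting operator
\begin{center}
$L:\mathcal{D}\longrightarrow \mathcal{R}$
\end{center}
is Fredholm with a parametrix of inverse order in the edge algebra; in particular $\ker L$ and $\operatorname{coker} L$ are finite dimensional and $\mathcal{D}=\ker L \oplus \mathcal{D}'$, $\mathcal{R}=\operatorname{im} L \oplus \mathcal{C}$ topologically, with $\mathcal{C}\cong \operatorname{coker} L$ finite dimensional.

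Next I would verify that $\mathcal{F}$ is a smooth map of Banach manifolds near $0$. This requires checking that the nonlinear terms — products and compositions of the components of $v$ and its first derivatives with smooth coefficients — map the chosen edge-Sobolev spaces continuously and smoothly; here one uses the multiplication properties of the weighted cone-Sobolev spaces $\mathcal{K}^{s,\gamma}(\cone)$ for $s$ large enough and the admissibility constraint $\gamma>\frac{\dim\Xcal+3}{2}$, which guarantees the relevant Sobolev algebra and trace properties, and the fact that the conormal asymptotics spaces are preserved under these operations. Granting smoothness, write $v=v_0+v'$ with $v_0\in\ker L$ and $v'\in\mathcal{D}'$, and let $\pi:\mathcal{R}\to\operatorname{im} L$ be the projection with kernel $\mathcal{C}$. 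The equation $\mathcal{F}(v_0+v')=0$ splits into $\pi\mathcal{F}(v_0+v')=0$ and $(1-\pi)\mathcal{F}(v_0+v')=0$. Since $L=D\mathcal{F}(0)$ restricts to an isomorphism $\mathcal{D}'\to\operatorname{im} L$, the Implicit Function theorem for Banach spaces solves the first equation uniquely for $v'=v'(v_0)$, a smooth function of $v_0$ in a neighbourhood $\mathcal{M}_1\subset\ker L$ of $0$ with $v'(0)=0$. Substituting into the second equation defines the obstruction map
\begin{center}
$\mathfrak{G}:\mathcal{M}_1\longrightarrow \mathcal{M}_2:=\mathcal{C},\qquad \mathfrak{G}(v_0):=(1-\pi)\mathcal{F}\bigl(v_0+v'(v_0)\bigr),$
\end{center}
which is smooth, satisfies $\mathfrak{G}(0)=0$, and whose zero set is in bijective (indeed homeomorphic) correspondence with the solutions of $\mathcal{F}(v)=0$ near $0$, hence with $\mathfrak{M}(M,\Phi,\mathcal{T},\gamma)$ near $M$. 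When $\mathfrak{G}\equiv 0$ the zero set is all of $\mathcal{M}_1$, an open subset of the finite-dimensional space $\ker L$, so the moduli space is a smooth finite-dimensional manifold.

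The step I expect to be the main obstacle is establishing the smoothness of $\mathcal{F}$ between the correct Banach spaces, i.e. showing that the edge-Sobolev spaces with conormal asymptotics are stable under the nonlinearities in McLean's equation and that the map is not merely continuous but $C^{\infty}$ (or at least $C^{1}$ with enough regularity to run the Implicit Function theorem). This is delicate precisely because the spaces carry weights and prescribed asymptotic types near the edge, and one must ensure that the boundary conditions encoded by $\mathcal{T}$ are compatible with the nonlinear terms — in particular that $\Phi_v$ still has an edge singularity of the same type and that the trace condition persists. A secondary technical point is the gauge-fixing needed so that distinct parameters $v$ give distinct submanifolds (quotienting out reparametrisations), which is what makes the final correspondence a homeomorphism rather than merely a surjection; this is handled exactly as in McLean and Joyce by working transverse to the diffeomorphism action. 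Once these are in place, the Lyapunov--Schmidt argument above is formal and yields the theorem.
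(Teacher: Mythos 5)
Your proposal is correct and follows essentially the same route as the paper: the moduli space is realized as the zero set of the nonlinear deformation operator augmented by the trace condition $\mathcal{T}$, the Fredholm property of the elliptic edge boundary value problem $\mathbf{A}_{\operatorname{DP[0]}}$ supplies the finite-dimensional kernel and cokernel, and an Implicit Function theorem / Lyapunov--Schmidt reduction yields the smooth obstruction map $\mathfrak{G}$ from a neighbourhood of zero in the kernel to the cokernel, whose vanishing makes the moduli space near $M$ a smooth finite-dimensional manifold. The only mechanical difference is that, instead of projecting onto $\operatorname{Im}L$ as you do, the paper augments the operator to $\hat{\operatorname{P}}\left((\Xi,g),(v,w)\right)=\mathbf{A}_{\operatorname{P}}(\Xi,g)+(v,w)$ on the domain enlarged by $\operatorname{Coker}\mathbf{A}_{\operatorname{DP[0]}}$, so that the linearisation becomes surjective and the Implicit Function theorem applies directly --- an equivalent formulation of your projection-based reduction; the smoothness of the nonlinear operator, which you flag as the main obstacle, is already established earlier in the paper via the Banach-algebra property of the edge-Sobolev spaces.
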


\section{Special Lagrangian submanifolds and deformations} \label{sec:1}
In this section we introduce the basics of deformation of special Lagrangian submanifolds in $\CC^{n}$. We will use the definitions and notation of this section throughout this paper. For further details on special Lagrangian geometry the reader is referred to   \cite{joyce}  and \cite{harveylawson}.

\subsection{Special Lagrangian submanifolds in $\CC^n$}\label{sec:1.1}
Let $\CC^{n}:=\lbrace(z_1,\cdots ,z_n):z_k\in\CC \text{ for all } 1\leq k\leq n \rbrace$ be the complex $n$-dimensional space. We identify $\CC^{n}$ with $\RR^{2n}=\RR^{n}_x\oplus\RR^{n}_y$ in the following, specific way

 \begin{equation}\label{identification}
(x_1,\cdots,x_n,y_1,\cdots y_n)\longrightarrow (x_1+\sqrt{-1}y_1,\cdots ,x_n+\sqrt{-1}y_n),
\end{equation} 
hence with this identification $z_k=x_k+\sqrt{-1}y_k$. 
Now let's consider the automorphism $J:\CC^{n}\longrightarrow\CC^{n}$ given by $J(z)=\sqrt{-1}z$. Then, under the identification $\CC^{n}\cong\xy$ we have

 \begin{equation}\label{complexstructure}
J=\begin{bmatrix}
0 & -\operatorname{Id}_{\RR^{n}} \\ 
\operatorname{Id}_{\RR^{n}} & 0
\end{bmatrix} :\xy\longrightarrow\xy.
\end{equation}

\begin{definition}
Let $\xi=\xi_1\wedge\cdots\wedge\xi_n$ be an oriented, real $n$-plane in $\CC^{n}$, where $\xi_1,\cdots,\xi_n$ is an oriented, orthonormal basis of $\xi$. We say that $\xi$ is a Lagrangian $n$-plane if $J(\xi)=\xi^{\bot}$ where \begin{equation}\label{lagrangiandef}
\xi^{\bot}=\lbrace \eta\in\CC^{n}: \langle \eta,v \rangle_{g_{\RR^{2n}}}=0 \text{ for all } v\in\xi \rbrace.
\end{equation}
\end{definition}

\begin{definition}
An oriented $n$-submanifold $\psi: M\longrightarrow\CC^{n}$ is a Lagrangian submanifold of $\CC^{n}$ if each tangent plane $\psi_{\ast}(T_{p}M)\subset T\CC^{n}\cong\CC^{n}$ is a Lagrangian $n$-plane in $\CC^{n}$ for every $p\in M$, where $\psi_{\ast}$ denotes the push-forward.
\end{definition}

\begin{definition}\label{hol.volume}
The complex $n$-form $\Omega=dz_1\wedge\cdots\wedge dz_n$\label{holomorphicvolume} is called the holomorphic volume form of $\CC^{n}$.
\end{definition}

\begin{definition}
An oriented Langrangian submanifold $M$ in $\CC^{n}$ is called a special Lagrangian submanifold with phase $\theta_0$ if the following equations are satisfied
 \begin{equation}\label{mastereq1}
 \begin{dcases*}
\operatorname{Re}(e^{-\sqrt{-1}\theta_{0}}\Omega)\big|_{M}=dV_{M} \\
 \operatorname{Im}(e^{-\sqrt{-1}\theta_{0}}\Omega)\big|_{M}=0
\end{dcases*}.
\end{equation}
\end{definition}

Observe that given a special Lagrangian submanifold $\Phi: M\longrightarrow \CC^{n}$ with phase $\theta$ i.e. a Lagrangian submanifold calibrated by  $\operatorname{Re}(e^{-\sqrt{-1}\theta_{}}\Omega)$, the submanifold given by \[e^{-\sqrt{-1}\frac{\theta}{n}}\Phi:M\longrightarrow\CC^{n} \]
is a special Lagrangian submanifold with phase $\theta=0$. 

Therefore by rotating a special Lagrangian submanifold with phase $\theta$ we transform it into a special Lagrangian submanifold with phase zero. Henceforth, when we consider special Lagrangian submanifolds in $\CC^n$, we shall focus and discuss only the case with phase zero.

%%\subsection{Special Lagrangian submanifolds in Calabi-Yau manifolds}\label{sec:1.2}

Let $(\mathfrak{X}, \omega, J, g_{_{\mathfrak{X}}})$ be a K{\"a}hler manifold of complex dimension $n$ with K{\"a}hler form $\omega$, complex structure $J$ and K{\"a}hler metric $g_{_{\mathfrak{X}}}$. Recall that $\mathfrak{X}$ is called a Calabi-Yau manifold if the holonomy group of $g_{_{\mathfrak{X}}}$ is a subgroup of $\operatorname{SU}(n)$, i.e. \[\operatorname{Hol}(g_{_{\mathfrak{X}}})\subseteq \operatorname{SU}(n).\]
Note that $\CC^{n}$ is a Calabi-Yau manifold with the structure  \[(\CC^{n},g_{_{\CC^{n}}},\omega_{_{\CC^{n}}}, \operatorname{}\Omega)\]
 where $g_{_{\mathbb{C}^n}}= |{dz_1}|^2+\cdots+|{dz_N}|^2$, $\omega_{\mathbb{C}^n}=\frac{\sqrt{-1}}{2}\sum\limits_{i=1}^{n}dz_i \wedge d\bar z _{i}$ and $\Omega=dz_{1}\wedge\cdots \wedge dz_{N}$. 
 
Harvey and Lawson in \cite{harveylawson} characterized special Lagrangian submanifolds in a way that has been extremely useful to study the deformation theory.
\begin{proposition}
Let $(\mathfrak{X}, \omega, J, g_{_{\mathfrak{X}}},\Omega_{\mathfrak{X}})$ be a Calabi-Yau manifold and $M$ a $n$-dimensional real submanifold. Then $M$ admits an orientation making it into a special Lagrangian submanifold if and only if \begin{equation}\label{master}
 \begin{dcases*}
\omega \big |_{M}   \equiv 0   \\
 \operatorname{Im}\Omega_{\mathfrak{X}} \big |_{M} \equiv 0 
\end{dcases*}.
\end{equation} 
\end{proposition}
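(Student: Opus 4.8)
The plan is to prove the equivalence directly from the two defining equations~\eqref{mastereq1} (with phase $\theta_0 = 0$), using the basic linear algebra of calibrations on oriented real $n$-planes in a Calabi--Yau manifold. The key point, which I would establish first, is a pointwise statement: for an oriented real $n$-plane $\xi \subset T_p\mathfrak{X}$, the condition $\omega|_\xi = 0$ says precisely that $\xi$ is a Lagrangian plane, and \emph{given} that $\xi$ is Lagrangian, the restriction $\Omega_{\mathfrak{X}}|_\xi$ is a complex number of modulus one times $dV_\xi$; writing $\Omega_{\mathfrak{X}}|_\xi = e^{\sqrt{-1}\theta(\xi)} dV_\xi$, the further condition $\operatorname{Im}\Omega_{\mathfrak{X}}|_\xi = 0$ forces $e^{\sqrt{-1}\theta(\xi)} = \pm 1$, and after possibly reversing orientation we may arrange $\theta(\xi) = 0$, i.e. $\operatorname{Re}\Omega_{\mathfrak{X}}|_\xi = dV_\xi$. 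This is the content I would cite from Harvey--Lawson for the linear-algebra step.

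The forward direction is then immediate: if $M$ is special Lagrangian (oriented so that \eqref{mastereq1} holds with $\theta_0=0$), then $\operatorname{Im}\Omega_{\mathfrak{X}}|_M \equiv 0$ is one of the defining equations, and Lagrangian-ness of every tangent plane $\Phi_*(T_pM)$ is equivalent to $\omega|_M \equiv 0$ — here I would spell out that $\xi$ being a Lagrangian $n$-plane in the sense of the definition ($J(\xi) = \xi^\perp$) is equivalent to $\omega(u,v) = \langle Ju, v\rangle_g = 0$ for all $u,v \in \xi$, since $\xi$ has real dimension $n$ and $J\xi \perp \xi$ exactly when $\omega$ annihilates $\xi$. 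Thus $\omega|_M \equiv 0$ and $\operatorname{Im}\Omega_{\mathfrak{X}}|_M \equiv 0$ both hold, which is \eqref{master}.

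For the converse, suppose $M$ is an $n$-dimensional real submanifold with $\omega|_M \equiv 0$ and $\operatorname{Im}\Omega_{\mathfrak{X}}|_M \equiv 0$. The vanishing of $\omega|_M$ makes every tangent plane Lagrangian, so by the pointwise step $\Omega_{\mathfrak{X}}|_{T_pM} = e^{\sqrt{-1}\theta(p)}\,dV_{T_pM}$ for a function $\theta: M \to \mathbb{R}/2\pi\mathbb{Z}$, once a local orientation is chosen. The hypothesis $\operatorname{Im}\Omega_{\mathfrak{X}}|_M \equiv 0$ gives $\sin\theta(p) \equiv 0$, so $\theta(p) \in \{0,\pi\}$ at every point; I would then argue that $\theta$ is continuous (the tangent planes vary smoothly and $\Omega_{\mathfrak{X}}$ is smooth), hence locally constant, so on each connected component $\theta \equiv 0$ or $\theta \equiv \pi$, and by reversing the chosen orientation on the components where $\theta \equiv \pi$ we obtain a \emph{global} orientation of $M$ for which $\operatorname{Re}\Omega_{\mathfrak{X}}|_M = dV_M$ and $\operatorname{Im}\Omega_{\mathfrak{X}}|_M = 0$, i.e. \eqref{mastereq1} with $\theta_0 = 0$. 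Since $M$ is Lagrangian and calibrated by $\operatorname{Re}\Omega_{\mathfrak{X}}$, it is special Lagrangian.

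The step I expect to be the main (though still mild) obstacle is the pointwise linear-algebra fact that $|\Omega_{\mathfrak{X}}|_\xi| = dV_\xi$ on a Lagrangian plane together with the orientation bookkeeping in the converse: one must be careful that the locally defined phase $\theta$ patches to a genuine continuous function and that flipping orientations on components is consistent with the ambient Calabi--Yau orientation. I would handle this by reducing to the model $\mathbb{C}^n$ via a unitary frame (using $\operatorname{Hol}(g_{\mathfrak{X}}) \subseteq \operatorname{SU}(n)$, which makes $\Omega_{\mathfrak{X}}$ parallel and locally equal to $dz_1 \wedge \cdots \wedge dz_n$ in a unitary coframe), where the statement is the classical computation that for $A \in \operatorname{U}(n)$ acting on $\mathbb{R}^n \subset \mathbb{C}^n$ one has $\Omega|_{A\mathbb{R}^n} = \det(A)\, dV$, and $|\det A| = 1$.
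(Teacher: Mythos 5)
The paper does not actually prove this proposition: it is stated as a quoted result of Harvey and Lawson and is used with a citation to \cite{harveylawson}, so there is no internal proof to compare against. Your argument is a correct reconstruction of the standard Harvey--Lawson proof: the identification of $\omega|_{\xi}=0$ with $J(\xi)=\xi^{\bot}$ for a real $n$-plane $\xi$ is right (since $J\xi\subset\xi^{\bot}$ forces equality by dimension count), the pointwise fact that on a Lagrangian plane $\Omega_{\mathfrak{X}}|_{\xi}=e^{\sqrt{-1}\theta(\xi)}\,dV_{\xi}$ reduces, via a unitary coframe in which $\Omega_{\mathfrak{X}}=dz_{1}\wedge\cdots\wedge dz_{n}$ (legitimate because $\operatorname{Hol}(g_{_{\mathfrak{X}}})\subseteq\operatorname{SU}(n)$ makes $\Omega_{\mathfrak{X}}$ parallel and of unit norm), to the computation $\Omega|_{A\RR^{n}}=\det_{\CC}(A)\,dV$ for $A\in\operatorname{U}(n)$, and the forward implication is indeed immediate from the definition with phase zero.

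The only place your write-up is looser than it needs to be is the converse's orientation bookkeeping: defining $\theta$ on a whole component presupposes that component is orientable, which is not part of the hypothesis. The clean way to close this (and it removes the patching worry you flag) is to observe that the pointwise step together with $\operatorname{Im}\Omega_{\mathfrak{X}}|_{M}\equiv 0$ makes $\operatorname{Re}\Omega_{\mathfrak{X}}|_{M}$ a nowhere-vanishing $n$-form on $M$ (its value on any tangent plane is $\pm$ the volume form, never zero). A nowhere-vanishing top form simultaneously proves $M$ is orientable and singles out the orientation for which $\operatorname{Re}\Omega_{\mathfrak{X}}|_{M}=dV_{M}$, which is exactly the orientation required by \eqref{mastereq1} with $\theta_{0}=0$. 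With that adjustment your proof is complete and is essentially the argument in Harvey--Lawson that the paper invokes.
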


\section{Manifolds with singularities}\label{sec:2}

We are interested in the deformation of special Lagrangian submanifolds with singularities. In this section we provide the definitions and concepts related to singular manifolds that we use throughout this paper. We refer the reader to chapter 1 and 2 of \cite{naza} for further details.   

We denote by \[\operatorname{Diff}_{}(M)\label{algebraclassicaldiff}:=\bigcup\limits_{l\geq 0}\operatorname{Diff}^{l}(M)\] the algebra of classical differential operators on $M$. 

\begin{definition}\label{def1}
A singular manifold is a pair $(M, \mathfrak{D})$ where $M$ is a  smooth manifold possibly non-compact and $\mathfrak{D}\subset \operatorname{Diff}_{}(M) $  is a subalgebra of differential operators such that  its restriction $\mathfrak{D}\big|_{U}$  at every open subset $U$ with compact closure  $\overline{U}\subset M$ is equal to the restriction of the algebra of all differential operators $\operatorname{Diff}_{}(M)\big|_{U}.$
\end{definition}

The algebra $\mathfrak{D}$ is generated by a function space $\mathcal{F}$ such that  $\smooth_{0}(M)\subset\mathcal{F}\subset \smooth(M)$ and a space of vector fields $\mathbf{V}$ on $M$ such that $\smooth_{0}(M,TM)\subset\mathbf{V}\subset \smooth(M, TM).$ We obtain the function space $\mathcal{F}$ by embedding $M$ into a compact manifold with boundary $\MM$ and defining $\mathcal{F}$ as the restriction of the space of smooth functions on $\MM$ i.e. $\mathcal{F}:=\smooth(\MM)\big|_{M}.$

\subsection{Examples}\label{examples}

\begin{enumerate}[i)]
\item Let's consider a smooth manifold $\MM$ with smooth boundary $\partial \MM=\Xcal$, $\operatorname{dim}\Xcal=m$ and define $M:=\MM\setminus\partial \MM$. Let $g_{_{M}}$ be a conical metric on $M$ i.e. $g_{_{M}}$ is a Riemannian metric on $M$ such that on a collar neighborhood of $\partial\MM$ given by $(0,1)\times\Xcal\subset M$ we have \[g_{_{M}}=r^{2}g_{_{\Xcal}}+dr^{2}\]
where $g_{_{\Xcal}}$ is a Riemannian metric on $\Xcal$. Then $g_{_{M}}$ extends to a smooth, symmetric 2-tensor on $\MM$ that degenerates in each tangent direction to $\Xcal$. .

Now, let $\mathcal{V}\in \smooth(\MM,T\MM)$ be a vector field with length of the order of unity with respect to $g_{_{M}}$, i.e. \[\abs{\mathcal{V}(p)}_{g_{_{M}}}\leq C\]
for any $p\in \MM$ and $C>0$ independent of $p$.

On a neighborhood $[0,1)\times\mathcal{U}\subset[0,1)\times\Xcal$ it is easy to see that \begin{equation}\label{conevectorfield}
\mathcal{V}=\mathcal{A}\partial_{r}+\sum\limits^{m}_{k=1}\mathcal{B}_{k}\frac{1}{r}\partial_{k}
\end{equation} 
where $\partial_k$ are the local coordinate vector fields on $\mathcal{U}\subset\Xcal$ and $\mathcal{A},\mathcal{B}_k\in \smooth([0,1)\times\mathcal{U})$.

The algebra of degenerate operators $\mathfrak{D}$ is generated by functions on $M$ smooth up to $r=0$ i.e. $\smooth(\MM)\big|_{M}$ and vector fields $\mathcal{V}$ such that on the collar neighborhood $[0,1)\times\Xcal$ are given by \[\mathcal{V}=\mathcal{A}\partial_{r}+\Theta\] where $\mathcal{A}\in\smooth([0,1)\times\Xcal)$ and $r\Theta\in\smooth([0,1), T\Xcal).$

This algebra is called the algebra of cone-degenerate operators $\operatorname{Diff}_{\operatorname{cone}}(M).$ From the local expressions above we have that every cone-degenerate operator $\operatorname{P}$ of order $l$ can be written in the collar neighborhood as \begin{equation}\label{coneop}\operatorname{P}=r^{-l}\sum\limits_{i\leq l}a_{i}(r)(-r\partial_{r})^{i} 
\end{equation}
where $a_{i}\in\smooth\left([0,1),\operatorname{Diff}^{l-i}(\Xcal)\right),$ with $\operatorname{Diff}^{l-i}(\Xcal)$ denoting the space of classical differential operators of order $l-i$ on $\Xcal$. Cone-degenerate operators are also called Fuchs-type operators.

We say that $(M,\operatorname{Diff}_{\operatorname{cone}}(M))$ is a manifold with conical singularity.

\item Let $\MM$ be a smooth compact manifold with boundary $\partial\MM$. Let $\Xcal$ and $\edge$ be smooth, compact manifolds without boundary such that $\partial\MM$ is the total space of a smooth $\Xcal$-fibration over $\edge$ \[\pi:\partial\MM\longrightarrow\edge.\]\label{boundaryfibration}
Observe that any collar neighborhood of the boundary $[0,1)\times\partial\MM$ has the structure of a $\Xcal$-fibration over $\edge\times [0,1)$. By fixing a collar neighborhood, we use the bundle coordinates on $[0,1)\times\partial\MM$ as admissible coordinates i.e. coordinates of the form $(r,\sigma_k,u_l)$ where $(u_l,r)$ are coordinates on $\edge\times [0,1)$ and $(\sigma_k)$ local coordinates  on the fiber $\Xcal.$

Now, we apply the scheme to construct singular manifolds. Let $M=\MM\setminus \partial\MM$ and equip $M$ with an edge metric \[g_{_{M}}=r^{2}g_{_{\Xcal}}+dr^2+g_{_{\edge}}\] where 
$g_{_{\edge}}$ is a smooth Riemannian metric on $\edge$. 

Observe that the edge metric $g_{_{M}}$ extends to a smooth symmetric 2-tensor on $\MM$ that degenerates on each $\Xcal$-fiber over $\partial\MM.$ In order to define the algebra of degenerate differential operators on $M$ we set $\mathcal{F}=\smooth(\MM)\big|_{M}.$  In admissible coordinates on the collar neighborhood  $[0,1)\times\mathcal{U}\times\Omega\subset [0,1)\times\partial\MM$, the set of vector fields with length of the order of unity with respect to the edge metric $g_{_{M}}$ is given by \begin{equation}\label{edgefield}
{V}=\mathcal{A}\partial_{r}+\sum\limits^{m}_{k=1}\mathcal{B}_k\frac{1}{r}\partial_{k}+\sum\limits^{q}_{l=1}\mathcal{C}_l\partial_{u_l}
\end{equation} where $\mathcal{A},\mathcal{B}_k,\mathcal{C}_l\in\smooth([0,1)\times\mathcal{U}\times\Omega)$, $\partial_k$ are local coordinate vector fields on $\mathcal{U}\subset\Xcal$ and $\partial_{u_{l}}$ are local coordinate vector fields on $\Omega\subset\edge$.

The algebra generated by $\mathcal{F}$ and $\mathbf{V}$ is called the algebra of edge-degenerate operators $\operatorname{Diff}_{\operatorname{edge}}(M).$\label{edgediffoperators} In admissible coordinates, every edge-degenerate operator of order $l$ is given by \begin{equation}\label{edgeop} \operatorname{P}=r^{-l}\sum\limits_{i\leq l}a_{i,\alpha}(r,u)(-r\partial_{r})^{i}(rD_u)^{\alpha}
\end{equation}  where $a_{i,\alpha}\in\smooth\left([0,1)\times\Omega,\operatorname{Diff}^{l-i-\abs{\alpha}}(\Xcal)\right)$ and $D_{u_l}=-\sqrt{-1}\partial_{u_l}$. 

We say that $(M,\operatorname{Diff}_{\operatorname{edge}}(M))$ is a manifold with edge singularity.

\end{enumerate}

\subsection{Analysis on Manifolds with edges.}\label{sectionanalysis}
In this section we describe the necessary elements to study partial differential equations on manifolds with conical or edge singularities. In particular we introduce the relevant concepts and definitions needed for the analysis of deformations of singular special Lagrangian submanifolds carried out in section~\ref{chapterdeformationtheory}. We refer the reader interested in full details and explanations to \cite{schulze3}, chapters 2 and 3. 

\subsection{Sobolev spaces on singular manifolds}\label{subsectionSobolev}
We start by introducing suitable Banach spaces on which cone-degenerate operators act. Recall that the Mellin transformation $\mathcal{M}$ is a continuous operator 
$\mathcal{M}:\smooth _{0}(\Rpos)\longrightarrow\mathcal{A}(\CC)$ given by the integral formula
\[(\mathcal{M}f)(z)=\displaystyle\int_{0}^{\infty}r^{z-1}f(r)dr,\]\label{Mellintransf}
where $\mathcal{A}(\CC)$ is the space of holomorphic functions on $\CC$. 

Very often we need the restriction of the holomorphic function $\mathcal{M}f$ to subsets isomorphic to $\RR$ given by \begin{equation}\label{Mellindomain}
\Gamma_{\beta}=\lbrace z\in\CC : Re(z)=\beta\rbrace.
\end{equation} 

The role of the Mellin transformation in cone-degenerate operators is given by the following basic fact: $(-r\partial_{r})f=\mathcal{M}^{-1}z\mathcal{M}f$ for any $f(r,\sigma)\in\smooth_{0}(\Rpos_{r}\times\RR^{m}_{\sigma})$. Therefore, any cone-degenerate differential operator $\operatorname{P}=r^{-l}\sum\limits_{i\leq l}a_{i}(r)(-r\partial_{r})^{i}$ is given in terms of the Mellin transformation as follows \[\operatorname{P}=r^{-l}\mathcal{M}^{-1}h(r,z)\mathcal{M},\]
where $h(r,z)=\sum\limits_{i\leq l}a_{i}(r)z^{i}.$

\begin{definition}\label{localcone}
The local cone-Sobolev space of order $s$ and weight $\gamma$, denoted by $\mathcal{H}^{s,\gamma}(\Rpos\times\RR^{m})$,  with $s,\gamma\in\RR$, is defined as the closure of $\smooth_{0}(\Rpos\times\RR^{m})$ with respect to the norm 
\begin{align*}
&\norm{f}_{\mathcal{H}^{s,\gamma}(\Rpos\times\RR^{m})}:=
\\&\left(\frac{1}{2\pi i}\displaystyle\int\limits_{\Gamma_{\frac{m+1}{2}-\gamma}}\displaystyle\int\limits_{\RR^m}(1+\abs{z}^{2}+\abs{\xi}^{2})^{s}\abs{\mathcal{M}_{\gamma-\frac{m}{2},r\rightarrow z}\mathcal{F}_{\sigma\rightarrow\xi}f }^{2}dzd\xi \right)^{\frac{1}{2}},
\end{align*} where $\mathcal{F}_{\sigma\rightarrow 
\xi}$ denotes the Fourier transformation in the variable $\sigma\in\RR^{m}$ and the symbol  $\mathcal{M}_{\gamma-\frac{m}{2},r\rightarrow z}$ denotes the restricted Mellin transformation acting on the variable $r\in\Rpos$.
\end{definition}
The local cone-Sobolev spaces are Hilbert spaces with inner product given by  

\begin{align*}
&\frac{1}{2\pi i}\left\langle  (1+\abs{z}^{2}+\abs{\xi}^{2})^{\frac{s}{2}}\mathcal{M}_{\gamma-\frac{m}{2},r\rightarrow z}\mathcal{F}_{\sigma\rightarrow\xi}f,\right. 
\\&  (1+\abs{z}^{2}+\abs{\xi}^{2})^{\frac{s}{2}}\left.\mathcal{M}_{\gamma-\frac{m}{2},r\rightarrow z}\mathcal{F}_{\sigma\rightarrow\xi}g \right\rangle_{L^{2}(\RR\times\RR^{m})}.
\end{align*}

The relation between the local-cone spaces  $\mathcal{H}^{s,\gamma}(\Rpos\times\RR^{m})$ and the standard Sobolev spaces $H^{s}(\RR^{m+1})$ is given by the following transformation. First consider the transformation \begin{equation}\label{S_transf}
S_{\gamma-\frac{m}{2}}:\smooth_{0}(\Rpos\times\RR^{m})\longrightarrow \smooth_{0}(\RR_{t}\times\RR^{m}_x)
\end{equation}  such that $S_{\gamma-\frac{m}{2}}(f)(t,x):=e^{-(\frac{1}{2}-(\gamma-\frac{m}{2}))t}f(e^{-t},x).$ This transformation extends to a Banach space isomorphism between $\mathcal{H}^{s,\gamma}(\Rpos\times\RR^{m})$ and the standard Sobolev spaces $H^{s}(\RR^{m+1}).$ Therefore the norm $\norm{f}_{\mathcal{H}^{s,\gamma}(\Rpos\times\RR^{m})}$ and the norm $\norm{S_{\gamma-\frac{m}{2}}(f)}_{H^{s}(\RR^{m+1})}$ are equivalent. 

In order to define the global cone-Sobolev space on a manifold with conical singularities $M$, we choose a finite open covering $\lbrace \mathcal{U}_{_{\lambda}},\chi_{_{\lambda}}\rbrace$ of $\Xcal$ given by coordinate neighborhoods such that $\chi_{_{\lambda}}:\mathcal{U}_{_{\lambda}}\longrightarrow \RR^{m}$ and $I\times\chi_{_{\lambda}}:\Rpos\times\mathcal{U}_{_{\lambda}}\longrightarrow\Rpos\times\RR^{m}$ with $(I\times\chi_{_{\lambda}})(r,p)=(r,\chi_{_{\lambda}}(p))$ are diffeomorphisms for every $\lambda$. Let $\lbrace \varphi_{_{\lambda}} \rbrace$ be a partition of unity subordinate to  $\lbrace \mathcal{U}_{_{\lambda}}\rbrace$. The global cone-Sobolev space near the conical singularities is modelled on the space  $\mathcal{H}^{s,\gamma}(\cone)$ defined on the open cone $\cone:=\Rpos\times\Xcal$ as  the closure of $\smooth_{0}(\cone)$ with respect to the norm \begin{equation}\label{nearconespace}
\norm{f}_{\mathcal{H}^{s,\gamma}(\cone)}:=\left( \sum_{\lambda}\norm{(I\times\chi_{_{\lambda}}^{\ast})^{-1}\varphi_{_{\lambda}}f}
^{2}_{\mathcal{H}^{s,\gamma}(\Rpos\times\RR^{m})}  \right)^{\frac{1}{2}}
\end{equation} where $m=\operatorname{dim}\Xcal.$

In order to glue together the space $\mathcal{H}^{s,\gamma}(\cone)$ with the classical Sobolev space away from the edge we use a cut-off function $\omega(r)\in\smooth_{0}(\overline{\RR}^{+})$ such that $\omega(r)=1$ for $0\leq r <\varepsilon_1$ and $\omega(r)=0$ for $r\geq\varepsilon_2$ for some $0<\varepsilon_1<\varepsilon_2$.  

\begin{definition}\label{conespace}
Given a compact manifold $M$ with conical singularity, the cone-Sobolev space of order $s$ and weight $\gamma$ is defined as follows \[\mathcal{H}^{s,\gamma}(M):=[\omega]\mathcal{H}^{s,\gamma}(\cone)+[1-\omega]H^{s}(2\MM).\]
The cone-Sobolev space on the open cone $\cone$ is defined in an analogous manner \[\mathcal{K}^{s,\gamma}(\cone):=[\omega]\mathcal{H}^{s,\gamma}(\cone)+[1-\omega]H^{s}_{\operatorname{cone}}(\cone).\]
Both of these spaces are endowed with the topology of the non-direct sum.
\end{definition}

\begin{proposition}\label{cont.cone}
Let $\operatorname{P}\in\operatorname{Diff}^{l}_{\operatorname{cone}}(M)$, then \[  \operatorname{P}:\mathcal{H}^{s,\gamma}(M)\longrightarrow \mathcal{H}^{s-l,\gamma-l}(M) \] is a continuous operator for any $s,\gamma$.
\end{proposition}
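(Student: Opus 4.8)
The plan is to reduce the global continuity statement to the local model estimate on $\mathcal{H}^{s,\gamma}(\Rpos\times\RR^{m})$ together with the classical continuity of differential operators on $H^{s}(2\MM)$, and then patch the two pieces via the non-direct-sum structure of Definition~\ref{conespace}. First I would fix a cut-off function $\omega$ as in the definition, together with a second cut-off $\widetilde\omega$ with $\widetilde\omega\equiv 1$ on $\operatorname{supp}\omega$; writing $\operatorname{P}=\omega\operatorname{P}\widetilde\omega + (1-\omega)\operatorname{P} + \omega\operatorname{P}(1-\widetilde\omega)$, one checks that the last term is smoothing in $r$ near $r=0$ (the factors have disjoint supports there, so it contributes a genuinely smooth, compactly supported operator away from the singularity) and the middle term maps into the classical Sobolev summand. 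So the crux is the first term, which is supported in the collar neighborhood $[0,1)\times\Xcal$.

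On the collar, I would invoke the expression~\eqref{coneop}: locally $\operatorname{P}=r^{-l}\sum_{i\leq l}a_i(r)(-r\partial_r)^i$ with $a_i\in\smooth([0,1),\operatorname{Diff}^{l-i}(\Xcal))$. Using a partition of unity $\{\varphi_\lambda\}$ subordinate to a coordinate cover of $\Xcal$ as in~\eqref{nearconespace}, it suffices to bound $\norm{\operatorname{P}f}_{\mathcal{H}^{s-l,\gamma-l}(\Rpos\times\RR^{m})}$ by $\norm{f}_{\mathcal{H}^{s,\gamma}(\Rpos\times\RR^{m})}$ for $f$ supported in one chart. Here I would exploit the isomorphism $S_{\gamma-\frac{m}{2}}$ of~\eqref{S_transf}: under this substitution $t=-\log r$, the operator $-r\partial_r$ becomes (essentially) $\partial_t$ up to the weight shift, $r^{-l}$ becomes the multiplication $e^{lt}$ which is absorbed precisely by passing from weight $\gamma$ to weight $\gamma-l$ (that is, $S_{(\gamma-l)-\frac{m}{2}}\circ r^{-l} = S_{\gamma-\frac{m}{2}}$ up to a constant), and the coefficients $a_i$ become bounded, smooth functions of $(t,x)$ with all derivatives bounded on $[0,\infty)\times\RR^m$ since the $a_i$ are smooth up to $r=0$. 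Thus $S_{(\gamma-l)-\frac m2}\operatorname{P} S_{\gamma-\frac m2}^{-1}$ is an honest differential operator of order $l$ on $\RR^{m+1}_{t,x}$ with bounded smooth coefficients, hence continuous $H^{s}(\RR^{m+1})\to H^{s-l}(\RR^{m+1})$ by the standard theory; transporting back through the norm equivalence gives the desired collar estimate.

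Assembling: continuity of each of the three pieces of $\operatorname{P}$ on the two summands $[\omega]\mathcal{H}^{s,\gamma}(\cone)$ and $[1-\omega]H^{s}(2\MM)$, combined with the fact that $\mathcal{H}^{s-l,\gamma-l}(M)$ carries the non-direct sum topology (so a map into it is continuous as soon as it is continuous into each summand after multiplication by the respective cut-offs), yields that $\operatorname{P}:\mathcal{H}^{s,\gamma}(M)\to\mathcal{H}^{s-l,\gamma-l}(M)$ is bounded. The main obstacle, and the only point requiring genuine care, is the collar estimate: one must verify that conjugation by $S_{\gamma-\frac m2}$ turns the edge-degenerate weights $r^{-l}(-r\partial_r)^i$ exactly into a bounded-coefficient differential operator with the weight bookkeeping $\gamma\mapsto\gamma-l$ matching the target space, and that the smoothness of $a_i$ up to $r=0$ indeed gives coefficients bounded with all derivatives after the $t=-\log r$ change of variables (boundedness near $t=+\infty$, i.e. $r=0$, is where the hypothesis $a_i\in\smooth([0,1),\cdot)$ is used). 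Everything else is routine localization and the standard mapping property of differential operators on $\RR^n$.
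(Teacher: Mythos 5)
Your argument is correct, but note that the paper does not actually prove Proposition~\ref{cont.cone}: it simply refers to Schulze \cite{schulze2}, p.~153, so there is no internal proof to compare against. What you wrote is essentially the standard argument behind that reference, and it is sound: the decomposition $\operatorname{P}=\omega\operatorname{P}\widetilde\omega+(1-\omega)\operatorname{P}+\omega\operatorname{P}(1-\widetilde\omega)$ is fine (in fact, since $\operatorname{P}$ is a differential, hence local, operator and $\widetilde\omega\equiv 1$ near $\operatorname{supp}\omega$, the third term is identically zero, not merely smoothing); the interior term is handled by classical Sobolev continuity on $2\MM$; and the collar term reduces, via the chart localization of \eqref{nearconespace}, to the model estimate on $\mathcal{H}^{s,\gamma}(\Rpos\times\RR^{m})$. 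Your weight bookkeeping is exactly right: a direct computation with \eqref{S_transf} gives $S_{(\gamma-l)-\frac{m}{2}}(r^{-l}f)=S_{\gamma-\frac{m}{2}}(f)$ identically (no constant needed), so $S_{(\gamma-l)-\frac{m}{2}}\operatorname{P}S_{\gamma-\frac{m}{2}}^{-1}=\sum_{i\leq l}a_i(e^{-t})(\partial_t+c)^i$ in local coordinates, an order-$l$ operator on $\RR^{m+1}_{t,x}$ whose coefficients, together with all derivatives, are bounded --- here you should say explicitly that boundedness as $t\to+\infty$ comes from smoothness of $a_i$ up to $r=0$, while boundedness for small $t$ is guaranteed because the cut-off $\omega$ confines everything to $r\leq\varepsilon_2<1$, so one never needs smoothness of $a_i$ at $r=1$. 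The remaining points (that $\widetilde\omega u$ has controlled $\mathcal{H}^{s,\gamma}(\cone)$-norm for $u$ in the non-direct sum, since $\widetilde\omega(1-\omega)$ is supported in a compact region away from $r=0$ where the two norms are equivalent, and commutators with the partition of unity on $\Xcal$) are routine, as you say. So your proposal supplies a legitimate self-contained proof of a statement the paper only quotes.
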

See \cite{schulze2} page 153.

The $\Rpos$-action on the cone-Sobolev space is given by $(\kappa_{\lambda}f)(r,\sigma):=\lambda^{\frac{m+1}{2}}f(\lambda r,\sigma)$. This defines a continuous one-parameter group of invertible operators with the strong operator topology.

\begin{definition}Edge-Sobolev spaces.
\begin{enumerate}[i)]
\item We define the edge-Sobolev space on the open edge $\cone\times\RR^{q}$ as the the completion of the Schwartz space $\mathcal{S}(\RR^{q},\mathcal{K}^{s,\gamma}(\cone))$ with respect to the norm \begin{equation}\label{localedgenorm}
\norm{f}_{\mathcal{W}^{s,\gamma}(\cone\times\RR^{q})}=
\left( \displaystyle\int [\eta]^{2s}\norm{ \kappa^{-1}_{[\eta]}(\mathcal{F}_{u\rightarrow \eta} f(\eta))}^{2}_{\scone} d \eta \right)_{.}^{\frac{1}{2}}
\end{equation}

\item Given a compact manifold $M$ with edge singularity $\edge$ the edge-Sobolev space $\mathcal{W}^{s,\gamma}(M)$ is defined as the closure of $\smooth_{0}(M)$ with respect to the norm \begin{equation}\label{globaledgenorm}
\norm{f}_{\mathcal{W}^{s,\gamma}(M)}=\left(\sum_{j} \norm{\omega\phi_{j}f}^{2}_{\mathcal{W}^{s,\gamma}(\cone\times\RR^{q})}+\norm{(1-\omega)f }^{2}_{H^{s}(2\MM)}\right)_{.}^{\frac{1}{2}}
\end{equation}
\end{enumerate}
where $\phi_{j}$ is a partition of unity associated to a finite open cover $\lbrace \Omega_{j}\rbrace$ of $\edge$ and $\omega$ is the cut-off function supported near the edge.
\end{definition}
Similarly we can define $\mathcal{W}^{s,\gamma}(M, E)$ with $E$ an admissible vector bundle over $M$ (see definition~\ref{admissibleE}).
\begin{proposition}\label{cont.edge}
Let $\operatorname{P}\in\operatorname{Diff}^{l}_{\operatorname{edge}}(M)$, then \[  \operatorname{P}:\mathcal{W}^{s,\gamma}(M)\longrightarrow \mathcal{W}^{s-l,\gamma-l}(M) \] is a continuous operator for any $s,\gamma$.
\end{proposition}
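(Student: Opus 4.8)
The plan is to reduce the global statement on $M$ to the model statement on $\cone\times\RR^{q}$, and then to prove the model statement by conjugating with the transformations that identify edge-Sobolev spaces with operator-valued Sobolev spaces of the standard type, where continuity becomes a symbol estimate. First I would unwind the definition~\eqref{globaledgenorm}: fix the finite open cover $\{\Omega_j\}$ of $\edge$, the subordinate partition of unity $\{\phi_j\}$, and the cut-off $\omega$ supported near the edge. Since $\operatorname{P}\in\operatorname{Diff}^{l}_{\operatorname{edge}}(M)$ is a finite-order differential operator, it is local, so $\omega\phi_j\operatorname{P}$ differs from $\operatorname{P}(\omega\phi_j\,\cdot)$ by commutator terms $[\operatorname{P},\omega\phi_j]$, which are again edge-degenerate of order $\le l-1$ (differentiating $\omega\phi_j$ produces factors $r\partial_r\omega$, $rD_u\phi_j$ that are smooth up to $r=0$); away from the edge, $(1-\omega)\operatorname{P}$ is a classical differential operator of order $l$ on the double $2\MM$, where continuity $H^s\to H^{s-l}$ is the standard fact. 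Thus it suffices to bound each term $\|\omega\phi_j\operatorname{P}f\|_{\mathcal{W}^{s-l,\gamma-l}(\cone\times\RR^{q})}$ by $\|f\|_{\mathcal{W}^{s,\gamma}(M)}$, i.e. to prove the model continuity
\[
\operatorname{P}:\mathcal{W}^{s,\gamma}(\cone\times\RR^{q})\longrightarrow\mathcal{W}^{s-l,\gamma-l}(\cone\times\RR^{q})
\]
for $\operatorname{P}=r^{-l}\sum_{i+\abs{\alpha}\le l}a_{i\alpha}(r,u)(-r\partial_r)^i(rD_u)^\alpha$ with $a_{i\alpha}\in\smooth([0,1)\times\Omega,\operatorname{Diff}^{l-i-\abs{\alpha}}(\Xcal))$.

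For the model statement I would proceed in two layers. The inner layer is the continuity of a Fuchs-type family on the cone: for each fixed $u$, the operator $r^{-l}\sum_i a_{i\alpha}(r,u)(-r\partial_r)^i$ maps $\mathcal{K}^{s,\gamma}(\cone)\to\mathcal{K}^{s-l,\gamma-l}(\cone)$ continuously — this is exactly Proposition~\ref{cont.cone} applied on the open cone (the analogous statement for $\mathcal{K}^{s,\gamma}(\cone)$, using the $\mathcal{S}$-valued localization and the $S_{\gamma-m/2}$-transform reducing $\mathcal{H}^{s,\gamma}(\Rpos\times\RR^m)$ to $H^s(\RR^{m+1})$ where a Fuchs operator becomes a constant-coefficient-in-$t$ operator with $\smooth$ coefficients). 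The outer layer handles the edge derivatives $rD_u$ and the $u$-dependence. Here the key mechanism is the $\kappa_\lambda$-homogeneity built into the norm~\eqref{localedgenorm}: after Fourier transform $\mathcal{F}_{u\to\eta}$, the factor $rD_u$ turns into $r\eta_j$, and one checks the operator-valued symbol estimate that $[\eta]^{-(s-l)}\kappa_{[\eta]}^{-1}\,\widehat{\operatorname{P}}(\eta)\,\kappa_{[\eta]}[\eta]^{s}$ is bounded as a family of operators $\mathcal{K}^{s,\gamma}(\cone)\to\mathcal{K}^{s-l,\gamma-l}(\cone)$ uniformly in $\eta$; the point is that $\kappa_{[\eta]}^{-1}(r\partial_r)\kappa_{[\eta]}=r\partial_r$ while $\kappa_{[\eta]}^{-1}(r\eta_j)\kappa_{[\eta]}=[\eta]^{-1}\,(\,[\eta] r\,)\,\eta_j=r\eta_j/[\eta]\cdot[\eta]=$ a bounded multiplier, so each factor $(rD_u)^\alpha$ contributes $(\eta/[\eta])^\alpha$, which is bounded, and the order shift $l$ is absorbed by the weights $[\eta]^{s}$ versus $[\eta]^{s-l}$ together with the $r^{-l}$ in front converting the weight $\gamma$ to $\gamma-l$. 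Combining this uniform bound with Plancherel in $\eta$ in~\eqref{localedgenorm} gives the model continuity.

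I expect the main obstacle to be the careful bookkeeping in the outer layer: showing that the full symbol $\widehat{\operatorname{P}}(\eta)$, after the $\kappa$-conjugation, satisfies the uniform (indeed, in the stronger sense, classical operator-valued symbol) estimates, keeping precise track of how the weight shift $\gamma\mapsto\gamma-l$ interacts with the $r^{-l}$ prefactor and with the powers $r^{\abs{\alpha}}$ hidden in $(rD_u)^\alpha$ — in particular that the ``bad'' factors $r^{-l+\abs{\alpha}+i}$ are exactly compensated because $a_{i\alpha}$ has $\Xcal$-order $l-i-\abs{\alpha}$, so the cone-degenerate order is honestly $l$. The $u$-dependence of $a_{i\alpha}$ adds only harmless multiplication operators, continuous on $\mathcal{K}^{s,\gamma}(\cone)$ uniformly on the compact $\overline\Omega$, plus their $u$-derivatives (which appear through the partition of unity commutators and are of lower order). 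Once the model estimate is in hand, reassembling the finitely many pieces in~\eqref{globaledgenorm} and absorbing the lower-order commutator terms by induction on $l$ finishes the proof. I would also remark that the statement extends verbatim to $\mathcal{W}^{s,\gamma}(M,E)$ for an admissible bundle $E$ by working in local trivializations, since the transition functions are smooth up to $r=0$.
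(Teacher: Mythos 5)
The paper does not actually prove this proposition: it is quoted from the edge calculus (``see \cite{schulze1}, section 3.1, proposition 5''), so your reconstruction is along the lines of the cited source --- localize with $\omega\phi_j$, treat the interior part as a classical operator on $2\MM$, and prove the model continuity on $\cone\times\RR^{q}$ through the $\kappa_\lambda$-twisted operator-valued symbol structure of $\operatorname{P}$, with Proposition~\ref{cont.cone} supplying the fibrewise $\mathcal{K}^{s,\gamma}(\cone)\to\mathcal{K}^{s-l,\gamma-l}(\cone)$ continuity. That architecture is the right one.

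There is, however, one step that does not go through as written: you conclude the model estimate by checking that $[\eta]^{-(s-l)}\kappa_{[\eta]}^{-1}\widehat{\operatorname{P}}(\eta)\kappa_{[\eta]}[\eta]^{s}$ is uniformly bounded in $\eta$ and then invoking Plancherel in the norm \eqref{localedgenorm}. That argument is only valid when the symbol is independent of $u$, i.e.\ when $\operatorname{Op}$ is a Fourier multiplier in the edge variable; the coefficients $a_{i\alpha}(r,u)$ depend on $u$ (and so does the localizing function $\phi_j$), so after $\mathcal{F}_{u\to\eta}$ the operator is not diagonal in $\eta$ and a pointwise bound on the conjugated symbol does not control the $\mathcal{W}^{s-l,\gamma-l}$ norm. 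What is actually needed is either (a) the full twisted symbol estimates on all derivatives $\partial_u^{\beta}\partial_\eta^{\delta}$ of the operator-valued symbol together with the general continuity theorem for $\operatorname{Op}(a)$ on wedge Sobolev spaces --- which is precisely the content of the result the paper cites --- or (b) a reduction to the multiplier case, e.g.\ a tensor-product (or coefficient-freezing) expansion $a_{i\alpha}(r,u)=\sum_k c_k\,\psi_k(u)\,b_k(r)$ with rapidly decreasing $c_k$, using that multiplication by smooth compactly supported functions of $u$ is bounded on $\mathcal{W}^{s,\gamma}(\cone\times\RR^{q})$ (the fact the paper invokes elsewhere from \cite{schulze3}, theorem 1.3.34). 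Relatedly, your remark that $\kappa_{[\eta]}^{-1}(r\eta_j)\kappa_{[\eta]}=r\eta_j/[\eta]$ is ``a bounded multiplier'' is false on the open cone globally, since $r$ is unbounded as $r\to\infty$ on $\cone$; the estimate must exploit the compact $r$-support produced by the cut-off $\omega$ (or the standard $\omega(r[\eta])$ splitting of the edge symbol). With these two points repaired the proof closes; the commutator/induction bookkeeping and the remark on admissible bundles are fine.
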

See \cite{schulze1} section 3.1, proposition 5.

 \section{Deformation of special Lagrangian submanfolds with edges }\label{chapterdeformationtheory}
In this section we consider the problem of deforming special Lagrangian submanfolds with edges. We study and analyze the non-linear differential operator that governs the special Lagrangian deformations and its linearisation.

\subsection{Preliminaries}\label{subsectionPrelimDeformation}
Given a Calabi-Yau manifold $(\mathfrak{X}, \omega, J, g_{_{\mathfrak{X}}},\Omega_{\mathfrak{X}})$ and a special Lagrangian submanifold \[\Phi:M\longrightarrow \mathfrak{X},\] we are interested in deformations of $M$, as a submanifold of $\mathfrak{X}$, such that the deformed submanifold is special Lagrangian. More precisely we are looking for submanifolds $\Psi:M\longrightarrow \mathfrak{X}$ such that $\Phi$ is isotopic to $\Psi$ and $\Psi(M):=M_{\Psi}$ is special Lagrangian. If we are able to find special Lagrangian deformations of $M$ then we want to investigate the structure of the space containing those special Lagrangian deformations i.e. the moduli space of special Lagrangian deformations $\mathfrak{M}(M,\Phi)$. In general the moduli space will be the space of special Lagrangian embeddings $\Psi:M\longrightarrow\mathfrak{X}$ (equivalent up to diffeomorphism)  isotopic to our original $\Phi$. If we require the isotopy through special Lagrangian submanifolds then we are considering only the connected component in the moduli space containing $M$. If we do not require the intermediate submanifolds to be special Lagrangian then we are considering all the connected components of the moduli space. 

If we consider nearby enough submanifolds then it is possible to obtained deformations of $M$ by moving it in a normal direction $\mathcal{V}$ given by a section of the normal bundle $\mathcal{V}\in\smooth(M,\mathcal{N}(M))$. This is possible thanks to the tubular neighborhood theorem (see \cite{lang} Ch 4, theorem 5.1). Observe that the submanifold $M$ is not required to be closed, see \cite{Lee} theorem 10.19. 

\begin{theorem}\label{tubular}
Let $(\mathfrak{X},g_{_{\mathfrak{X}}})$ be a Riemannian manifold and $M$ an embedded submanifold. Then there exists an open neighborhood $\mathfrak{A}$  of the zero section in $\mathcal{N}(M)$ and an open neighborhood $U$ of $M$ in $\mathfrak{X}$ such that the exponential map $\operatorname{exp}_{g_{_{\mathfrak{X}}}}:\mathfrak{A}\subset\mathcal{N}(M)\longrightarrow U\subset\mathfrak{X}$ is a diffeomorphism.
\end{theorem}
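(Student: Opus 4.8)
The plan is to realize the claimed diffeomorphism as a restriction of the normal exponential map and then promote a purely local statement to a neighborhood of the entire zero section by an exhaustion argument. First I would define $E:\mathcal{D}\longrightarrow\mathfrak{X}$ by $E(p,v):=\operatorname{exp}_{g_{_{\mathfrak{X}}},p}(v)$ on the open set $\mathcal{D}\subset\mathcal{N}(M)$ of those normal vectors for which the $g_{_{\mathfrak{X}}}$-geodesic through $p$ with initial velocity $v$ is defined up to time $1$. This $\mathcal{D}$ is an open neighborhood of the zero section $Z\subset\mathcal{N}(M)$ and $E$ is smooth on it, being the composition of the smooth bundle inclusion $\mathcal{N}(M)\hookrightarrow T\mathfrak{X}$ with the (smooth, partially defined) exponential map of $g_{_{\mathfrak{X}}}$; moreover $E|_{Z}$ is exactly the embedding $M\hookrightarrow\mathfrak{X}$.

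Next I would show $E$ is a local diffeomorphism at every point of $Z$. Fix $p\in M$ and use the canonical splitting $T_{(p,0)}\mathcal{N}(M)\cong T_{p}M\oplus\mathcal{N}_{p}(M)$. On the first summand $dE_{(p,0)}$ is the identity of $T_{p}M$, since $E$ restricted to $Z$ is the inclusion; on the second summand $dE_{(p,0)}$ is the identity of $\mathcal{N}_{p}(M)$, since $t\mapsto E(p,tv)=\operatorname{exp}_{g_{_{\mathfrak{X}}},p}(tv)$ is the geodesic with initial velocity $v$ and hence has derivative $v$ at $t=0$. Because $T_{p}M\oplus\mathcal{N}_{p}(M)=T_{p}\mathfrak{X}$ as $g_{_{\mathfrak{X}}}$-orthogonal complements, $dE_{(p,0)}$ is a linear isomorphism, so by the inverse function theorem there is an open $\mathcal{N}(M)$-neighborhood $W_{p}$ of $(p,0)$ on which $E$ is a diffeomorphism onto an open subset of $\mathfrak{X}$.

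It then remains to produce an open neighborhood $\mathfrak{A}$ of $Z$ with $\mathfrak{A}\subset\bigcup_{p}W_{p}$ on which $E$ is \emph{injective}; granting this, $E$ is an open embedding of $\mathfrak{A}$ and we may take $U:=E(\mathfrak{A})$. Since $\mathfrak{X}$ is metrizable, I would look for $\mathfrak{A}$ as a disc bundle $\mathfrak{A}=\{(p,v):\abs{v}_{g_{_{\mathfrak{X}}}}<\varepsilon(p)\}$ for a suitable continuous $\varepsilon:M\to\Rpos$. When $M$ has compact closure this is the classical contradiction argument: if no such $\mathfrak{A}$ existed there would be sequences $(p_{n},v_{n})\ne(q_{n},w_{n})$ with $\abs{v_{n}}_{g_{_{\mathfrak{X}}}},\abs{w_{n}}_{g_{_{\mathfrak{X}}}}\to 0$ and $E(p_{n},v_{n})=E(q_{n},w_{n})$; passing to subsequences with $p_{n},q_{n}\to x\in M$, eventually both $(p_{n},v_{n})$ and $(q_{n},w_{n})$ lie in the single chart $W_{x}$, contradicting injectivity of $E$ on $W_{x}$. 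In general $M$ is only embedded, so I would exhaust $M$ by compacta $K_{1}\subset K_{2}\subset\cdots$ and build $\varepsilon$ inductively, shrinking it over each $K_{j}$ so that the compact-case argument applies there while keeping the already-constructed values on $K_{j-1}$; paracompactness of $M$ ensures the pieces glue to a continuous positive $\varepsilon$.

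The main obstacle is precisely this last step in the non-compact setting: local injectivity together with injectivity on the zero section does not by itself yield injectivity on a neighborhood, so the variable radius $\varepsilon$ must genuinely be constructed via the exhaustion/paracompactness argument above. Everything else — smoothness of $E$, the differential computation, and the application of the inverse function theorem — is routine.
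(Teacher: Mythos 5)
The paper does not actually prove this statement: it is quoted from the literature (Lang, Ch.~4, Thm.~5.1, and Lee, Thm.~10.19, the latter cited precisely because $M$ is not assumed closed), so your proposal has to stand as a self-contained proof. Its first two paragraphs are fine and coincide with the standard argument in those sources: $E$ is smooth on the domain $\mathcal{D}$, restricts to the inclusion on the zero section, and $dE_{(p,0)}$ is the identity on both summands of $T_pM\oplus\mathcal{N}_p(M)$, so $E$ is a local diffeomorphism along the zero section.

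The genuine gap is exactly at the step you flag as the main obstacle, and the sketch you give does not close it. Shrinking $\varepsilon$ over each $K_j$ ``so that the compact-case argument applies there'' only rules out collisions $E(p,v)=E(q,w)$ when $p$ and $q$ both lie over (or near) the same compactum; it says nothing about $p\in K_2$, $q\in K_{47}\setminus K_{46}$, and keeping the already-fixed radii on $K_{j-1}$ makes this harder, since the fixed-radius tube over $K_{j-1}$ must be shown to avoid all later portions of $M$ and their tubes. Closing this hole is precisely where embeddedness must be used: for an embedded $M$, points leaving every compact subset of $M$ cannot accumulate on a compact $K\subset M$, so $\operatorname{dist}(K_j,\,M\setminus K_{j+1})>0$ and one can impose, with foresight, conditions of the type $\varepsilon\le\tfrac{1}{4}\operatorname{dist}(\,\cdot\,,M\setminus K_{j+1})$ on both old and new pieces. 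Your final step as written never invokes embeddedness, yet the theorem is false for merely injectively immersed submanifolds (a dense line in the torus admits no tubular neighborhood), so the argument cannot be complete as stated. Two related slips: the uniform-$\varepsilon$ contradiction argument requires $M$ compact, not merely of compact closure --- for the graph of $\sin(1/x)$, $0<x<1$, intrinsically distant points are extrinsically arbitrarily close, no uniform radius exists, and the limit $x$ of your sequences may lie outside $M$, where there is no chart $W_x$; and even when $x\in M$, concluding that $(q_n,w_n)\in W_x$ eventually from convergence of base points in $\mathfrak{X}$ already uses the subspace topology. The cited proof of Lee discharges all of this at once by defining $\rho(x)$ as the supremum of radii $r$ for which $E$ is injective on the bundle of $r$-discs over the $r$-ball about $x$, checking that $\rho$ is $1$-Lipschitz hence continuous and positive, setting $\varepsilon=\rho/2$, and using $d_{\mathfrak{X}}(p,q)\le |v|+|w|<\rho(p)$ to force any colliding pair into a single region where $E$ is injective; some such mechanism handling distant pairs is what your exhaustion argument is missing.
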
 

Therefore any normal section $\mathcal{V}$ lying in $\mathfrak{A}$ will produce an embedded submanifold given by \[ \operatorname{exp}_{g_{_{\mathfrak{X}}}}(\mathcal{V})\circ\Phi :M\longrightarrow \mathfrak{X}\] such that $(\operatorname{exp}_{g_{_{\mathfrak{X}}}}(\mathcal{V})\circ\Phi)(M):=M_{\mathcal{V}}\subset U$. Once we have the deformed submanifold $M_{\mathcal{V}}$ we want to investigate if it is special Lagrangian. Equations \eqref{master} imply that $M_{\mathcal{V}}$ is special Lagrangian if and only if 

 \begin{equation}
 \begin{dcases*}
 (\operatorname{exp}_{g_{_{\mathfrak{X}}}}(\mathcal{V})\circ\Phi)^{\ast}(\omega)  \equiv 0   \\
 (\operatorname{exp}_{g_{_{\mathfrak{X}}}}(\mathcal{V})\circ\Phi)^{\ast}( \operatorname{Im}\Omega_{\mathfrak{X}}) \equiv 0 
\end{dcases*}.
\end{equation} 
This provides us with two explicit equations that $\mathcal{V}$ must satisfy in order to produce a special Lagrangian deformation of $M$. Taking advantage of the fact that $M$ is a Lagrangian submanifold we can use the bundle isomorphisms \[T^{\ast}M \overset{g_{_{\mathfrak{X}}}^{-1}}{\cong} TM \overset{J}{\cong} \mathcal{N}(M)\] so that we obtain that each differential form $\Xi\in \mathcal{C}^{\infty}(M,T^{\ast}M)$ defines a unique section of the normal bundle  
\[\mathcal{V}_{\Xi}=J(g_{_{\mathfrak{X}}}^{-1}(\Xi))\in \mathcal{C}^{\infty}(M,\mathcal{N}(M)).\]

Therefore, we can express the deformation problem as a non-linear operator $\operatorname{P}$ acting on differential forms on $M$: 

\begin{equation}
 \operatorname{P}: \mathcal{C}^{\infty}_{}(M,T^{\ast}M)\big|_{\mathfrak{A}}\longrightarrow \mathcal{C}^{\infty}(M, \bigwedge\nolimits^{2}T^{\ast}M ) \oplus \mathcal{C}^{\infty}(M, \bigwedge\nolimits^{n}T^{\ast}M ) 
 \end{equation}
given by \begin{equation}\label{deformationoperator}
\operatorname{P}(\Xi)=\left( (\operatorname{exp_{g_{_{\mathfrak{X}}}}}(\mathcal{V}_{\Xi})\circ\Phi)^{\ast}(\omega),   (\operatorname{exp_{g_{_{\mathfrak{X}}}}}(\mathcal{V}_{\Xi})\circ\Phi )^{\ast}(\operatorname{Im}\Omega)\right),
\end{equation}
where $\mathcal{C}^{\infty}(M,T^{\ast}M)\big|_{\mathfrak{A}}$ is the space of differential forms $\Xi$ such that their image under the bundle map $J\circ g^{-1}_{\mathfrak{X}}$ belongs to $\mathfrak{A}$.

The zero set of this operator contains those special Lagrangian deformations of $M$ lying in $U$, that is, 

\[ \operatorname{P}^{-1}(0)=\]
\[\left\lbrace \Xi\in\smooth (M,T^{\ast}M)|_{\mathfrak{A}}:    \operatorname{exp}_{g_{_{\mathfrak{X}}}}(\mathcal{V}_{\Xi})\circ\Phi :M\longrightarrow \mathfrak{X} \quad \text{is a S.L. embedding}\right\rbrace. \]
Hence, the local structure of the moduli space $\mathfrak{M}(M,\Phi)$ near $M$  is given by $\operatorname{P}^{-1}(0)$, the zero set of a non-linear operator. A classical result in non-linear functional analysis that has been used to describe the zero set of non-linear operators in deformation of calibrated submanifolds is the Implicit Function Theorem for Banach spaces  (see, for example,  \cite{lang2} chapter 14, theorem 2.1).

\begin{theorem}\label{IFT}
Let $X$ and $Y$ be Banach spaces, $\mathfrak{A}\subset X$ an open neighborhood of zero and ${\operatorname{P}}:\mathfrak{A}\subset X\longrightarrow Y$ a $\mathcal{C}^{k}$-map such that
\begin{enumerate}[i)]
\item ${\operatorname{P}}(0)=0$ 
\item $\operatorname{DP[0]}: X\longrightarrow Y$ is surjective
\item $\operatorname{DP[0]}$ splits $X$ i.e. $X=\operatorname{Ker}\operatorname{DP[0]}\oplus Z$ for some closed subspace $Z$ 
\end{enumerate}
then there exist open subsets $W_1\subset \operatorname{Ker}\operatorname{DP[0]}$,  $W_2\subset Z$ and a unique $\mathcal{C}^{k}$-map $G:W_1\subset \operatorname{Ker}\operatorname{DP[0]} \longrightarrow W_2\subset Z$ such that 
\begin{enumerate}[i)]
\item $0\in W_1\cap W_2$
\item $W_1\oplus W_2 \subset \mathcal{U}$
\item $\operatorname{P}^{-1}(0)\cap (W_1\oplus W_2)=\lbrace (x,G(x)) : x\in W_1\rbrace .$
\end{enumerate}
\end{theorem}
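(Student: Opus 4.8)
The target, Theorem~\ref{IFT}, is the surjective-derivative form of the Implicit Function Theorem, and the plan is to reduce it to the ordinary Inverse Function Theorem for $\mathcal{C}^{k}$ maps between Banach spaces. First I would set $K:=\operatorname{Ker}\operatorname{DP}[0]$, which is closed in $X$ as the kernel of a bounded operator, and use hypothesis (iii) to fix a closed complement $Z$, so that $X=K\oplus Z$ with bounded projections $\pi_{K}\colon X\to K$ and $\pi_{Z}\colon X\to Z$; both $K$ and $Z$ are then Banach spaces in the induced norm, and I identify a point $x\in X$ with the pair $(\pi_{K}x,\pi_{Z}x)\in K\times Z$.

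The one genuinely linear ingredient is that the restriction $L:=\operatorname{DP}[0]\big|_{Z}\colon Z\longrightarrow Y$ is a topological isomorphism: it is injective because $Z\cap K=\{0\}$, and surjective because $\operatorname{DP}[0]$ is onto $Y$ by hypothesis (ii) while $\operatorname{DP}[0]\big|_{K}\equiv 0$, whence $\operatorname{DP}[0](X)=\operatorname{DP}[0](Z)=Y$; since $Z$ and $Y$ are complete, the Open Mapping Theorem forces $L^{-1}\colon Y\to Z$ to be bounded. This is the one place where completeness of the spaces, not merely normedness, is used.

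Next I would introduce the auxiliary map $\Phi\colon\mathfrak{A}\subset X\longrightarrow X$,
\[
\Phi(x):=\pi_{K}(x)+L^{-1}\!\big(\operatorname{P}(x)\big),
\]
a $\mathcal{C}^{k}$ map as a composition of the $\mathcal{C}^{k}$ map $\operatorname{P}$ with the bounded linear maps $\pi_{K}$ and $L^{-1}$, with $\Phi(0)=0$ since $\operatorname{P}(0)=0$. Its Fr\'echet derivative at the origin is $D\Phi[0]=\pi_{K}+L^{-1}\operatorname{DP}[0]$, which restricts to $\operatorname{id}_{K}$ on $K$ (as $\operatorname{DP}[0]$ kills $K$) and to $L^{-1}L=\operatorname{id}_{Z}$ on $Z$, hence $D\Phi[0]=\operatorname{id}_{X}$. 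By the Inverse Function Theorem there are open neighborhoods $\mathcal{O},\mathcal{O}'$ of $0$ in $X$, with $\mathcal{O}\subset\mathfrak{A}$, such that $\Phi\colon\mathcal{O}\to\mathcal{O}'$ is a $\mathcal{C}^{k}$ diffeomorphism. Since $L$ is invertible, for $x\in\mathcal{O}$ one has $\operatorname{P}(x)=0\iff L^{-1}\operatorname{P}(x)=0\iff\Phi(x)\in K$, so $\operatorname{P}^{-1}(0)\cap\mathcal{O}=\Phi^{-1}(\mathcal{O}'\cap K)$; and for $k\in\mathcal{O}'\cap K$, decomposing $\Phi(\Phi^{-1}(k))=k$ into $K$- and $Z$-components gives $\pi_{K}(\Phi^{-1}(k))=k$ and $\operatorname{P}(\Phi^{-1}(k))=0$. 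Then $G(k):=\pi_{Z}(\Phi^{-1}(k))$ defines a $\mathcal{C}^{k}$ map (since $\Phi^{-1}$ is $\mathcal{C}^{k}$ and $\pi_{Z}$ is bounded linear) with $G(0)=0$, whose graph $\{k+G(k)\}$ is exactly $\operatorname{P}^{-1}(0)$ near the origin.

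To finish, I would extract the neighborhoods $W_{1}\subset K$, $W_{2}\subset Z$ of the statement (where the ``$\mathcal{U}$'' in conclusion (ii) should read $\mathfrak{A}$): choose $W_{1}$ an open ball about $0$ inside $\mathcal{O}'\cap K$ and $W_{2}$ an open ball about $0$ in $Z$, shrinking both so that $W_{1}\oplus W_{2}\subset\mathcal{O}\subset\mathfrak{A}$ (possible as $\mathcal{O}$ is a neighborhood of $0$ in the product $K\times Z$) and, by continuity of $G$ with $G(0)=0$, so that $G(W_{1})\subset W_{2}$; then (i)--(iii) hold, and uniqueness of $G$ comes from uniqueness of the local inverse $\Phi^{-1}$. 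I do not expect a conceptual obstacle: the substantive content is precisely the Open Mapping Theorem together with the Inverse Function Theorem, and the only delicate point is this routine bookkeeping of nested neighborhoods making $W_{1}\oplus W_{2}$ lie simultaneously inside $\mathfrak{A}$ and the domain of the local diffeomorphism while containing the graph of $G$. Equivalently — and this is the alternative I would mention — one may directly invoke the standard Implicit Function Theorem in its invertible-partial-derivative form applied to $\operatorname{P}$ viewed as a function of $(k,z)\in K\times Z$, whose $z$-partial derivative at $(0,0)$ is the isomorphism $L$; that version packages the bookkeeping automatically.
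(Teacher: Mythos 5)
Your argument is correct: the reduction via the auxiliary map $\Phi(x)=\pi_{K}(x)+L^{-1}(\operatorname{P}(x))$, the Open Mapping Theorem for $L=\operatorname{DP}[0]\big|_{Z}$, and the Inverse Function Theorem is exactly the standard proof of this statement, and your bookkeeping of the neighborhoods $W_{1}$, $W_{2}$ (including the observation that the ``$\mathcal{U}$'' in conclusion (ii) should be $\mathfrak{A}$) is sound. Note, however, that the paper offers no proof of Theorem~\ref{IFT} at all: it is quoted as a classical result with a citation to \cite{lang2} (chapter 14, theorem 2.1), so there is nothing in the paper to compare against beyond that reference, whose proof your write-up essentially reproduces.
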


Ideally, in order to apply theorem \ref{IFT}, we expect to define Banach spaces of differential forms $X$ and $Y$ such that the deformation operator $\operatorname{P}$ acts smoothly,  adapt the tubular neighborhood given by theorem~\ref{tubular} such that an open neighborhood of zero $\mathfrak{A}\subset X$ fits into it. Moreover we would like that with this choice of Banach spaces the linearisation of the deformation operator at zero $\operatorname{DP[0]}$ is a Fredholm operator. Thus its kernel is a finite dimensional space and it splits $X$. Moreover if the cokernel of  $\operatorname{DP[0]}$ vanishes, then theorem~\ref{IFT} applies immediately and gives us that the moduli space $\mathfrak{M}(M,\Phi)$, locally around $M$, is a finite dimensional, smooth manifold with dimension equal to $\operatorname{dim} \operatorname{Ker}DP[0]$. Moreover any infinitesimal deformation, i.e. $x \in W_1\subset \operatorname{Ker}\operatorname{DP[0]}$, can be lifted to an authentic deformation given by $(x,G(x))$ with $\operatorname{P}(x+G(x))=0$ i.e. there are no obstructions. 

This ideal situation turned out to be true in the compact case. In \cite{mclean}, McLean studied  the deformation of a compact special Lagrangian submanifold inside a Calabi-Yau manifold $\mathfrak{X}$. McLean used the classical and well-developed elliptic theory on compact manifolds to analyze the deformation operation and its linearisation.

In order to set up a deformation framework on a singular manifold $M$ with edge singularity, we shall use edge-degenerate differential forms. These forms are dual to the edge-degenerate vector fields in (\ref{edgefield}) section~\ref{examples} with respect to the edge metric $g_{_{M}}=r^{2}g_{_{\Xcal}}+dr^2+g_{_{\edge}}.$ More precisely, let's consider the following space of differential forms $\gamma$ on the stretched manifold $\MM$ such that they vanish on all tangent directions to the fibers on $\partial\MM$:
\begin{center}
$\smooth (T^{\ast}_{\wedge} \MM):=\lbrace \gamma\in\smooth(T^{\ast}\MM): \gamma\vert_{T \mathcal{X}_y}=0$  $\forall y\in\edge\rbrace.$
\end{center}
The space $\smooth (T^{\ast}_{\wedge} \MM)$ is a locally free $\smooth(\MM)$-module. By the Swan theorem \cite{swan}, this is the space of sections of a vector bundle $T^{\ast}_{\wedge} \MM$ over $\MM$. The vector bundle $T^{\ast}_{\wedge} \MM$\label{stretchedcotagent} is called the stretched cotangent bundle of the manifold with edges $M$ (see \cite{naza} section 1.3.1). In local coordinates $\local$   we have  \[\gamma=\mathcal{A}dr+\sum\limits_{k=1}^{m}\mathcal{B}_{k}rd\sigma_{k}
 +\mathcal{C}_{l}du_{l}\] with $\mathcal{A},\mathcal{B}_{k},\mathcal{C}_{l}$ in $\smooth(\MM)$. Observe that these are differential forms that degenerate at each direction tangent to $\Xcal$. We will denote by $T^{\ast}_{\wedge}M$ the restriction of the stretched cotangent bundle to $M$.
 
At this point we have to make an assumption on the vector bundles we consider on a manifold with edge (or conical) singularity.

 \begin{definition}\label{admissibleE}
Let $M$ be a manifold with edge singularity. We say that a vector bundle $E$ over $M$ is admissible if on a collar neighborhood $(0,\varepsilon)\times\Xcal\times\edge$ the restriction $E|_{(0,\varepsilon)\times\Xcal\times\edge}$ is the pull-back of a vector bundle $E_{\Xcal}$ over $\Xcal$.  
\end{definition}
Now let's consider the stretched cotangent bundle $T^{\ast}_{\wedge}M$ as an admissible vector bundle. In order to do that let's define \[E_{\Xcal}:=\wedge^{0}\Xcal \oplus T^{\ast}\Xcal \oplus\underbrace{\wedge^{0}\Xcal\oplus\wedge^{0}\Xcal\oplus ... \oplus\wedge^{0}\Xcal}_\text{q-times}\] where $q=\operatorname{dim}\edge$.

We shall assume that on the collar neighborhood \[[0,\varepsilon)\times\Xcal\times\edge\] the stretched cotangent bundle  $T^{\ast}_{\wedge}M$ is isomorphic to the pull-back vector bundle $\pi^{\ast}_{\Rpos\times\edge}E_{\Xcal}$ where $\pi^{}_{\Rpos\times\edge}$ is the projection \begin{equation}\label{projectionedge}
\pi^{}_{\Rpos\times\edge}:\cone\times\edge\longrightarrow \Xcal
\end{equation} and $\cone=\Rpos\times\Xcal$.
We shall also define the bundle  $E_{\cone}:=\pi^{\ast}_{\Rpos}E_{\Xcal}$ as the pull-back of the bundle $E_{\Xcal}$ by the projection \begin{equation}\label{projectioncone}
\pi^{}_{\Rpos}:\cone\longrightarrow \Xcal.
\end{equation} 
In particular if the edge $\edge$ is a parallelizable manifold then the stretched cotangent bundle $T^{\ast}_{\wedge}M$ is an admissible bundle.

Throughout this section we consider $\CC^{n}$ with its standard Calabi-Yau structure \[(\CC^{n},g_{_{\CC^{n}}},\omega_{_{\CC^{n}}}, \operatorname{}\Omega)\] where $g_{_{\mathbb{C}^n}}= |{dz_1}|^2+\cdots+|{dz_n}|^2$, $\omega_{\mathbb{C}^n}=\frac{\sqrt{-1}}{2}\sum\limits_{i=1}^{n}dz_i \wedge d\bar z _{i}$ and $\Omega=dz_{1}\wedge\cdots \wedge dz_{n}$. We consider $\CC^{n}$ with a fictitious edge structure as follows: \[\CC^{n}=\RR^{n}\oplus\RR^{n}\cong \frac{\overline{\RR}^{+}\times S^{n-1}}{\lbrace 0 \rbrace \times S^{n-1}}\times \RR^{n}.\]
Associated with this edge structure we have the stretched space \[\CC^{n}_{\operatorname{Str}}:=\left( \overline{\RR}^{+} \times S^{n-1}\right)\times \RR^{n}\] such that \[ \CC^{n}_{\operatorname{Str}}\setminus \partial \CC^{n}_{\operatorname{Str}}=\left( \RR^{n}\setminus \lbrace 0 \rbrace \right) \times\RR^{n}\cong\CC^{n}\setminus \left( \lbrace 0 \rbrace \times \RR^{n}\right). \]
  
\subsection{Submanifolds with edge singularities in $\CC^{n}$ }\label{subsectionSubmanifoldswithEdges}
Let $M$ be a compact manifold with edge singularity $\edge$ (see section~\ref{examples}). Then the boundary of the stretched manifold $\MM$ has a $\Xcal$-fibration structure over $\edge$, $\pi:\partial\MM\rightarrow \edge$, where $\Xcal$ and $\edge$ are compact smooth manifolds (without boundary) with  $q=\operatorname{dim}\edge$ and $m=\operatorname{dim}\Xcal$. We assume that $\Xcal$ is diffeomorphic to an embedded submanifold of the sphere $S^{n-1}$ with diffeomorphism given by $\theta:\Xcal\longrightarrow S^{n-1}\subset\RR^{n}$. Consider the cone $\cone$ with cross section $\Xcal$ i.e. $\cone=\Xcal\times\Rpos$ and let's define a diffeomorphism of $\cone$ with a cone $\mathcal{C}\subset\RR^{n}$ by $\psi :\cone\longrightarrow\mathcal{C}\subset \RR^{n}$ where $\psi (r,p):=(r\theta_{1}(p),\dots,r \theta_{n}(p))\in\RR^{n}.$ We shall also assume that $\edge$ is embedded in $\RR^{n}$ by $\tau:\edge\longrightarrow\RR^{n}$. 

\begin{definition}\label{edgeembedding}Let $M$ be a compact manifold with edge singularity $\edge$. 
\begin{enumerate}[i)]
\item A smooth embedding $\Phi:M\longrightarrow \CC^{n}$ is called an edge embedding if on a collar neighborhood $(0,\varepsilon)\times\partial\MM $, which has the structure of a $\cone$-bundle over $\edge$, the embedding $\Phi$ splits as $\Phi(r,p,v)=(\psi(r,p),\tau(v))$ with respect to the identification $\CC^{n}\cong\xy$.    

\item If $\Phi:M\longrightarrow \CC^{n}$ is an edge embedding such that $\Phi(M)$ is a special Lagrangian submanifold of $\CC^{n}$, we say that $(M,\Phi)$ is a special Lagrangian submanifold with edge singularity. 
\end{enumerate}
\end{definition}

\subsection{The Deformation Operator}\label{subsectiondeformationoperator}

Let $\Phi:M\longrightarrow\CC^{n}$ be a compact special Lagrangian submanifold with edge singularity, see definition~\ref{edgeembedding} in section~\ref{subsectionSubmanifoldswithEdges}. In order to study the moduli space of special Lagrangian deformations of $M$ as a manifold with edge singularities, we have to study small deformations of $M$ inside $\CC^{n}$. These deformations are produced by  sections of the normal bundle $\varphi\in\mathcal{N}(M)$ via the exponential map $\operatorname{exp}_{g_{_{\CC^{n}}}}$. The equations \begin{equation}\label{mastereq}
 \begin{dcases*}
\omega_{_{\CC^n}} \big |_{M}   \equiv 0   \\
 \operatorname{Im}\Omega \big |_{M} \equiv 0 
\end{dcases*}
\end{equation} define a first order non-linear partial differential operator $\operatorname{P}$ such that $\varphi$ must satisfy the equation $\operatorname{P}(\varphi)=0$ in order to produce a special Lagrangian deformation (see \eqref{deformationoperator}). Because we are interested in small deformations we can use the Implicit Function Theorem for Banach spaces (if applicable) to describe small solutions of the equation $\operatorname{P}(\varphi)=0$ in terms of solutions of the linearised equation at zero i.e. $\operatorname{DP}[0](\varphi)=0$. In particular, on a collar neighborhood  $(0,\varepsilon)\times\partial\MM$, equipped with the edge metric $g_{_{M}}=r^{2}g_{_{\Xcal}}+dr^{2}+g_{_{\edge}}$, we want to solve the equation $\operatorname{DP[0]}(\varphi)=0$. This is a problem of analysis of PDEs on singular spaces and this observation suggests the approach to follow. First, we expect the operators $\operatorname{P}$ and $\operatorname{DP[0]}$ to be edge-degenerate on $(0,\varepsilon)\times\partial\MM$. This is achieved by using sections of the stretched cotangent bundle $T^{\ast}_{\wedge}M$ to produce small deformations. This is natural as differential forms in  $T^{\ast}_{\wedge}M$ have a degenerate behavior compatible with the edge singularity of $M$ in the sense that their degenerations are induced by the pairing of the edge metric $g_{_{M}}$ with edge-degenerate vector fields. Then, in order to invoke the Implicit Function Theorem for Banach spaces we need that $\operatorname{DP[0]}$ is an elliptic operator in the edge calculus (hence a Fredholm operator by theorem~\ref{ellipticityforedge} in section~\ref{ellipticsection}). This is achieved by completing the edge symbol $\sigma_{\wedge}^{1}(\operatorname{DP[0]})$ with boundary and coboundary conditions as the Atiyah-Bott obstruction vanishes, see section~\ref{linear operator} below.

\subsection{The non-linear deformation operator}\label{subsectionNonLinearOp}

Given a compact special Lagrangian submanifold with edge singularity $\Phi : M\longrightarrow\CC^{n}$, let $\mathcal{N}(M)\subset T(\CC^{n})$ be the normal bundle. By using the identification $\CC^{n}\cong\xy$ we have $T^{\ast}(\CC^{n})\cong T^{\ast}(\xy)$. Now, the complex structure $J$ induces an isomorphism of vector bundles  $J:T(M)\longrightarrow\mathcal{N}(M)$, hence we have a bundle isomorphism $J\circ\Phi_{\ast}\circ g^{\ast}_{_{M}}:T^{\ast}(M)\longrightarrow\mathcal{N}(M)\subset T^{}(\xy)|_{M}$ where $ g^{\ast}_{_{M}}$ is the dual metric on the cotangent bundle $T^{\ast}M$ inducing a bundle map $g_{_{M}}^{\ast}:T^{\ast}M\longrightarrow TM$.

\begin{lemma}\label{localemma}
Let $\Xi\in\smooth(M,T^{\ast}M)$ with local expression in an edge neighborhood $(0,\varepsilon)\times\mathcal{U}\times\Omega\subset\cone\times\edge$, in local coordinates $\coordsimple$,  be given by  \[\Xi\coordsimple=\CA\coordsimple dr+\sum\limits_{k=1}^{m}\CB\coordsimple d\sigma_k+\sum\limits_{l=1}^{q}\cC\coordsimple du_{l},\]
then, its image under the map $J\circ\Phi_{\ast}\circ g^{\ast}_{M}$ is given by the following  expression in the restriction of the tangent bundle $T(\xy)\big|_{(0,\varepsilon)\times\mathcal{U}\times\Omega}$
 \[\mathcal{V}_{\Xi}:=J(\Phi_{\ast}(g^{\ast}_{M}(\Xi)))=\sum\limits_{i=1}^{n}-\tilde{\mathcal{C}}_{i}\coordsimple \partial_{x_i}+(\CA\coordsimple \theta^{\circ}_{i}+\frac{1}{r_{}}\tilde{\mathcal{B}}_{i}\coordsimple)\partial_{y_i}\]
 where $
\tilde {\mathcal{B}}_i$ and $\tilde{\mathcal{C}}_l$ are the components of the corresponding pushforwards \[\theta_{\ast}\left( g^{\ast}_{_{\Xcal}}\left(\sum\limits_{k=1}^{m}\CB\coordsimple d\sigma_k \right)\right),\]

\[\tau_{\ast}\left(g^{\ast}_{_{\edge}}\left(\sum\limits_{l=1}^{q}\cC\coordsimple du_l\right)\right)\]
and $(\theta^{\circ}_1,\dots\,\theta^{\circ}_{n})=\theta(\sigma_1,\dots\sigma_{m})$.
\end{lemma}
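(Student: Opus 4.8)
The plan is to carry out a direct, coordinate-by-coordinate computation of the composite bundle map $J\circ\Phi_{\ast}\circ g_M^{\ast}$ applied to a general one-form $\Xi$, using the explicit block form of $J$ in \eqref{complexstructure} together with the fact that $\Phi$ is an edge embedding that splits as $\Phi(r,p,v)=(\psi(r,p),\tau(v))$ on the collar neighborhood. First I would write out $g_M^{\ast}(\Xi)$: since the edge metric $g_M = r^2 g_{\Xcal} + dr^2 + g_{\edge}$ block-diagonalizes with respect to the splitting of $T^{\ast}M$ into the $dr$-direction, the fiber directions $d\sigma_k$, and the edge directions $du_l$, the dual metric $g_M^{\ast}$ acts as the identity on $\partial_r$, scales the fiber block by $r^{-2}$, and acts by $g_{\edge}^{\ast}$ on the edge block. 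Thus $g_M^{\ast}(\Xi) = \CA\,\partial_r + \frac{1}{r^2}\, g_{\Xcal}^{\ast}\!\left(\sum_k \CB\, d\sigma_k\right) + g_{\edge}^{\ast}\!\left(\sum_l \cC\, du_l\right)$, where the middle term is a tangent vector to $\Xcal$ (with an $r^{-2}$ factor) and the last is tangent to $\edge$.

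Next I would push this vector forward by $\Phi_{\ast}$. Because $\Phi$ splits as $(\psi,\tau)$, the differential $\Phi_{\ast}$ sends $\partial_r \mapsto \partial_r \psi = (\theta_1^{\circ},\dots,\theta_n^{\circ})$ in the $\RR_x^n$ factor (since $\psi(r,p) = r\,\theta(p)$), sends a vector tangent to $\Xcal$ to $r$ times its image under $\theta_{\ast}$ in $\RR_x^n$ (again from $\psi(r,p)=r\theta(p)$, so $\partial_{\sigma_k}\psi = r\,\partial_{\sigma_k}\theta$), and sends a vector tangent to $\edge$ to its image under $\tau_{\ast}$ in the $\RR_x^n$ factor. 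Combining with the $r^{-2}$ from $g_M^{\ast}$, the fiber contribution picks up a net factor of $r^{-1}$, which is exactly the $\frac{1}{r}\tilde{\mathcal{B}}_i$ term in the statement; the $dr$ contribution gives $\CA\,\theta_i^{\circ}$ and the edge contribution gives $\tilde{\mathcal{C}}_l$ (with $\tilde{\mathcal{B}}_i$, $\tilde{\mathcal{C}}_l$ defined precisely as the components of $\theta_{\ast}(g_{\Xcal}^{\ast}(\cdots))$ and $\tau_{\ast}(g_{\edge}^{\ast}(\cdots))$ as in the statement). At this stage $\Phi_{\ast}(g_M^{\ast}(\Xi))$ is a vector in $T(\RR_x^n)|_M$ with components $\big(\CA\,\theta_i^{\circ} + \frac{1}{r}\tilde{\mathcal{B}}_i\big)$ in the $\partial_{x_i}$ directions and $\tilde{\mathcal{C}}_i$ in the... wait — one must be careful: the edge embedding $\tau$ maps into $\RR^n$, and under the identification \eqref{identification}, the coordinates of $\tau$ occupy a specific set of real slots; the statement's convention places the cone part in $\RR_x^n$ and the edge part in $\RR_y^n$. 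I would align with that convention, so that $\Phi_{\ast}(g_M^{\ast}(\Xi))$ has $\partial_{x_i}$-components $\CA\,\theta_i^{\circ} + \frac{1}{r}\tilde{\mathcal{B}}_i$ and $\partial_{y_i}$-components $\tilde{\mathcal{C}}_i$.

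Finally, applying $J$ in the form \eqref{complexstructure}, which sends $\partial_{x_i}\mapsto \partial_{y_i}$ and $\partial_{y_i}\mapsto -\partial_{x_i}$, immediately produces $\mathcal{V}_{\Xi} = \sum_i -\tilde{\mathcal{C}}_i\,\partial_{x_i} + \big(\CA\,\theta_i^{\circ} + \frac{1}{r}\tilde{\mathcal{B}}_i\big)\partial_{y_i}$, which is exactly the claimed expression. The main obstacle — or rather the main point requiring care — is bookkeeping: correctly tracking the powers of $r$ through the three maps (the $r^{-2}$ from the dual edge metric on the fiber block, and the $r^{+1}$ from $\psi_{\ast}$ on fiber directions, netting $r^{-1}$), and fixing once and for all which real coordinates of $\CC^n \cong \RR_x^n \oplus \RR_y^n$ receive the cone data $\psi$ versus the edge data $\tau$, since the sign pattern in the final formula is entirely dictated by how $J$ interacts with that choice. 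Everything else is a routine unwinding of definitions, and the lemma follows.
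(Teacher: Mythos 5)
Your proposal is correct and follows essentially the same route as the paper: compute $g^{\ast}_{M}(\Xi)$ from the block-diagonal dual edge metric, push forward through the split embedding $\Phi=(\psi,\tau)$ keeping track of the net $r^{-1}$ on the fiber block, and apply the block form of $J$ from \eqref{complexstructure}. The only difference is cosmetic: the paper evaluates $\Phi_{\ast}$ by differentiating a curve with initial velocity $g^{\ast}_{M}(\Xi(p))$, while you apply the differential directly to the coordinate vector fields, which yields the same bookkeeping and the same final formula.
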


\begin{proof}
It follows from the expression of the dual edge metric $g^{\ast}_{_{M}}=\frac{1}{r^2}g^{\ast}_{_{\Xcal}}+\partial_r\otimes\partial_r+g^{\ast}_{_{\edge}}$ that \[g^{\ast}_{_{M}}(\Xi)=\CA\partial_r+\frac{1}{r^2}\sum\limits_{k=1}^{m}\hat\CB \partial_k+\sum\limits_{l=1}^{q}\hat\cC \partial_{u_l}\]
where $\sum\limits_{k=1}^{m}\hat\CB \partial_k=g^{\ast}_{_{\Xcal}}(\sum\limits_{k=1}^{m}\CB d\sigma_k)$ and $\sum\limits_{l=1}^{q}\hat\cC \partial_{u_l}=g^{\ast}_{_{\edge}}(\sum\limits_{l=1}^{q}\cC d u_l).$
Let $p\in (0,\varepsilon)\times\mathcal{U}\times\Omega$ and take a curve $\mathcal{J}:I\subset \RR\longrightarrow M$ given by $\mathcal{J}(t)=(r(t),\sigma_{}(t),u_{}(t))$, such that $\mathcal{J}(0)=p$ and $\mathcal{J}'(0)=g^{\ast}_{_{M}}(\Xi (p))$. 
Then \[\Phi\circ\mathcal{J}:I\subset\RR\longrightarrow\CC^{n}\cong\xy\] defines a curve given by 
\begin{align*}
(\Phi\circ\mathcal{J})(t)&=\Phi(r(t),\sigma_{}(t),u_{}(t))
\\&=(r(t)\theta_{1}(\sigma_{}(t)),\dots,r(t)\theta_{n}(\sigma_{}(t)),\tau_{1}(u_{}(t)),\dots ,\tau_{n}(u_{}(t))),
\end{align*}
therefore
\begin{align*}
& (\Phi\circ\mathcal{J})'(0) 
\\ & =(r'(0)\theta_{1}(\sigma_{k}(0))+r(0)\theta'_{1}(\sigma_{k}(0)),\dots ,r'(0)\theta_{n}(\sigma_{k}(0))+r(0)\theta'_{n}(\sigma_{k}(0))
\\& ,\tau'_{1}(u_{l}(0)),\dots,\tau'_{n}(u_{l}(0)))
\\& =(\CA\theta^{\circ}_{1}+\frac{1}{r}\tilde{\mathcal{B}}_{1},\dots ,\CA\theta^{\circ}_{n}+\frac{1}{r}\tilde{\mathcal{B}}_{n},\tilde{\mathcal{C}}_{1},\dots,\tilde{\mathcal{C}}_{n})\in\xy\cong T_{p}(\xy).
\end{align*}

By applying the standard complex structure $J$ on $\CC^{n}$ under the identification \eqref{complexstructure} we obtain the result. 
\end{proof}

\begin{proposition}\label{localpropedge}
If $\Xi\in\mathcal{W}^{s,\gamma}(M,T^{\ast}_{\wedge}M)$ then $\mathcal{V}_{\Xi}$ belongs to $\mathcal{W}^{s,\gamma}(M,\mathcal{N}(M))$ where $\mathcal{N}(M)$ is endowed with the restriction of the standard flat metric $g_{_{\CC^{n}}}=g_{_{\RR^{2n}}}$.
\end{proposition}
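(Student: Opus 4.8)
The plan is to reduce the statement to a local, coordinate-wise estimate using the definition of the edge-Sobolev norm \eqref{globaledgenorm} together with Lemma~\ref{localemma}. First I would note that, since $M$ is compact, it suffices to work with a fixed finite cover $\{\Omega_j\}$ of $\edge$ with subordinate partition of unity $\{\phi_j\}$ and the cut-off $\omega$ supported near the edge: away from the edge, i.e.\ on the support of $1-\omega$, the claim is just the classical statement that a smooth bundle map sends $H^s$ sections to $H^s$ sections, which is immediate. So the content is entirely in the collar neighborhood $(0,\varepsilon)\times\partial\MM\cong\cone\times\edge$, where I must show that $\Xi\mapsto\mathcal{V}_\Xi$ sends $\mathcal{W}^{s,\gamma}(\cone\times\RR^q, E_\cone)$ into $\mathcal{W}^{s,\gamma}(\cone\times\RR^q, \mathcal{N}(M))$ boundedly.

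Next I would read off from Lemma~\ref{localemma} the precise local form of the map: in coordinates $\coordsimple$, writing $\Xi = \CA\,dr + \sum_k \CB\,d\sigma_k + \sum_l \cC\,du_l$, the section $\mathcal{V}_\Xi$ has components that are $\RR$-linear combinations, with coefficients built from the embedding data $\theta,\tau$ (which are smooth up to $r=0$ on the stretched manifold) and the fibre metrics $g^\ast_\Xcal, g^\ast_\edge$, of the quantities $\CA$, $\tilde{\mathcal{B}}_i$, and $\tfrac1r\tilde{\mathcal{B}}_i$, and $\tilde{\mathcal{C}}_l$. The key structural point to verify is that this is exactly an edge-degenerate bundle morphism of order zero: each output component is obtained from the input components of $\Xi\in\smooth(M,T^\ast_\wedge M)$ by multiplication by functions in $\smooth(\MM)$ and by the operations $\CB\,d\sigma_k \mapsto \tfrac1r\tilde{\mathcal B}_i$ — and since $\Xi$ is a section of the \emph{stretched} cotangent bundle, the coefficient of $d\sigma_k$ in the stretched frame is the coefficient of $r\,d\sigma_k$ divided by $r$, so that the factor $\tfrac1r$ is precisely absorbed into the stretched-frame normalization. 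In other words, in the stretched frames for $T^\ast_\wedge M$ and for $\mathcal{N}(M)\cong T^\ast_\wedge M$ (via $J\circ\Phi_\ast\circ g^\ast_M$, which is the bundle isomorphism identified in \S\ref{subsectionNonLinearOp}), the map $\mathcal{V}_{(\cdot)}$ is given by a matrix with entries in $\smooth(\MM)$.

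Once that is established, the conclusion follows from the mapping properties already recorded: multiplication by a function in $\smooth(\MM)\big|_M = \mathcal{F}$ is a (zeroth order) edge-degenerate operator, hence by Proposition~\ref{cont.edge} (with $l=0$) acts continuously on $\mathcal{W}^{s,\gamma}$, and a finite matrix of such operators acts continuously on the corresponding space of vector-valued sections $\mathcal{W}^{s,\gamma}(M,E)$; this is exactly the point at which I invoke the fact — stated in the excerpt — that $\mathcal{W}^{s,\gamma}(M,E)$ is defined for admissible bundles $E$, with $T^\ast_\wedge M$ and $\mathcal{N}(M)$ both admissible (the latter because, on the collar, $\mathcal{N}(M)\cong T^\ast_\wedge M$ via a smooth bundle isomorphism, and $T^\ast_\wedge M$ is admissible by the hypothesis made after Definition~\ref{admissibleE} that $E_\Xcal$ is pulled back over $\Rpos\times\edge$). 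Putting the local pieces together through \eqref{globaledgenorm} and summing over $j$ gives the norm bound $\|\mathcal{V}_\Xi\|_{\mathcal{W}^{s,\gamma}(M,\mathcal{N}(M))} \le C\,\|\Xi\|_{\mathcal{W}^{s,\gamma}(M,T^\ast_\wedge M)}$.

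The main obstacle I anticipate is not the functional-analytic bookkeeping but the verification in the second paragraph that the factor $\tfrac1r$ appearing in front of $\tilde{\mathcal{B}}_i$ in Lemma~\ref{localemma} is genuinely harmless — i.e.\ that $\mathcal{V}_\Xi$ really is a section of an admissible bundle with no leftover negative power of $r$ when everything is expressed in the correct stretched frames. This requires being careful that $\Xi$ is taken in $T^\ast_\wedge M$ (not in $T^\ast M$, where the $d\sigma_k$ coefficients would not carry the compensating $r$), and that the identification $\mathcal{N}(M)\cong T^\ast_\wedge M$ used to put a $\mathcal{W}^{s,\gamma}$-structure on normal sections is the \emph{same} stretched identification under which the degenerate weights match; once that matching is pinned down, the $\tfrac1r\,d\sigma_k$ in the frame and the $\tfrac1r\tilde{\mathcal B}_i$ in the formula cancel and the morphism is manifestly of order zero.
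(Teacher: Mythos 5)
Your argument is correct and follows essentially the same route as the paper: localize near the edge with the cut-off and partitions of unity, observe that in stretched-frame components the map $\Xi\mapsto\mathcal{V}_{\Xi}$ is just multiplication by coefficients built from $\theta$, $\tau$, $g^{\ast}_{\Xcal}$, $g^{\ast}_{\edge}$ that are smooth up to $r=0$ (the apparent $\tfrac{1}{r}$ being absorbed by the $r\,d\sigma_k$ normalization of $T^{\ast}_{\wedge}M$), and treat the region away from the edge classically. The only difference is in how the multiplier bound is closed: you cite Proposition~\ref{cont.edge} with $l=0$ for zeroth-order edge-degenerate multiplications, whereas the paper verifies the bound by hand, conjugating with $S_{\gamma-\frac{m}{2}}$ to $H^{s}(\RR^{m+1})$ and using multiplier theorems plus Schulze's theorem on multiplication by Schwartz functions in the edge variable; both justifications are legitimate.
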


\begin{proof}
We have to prove that $\norm{\mathcal{V}_{\Xi}}_{\mathcal{W}^{s,\gamma}(M,\mathcal{N}(M))}<\infty$. By \eqref{globaledgenorm} we have to estimate near the edge with the edge-Sobolev norm \[\norm{\omega\mathcal{V}_{\Xi}}_{\mathcal{W}^{s,\gamma}(M,\mathcal{N}(M))}\] and away from the edge with the classical Sobolev norm \[\norm{(1-\omega)\mathcal{V}_{\Xi}}_{H^{s}(2\MM,\mathcal{N}(M))}.\]

Let $\lbrace \Omega_{j},\beta_{j}\rbrace$ and $\lbrace \mathcal{U}_{\lambda},\chi_{_{\lambda}} \rbrace$ be  finite coverings of $\edge$ and $\Xcal$ respectively, given by coordinate neighborhoods such that \[\beta_{j}:\Omega_{j}\rightarrow\RR^{q}\] and  \[I\times\chi_{_{\lambda}}:\Rpos\times\mathcal{U}_{\lambda}\longrightarrow\Rpos\times\RR^{m}\] are diffeomorphism and let $\lbrace \phi_{j}\rbrace$ and $\lbrace \varphi_{\lambda} \rbrace$ be corresponding  subordinate partitions of unity. Let $\omega(r)$ be the cut-off function defining the edge-Sobolev space (see definition~\ref{conespace}).
In an edge neighborhood $\Rpos\times\mathcal{U}_{\lambda}\times\Omega_j$ we have \[\Xi\coordsimple=\CA\coordsimple dr+\sum\limits_{k=1}^{m}\CB\coordsimple rd\sigma_k+\sum\limits_{l=1}^{q}\cC\coordsimple du_l,\]
and the fact that $\Xi\in\mathcal{W}^{s,\gamma}(M,T^{\ast}_{\wedge}M)$ implies that $\omega\phi_{j}\varphi_{\lambda}\CA$, $\omega\phi_{j}\varphi_{\lambda}\CB$ and $ \omega\phi_{j}\varphi_{\lambda}\cC$ belong to $\mathcal{W}^{s,\gamma}(M)$. This follows from \eqref{nearconespace} and \eqref{globaledgenorm}. 

By lemma~\ref{localemma} we have \begin{equation}\label{localexpression}
\mathcal{V}_{\Xi}\coordsimple=\sum\limits_{i=1}^{n}-\tilde{\mathcal{C}}_{i}\coordsimple \partial_{x_i}+(\CA\coordsimple \theta^{\circ}_{i}+\tilde{\mathcal{B}}_{i}\coordsimple)\partial_{y_i}.
\end{equation} 
By~\ref{localedgenorm}, near the edge we want to estimate the terms  \[\norm{\omega\phi_{j}\varphi_{\lambda}\tilde{\mathcal{B}_{k}}}^{2}_{\mathcal{W}^{s,\gamma}(\cone\times\RR^{q})}=\int\limits_{\RR^{q}_{\eta}}[\eta]^{2s}\norm{\kappa^{-1}_{[\eta]}\mathcal{F}_{u\rightarrow\eta}(\omega\phi_{j}\varphi_{\lambda}\tilde{\mathcal{B}_{k}})}^{2}_{\mathcal{H}^{s,\gamma}(\cone)}d\eta \]
and
\[\norm{\omega\phi_{j}\varphi_{\lambda}\tilde{\mathcal{C}_{l}}}^{2}_{\mathcal{W}^{s,\gamma}(\cone\times\RR^{q})}=\int\limits_{\RR^{q}_{\eta}}[\eta]^{2s}\norm{\kappa^{-1}_{[\eta]}\mathcal{F}_{u\rightarrow\eta}(\omega\phi_{j}\varphi_{\lambda}\tilde{\mathcal{C}_{l}})}^{2}_{\mathcal{H}^{s,\gamma}(\cone)}d\eta .\]
First, we observe that \begin{align}\label{comparable}
& \norm{\kappa^{-1}_{[\eta]}\mathcal{F}_{u\rightarrow\eta}(\omega\phi_{j}\varphi_{\lambda}\tilde{\mathcal{B}_{k}})}^{2}_{\mathcal{H}^{s,\gamma}(\cone)}
\\& \approx\sum_{\lambda}\norm{ ((I\times\chi_{\lambda})^{\ast})^{-1}\kappa^{-1}_{[\eta]})\varphi_{\lambda}\omega\mathcal{F}_{u\rightarrow\eta}(\phi_{j}\tilde{\mathcal{B}}_{k})(\eta)}_{\mathcal{H}^{s,\gamma}(\Rpos\times\RR^{m})}^{2}
\end{align}
by \eqref{nearconespace}. By lemma~\ref{localemma} $\tilde{\mathcal{B}}_k$ is obtained by  applying $\theta_{\ast}g^{\ast}_{_{\Xcal}}$. This pull-back and push-forward acts locally on the components $\mathcal{B}_{k}$ by multiplying by $g^{ij}_{_{\Xcal}}$ and partial derivatives of the component functions of $\theta:\Xcal\rightarrow S^{n-1}\subset\RR^{n}$. We claim that both of these operations preserve the membership in $\mathcal{W}^{s,\gamma}(M)$. Indeed, in local coordinates  $\mathcal{U}_{\lambda}$ we have  $g^{\ast}_{_{\Xcal}}=\sum\limits_{i,j=1}^{m}g^{ij}_{_{\Xcal}}\partial_{i}\otimes\partial_{j}$ and \[g^{\ast}_{_{\Xcal}}(\sum\limits_{k=1}^{m}\mathcal{B}_{k}\coordsimple d\sigma_{k})=\sum\limits_{j=1}^{m}(\sum\limits_{k=1}^{m}g^{kj}_{_{\Xcal}}\mathcal{B}_{k}\coordsimple)\partial_{j}.\] 
The norm \[\norm{ ((I\times\chi_{\lambda})^{\ast})^{-1}\kappa^{-1}_{[\eta]}\varphi_{\lambda}\omega\mathcal{F}_{u\rightarrow\eta}(\phi_{j}g^{kj}_{_{\Xcal}}\mathcal{B}_{k})(\eta)}_{\mathcal{H}^{s,\gamma}(\Rpos\times\RR^{m})}\]
is equivalent to  \[\norm{  \varphi_{\lambda} g^{kj}_{_{\Xcal}} S_{{\gamma}-\frac{m}{2}} \left(((I\times\chi_{\lambda})^{\ast})^{-1} \kappa^{-1}_{[\eta]}\omega\mathcal{F}_{u\rightarrow\eta}(\phi_{j}\mathcal{B}_{k})(\eta)\right)}_{H^{s}(\RR^{m+1})}\]
by (\ref{S_transf}). The functions $ g^{kj}_{_{\Xcal}}$ are bounded on the support of $\varphi_{\lambda}$, hence they are bounded functions on $\RR^{m+1}$ under the coordinate map $I\times\chi_{\lambda}$. By the general theory of multipliers on Sobolev spaces $H^{s}(\RR^{m+1})$, (\cite{agranovich} theorem 1.9.1 and 1.9.2), multiplication by any bounded function defines a bounded operator, therefore there exists a constant $C_{kj}$ depending only on $g^{kj}_{\Xcal}$ such that  
\begin{align*}
&\norm{ ((I\times\chi_{\lambda})^{\ast})^{-1}\kappa^{-1}_{[\eta]}\varphi_{\lambda}\omega\mathcal{F}_{u\rightarrow\eta}(\phi_{j}g^{kj}_{\Xcal}\mathcal{B}_{k})(\eta)}_{\mathcal{H}^{s,\gamma}(\Rpos\times\RR^{m})}  \\ & \leq C_{kj}
\norm{ ((I\times\chi_{\lambda})^{\ast})^{-1}\kappa^{-1}_{[\eta]}\varphi_{\lambda}\omega\mathcal{F}_{u\rightarrow\eta}(\phi_{j}\mathcal{B}_{k})(\eta)}_{\mathcal{H}^{s,\gamma}(\Rpos\times\RR^{m})}.
\end{align*}
By hypothesis \[\int\limits_{\RR^{q}_{\eta}}[\eta]^{2s}\norm{ ((I\times\chi_{\lambda})^{\ast})^{-1}\kappa^{-1}_{[\eta]}\varphi_{\lambda}\omega\mathcal{F}_{u\rightarrow\eta}(\phi_{j}\mathcal{B}_{k})(\eta)}_{\mathcal{H}^{s,\gamma}(\Rpos\times\RR^{m})}^{2}d\eta<\infty\]
then, by \eqref{comparable}, $g^{\ast}$ preserves $\mathcal{W}^{s,\gamma}(M)$ near the edge.

The push-forward $\theta _{\ast}$ is induced by a diffeomorphism $\theta:\Xcal\longrightarrow S^{n-1}\subset\RR^{n}$. Locally this push-forward acts on the components of vector fields by multiplications by the derivatives of the component functions $\theta_{k}$, hence the same argument as above applies and we conclude that $\omega\phi_{j}\tilde{\mathcal{B}_{k}}\in\mathcal{W}^{s,\gamma}(M).$

Now, the components $\tilde{\mathcal{C}}_{l}$ are obtained  by applying $\tau_{\ast}g^{\ast}_{_{\edge}}$ to the components  $\mathcal{C}_{l}$. Given $g^{\ast}_{_{\edge}}=\sum g^{ij}_{_{\edge}}(u)\partial_{u_i}\otimes\partial_{u_j}$, it acts on the components $\mathcal{C}_{l}$ by multiplication by $g^{ij}_{_{\edge}}(u)$.

In the same way the push-forward $\tau_{\ast}$ acts through multiplication by derivatives of its components $\frac{\partial\tau^{i}}{\partial u_{l}}$. When composed with the coordinate function $\beta_{k}$, the maps \[(\phi_{k}g^{ij}_{_{\edge}})\circ\beta^{-1}_{k}:\RR^{q}\longrightarrow\RR\] and  \[(\phi_{k}\frac{\partial\tau^{i}}{\partial u_{l}})\circ\beta^{-1}_{k}:\RR^{q}\longrightarrow\RR\]
belong to $\mathcal{S}(\RR^{q})$ as they have compact support. By \cite{schulze3} theorem 1.3.34, multiplication by  an element in $\mathcal{S}(\RR^{q})$ defines a continuous operator on $\mathcal{W}^{s,\gamma}(\cone\times\RR^{q})$. Hence, by the same argument as in the first part of the proof, $\tau_{\ast}g^{\ast}_{_{\edge}}$ preserves  $\mathcal{W}^{s,\gamma}(M)$ near the edge and $\omega\phi_{k}\varphi_{\lambda}\tilde{\mathcal{C}}_{l}\in\mathcal{W}^{s,\gamma}(M)$.

Away from the edge on the compact manifold $M\setminus \left( (0,\varepsilon)\times\partial\MM\right)$ take a finite covering of coordinate neighborhoods $\lbrace W_{i}  \rbrace$ with a subordinate partition of unity $\lbrace  \mu_{i} \rbrace$. Then 

\begin{equation}
 \norm{(1-\omega)\mathcal{V}_{\Xi}}^{2}_{H^{s}(2\MM,\mathcal{N}(M))} =\sum_{i}\norm{\mu_{i}\mathcal{V}_{\Xi}}^{2}_{H^{s}(\RR^{n},\RR^{q})}.
\end{equation} As on each of those patches of local coordinates the support of $\mu_i\mathcal{V}_{\Xi}$ is compact, clearly $\norm{(1-\omega)\mathcal{V}_{\Xi}}^{2}_{H^{s}(2\MM,\mathcal{N}(M))}<\infty.$
\end{proof}

\begin{proposition} \label{localprop}
Let $\Xi\in\smooth_{0}(T^{\ast}_{\wedge}M)$  and  $\mathcal{V}_{\Xi}=J(\Phi_{\ast}( g_{M}^{\ast}(\Xi)))\in\smooth (\mathcal{N}(M)).$ Then the pull-back of the standard K{\"a}hler form $\omega_{_{\CC^{n}}}$ by the map $\operatorname{exp}(\mathcal{V}_{\Xi})\circ\Phi$ is given in terms of edge-degenerate differential operators of order 1 by the following expression in a neighborhood $(0,\varepsilon)\times\mathcal{U}_\lambda\times\Omega_j$ near the edge:
\begin{align*}
& \left(\operatorname{exp}(\mathcal{V}_{\Xi})\circ\Phi\right)^{\ast}(\omega_{\CC^{n}}) =
 \\& \sum\limits_{k=1}^{m}\Bigg( \sum\limits_{i=1}^{n} \operatorname{P}^{}_{k,i}(\mathcal{A})+\operatorname{Q}^{}_{k,i}(\tilde{\mathcal{B}}_{i})+(\operatorname{S}^{}_{k,i}(\mathcal{A})+\operatorname{T}^{}_{k,i}(\tilde{\mathcal{B}}_{i}))\operatorname{R}^{}_{k,i}(\tilde{\mathcal{C}_{i}})
  \\& +(\operatorname{L}^{}_{k,i}(\mathcal{A})+\operatorname{M}^{}_{k,i}(\tilde{\mathcal{B}}_{i}))\operatorname{O}^{}_{k,i}(\tilde{\mathcal{C}}_{i}) \Bigg)rdr\wedge d\sigma_{k}
  \\& + \sum\limits_{l=1}^{q}\Bigg( \sum\limits_{i=1}^{n} \operatorname{P}^{}_{l,i}(\mathcal{A})+\operatorname{Q}^{}_{l,i}(\tilde{\mathcal{B}}_{i})+(\operatorname{S}^{}_{l,i}(\mathcal{A})+\operatorname{T}^{}_{l,i}(\tilde{\mathcal{B}}_{i}))\operatorname{R}^{}_{l,i}(\tilde{\mathcal{C}_{i}})
  \\&+(\operatorname{L}^{}_{l,i}(\mathcal{A})+\operatorname{M}^{}_{l,i}(\tilde{\mathcal{B}}_{i}))\operatorname{O}^{}_{l,i}(\tilde{\mathcal{C}}_{i})\Bigg)dr\wedge d u_{l}
  \\& +\sum\limits_{k=1}^{m}\sum\limits_{l=1}^{q}\Bigg( \sum\limits_{i=1}^{n} \operatorname{P}^{}_{k,l,i}(\mathcal{A})+\operatorname{Q}^{}_{k,l,i}(\tilde{\mathcal{B}}_{i})+\operatorname{U}_{k,l,i}(\tilde{\mathcal{C}}_{i})+(\operatorname{S}^{}_{k,l,i}(\mathcal{A})
  \\& +\operatorname{T}^{}_{k,l,i}(\tilde{\mathcal{B}}_{i}))\operatorname{R}^{}_{k,l,i}(\tilde{\mathcal{C}_{i}})+(\operatorname{L}^{}_{k,l,i}(\mathcal{A})+\operatorname{M}^{}_{k,l,i}(\tilde{\mathcal{B}}_{i}))\operatorname{O}^{}_{k,l,i}(\tilde{\mathcal{C}}_{i})\Bigg) rd\sigma_{k}\wedge d u_{l}
  \\&+\sum\limits_{k=1}^{m}\sum\limits_{j=1}^{m}\Bigg( \sum\limits_{i=1}^{n}\operatorname{P}^{}_{k,j,i}(\mathcal{A})+\operatorname{Q}^{}_{k,j,i}(\tilde{\mathcal{B}}_{i})+(\operatorname{S}^{}_{k,j,i}(\mathcal{A})+\operatorname{T}^{}_{k,j,i}(\tilde{\mathcal{B}}_{i}))\operatorname{R}^{}_{k,j,i}(\tilde{\mathcal{C}_{i}})
\\& + (\operatorname{L}^{}_{k,j,i}(\mathcal{A})+\operatorname{M}^{}_{k,j,i}(\tilde{\mathcal{B}}_{i}))\operatorname{O}^{}_{k,j,i}(\tilde{\mathcal{C}}_{i})\Bigg)r^{2}d\sigma_{k}\wedge d \sigma_{j}
\end{align*}

 \begin{align*} 
\\& + \sum\limits_{s=1}^{q}\sum\limits_{l=1}^{q}\Bigg( \sum\limits_{i=1}^{n}\operatorname{P}^{}_{s,j,i}(\tilde{\mathcal{C}}_i)+(\operatorname{S}^{}_{s,j,i}(\mathcal{A})+\operatorname{T}^{}_{s,j,i}(\tilde{\mathcal{B}}_{i}))\operatorname{R}^{}_{s,j,i}(\tilde{\mathcal{C}_{i}})
\\& +(\operatorname{L}^{}_{s,j,i}(\mathcal{A})+\operatorname{M}^{}_{s,j,i}(\tilde{\mathcal{B}}_{i}))\operatorname{O}^{}_{s,j,i}(\tilde{\mathcal{C}}_{i})\Bigg) du_{s}\wedge d u_{l}
\end{align*}
where $\mathcal{A},\tilde{\mathcal{B}}_{i},\tilde{\mathcal{C}}_{i}$, $i=1,\dots,n$ are the components of $\mathcal{V}_{\Xi}$ in a neighborhood of the edge  as in \eqref{localexpression} and  the operators $\operatorname{L}_{\bullet},\operatorname{M}_{\bullet},\operatorname{O}_{\bullet},\operatorname{P}_{\bullet},\operatorname{Q}_{\bullet},\operatorname{R}_{\bullet},\operatorname{S}_{\bullet},\operatorname{T}_{\bullet},\operatorname{U}_{\bullet}$  belong to $\operatorname{Diff}_{\operatorname{edge}}^{1}(M).$
\end{proposition}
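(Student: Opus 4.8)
The plan is to exploit that $(\CC^{n},g_{_{\CC^{n}}})$ is flat, so that its exponential map is affine: $\operatorname{exp}_{g_{_{\CC^{n}}}}(v)(p)=p+v$ once $\CC^{n}\cong\xy$ is used to identify each tangent space with $\xy$. Hence $\Psi:=\operatorname{exp}(\mathcal{V}_{\Xi})\circ\Phi=\Phi+\mathcal{V}_{\Xi}$ pointwise, and, combining the coordinate form of an edge embedding near the edge, $\Phi(r,\sigma,u)=(r\theta_{1}(\sigma),\dots,r\theta_{n}(\sigma),\tau_{1}(u),\dots,\tau_{n}(u))$, with Lemma~\ref{localemma} and expression \eqref{localexpression}, the map $\Psi$ has in the admissible coordinates $\coordsimple$ the components
\[X_{i}=r\theta_{i}(\sigma)-\tilde{\mathcal{C}}_{i},\qquad Y_{i}=\tau_{i}(u)+\mathcal{A}\,\theta^{\circ}_{i}+\tilde{\mathcal{B}}_{i},\qquad i=1,\dots,n.\]
Because $\Xi$ is a section of the stretched cotangent bundle and $\tilde{\mathcal{B}}_{i},\tilde{\mathcal{C}}_{i}$ are obtained from $\mathcal{B}_{k},\mathcal{C}_{l}$ by the fibrewise‑linear maps $\theta_{\ast}g^{\ast}_{_{\Xcal}}$ and $\tau_{\ast}g^{\ast}_{_{\edge}}$, whose coefficients are smooth on $\Xcal$ resp.\ $\edge$, the functions $\mathcal{A},\tilde{\mathcal{B}}_{i},\tilde{\mathcal{C}}_{i}$ are smooth in $(r,\sigma,u)$ up to $r=0$; in particular so are all of their first partial derivatives in $r,\sigma,u$.

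Next I would pull back the Kähler form: under the real identification $\omega_{_{\CC^{n}}}=\sum_{i=1}^{n}dx_{i}\wedge dy_{i}$ (up to an irrelevant constant), so $\Psi^{\ast}\omega_{_{\CC^{n}}}=\sum_{i=1}^{n}dX_{i}\wedge dY_{i}$. Computing $dX_{i}$ and $dY_{i}$ in the coordinates $(r,\sigma_{k},u_{l})$, and using that $\theta$ (hence $\theta^{\circ}$) depends only on $\sigma$ while $\tau$ depends only on $u$, one finds e.g.\ $dX_{i}=(\theta_{i}-\partial_{r}\tilde{\mathcal{C}}_{i})\,dr+\sum_{k}(r\partial_{\sigma_{k}}\theta_{i}-\partial_{\sigma_{k}}\tilde{\mathcal{C}}_{i})\,d\sigma_{k}-\sum_{l}\partial_{u_{l}}\tilde{\mathcal{C}}_{i}\,du_{l}$, and similarly for $dY_{i}$. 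Expanding $\sum_{i}dX_{i}\wedge dY_{i}$ and collecting with respect to the five coframe products, each coefficient splits into a part independent of $\mathcal{A},\tilde{\mathcal{B}}_{i},\tilde{\mathcal{C}}_{i}$, a part linear in them, and a part bilinear in them; in the latter two, each component occurs either undifferentiated or with a single $\partial_{r},\partial_{\sigma_{k}}$ or $\partial_{u_{l}}$, multiplied by functions smooth up to $r=0$ (such as $\theta_{i},\theta^{\circ}_{i},\partial_{\sigma_{k}}\theta_{i},\partial_{u_{l}}\tau_{i}$). The component‑independent part is precisely $\Phi^{\ast}\omega_{_{\CC^{n}}}$, which vanishes because $\Phi$ is Lagrangian, cf.\ \eqref{master}; and the identities $\sum_{i}\theta_{i}^{2}\equiv1$, hence $\sum_{i}\theta_{i}\partial_{\sigma_{k}}\theta_{i}\equiv0$, collapse several further terms.

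It remains to recognise the surviving monomials as edge‑degenerate. The natural $2$‑forms built from the stretched coframe $\{dr,\,rd\sigma_{k},\,du_{l}\}$ are $r\,dr\wedge d\sigma_{k}$, $dr\wedge du_{l}$, $r\,d\sigma_{k}\wedge du_{l}$, $r^{2}\,d\sigma_{k}\wedge d\sigma_{j}$ and $du_{s}\wedge du_{l}$, which is the source of the explicit powers of $r$ in the statement. After dividing by the relevant power of $r$, each factor of a monomial is one of: multiplication by a function smooth up to $r=0$; multiplication by $r^{-1}$ times such a function; $\partial_{r}$; $\partial_{u_{l}}$; or $r^{-1}\partial_{\sigma_{k}}$. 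Each of these lies in $\operatorname{Diff}^{1}_{\operatorname{edge}}(M)$: for the multiplications this is \eqref{edgeop} with $l\le1$, while $\partial_{r}=-r^{-1}(-r\partial_{r})$, $\partial_{u_{l}}=\sqrt{-1}\,r^{-1}(rD_{u_{l}})$, and $r^{-1}\partial_{\sigma_{k}}=r^{-1}a_{0,0}$ with $a_{0,0}=\partial_{\sigma_{k}}\in\operatorname{Diff}^{1}(\Xcal)$. Since only products of at most two such factors occur, regrouping the monomials yields, for each of the five coframe $2$‑forms, a sum whose terms are each either a single operator in $\operatorname{Diff}^{1}_{\operatorname{edge}}(M)$ applied to one of $\mathcal{A},\tilde{\mathcal{B}}_{i},\tilde{\mathcal{C}}_{i}$ or a product of two such — that is, exactly the form displayed, with $\operatorname{L}_{\bullet},\operatorname{M}_{\bullet},\operatorname{O}_{\bullet},\operatorname{P}_{\bullet},\operatorname{Q}_{\bullet},\operatorname{R}_{\bullet},\operatorname{S}_{\bullet},\operatorname{T}_{\bullet},\operatorname{U}_{\bullet}\in\operatorname{Diff}^{1}_{\operatorname{edge}}(M)$.

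The step that demands care — and is the heart of the matter — is the bookkeeping of these powers of $r$: one must check that the power supplied by the stretched coframe is always \emph{exactly} what is needed to turn each bare $\partial_{\sigma_{k}}$ into the admissible $r^{-1}\partial_{\sigma_{k}}$, and no worse. This holds because in $dX_{i}$ (and in $dY_{i}$) a $\partial_{r}$‑derivative of a component appears only in the $dr$‑slot, a $\partial_{u_{l}}$‑derivative only in the $du_{l}$‑slot, and a $\partial_{\sigma_{k}}$‑derivative only in the $d\sigma_{k}$‑slot; together with $dr\wedge dr=0$ this excludes the only dangerous combination, a term of the shape $r^{-1}(\partial_{r}\bullet)(\partial_{r}\bullet)$, which is \emph{not} edge‑degenerate of order $\le1$. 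Every $d\sigma\wedge d\sigma$ product carries $r^{2}$, supplying precisely the two factors $r^{-1}$ absorbed by its two $\sigma$‑derivatives, while $dr\wedge du$ and $du\wedge du$ products carry no extra $r$ and require none; the remaining mixed products each carry a single $r$, absorbed by their unique $\sigma$‑derivative.
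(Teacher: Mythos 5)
Your proposal is correct and follows essentially the same route as the paper: write $\operatorname{exp}(\mathcal{V}_{\Xi})\circ\Phi$ explicitly as $(r\theta_i-\tilde{\mathcal{C}}_i,\ \tau_i+\mathcal{A}\theta_i+\tilde{\mathcal{B}}_i)$ using flatness of $\CC^{n}$, pull back $\sum_i dx_i\wedge dy_i$ directly, discard the terms free of $\mathcal{A},\tilde{\mathcal{B}}_i,\tilde{\mathcal{C}}_i$ via $\Phi^{\ast}\omega_{\CC^{n}}=0$, and recognize each remaining factor ($\partial_r$, $\partial_{u_l}$, $r^{-1}\partial_{\sigma_k}$, $r^{-1}\times$ smooth multiplication) as an element of $\operatorname{Diff}^{1}_{\operatorname{edge}}(M)$. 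Your explicit bookkeeping of the $r$-powers against the stretched coframe (and the remark that no $r^{-1}(\partial_r\cdot)(\partial_r\cdot)$ terms can arise) is a welcome articulation of what the paper's displayed computation verifies term by term.
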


\begin{proof}
Let $\Xi=\mathcal{A}dr+\sum\limits_{k=1}^{m}\mathcal{B}_{k}rd\sigma_{k}+\sum\limits_{l=1}^{q}\mathcal{C}_{l}du_{l}$ in local coordinates near the edge, then
\[\mathcal{V}_{\Xi}=J\Phi_{\ast}( g_{M}^{\ast}(\Xi))=\sum\limits_{i=1}^{n}-\tilde{\mathcal{C}}_{i}\coordsimple\partial_{x_i}+(\CA\coordsimple\theta^{\circ}_{i}+
 \tilde{\mathcal{B}}_{i}\coordsimple)\partial_{y_i}\]
 and
\[
(\operatorname{exp}( \mathcal{V}_{\Xi})\circ\Phi)\coordsimple =(r\theta_1-\tilde{\mathcal{C}_1} ,\dots ,r\theta_{n}-\tilde{\mathcal{C}}_{n},  \tau_1+\CA \theta_{1}+\tilde{\mathcal{B}}_{1},\dots,\tau_{n}+\CA \theta_{n}+\tilde{\mathcal{B}}_{n}).
\]
The standard K{\"a}hler form in $\CC^{n}$ is given by 
\[\omega_{_{\CC^{n}}}=\sum\limits_{i=1}^{n}dx_{i}\wedge dy_{i},\]
then a direct computation shows that 
 \begin{align*}
&(\operatorname{exp}(\mathcal{V}_{\Xi})\circ\Phi )^{\ast}(\omega_{\CC^{n}})=
 \sum\limits_{i=1}^{n} (\operatorname{exp}(\mathcal{V}_{\Xi})\circ\Phi )^{\ast}( dx_{i}\wedge dy_{i})
   \\& =\sum\limits_{i=1}^{n}d\left( r\theta_{i}(\sigma)-\tilde{\mathcal{C}}_{i}\coordsimple\right)\wedge d\left( \tau_i(u_l)+\CA \theta_{i}(\sigma)+\tilde{\mathcal{B}}_{i}\right) 
    \\& =\sum\limits_{k=1}^{m}\bigg( \sum\limits_{i=1}^{n}\frac{1}{r}\left( \theta_{i}\partial_{k}(\theta_i)+\theta_i\partial_{k}+\theta_{i}\partial_{k}(\theta_i)(-r\partial_{r})   \right)(\CA)
    \\&+\frac{1}{r}\left( \theta_{i}\partial_{k}+\partial_{k}(\theta_i)(-r\partial_{r})(\tilde{\mathcal{B}}_{i}) \right)  
    - \frac{1}{r}\left(\left( \partial_{k}(\theta_i)+\theta_{i}\partial_{k}\right)(\CA)+\partial_{k}(\tilde{\mathcal{B}}_{i})\right)\partial_{r}(\tilde{\mathcal{C}}_{i})
   \\&+\frac{1}{r}\partial_{k}(\tilde{\mathcal{C}})\big(\theta_{i}\partial_{r}(\CA) 
    + \partial_{r}(\tilde{\mathcal{B}}_{i})  \big)\bigg)rdr\wedge d\sigma_k 
\end{align*}

 \begin{align*}    
   \\&  +\sum\limits_{l=1}^{q}\bigg(\sum\limits_{i=1}^{n} \theta^{2}_{i}\partial_{u_{l}}(\CA)+\theta_{i}\partial_{u_{l}}(\tilde{\mathcal{B}}_{i})-\partial_{u_{l}}(\tau_{i})\partial_{r}(\tilde{\mathcal{C}}_{i})  
   \\&-\partial_{r}(\tilde{\mathcal{C}}_{i})\left( \theta_{i}\partial_{u_{l}}(\CA) +\partial_{u_{l}}(\tilde{\mathcal{B}}_i)  \right)+\partial_{u_{l}}(\tilde{\mathcal{C}}_i)\left( \theta_{i}\partial_{r}(\CA) +\partial_{r}\tilde{\mathcal{B}}_i \right) \bigg)dr\wedge du_{l} 
    \\&+\sum\limits_{k=1}^{m}\sum\limits_{l=1}^{q}\bigg(\sum\limits_{i=1}^{n} \frac{1}{r}\left(\partial_{k}(\theta_{i})\theta_i (r\partial_{u_{l}})\right)(\CA)+\frac{1}{r}\left(\partial_{k}(\theta_{i}) (r\partial_{u_{l}})\right)(\tilde{\mathcal{B}}_{i})
    \\& -\frac{1}{r}\partial_{u_{l}}(\tau_{i})\partial_{k}(\tilde{\mathcal{C}}_i)-\frac{1}{r}\partial_{k}(\tilde{\mathcal{C}}_i)\left(\theta_{i}\partial_{u_{l}}(\CA)+\partial_{u_{l}}(\tilde{\mathcal{B}}_i)\right)
    \\&+\partial_{u_{l}}(\tilde{\mathcal{C}}_i)\left( (\partial_{k}(\theta_i) +\theta_{i}\partial_{k})(\CA)+\partial_{k}(\tilde{\mathcal{B}}_i) \right)\bigg)rd\sigma_{k}\wedge du_{l}     
   \\&+\sum\limits_{j=1}^{m}\sum\limits_{k=1}^{m}\Bigg(\sum\limits_{i=1}^{n}\frac{1}{r}\left(\theta_i \partial_{j}(\theta_i)\partial_{k}-\theta_i\partial_{k}(\theta_i)\partial_{j} \right)(\CA) 
      \\& +\frac{1}{r}\left( \partial_{j}(\theta_i)\partial_{k}-\partial_{k}(\theta_i)\partial_{j} \right)(\tilde{\mathcal{B}}_i)
      \\&-\frac{1}{r}\partial_{j}(\tilde{\mathcal{C}}_i)\left(\frac{1}{r}\left(\partial_{k}(\theta_i)+\theta_i\partial_k \right)(\CA)+\frac{1}{r}\partial_k(\tilde{\mathcal{B}}_i)  \right)
    \\&+ \frac{1}{r}\partial_{k}(\tilde{\mathcal{C}}_i)\left(\frac{1}{r}\left(\partial_{j}(\theta_i)+\theta_i\partial_j \right)(\CA)+\frac{1}{r}\partial_j(\tilde{\mathcal{B}}_i)  \right)\Bigg) r^{2}d\sigma_{j}\wedge d \sigma_{k}
 \\&+\sum\limits_{\lambda =1}^{q}\sum\limits_{l =1}^{q}\Bigg(\sum\limits_{i=1}^{n} \left( -\partial_{u_{l}}(\tau_i)\partial_{u_{\lambda}}+ \partial_{u_{\lambda}}(\tau_i)\partial_{u_{l}} \right)(\tilde{\mathcal{C}}_i)
\\& -\partial_{u_{\lambda}}(\tilde{\mathcal{C}}_i)\left( \theta_i\partial_{u_{l}}(\CA)+\partial_{u_{l}}(\tilde{\mathcal{B}}_i)  \right)  + \partial_{u_{l}}(\tilde{\mathcal{C}}_i)\left( \theta_i\partial_{u_{\lambda}}(\CA)+\partial_{u_{\lambda}}(\tilde{\mathcal{B}}_i)  \right)\Bigg)du_{\lambda}\wedge du_{l}.
\end{align*}

  Each of these terms are edge-degenerate differential operators acting on the components of $\Xi$ and products of these as it is claimed in the proposition. Note that we have used the fact that \[\left(\operatorname{exp}(\mathcal{V}_{0})\circ\Phi\right)^{\ast}\left( \omega_{\CC^{n}} \right)=0\]
to remove  products in each term that do not contain any of the component functions $\CA$, $\tilde{\mathcal{B}}_{i}$ and $\tilde{\mathcal{C}}_{i}.$ 
\end{proof}

\begin{corollary}\label{coro1}
The map \[\operatorname{P}_{\omega_{\CC^{n}}}: \smooth_{0}(T^{\ast}_{\wedge}M)
\longrightarrow \smooth_{0}(M,\bigwedge\nolimits^{2}T^{\ast}_{\wedge}M) \] defined by $\operatorname{P}_{\omega_{_{\CC^n}}}(\Xi):=\left(\operatorname{exp}(\mathcal{V}_{\Xi})\circ\Phi\right)^{\ast}(\omega_{_{\CC^n}})$,
extends to a continuous non-linear operator \[\operatorname{P}_{\omega_{\CC^{n}}}: \mathcal{W}^{s,\gamma}(M,T^{\ast}_{\wedge}M)
\longrightarrow  \mathcal{W}^{s-1,\gamma-1}(M,\bigwedge\nolimits^{2}T^{\ast}_{\wedge}M)\]
for $s>\frac{\operatorname{dim}\Xcal+\operatorname{dim}\edge+3}{2}$ and $\gamma>\frac{\operatorname{dim}\Xcal+1}{2}.$  
\end{corollary}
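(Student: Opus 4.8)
The plan is to read the required mapping property off, term by term, from the explicit local formula of Proposition~\ref{localprop}, feeding it the continuity statements already available for edge-degenerate operators and for the component map $\Xi\mapsto\mathcal{V}_{\Xi}$. Since $\CC^{n}$ carries the flat metric, $\operatorname{exp}_{g_{\CC^{n}}}(\mathcal{V}_{\Xi})\circ\Phi=\Phi+\mathcal{V}_{\Xi}$ in the linear coordinates of \eqref{identification}, and by Lemma~\ref{localemma} the components of $\mathcal{V}_{\Xi}$ depend linearly on those of $\Xi$; hence $\operatorname{P}_{\omega_{\CC^{n}}}$ is the sum of a bounded linear operator $L$ and a bounded quadratic operator $Q$, with vanishing constant term because $\Phi^{\ast}\omega_{\CC^{n}}=0$ ($M$ being Lagrangian). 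This is exactly the structure displayed by Proposition~\ref{localprop}: the terms $\operatorname{P}_{\bullet}(\mathcal{A})$, $\operatorname{Q}_{\bullet}(\tilde{\mathcal{B}}_{i})$, $\operatorname{U}_{\bullet}(\tilde{\mathcal{C}}_{i})$ make up $L$, the products such as $\bigl(\operatorname{S}_{\bullet}(\mathcal{A})+\operatorname{T}_{\bullet}(\tilde{\mathcal{B}}_{i})\bigr)\operatorname{R}_{\bullet}(\tilde{\mathcal{C}}_{i})$ make up $Q$, and all of $\operatorname{L}_{\bullet},\dots,\operatorname{U}_{\bullet}$ lie in $\operatorname{Diff}^{1}_{\operatorname{edge}}(M)$.

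It then suffices to estimate $\operatorname{P}_{\omega_{\CC^{n}}}(\Xi)$ for $\Xi\in\smooth_{0}(T^{\ast}_{\wedge}M)$ and to split by the non-direct sum of Definition~\ref{conespace}, i.e.\ by the cut-off $\omega$ and the partitions of unity $\{\phi_{j}\},\{\varphi_{\lambda}\}$. Since $\wedge^{2}T^{\ast}_{\wedge}M$ is again admissible, its $\mathcal{W}^{s-1,\gamma-1}$-norm is controlled by the $\mathcal{W}^{s-1,\gamma-1}$-norms of the coefficients in the edge frame $\{dr\wedge r\,d\sigma_{k},\ dr\wedge du_{l},\ r\,d\sigma_{k}\wedge du_{l},\ r^{2}d\sigma_{k}\wedge d\sigma_{j},\ du_{s}\wedge du_{l}\}$, and these are precisely the coefficients appearing in Proposition~\ref{localprop}. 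On $\operatorname{supp}(1-\omega)$ the space $\mathcal{W}^{s,\gamma}$ coincides locally with $H^{s}$ on $2\MM$, the $\operatorname{L}_{\bullet},\dots,\operatorname{U}_{\bullet}$ have bounded smooth coefficients there, and the classical Sobolev product estimate gives a bounded bilinear map $H^{s-1}\times H^{s-1}\to H^{s-1}$ because $s-1>\frac12\dim M=\frac{\dim\Xcal+\dim\edge+1}{2}$. Near the edge, by Proposition~\ref{localpropedge} (and its proof) the components $\mathcal{A},\tilde{\mathcal{B}}_{i},\tilde{\mathcal{C}}_{i}$ of $\mathcal{V}_{\Xi}$ lie in $\mathcal{W}^{s,\gamma}(M)$ with norm controlled by $\|\Xi\|_{\mathcal{W}^{s,\gamma}(M,T^{\ast}_{\wedge}M)}$, and since each $\operatorname{L}_{\bullet},\dots,\operatorname{U}_{\bullet}\in\operatorname{Diff}^{1}_{\operatorname{edge}}(M)$, Proposition~\ref{cont.edge} shows every summand of $L$ lands in $\mathcal{W}^{s-1,\gamma-1}(M)$ with the desired bound.

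For the quadratic part near the edge each summand is a product of two elements of $\mathcal{W}^{s-1,\gamma-1}(M)$, so what is needed is a multiplication theorem for the edge-Sobolev scale: pointwise multiplication must be a continuous bilinear map $\mathcal{W}^{s-1,\gamma-1}(M)\times\mathcal{W}^{s-1,\gamma-1}(M)\to\mathcal{W}^{s-1,\gamma-1}(M)$. This holds under exactly the stated hypotheses on $s$ and $\gamma$ (cf.\ \cite{schulze3}), and can also be checked by transporting $\mathcal{K}^{s,\gamma}(\cone)$ to a standard Sobolev space via the isomorphism $S_{\gamma-\frac{m}{2}}$ of \eqref{S_transf} together with the group $\kappa_{\lambda}$, the transversal regularity being controlled by $s$ and the behaviour at $r=0$ by the weight. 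Summing over the finitely many charts and using that multiplication by $\omega,\phi_{j},\varphi_{\lambda}$ is continuous on all spaces involved yields $\|\operatorname{P}_{\omega_{\CC^{n}}}(\Xi)\|_{\mathcal{W}^{s-1,\gamma-1}}\le C_{1}\|\Xi\|_{\mathcal{W}^{s,\gamma}}+C_{2}\|\Xi\|_{\mathcal{W}^{s,\gamma}}^{2}$ on $\smooth_{0}(T^{\ast}_{\wedge}M)$; applying the same reasoning to $\Xi_{1}-\Xi_{2}$ and using bilinearity of $Q$ gives the local Lipschitz estimate $\|\operatorname{P}_{\omega_{\CC^{n}}}(\Xi_{1})-\operatorname{P}_{\omega_{\CC^{n}}}(\Xi_{2})\|_{\mathcal{W}^{s-1,\gamma-1}}\le\bigl(C_{1}+C_{2}(\|\Xi_{1}\|+\|\Xi_{2}\|)\bigr)\|\Xi_{1}-\Xi_{2}\|_{\mathcal{W}^{s,\gamma}}$. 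Since $\smooth_{0}(T^{\ast}_{\wedge}M)$ is dense in $\mathcal{W}^{s,\gamma}(M,T^{\ast}_{\wedge}M)$, $\operatorname{P}_{\omega_{\CC^{n}}}$ extends uniquely to a continuous (indeed quadratic) map between the indicated spaces.

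The main obstacle is the quadratic part: one must make the edge-Sobolev multiplication theorem work in exactly the stated range, which comes down to the bookkeeping of the conormal weight under the edge-degenerate differentiations in Proposition~\ref{localprop} — a genuinely $r^{-1}$-weighted first-order edge operator costs one unit of weight, while the $-r\partial_{r}$- and $D_{u}$-type ones do not — and to checking that the product of the resulting factors still lies in $\mathcal{W}^{s-1,\gamma-1}$; this is where both hypotheses $s>\frac{\dim\Xcal+\dim\edge+3}{2}$ and $\gamma>\frac{\dim\Xcal+1}{2}$ are used. The passage from the local formula to a globally well-defined section of $\wedge^{2}T^{\ast}_{\wedge}M$ and the continuity of the cut-off multiplications are routine given the structure of the norms \eqref{nearconespace} and \eqref{globaledgenorm}.
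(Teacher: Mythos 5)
Your proposal is correct and follows essentially the same route as the paper: read the structure of $\operatorname{P}_{\omega_{\CC^{n}}}$ off the local formula of Proposition~\ref{localprop} (first-order edge-degenerate terms plus products of two such terms), use Proposition~\ref{cont.edge} for the linear part, the Banach-algebra/multiplication property \eqref{Banachalg} of the edge-Sobolev spaces under the stated bounds on $s$ and $\gamma$ for the quadratic part, the classical Sobolev algebra property away from the edge, and then extend from $\smooth_{0}(T^{\ast}_{\wedge}M)$ by density via a Lipschitz-type estimate (the paper phrases this as mapping Cauchy sequences to Cauchy sequences using the identity $ab-a'b'=\tfrac12(a+a')(b-b')+\tfrac12(a-a')(b+b')$, which is the same bilinearity argument you invoke).
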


\begin{proof}
Let's consider a sequence $\lbrace \Xi_{i}\rbrace_{i\in\mathbb{N}} \subset\smooth_{0}(T^{\ast}_{\wedge}M)$ such that it is a Cauchy sequence in $\mathcal{W}^{s,\gamma}(M,T^{\ast}_{\wedge}M).$ Then, in a neighborhood near the edge, the components of the elements of the sequence $\lbrace \Xi_{i} \rbrace_{i\in\mathbb{N}}$ define Cauchy sequences in $\mathcal{W}^{s,\gamma}(M)$ i.e. 
\[\norm{\omega\phi_{j}\varphi_{\lambda}\mathcal{A}^{i}-\omega\phi_{j}\varphi_{\lambda}\mathcal{A}^{j}}_{\mathcal{W}^{s,\gamma}(M,T^{\ast}_{\wedge}M)}<\varepsilon,\]
 \[\norm{\omega\phi_{j}\varphi_{\lambda}\tilde{\mathcal{B}}_{k}^{i}-\omega\phi_{j}\varphi_{\lambda}\tilde{\mathcal{B}}_{k}^{j}}_{\mathcal{W}^{s,\gamma}(M,T^{\ast}_{\wedge}M)}<\varepsilon \text{    for all } k=1,2,\dots ,m\] 
 and
 \[\norm{\omega\phi_{j}\varphi_{\lambda}\tilde{\mathcal{C}}_{l}^{i}-\omega\phi_{j}\varphi_{\lambda}\tilde{\mathcal{C}}_{l}^{j}}_{\mathcal{W}^{s,\gamma}(M,T^{\ast}_{\wedge}M)}<\varepsilon \text{    for all } l=1,2,\dots ,q\]
for all $i,j>N(\varepsilon).$
Away from the edge we have \[\norm{(1-\omega)(\Xi_{i}-\Xi_{j})}_{H^{s}(2\MM,T^{\ast}_{}M)}< \varepsilon.\]
We want to estimate \[\norm{\operatorname{P}_{\omega_{\CC^{n}}}(\Xi_{i})-\operatorname{P}_{\omega_{\CC^{n}}}(\Xi_{j})}_{\mathcal{W}^{s-1,\gamma-1}(M,T^{\ast}_{\wedge}M)}.\]  
  Observe that the conditions on $s$ and $\gamma$ imply that the edge-Sobolev spaces are Banach algebras \eqref{Banachalg}, hence multiplication is well-defined and we have the estimate $\norm{fg}_{s,\gamma}\leq C_{s,\gamma}\norm{f}_{s,\gamma}\norm{g}_{s,\gamma}$ with a constant $ C_{s,\gamma}$ depending only on $s$ and $\gamma$. To simplify  the notation we will use $\norm{\cdot}_{s,\gamma}$ to denote $\norm{\cdot}_{\mathcal{W}^{s,\gamma}(M,T^{\ast}_{\wedge}M)}.$
  
Now, near the edge, the components of the operator $\operatorname{P}_{\omega_{\CC^{n}}}$ are given by expressions of the form 
$\operatorname{P}+\operatorname{Q}+(\operatorname{S}+\operatorname{T})\cdot \operatorname{R}$ where $\operatorname{P},\operatorname{Q},\operatorname{R},\operatorname{S},\operatorname{T}\in \operatorname{Diff}^{1}_{\operatorname{edge}}(M).$ By estimating one of these expressions we can apply the same argument to all components. 

Given $\operatorname{P},\operatorname{Q},\operatorname{R},\operatorname{S},\operatorname{T}$ in $\operatorname{Diff}^{1}_{\operatorname{edge}}(M)$ and $\mathcal{A,B,C,A',B',C'}$ in $\mathcal{W}^{s,\gamma}(M)$ by the continuity these operators (proposition~\ref{cont.edge}) and the elementary identity $ab-a'b'=\frac{1}{2}(a+a')(b-b')+\frac{1}{2}(a-a')(b+b')$ we have
\begin{align*}
& \parallel\operatorname{P}(\CA)+\operatorname{Q}(\mathcal{B})+(\operatorname{S}(\CA)+\operatorname{T}(\mathcal{B}))\operatorname{R}(\mathcal{C})
 -\left(\operatorname{P}(\CA')+\operatorname{Q}(\mathcal{B'})\right.
 \\& \left.+(\operatorname{S}(\CA')+\operatorname{T}(\mathcal{B'}))\operatorname{R}(\mathcal{C'})\right) \parallel_{s-1,\gamma-1}
\\& \leq \norm{\operatorname{P}}\norm{\CA-\CA'}_{s\gamma}+\norm{\operatorname{Q}}
\norm{\mathcal{B}-\mathcal{B}'}_{s,\gamma}\\& + \norm{(\operatorname{S}(\CA)+\operatorname{T}(\mathcal{B}))\operatorname{R}(\mathcal{C})-(\operatorname{S}(\CA')+\operatorname{T}(\mathcal{B'}))\operatorname{R}(\mathcal{C'})}_{s-1,\gamma-1}
\\& \leq \norm{\operatorname{P}}\norm{\CA-\CA'}_{s,\gamma}+\norm{\operatorname{Q}}\norm{\mathcal{B}-\mathcal{B}'}_{s,\gamma}\\&+\frac{1}{2}\left(\norm{\operatorname{R}}\norm{\operatorname{S}}\norm{\CA-\CA'}_{s,\gamma}+\norm{\operatorname{R}}\norm{\operatorname{T}}\norm{\mathcal{B}-\mathcal{B}'}_{s,\gamma}\right)\norm{ \mathcal{C}+\mathcal{C}' }_{s,\gamma}
\\&+ \frac{1}{2}\left( \norm{\operatorname{R}}\norm{\operatorname{S}}\norm{\CA+\CA'}_{s,\gamma}+\norm{\operatorname{R}}\norm{\operatorname{T}}\norm{\mathcal{B}+\mathcal{B}'}_{s,\gamma}\right)\norm{ \mathcal{C}-\mathcal{C}' }_{s,\gamma}.
\end{align*}

Therefore, if $\lbrace \Xi_{i} \rbrace $ is a Cauchy sequence in $\mathcal{W}^{s,\gamma}(M, T^{\ast}_{\wedge}M)$ then  \[\lbrace \operatorname{P}(\CA_i)+\operatorname{Q}(\mathcal{B}_i)+(\operatorname{S}(\CA_i)+\operatorname{T}(\mathcal{B}_i))\operatorname{R}(\mathcal{C}_i) \rbrace_{i\in\mathbb{N}}\] is a Cauchy sequence in  $\mathcal{W}^{s-1,\gamma-1}(M)$  which implies that $\lbrace \omega(r) \operatorname{P}_{\omega_{\CC^{n}}}(\Xi_{i})\rbrace_i$ is Cauchy too.

Away from the edge we are in the setting of the standard Sobolev spaces $H^{s}(2\MM,T^{\ast}_{\wedge}M)$ and the operator $ \operatorname{P}_{\omega_{\CC^{n}}}$ is given by products of two differential operators of order 1. By using their continuity on $H^{s}(2\MM,T^{\ast}_{}M)$, the fact that standard Sobolev spaces are  Banach algebras for $s>\frac{\operatorname{dim}M}{2}$ and  a similar argument give us that  $\lbrace (1-\omega) \operatorname{P}_{\omega_{\CC^{n}}}(\Xi_{i})\rbrace_i$ is Cauchy in $H^{s}(2\MM,T^{\ast}_{}M)$. Therefore the corollary follows immediately.
\end{proof}

\begin{proposition} 
Let $\Xi\in\smooth_{0}(T^{\ast}_{\wedge}M)$  and  $\mathcal{V}_{\Xi}=J\Phi_{\ast}( g_{M}^{\ast}(\Xi))\in\smooth (\mathcal{N}(M)).$ Then, on a neighborhood $(0,\varepsilon)\times\mathcal{U}_{\lambda}\times\Omega_j$ near the edge, the pull-back of the imaginary part of the holomorphic volume form in $\CC^{n}$ \[\operatorname{Im}\Omega=\operatorname{Im}(dz_1\wedge\dots\wedge dz_{n}),\] by the map $\operatorname{exp}(\mathcal{V}_{\Xi})\circ\Phi$ is given as a sum of $n$ products of the form \[\operatorname{P}_{i_{1}}(F_{i_{1}})\operatorname{P}_{i_{2}}(F_{i_{2}})\cdot\dots\cdot \operatorname{P}_{i_{n}}(F_{i_{n}})\]
where $\operatorname{P}_{i_{j}}\in\operatorname{Diff}^{1}_{\operatorname{edge}}(M)$ and $F_{i_{j}}\in\mathcal{W}^{s,\gamma}(M)$.
\end{proposition}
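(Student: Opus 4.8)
The plan is to repeat the computation carried out in the proof of proposition~\ref{localprop}, with the $2$-form $\omega_{\CC^{n}}$ replaced by the $n$-form $\Omega=dz_{1}\wedge\cdots\wedge dz_{n}$, and to exploit the fact that $\Phi$ is \emph{already} special Lagrangian in order to discard the one term that would otherwise fail to have the asserted shape. Recall from the proof of proposition~\ref{localprop} that, in coordinates $\coordsimple$ on an edge neighborhood $(0,\varepsilon)\times\mathcal{U}_{\lambda}\times\Omega_{j}$, the map $\operatorname{exp}(\mathcal{V}_{\Xi})\circ\Phi$ sends $\coordsimple$ to $\big(r\theta_{1}-\tilde{\mathcal{C}}_{1},\dots,r\theta_{n}-\tilde{\mathcal{C}}_{n},\tau_{1}+\mathcal{A}\theta_{1}+\tilde{\mathcal{B}}_{1},\dots,\tau_{n}+\mathcal{A}\theta_{n}+\tilde{\mathcal{B}}_{n}\big)$, so that under $\CC^{n}\cong\xy$ the pull-back of the coordinate $z_{j}=x_{j}+\sqrt{-1}y_{j}$ is the complex function $w_{j}:=(r\theta_{j}-\tilde{\mathcal{C}}_{j})+\sqrt{-1}(\tau_{j}+\mathcal{A}\theta_{j}+\tilde{\mathcal{B}}_{j})$. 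Consequently $(\operatorname{exp}(\mathcal{V}_{\Xi})\circ\Phi)^{\ast}\Omega=dw_{1}\wedge\cdots\wedge dw_{n}$, and the task is to take the imaginary part of this $n$-fold wedge and read off its components in the frame of $\bigwedge\nolimits^{n}T^{\ast}_{\wedge}M$ generated by the wedges of $dr,\ rd\sigma_{1},\dots,rd\sigma_{m},\ du_{1},\dots,du_{q}$.

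Next I would split $dw_{j}=\eta_{j}^{0}+\rho_{j}$, where $\eta_{j}^{0}:=d(r\theta_{j})+\sqrt{-1}\,d\tau_{j}$ is the value of $dw_{j}$ at $\Xi=0$ and $\rho_{j}:=-d\tilde{\mathcal{C}}_{j}+\sqrt{-1}\,d(\mathcal{A}\theta_{j}+\tilde{\mathcal{B}}_{j})$ collects the terms containing a component of $\mathcal{V}_{\Xi}$. Since $\theta_{j}$ depends only on $\sigma$ and $\tau_{j}$ only on $u$, in the stretched frame the coefficients of $\eta_{j}^{0}$ are $\theta_{j}$, $\partial_{\sigma_{k}}\theta_{j}$ and $\sqrt{-1}\,\partial_{u_{l}}\tau_{j}$, all of which lie in $\smooth(\MM)$ and hence act as order-zero edge-degenerate multipliers; on the other hand, using $\partial_{r}=-\tfrac1r(-r\partial_{r})$, $\tfrac1r\partial_{\sigma_{k}}=r^{-1}\partial_{\sigma_{k}}$ with $\partial_{\sigma_{k}}\in\operatorname{Diff}^{1}(\Xcal)$, $\partial_{u_{l}}=\sqrt{-1}\,\tfrac1r(rD_{u_{l}})$ and the fact that $\operatorname{Diff}_{\operatorname{edge}}(M)$ is an algebra containing multiplication by $\smooth(\MM)$, the coefficients of $\rho_{j}$ in the stretched frame are finite sums of expressions $\operatorname{P}(F)$ with $\operatorname{P}\in\operatorname{Diff}^{1}_{\operatorname{edge}}(M)$ and $F\in\{\mathcal{A},\tilde{\mathcal{B}}_{1},\dots,\tilde{\mathcal{B}}_{n},\tilde{\mathcal{C}}_{1},\dots,\tilde{\mathcal{C}}_{n}\}$ — the single factor $r^{-1}$ produced when passing to the stretched frame being exactly what makes these genuine first-order edge-degenerate operators.

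Then I would expand $dw_{1}\wedge\cdots\wedge dw_{n}=\prod_{j}(\eta_{j}^{0}+\rho_{j})$. The only term carrying no factor $\rho_{j}$ is $\eta_{1}^{0}\wedge\cdots\wedge\eta_{n}^{0}=\Phi^{\ast}\Omega$, whose imaginary part $\Phi^{\ast}(\operatorname{Im}\Omega)=\operatorname{Im}\Omega|_{M}$ vanishes because $(M,\Phi)$ is special Lagrangian (equations~\eqref{mastereq}); after taking $\operatorname{Im}$ this term disappears. Every other term contains at least one factor $\rho_{j}$; expanded in the stretched frame it becomes a finite sum of monomials, each a scalar times a frame element, the scalar a product of $n$ factors at least one of which is of the form $\operatorname{P}(F)$ as above and the others either of that form or lying in $\smooth(\MM)$. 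Absorbing the $\smooth(\MM)$-factors into one of the $\operatorname{P}(F)$-factors (again using that $\operatorname{Diff}^{1}_{\operatorname{edge}}(M)$ is closed under multiplication by $\smooth(\MM)$) rewrites each monomial as $\operatorname{P}_{i_{1}}(F_{i_{1}})\cdots\operatorname{P}_{i_{n}}(F_{i_{n}})$ with $\operatorname{P}_{i_{j}}\in\operatorname{Diff}^{1}_{\operatorname{edge}}(M)$ and $F_{i_{j}}\in\mathcal{W}^{s,\gamma}(M)$ — the last membership being immediate for $\Xi\in\smooth_{0}$, and, for the continuous extension to $\mathcal{W}^{s,\gamma}(M,T^{\ast}_{\wedge}M)$, a consequence of proposition~\ref{localpropedge}. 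Taking the imaginary part merely recombines real and imaginary parts of such monomials, which leaves the form unchanged; this proves the proposition.

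The computation is in the end mostly bookkeeping, exactly as in proposition~\ref{localprop}, and I foresee no conceptual difficulty. The one step I would single out is the vanishing of $\eta_{1}^{0}\wedge\cdots\wedge\eta_{n}^{0}$: it is precisely here that one must use that $\Phi$ is special Lagrangian, for otherwise there would survive a term free of $\mathcal{A},\tilde{\mathcal{B}}_{i},\tilde{\mathcal{C}}_{i}$, which cannot be put in the required form since the constant function $1$ is not in $\mathcal{W}^{s,\gamma}(M)$ for the admissible weights $\gamma>\tfrac{\operatorname{dim}\Xcal+1}{2}$. The remaining labour is the combinatorial expansion of the $n$-fold wedge product, which is heavier than the $2$-form case treated in proposition~\ref{localprop} but entirely routine, together with some care, when absorbing the factors $r^{-1}$ and the $\smooth(\MM)$-coefficients, to keep each factor a genuine first-order edge-degenerate operator — which is guaranteed by the local description of $\operatorname{Diff}^{1}_{\operatorname{edge}}(M)$ recalled in section~\ref{examples}.
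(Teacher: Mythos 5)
Your proposal is correct and takes essentially the same route as the paper: expand the pulled-back $n$-form in the edge frame using the local expression of $\operatorname{exp}(\mathcal{V}_{\Xi})\circ\Phi$, discard the sole background contribution via $\left(\operatorname{exp}(\mathcal{V}_{0})\circ\Phi\right)^{\ast}(\operatorname{Im}\Omega)=0$, and identify each remaining factor as an element of $\operatorname{Diff}^{1}_{\operatorname{edge}}(M)$ applied to a component function, absorbing the extra $r^{-1}$ from the stretched frame into the operators. The only difference is bookkeeping: you organize the expansion through the complex $1$-forms $dw_{j}=\eta_{j}^{0}+\rho_{j}$ and take $\operatorname{Im}$ at the end, whereas the paper first writes $\operatorname{Im}\Omega$ as a sum of real monomials $dy_{I}\wedge dx_{1}\wedge\dots\wedge\widehat{dx_{I}}\wedge\dots\wedge dx_{n}$ and computes the resulting determinants $F_{I}$ — the substance is the same.
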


\begin{proof}
The holomorphic volume form in $\CC^{n}$ is given by 

\begin{align*}
\operatorname{Im}\Omega=\operatorname{Im}(dz_1\wedge\dots\wedge dz_{n})&=\operatorname{Im} d(x_1+iy_1)\wedge\dots\wedge d(x_{n}+iy_{n})
\\& =\sum\limits_{\abs{I}=odd}  c_{I}dy_{I}\wedge dx_{1}\wedge\dots\wedge\widehat{dx_{I}}\wedge\dots dx_{n}
\end{align*}
where the sum is taken over all increasingly ordered multi-indexes $I$ of odd length $k$, $I=(i_1 , \dots, i_k )$, the hat means that we omit the corresponding terms and  $c_{I}=\pm1. $
Then
\begin{align*}
  &(\operatorname{exp}(\mathcal{V}_{\Xi})\circ\Phi)^{\ast}\left( \operatorname{Im}\Omega \right)  =
  \\&\sum\limits_{\abs{I}=odd}  c_{I}(\operatorname{exp}(\mathcal{V}_{\Xi})\circ\Phi)^{\ast}\left(dy_{I}\wedge dx_{1}\wedge\dots\wedge\widehat{dx_{I}}\wedge\dots dx_{n}\right)
\\&=\sum\limits_{\abs{I}=odd}  c_{I}d(\tau_{i_{1}}+\CA\theta_{i_{1}}+\tilde{\mathcal{B}}_{i_{1}})\wedge\dots\wedge
d(\tau_{i_{k}}+\CA\theta_{i_{k}}+\tilde{\mathcal{B}}_{i_{k}})
 \wedge d (r\theta_1-\tilde{\mathcal{C}}_1)\wedge\dots  
 \\& \wedge\widehat{d(r\theta_I-\tilde{\mathcal{C}}_I)}
\wedge\dots\wedge d (r\theta_{n}-\tilde{\mathcal{C}}_{n}).
\end{align*}
Each of the terms in the sum is a $n$-form on $(0, \varepsilon)\times\mathcal{U}_{\lambda}\times\Omega_j$, hence
\begin{align*}
& c_{I}(\operatorname{exp}(\mathcal{V}_{\Xi})\circ\Phi)^{\ast}\left(dy_{I}\wedge dx_{1}\wedge\dots\wedge\widehat{dx_{I}}\wedge\dots dx_{n}\right)
\\& =F_{I}\coordsimple r^{m}dr\wedge d\sigma_{1}\wedge\dots\wedge d\sigma_{m}\wedge du_{1}\wedge\dots\wedge du_{q},
\end{align*}
where $ F_{I}\coordsimple$ is the determinant of the following matrix
 \[\begin{bsmallmatrix}
 \partial_{r}(\tau_{i_{1}}+\CA\theta_{i_{1}}+\tilde{\mathcal{B}}_{i_{1}}) & \dots  & \partial_{r}(r\theta_{1}-\tilde{\mathcal{C}}_{1}) & \dots & \widehat{\partial_{r}(r\theta_{I}-\tilde{\mathcal{C}}_{I})} & \dots & \partial_{r}(r\theta_{n}-\tilde{\mathcal{C}}_{n}) \\ 
 \vdots & \dots & \vdots & \dots & \vdots & \dots & \vdots \\ 
  \frac{1}{r}\partial_{j}(\tau_{i_{1}}+\CA\theta_{i_{1}}+\tilde{\mathcal{B}}_{i_{1}}) & \dots  & \frac{1}{r}\partial_{j}(r\theta_{1}-\tilde{\mathcal{C}}_{1}) & \dots & \widehat{\frac{1}{r}\partial_{j}(r\theta_{I}-\tilde{\mathcal{C}}_{I})} & \dots & \frac{1}{r}\partial_{j}(r\theta_{n}-\tilde{\mathcal{C}}_{n}) \\ 
 \vdots & \dots & \vdots & \dots & \vdots & \dots & \vdots \\ 
  \partial_{u_{q}}(\tau_{i_{1}}+\CA\theta_{i_{1}}+\tilde{\mathcal{B}}_{i_{1}}) &\dots & \partial_{u_{q}}(r\theta_{1}-\tilde{\mathcal{C}}_{1}) & \dots & \widehat{\partial_{u_{q}}(r\theta_{I}-\tilde{\mathcal{C}}_{I})} & \dots & \partial_{u_{q}}(r\theta_{n}-\tilde{\mathcal{C}}_{n}) \\ 
 \end{bsmallmatrix}.\]
Therefore $F_{I}\coordsimple$ is the sum of  products of the form \[\operatorname{P}_1(\tau_{i_{1}}+\CA\theta_{i_{1}}+\tilde{\mathcal{B}}_{i_{1}})\dots \operatorname{P}_{n}(r\theta_{n}-\tilde{\mathcal{C}}_{n})\]
with all the  operators $\operatorname{P}_k$, $k=1,\dots,n,$ in $\operatorname{Diff}^{1}_{\operatorname{edge}}(M).$ By expanding these products and observing that the sum of all terms of the form \[\operatorname{P}_1(\tau_{i_{1}})\dots \operatorname{P}_{n}(r\theta_{n}),\]
i.e. all those products not containing any of the functions $\mathcal{A},\tilde{\mathcal{B}}_{\bullet}$ or $\tilde{\mathcal{C}}_{\bullet}$, is equal to zero as \[\left(\operatorname{exp}(\mathcal{V}_{0})\circ\Phi\right)^{\ast}\left( \operatorname{Im}\Omega \right)= 0;\]
we obtain that $F_{I}\coordsimple$ is the sum of products of the form \begin{equation}\label{secondnonlinearterm}
\operatorname{P}_{i_{1}}(F_{i_{1}})\operatorname{P}_{i_{2}}(F_{i_{2}})\cdot\dots\cdot \operatorname{P}_{i_{n}}(F_{i_{n}})
\end{equation} 
where $\operatorname{P}_{i_{j}}\in\operatorname{Diff}^{1}_{\operatorname{edge}}(M)$ and $F_{i_{j}}\in\mathcal{W}^{s,\gamma}(M)$ as claimed.
\end{proof}

\begin{corollary}\label{coro2}
The map \[\operatorname{P}_{\operatorname{Im}\Omega}: \smooth_{0}(T^{\ast}_{\wedge}M)
\longrightarrow  \smooth_{0}(M,\bigwedge\nolimits^{n}T^{\ast}_{\wedge}M) \] given by $\operatorname{P}_{\operatorname{Im}\Omega}(\Xi):=\left( \operatorname{exp}(\mathcal{V}_{\Xi})\circ\Phi\right)^{\ast}(\operatorname{Im}(\Omega))$ 
extends to a continuous non-linear operator \[\operatorname{P}_{\operatorname{Im}\Omega}: \mathcal{W}^{s,\gamma}(M,T^{\ast}_{\wedge}M)
\longrightarrow  \mathcal{W}^{s-1,\gamma-1}(M,\bigwedge\nolimits^{n}T^{\ast}_{\wedge}M)\] for $s>\frac{\operatorname{dim}\Xcal+\operatorname{dim}\edge+3}{2}$ and $\gamma>\frac{\operatorname{dim}\Xcal+1}{2}.$ 
\end{corollary}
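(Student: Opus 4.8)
The plan is to run the argument of Corollary~\ref{coro1} almost verbatim, the only genuinely new ingredient being that the nonlinear terms of $\operatorname{P}_{\operatorname{Im}\Omega}$ are $n$-fold products rather than quadratic expressions. So I would fix a sequence $\{\Xi_{i}\}\subset\smooth_{0}(T^{\ast}_{\wedge}M)$ that is Cauchy for $\norm{\cdot}_{\mathcal{W}^{s,\gamma}(M,T^{\ast}_{\wedge}M)}$ and show that $\{\operatorname{P}_{\operatorname{Im}\Omega}(\Xi_{i})\}$ is Cauchy for $\norm{\cdot}_{\mathcal{W}^{s-1,\gamma-1}(M,\bigwedge\nolimits^{n}T^{\ast}_{\wedge}M)}$; since $\smooth_{0}(T^{\ast}_{\wedge}M)$ is dense, the limit map is then the required continuous extension, and it is automatically unique and independent of the approximating sequence. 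Splitting with the cut-off $\omega$ exactly as in Corollary~\ref{coro1}, the components $\omega\phi_{j}\varphi_{\lambda}\mathcal{A}^{i}$, $\omega\phi_{j}\varphi_{\lambda}\tilde{\mathcal{B}}_{k}^{i}$, $\omega\phi_{j}\varphi_{\lambda}\tilde{\mathcal{C}}_{l}^{i}$ are then Cauchy, hence bounded, in $\mathcal{W}^{s,\gamma}(M)$ near the edge (using Proposition~\ref{localpropedge}), while $(1-\omega)\Xi_{i}$ is Cauchy in $H^{s}(2\MM,T^{\ast}M)$ away from it.

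Near the edge I would feed in the preceding proposition: in a chart $(0,\varepsilon)\times\mathcal{U}_{\lambda}\times\Omega_{j}$ the form $\omega\,\operatorname{P}_{\operatorname{Im}\Omega}(\Xi)$ is a finite sum of products $\operatorname{P}_{i_{1}}(F_{i_{1}})\operatorname{P}_{i_{2}}(F_{i_{2}})\cdots\operatorname{P}_{i_{n}}(F_{i_{n}})$ with $\operatorname{P}_{i_{\ell}}\in\operatorname{Diff}^{1}_{\operatorname{edge}}(M)$, and, after deleting the $\Xi$-independent products (whose sum is zero because $(\operatorname{exp}(\mathcal{V}_{0})\circ\Phi)^{\ast}(\operatorname{Im}\Omega)=0$), each remaining summand has at least one factor $F_{i_{\ell}}$ that is a component of $\mathcal{V}_{\Xi}$, depends linearly on $\Xi$ and lies in $\mathcal{W}^{s,\gamma}(M)$, while the other $F_{i_{\ell}}$ are fixed functions built from $\theta$, $\tau$, $r\theta$ that act as continuous multipliers on the spaces in play. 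To compare $\operatorname{P}_{\operatorname{Im}\Omega}(\Xi_{i})$ and $\operatorname{P}_{\operatorname{Im}\Omega}(\Xi_{j})$ I would apply, to each of the finitely many summands, the telescoping identity
\[
\prod_{\ell=1}^{n}a_{\ell}-\prod_{\ell=1}^{n}a_{\ell}'=\sum_{k=1}^{n}\Bigl(\prod_{\ell<k}a_{\ell}'\Bigr)(a_{k}-a_{k}')\Bigl(\prod_{\ell>k}a_{\ell}\Bigr)
\]
with $a_{\ell}=\operatorname{P}_{i_{\ell}}(F_{i_{\ell}})$ and $a_{\ell}'=\operatorname{P}_{i_{\ell}}(F_{i_{\ell}}')$, together with the continuity and linearity of each $\operatorname{P}_{i_{\ell}}\colon\mathcal{W}^{s,\gamma}(M)\to\mathcal{W}^{s-1,\gamma-1}(M)$ from Proposition~\ref{cont.edge} and the Banach-algebra and multiplier machinery already assembled in the proof of Corollary~\ref{coro1} (the conditions on $s$ and $\gamma$, with $\dim M=\dim\Xcal+\dim\edge+1$ giving in particular $s-1>\tfrac{\dim M}{2}$, being exactly what makes it applicable). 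This yields a bound of the shape
\[
\norm{\prod_{\ell=1}^{n}a_{\ell}-\prod_{\ell=1}^{n}a_{\ell}'}_{s-1,\gamma-1}\le C\sum_{k=1}^{n}\norm{F_{i_{k}}-F_{i_{k}}'}_{s,\gamma}\prod_{\ell\ne k}\bigl(\norm{F_{i_{\ell}}}_{s,\gamma}+\norm{F_{i_{\ell}}'}_{s,\gamma}\bigr),
\]
in which the fixed factors contribute $0$ to the differences $F_{i_{k}}-F_{i_{k}}'$ and uniformly bounded constants to the products on the right; since the component sequences are bounded and Cauchy in $\mathcal{W}^{s,\gamma}(M)$, the right-hand side tends to $0$, so $\{\omega\,\operatorname{P}_{\operatorname{Im}\Omega}(\Xi_{i})\}$ is Cauchy in $\mathcal{W}^{s-1,\gamma-1}(M,\bigwedge\nolimits^{n}T^{\ast}_{\wedge}M)$.

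Away from the edge, on the compact manifold $M\setminus\bigl((0,\varepsilon)\times\partial\MM\bigr)$, the operator $(1-\omega)\operatorname{P}_{\operatorname{Im}\Omega}$ is, in each coordinate patch, a finite sum of $n$-fold products of classical first-order differential operators applied to smooth functions, so, using that $H^{s}(2\MM)$ is a Banach algebra for $s>\tfrac{\dim M}{2}$, the identical telescoping argument (carried out entirely at level $s$, as in Corollary~\ref{coro1}) shows that $\{(1-\omega)\operatorname{P}_{\operatorname{Im}\Omega}(\Xi_{i})\}$ is Cauchy in $H^{s}(2\MM,\bigwedge\nolimits^{n}T^{\ast}M)$. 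Adding the two regions gives that $\{\operatorname{P}_{\operatorname{Im}\Omega}(\Xi_{i})\}$ is Cauchy in $\mathcal{W}^{s-1,\gamma-1}(M,\bigwedge\nolimits^{n}T^{\ast}_{\wedge}M)$, which is what is needed. I expect the only real work here, as opposed to any conceptual difficulty, to be the bookkeeping in the $n$-fold telescoping and the verification that the Banach-algebra and multiplier estimates genuinely survive the shift $(s,\gamma)\mapsto(s-1,\gamma-1)$; modulo that, the proof is a line-by-line repetition of the proof of Corollary~\ref{coro1}.
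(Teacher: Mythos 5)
Your proposal is correct and follows essentially the same route as the paper: pass to Cauchy sequences of smooth compactly supported forms, split with the cut-off $\omega$, and near the edge decompose differences of the $n$-fold products $\operatorname{P}_{i_{1}}(F_{i_{1}})\cdots\operatorname{P}_{i_{n}}(F_{i_{n}})$ into terms each carrying a single difference, controlled by Proposition~\ref{cont.edge} and the Banach-algebra property \eqref{Banachalg}, with the analogous classical Sobolev argument away from the edge. Your telescoping identity is just a repackaging of the paper's use of $ab-a'b'=\tfrac{1}{2}(a+a')(b-b')+\tfrac{1}{2}(a-a')(b+b')$, so there is no substantive difference.
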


\begin{proof}
Given $\Xi$ and $\Xi'$ in $\smooth_{0}(T^{\ast}_{\wedge}M)$, near the edge, we have 
 \[\operatorname{P}_{\operatorname{Im}\Omega}(\Xi)-\operatorname{P}_{\operatorname{Im}\Omega}(\Xi')=\]
\begin{align*}
&\left(\sum\limits_{I}    \operatorname{P}_{i_{1}}(F_{i_{1}})\dots \operatorname{P}_{i_{n}}(F_{i_{n}}) -\operatorname{P}_{i_{1}}(F_{i_{1}}')\dots\operatorname{P}_{i_{n}}(F_{i_{n}}') \right)r^{m}dr\wedge d\sigma_{1}\dots
\\&\dots\wedge d\sigma_{m}\wedge du_1\wedge\dots\wedge du_{q}.
\end{align*} 
 
By applying the elementary identity $ab-a'b'=\frac{1}{2}(a+a')(b-b')+\frac{1}{2}(a-a')(b+b')$ we can decompose each term of the sum in factors, each of them containing one of the subtractions
\begin{equation}\label{subtraction}
{P}_{i_{n-k}}(F_{i_{n-k}})-\operatorname{P}_{i_{n-k}}(F_{i_{n-k}})
\end{equation}
for $k=0,\dots,n$.

Now, if  $\lbrace \Xi_{i}\rbrace_{i\in\mathbb{N}}\subset \smooth_{0}(T^{\ast}_{\wedge}M)$ is a Cauchy sequence in the Sobolev space $\mathcal{W}^{s,\gamma}(M, T^{\ast}_{\wedge}M)$ as in corollary~\ref{coro1}, the sequences $\operatorname{P}_{i_{j}}(F^{(k)}_{i_{j}})$ are Cauchy in the space $\mathcal{W}^{s-1,\gamma-1}(M)$ (by the continuity of edge-degenerate operators on edge-Sobolev spaces, see proposition~\ref{cont.edge})  and \[\norm{\operatorname{P}_{i_{n-k}}(F^{(k)}_{i_{n-k}})-\operatorname{P}_{i_{n-k}}(F^{(k')}_{i_{n-k}})}_{s-1,\gamma-1} <\varepsilon\] for all $k,k'>N(\varepsilon).$
Therefore, by the Banach algebra property \eqref{Banachalg} and \eqref{subtraction}, we have that $\left\lbrace \omega(r) \operatorname{P}_{\operatorname{Im}\Omega}(\Xi_{k}) \right\rbrace_{k\in\mathbb{N}}$ is a Cauchy sequence in the Sobolev space $\mathcal{W}^{s-1,\gamma-1}(M,\bigwedge\nolimits^{n}T^{\ast}_{\wedge}M).$ Away from the edge a completely similar argument using the classical Sobolev spaces $H^{s}(2\MM,T^{\ast}M)$ implies that the sequence $\left\lbrace (1-\omega) \operatorname{P}_{\operatorname{Im}\Omega}(\Xi_{k}) \right\rbrace_{k\in\mathbb{N}}$ is Cauchy in the space $H^{s}(2\MM,T^{\ast}M)$ and the corollary follows immediately. 
\end{proof}

Corollary~\ref{coro1} and ~\ref{coro2} tell us that we have a continuous non-linear deformation operator:
\[\operatorname{P}:=\begin{array}{ccc}
\operatorname{P}_{\omega_{_{\CC^{n}}}} & &   \mathcal{W}^{s-1,\gamma-1}(M,\bigwedge\nolimits^{2}T^{\ast}_{\wedge}M) \\ 
\oplus &: \mathcal{W}^{s,\gamma}(M,T^{\ast}_{\wedge}M) \longrightarrow &\oplus  \\ 
\operatorname{P}_{\operatorname{Im}\Omega} & &    \mathcal{W}^{s-1,\gamma-1}(M,\bigwedge\nolimits^{n}T^{\ast}_{\wedge}M)
\end{array} \] for $s>\frac{\operatorname{dim}\Xcal+\operatorname{dim}\edge+3}{2}$ and $\gamma>\frac{\operatorname{dim}\Xcal+1}{2}.$ 

In fact, this operator is smooth as the following corollary shows.

\begin{corollary}
The non-linear deformation operator 
\[\begin{array}{ccc}
\operatorname{P}_{\omega_{_{\CC^{n}}}} & &   \mathcal{W}^{s-1,\gamma-1}(M,\bigwedge\nolimits^{2}T^{\ast}_{\wedge}M) \\ 
\oplus &: \mathcal{W}^{s,\gamma}(M,T^{\ast}_{\wedge}M) \longrightarrow &\oplus  \\ 
\operatorname{P}_{\operatorname{Im}\Omega} & &    \mathcal{W}^{s-1,\gamma-1}(M,\bigwedge\nolimits^{n}T^{\ast}_{\wedge}M). 
\end{array}\]
is smooth.
\end{corollary}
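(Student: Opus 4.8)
The plan is to upgrade the continuity statements of Corollaries~\ref{coro1} and \ref{coro2} to smoothness by observing that, because the ambient space $\CC^{n}$ is \emph{flat}, the operator $\operatorname{P}$ is in fact a polynomial map between Banach spaces, and bounded polynomial maps between Banach spaces are $C^{\infty}$ (indeed real--analytic). The general fact to invoke is: if $A\colon X_{1}\times\cdots\times X_{k}\to Y$ is a bounded $k$--linear map of Banach spaces, then $A$ is $C^{\infty}$, hence so is the composition $x\mapsto A(x,\dots,x)$ with the bounded linear diagonal map $X\to X^{k}$; a finite sum of such bounded homogeneous polynomials is $C^{\infty}$, and a map into a finite direct sum of Banach spaces is $C^{\infty}$ exactly when each component is. So it suffices to exhibit $\operatorname{P}_{\omega_{\CC^{n}}}$ and $\operatorname{P}_{\operatorname{Im}\Omega}$, separately and on each summand of their targets, as finite sums of bounded homogeneous polynomials in $\Xi$; since such polynomial maps agree with $\operatorname{P}_{\omega_{\CC^{n}}}$ and $\operatorname{P}_{\operatorname{Im}\Omega}$ on the dense subspace $\smooth_{0}(T^{\ast}_{\wedge}M)$, they coincide with the continuous extensions produced in Corollaries~\ref{coro1} and \ref{coro2}.

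First I would assemble the bounded linear and bilinear building blocks. The assignment $\Xi\mapsto\mathcal{V}_{\Xi}=J\Phi_{\ast}(g_{M}^{\ast}(\Xi))$ is linear in $\Xi$ by Lemma~\ref{localemma}, and bounded from $\mathcal{W}^{s,\gamma}(M,T^{\ast}_{\wedge}M)$ into $\mathcal{W}^{s,\gamma}(M,\mathcal{N}(M))$ by the multiplier estimates carried out in the proof of Proposition~\ref{localpropedge}; in particular each local component $\mathcal{A},\tilde{\mathcal{B}}_{i},\tilde{\mathcal{C}}_{i}$ of $\mathcal{V}_{\Xi}$ depends linearly and boundedly on $\Xi$. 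Every operator in $\operatorname{Diff}^{1}_{\operatorname{edge}}(M)$ is a bounded linear map $\mathcal{W}^{s,\gamma}(M)\to\mathcal{W}^{s-1,\gamma-1}(M)$ by Proposition~\ref{cont.edge}, multiplication by the cut--off functions $\omega$ and $1-\omega$ is bounded, and pointwise multiplication is a bounded bilinear map on $\mathcal{W}^{s-1,\gamma-1}(M)$ by the Banach algebra property \eqref{Banachalg} — the hypotheses $s>\tfrac{\operatorname{dim}\Xcal+\operatorname{dim}\edge+3}{2}$, $\gamma>\tfrac{\operatorname{dim}\Xcal+1}{2}$ being exactly what guarantees this, while away from the edge $H^{s-1}(2\MM)$ is a Banach algebra since $s-1>\tfrac12\operatorname{dim}M$.

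Now, in flat $\CC^{n}$ the exponential map is affine, $\operatorname{exp}_{g_{\CC^{n}}}(v)\circ\Phi=\Phi+\mathcal{V}_{\Xi}$ in the coordinates of $\CC^{n}\cong\xy$, so each coordinate function of $\operatorname{exp}(\mathcal{V}_{\Xi})\circ\Phi$ — namely $r\theta_{i}-\tilde{\mathcal{C}}_{i}$ and $\tau_{i}+\mathcal{A}\theta_{i}+\tilde{\mathcal{B}}_{i}$ — is \emph{affine} in the components of $\mathcal{V}_{\Xi}$, hence affine in $\Xi$, and its differential is again affine in $\Xi$. Consequently the pull--back of a constant--coefficient $p$--form on $\CC^{n}$ is a polynomial of degree $\leq p$ in $\Xi$. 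For $\omega_{\CC^{n}}=\sum_{i}dx_{i}\wedge dy_{i}$ the degree is $\leq 2$: the explicit local formula of Proposition~\ref{localprop} already displays $\operatorname{P}_{\omega_{\CC^{n}}}(\Xi)$ near the edge as a finite sum of terms $\operatorname{P}(\mathcal{A})+\operatorname{Q}(\tilde{\mathcal{B}}_{i})$, bounded and linear in $\Xi$, and $(\operatorname{S}(\mathcal{A})+\operatorname{T}(\tilde{\mathcal{B}}_{i}))\operatorname{R}(\tilde{\mathcal{C}}_{i})$, bounded and bilinear in $\Xi$ (a composition of the linear maps above with the Banach algebra multiplication), together with the entirely parallel classical description away from the edge; assembling these across the non--direct sum defining $\mathcal{W}^{s-1,\gamma-1}(M,\bigwedge\nolimits^{2}T^{\ast}_{\wedge}M)$ exhibits $\operatorname{P}_{\omega_{\CC^{n}}}$ as the sum of a bounded linear and a bounded quadratic map, hence $C^{\infty}$. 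For $\operatorname{Im}\Omega=\operatorname{Im}(dz_{1}\wedge\cdots\wedge dz_{n})$ the same argument applies with $p=n$: the proposition preceding Corollary~\ref{coro2} writes $\operatorname{P}_{\operatorname{Im}\Omega}(\Xi)$ locally as a finite sum of $n\times n$ determinants whose entries are first derivatives of affine functions of the components, i.e.\ a finite sum of $n$--fold products $\operatorname{P}_{i_{1}}(F_{i_{1}})\cdots\operatorname{P}_{i_{n}}(F_{i_{n}})$ with each $\operatorname{P}_{i_{j}}\in\operatorname{Diff}^{1}_{\operatorname{edge}}(M)$ and each $F_{i_{j}}$ bounded linear in $\Xi$; expanding and discarding the degree--zero term (which vanishes since $(\operatorname{exp}(\mathcal{V}_{0})\circ\Phi)^{\ast}(\operatorname{Im}\Omega)=0$) presents $\operatorname{P}_{\operatorname{Im}\Omega}$ as a finite sum of bounded homogeneous polynomials of degrees $1,\dots,n$, each degree--$k$ summand being a bounded $k$--linear map (obtained by iterating Proposition~\ref{cont.edge} and the Banach algebra multiplication) restricted to the diagonal. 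Hence $\operatorname{P}_{\operatorname{Im}\Omega}$ is $C^{\infty}$ as well.

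Combining the two, $\operatorname{P}=(\operatorname{P}_{\omega_{\CC^{n}}},\operatorname{P}_{\operatorname{Im}\Omega})$ is a finite sum of bounded homogeneous polynomials with values in $\mathcal{W}^{s-1,\gamma-1}(M,\bigwedge\nolimits^{2}T^{\ast}_{\wedge}M)\oplus\mathcal{W}^{s-1,\gamma-1}(M,\bigwedge\nolimits^{n}T^{\ast}_{\wedge}M)$, hence $C^{\infty}$ (in fact real--analytic), with Fr\'echet derivatives read off from the underlying multilinear maps by polarization; in particular $D\operatorname{P}[0]$ is the degree--one part, i.e.\ the linearised deformation operator analysed in the sequel. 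The only real work is bookkeeping: one must check that each multilinear building block is bounded in the non--direct sum norm defining the edge--Sobolev spaces, which is exactly where the hypotheses on $s$ and $\gamma$ enter through the Banach algebra property; but this runs parallel to the Cauchy--sequence estimates already performed in Corollaries~\ref{coro1} and \ref{coro2}, so I expect no genuinely new obstacle beyond organizing those estimates multilinearly.
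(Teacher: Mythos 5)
Your proposal is correct, and at bottom it rests on the same ingredients as the paper's proof: the local decompositions of $\operatorname{P}_{\omega_{_{\CC^{n}}}}$ into terms $\operatorname{P}(\CA)+\operatorname{Q}(\tilde{\mathcal{B}})+(\operatorname{S}(\CA)+\operatorname{T}(\tilde{\mathcal{B}}))\operatorname{R}(\tilde{\mathcal{C}})$ and of $\operatorname{P}_{\operatorname{Im}\Omega}$ into $n$-fold products $\operatorname{P}_{i_{1}}(F_{i_{1}})\cdots\operatorname{P}_{i_{n}}(F_{i_{n}})$, the continuity of $\operatorname{Diff}^{1}_{\operatorname{edge}}(M)$ on edge-Sobolev spaces, and the Banach algebra property. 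Where you differ is in how smoothness is extracted: the paper computes Fr\'echet derivatives by hand, checking that the quadratic remainders are $o(\norm{\nu})$, that $\operatorname{D}^{2}$ of the quadratic part is constant, and that the $n$-th derivative of each degree-$n$ product is the constant sum over permutations; you instead package $\operatorname{P}$ as a finite sum of bounded homogeneous polynomials (restrictions to the diagonal of bounded multilinear maps built from the linear map $\Xi\mapsto\mathcal{V}_{\Xi}$, the edge operators, and the algebra multiplication) and invoke the standard fact that such maps are $C^{\infty}$, indeed real-analytic, with derivatives recovered by polarization. Your route is a clean abstraction of the same computation and buys real-analyticity and the identification of $\operatorname{DP[0]}$ as the degree-one part for free, while the paper's explicit computation makes the derivative formulas used later directly visible. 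One small point you should make explicit: the degree-zero coefficients entering the products, e.g.\ $\operatorname{P}_{k}(\tau_{i})$ and $\operatorname{P}_{k}(r\theta_{i})$, are fixed functions that must act as bounded multipliers on $\mathcal{W}^{s-1,\gamma-1}(M)$ for your multilinear building blocks to be bounded; this is true by the same multiplier arguments as in the proof of Proposition~\ref{localpropedge} (and is equally implicit in the paper's treatment), but it is the one place where your "bookkeeping" genuinely needs a check rather than a citation.
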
 

\begin{proof}
Recall that from proposition~\ref{localprop} the components of the operator $\operatorname{P}_{\omega_{_{\CC^n}}}$ are given by expressions of the form $\operatorname{P}+\operatorname{Q}+(\operatorname{S}+\operatorname{T})\cdot \operatorname{R}$ where $\operatorname{P}, \operatorname{Q}, \operatorname{R}, \operatorname{S}, \operatorname{T}\in\operatorname{Diff}^{1}_{\operatorname{edge}}(M).$ Therefore, given $f, \nu\in \mathcal{W}^{s,\gamma}(M)$ we have 
\begin{align*}
& (\operatorname{P}+\operatorname{Q}+(\operatorname{S}+\operatorname{T})\cdot \operatorname{R})(f+\nu)-(\operatorname{P}+\operatorname{Q}+(\operatorname{S}+\operatorname{T})\cdot \operatorname{R})(f)
\\& =\operatorname{P}(\nu)+\operatorname{Q}(\nu)+\operatorname{R}(f)(\operatorname{S}(\nu)+\operatorname{T}(\nu))+\operatorname{S}(f)\operatorname{R}(\nu)+\operatorname{T}(f)\operatorname{R}(\nu)
\\&+\operatorname{S}(\nu)\operatorname{R}(\nu)+\operatorname{T}(\nu)\operatorname{R}(\nu).
\end{align*}
Observe that by the continuity of $\operatorname{S}$ \[\norm{\frac{\operatorname{S}(\nu)\operatorname{R}(\nu)}{\norm{\nu}}}_{s-1,\gamma-1}\leq \norm{\operatorname{S}}_{s,\gamma}\norm{\operatorname{R}(\nu)}_{s-1,\gamma-1},\] hence  by the continuity of $\operatorname{R}$ we have  $\lim_{\norm{\nu}\to 0}\frac{\operatorname{S}(\nu)\operatorname{R}(\nu)}{\norm{\nu}}=0$ and the same holds for $\operatorname{T}(\nu)\operatorname{R}(\nu)$. This implies that the Fr\'echet derivative of the operator $\operatorname{P}+\operatorname{Q}+(\operatorname{S}+\operatorname{T})\cdot \operatorname{R}$ at $f$ is given by \[\operatorname{D}\left(\operatorname{P}+\operatorname{Q}+(\operatorname{S}+\operatorname{T})\cdot \operatorname{R}\right)[f]=\operatorname{P}+\operatorname{Q}+\operatorname{R}(f)(\operatorname{S}+\operatorname{T})+\operatorname{S}(f)\operatorname{R}+\operatorname{T}(f)\operatorname{R}\]
 and from this expression is clear that \[\operatorname{D}\left(\operatorname{P}+\operatorname{Q}+(\operatorname{S}+\operatorname{T})\cdot \operatorname{R}\right)[\bullet]:\mathcal{W}^{s,\gamma}(M)\longrightarrow\mathcal{L}\left(\mathcal{W}^{s,\gamma}(M),
 \mathcal{W}^{s-1,\gamma-1}(M) \right)\]
is a continuous operator. Moreover, observe that the second derivative is constant. More precisely, the second derivative of $\operatorname{P}+\operatorname{Q}+(\operatorname{S}+\operatorname{T})\cdot \operatorname{R}$ at $f\in\mathcal{W}^{s,\gamma}(M)$ is the linear, continuous map
\[\operatorname{D}^{2}\left(\operatorname{P}+\operatorname{Q}+(\operatorname{S}+\operatorname{T})\cdot \operatorname{R}\right)[f]\in\mathcal{L}\left(\mathcal{W}^{s,\gamma}(M),\mathcal{L}\left(\mathcal{W}^{s,\gamma}(M),
 \mathcal{W}^{s-1,\gamma-1}(M) \right)\right)\]
 such that 
\begin{align*}
&\left(\operatorname{D}\left(\operatorname{P}+\operatorname{Q}+(\operatorname{S}+\operatorname{T})\cdot \operatorname{R}\right)[f+\nu]-\operatorname{D}\left(\operatorname{P}+\operatorname{Q}+(\operatorname{S}+\operatorname{T})\cdot \operatorname{R}\right)[f]\right. 
  \\&- \left. \operatorname{D}^{2}\left(\operatorname{P}+\operatorname{Q}+(\operatorname{S}+\operatorname{T})\cdot \operatorname{R}\right)[f](\nu)\right)\Big/ {\norm{\nu}} 
\end{align*} 
  goes to zero in $\mathcal{L}\left(\mathcal{W}^{s,\gamma}(M),
 \mathcal{W}^{s-1,\gamma-1}(M) \right)$ when $\norm{\nu}\rightarrow 0$. 
 
 A direct computation shows that 
 \begin{align*}
 &\operatorname{D}\left(\operatorname{P}+\operatorname{Q}+(\operatorname{S}+\operatorname{T})\cdot \operatorname{R}\right)[f+\nu]-\operatorname{D}\left(\operatorname{P}+\operatorname{Q}+(\operatorname{S}+\operatorname{T})\cdot \operatorname{R}\right)[f]
 \\& =\operatorname{R}(\nu)(\operatorname{S}+\operatorname{T})+\operatorname{S}(\nu)\operatorname{R}+\operatorname{T}(\nu)\operatorname{R}
 \end{align*}
hence by uniqueness of derivatives we have   \[\operatorname{D}^{2}\left(\operatorname{P}+\operatorname{Q}+(\operatorname{S}+\operatorname{T})\cdot \operatorname{R}\right)[f](\nu)= \operatorname{R}(\nu)(\operatorname{S}+\operatorname{T})+\operatorname{S}(\nu)\operatorname{R}+\operatorname{T}(\nu)\operatorname{R} \]
for any $\nu\in\mathcal{W}^{s,\gamma}(M)$ and all $f\in\mathcal{W}^{s,\gamma}(M)$. Thus the second derivative is constant and given by \[\operatorname{D}^{2}\left(\operatorname{P}+\operatorname{Q}+(\operatorname{S}+\operatorname{T})\cdot \operatorname{R}\right)[f](g,h)=\operatorname{R}(g)(\operatorname{S}+\operatorname{T})(h)+(\operatorname{S}+\operatorname{T})(g)\operatorname{R}(h)\]
for any $f\in \mathcal{W}^{s,\gamma}(M)$. We conclude that $\operatorname{P}_{\omega_{_{\CC^n}}}$ is smooth.

Now, the operator $\operatorname{P}_{\operatorname{Im}\Omega}$ is given as a sum of products  $\operatorname{P}_{1}(F_1)\operatorname{P}_{2}(F_2)\cdot\dots\cdot \operatorname{P}_n(F_n)$ where $\operatorname{P}_{i}\in\operatorname{Diff}^{1}_{\operatorname{edge}}(M)$ and $F_{i}\in\mathcal{W}^{s,\gamma}(M)$. Then we have that the expression \[\operatorname{P}_{1}(F_1+\nu)\operatorname{P}_{2}(F_2+\nu)\cdot\dots\cdot \operatorname{P}_n(F_n+\nu)-\operatorname{P}_{1}(F_1)\operatorname{P}_{2}(F_2)\cdot\dots\cdot \operatorname{P}_n(F_n)\] is a sum of products of the operators $\operatorname{P}_i$ evaluated at $F_i$ or at $\nu$ with at least one $\nu$ in each product. Those products containing 2 or more terms with $\nu$ does not contribute to the Frechet derivative as in the first part of the proof. Hence the Fr\'echet derivative is computed only with products having one term with $\nu$ which produce continuous linear operators of the form $\operatorname{P}_{i_1}(F_{i_1})\operatorname{P}_{i_2}(F_{i_2})\cdot\dots\cdot \operatorname{P}_{i_{n-1}}(F_{i_{n-1}})\operatorname{P}_{i_n}$. Furthermore, it is easily seen that the $n$th derivative of each of the products  $\operatorname{P}_{1}(F_1)\operatorname{P}_{2}(F_2)\cdot\dots\cdot \operatorname{P}_n(F_n)$ is constant and equal to \[ \sum\limits_{(i_1,\dots,i_n)\in S_n}\operatorname{P}_{i_{1}}\operatorname{P}_{i_{2}}\cdot\dots\cdot \operatorname{P}_{i_{n}}\] 
where the sum is taken over  the group of permutations with $n$ elements. 
From this it follows that $\operatorname{P}_{\operatorname{Im}\Omega}$ is smooth.
\end{proof}

Observe that any element $\alpha\in \mathcal{W}^{s-1,\gamma-1}(M,\bigwedge\nolimits^{n}T^{\ast}_{\wedge}M) $ is given as $\alpha=f\operatorname{Vol}_{M}$ where $f\in  \mathcal{W}^{s-1,\gamma-1}(M)$ and $f_{k}\operatorname{Vol}_{M}\longrightarrow f_{}\operatorname{Vol}_{M}$ in the edge-Sobolev space $\mathcal{W}^{s-1,\gamma-1}(M,\bigwedge\nolimits^{n}T^{\ast}_{\wedge}M) $ if and only if $f_k\longrightarrow f$ in the space $\mathcal{W}^{s-1,\gamma-1}(M).$ Hence we can consider the operator $\operatorname{P}_{\operatorname{Im}\Omega}$ as an operator acting between the following spaces:
\[\operatorname{P}_{\operatorname{Im}\Omega}:\mathcal{W}^{s,\gamma}(M,T^{\ast}_{\wedge}M)\longrightarrow \mathcal{W}^{s-1,\gamma-1}(M). \]

\section{The linear operator DP{[}0{]}}\label{linear operator}

In this section we consider the operator $\operatorname{DP[0]}$, the linearisation at zero of the deformation operator $\operatorname{P}=\operatorname{P}_{\omega_{_{\CC^n}}}\oplus \operatorname{P}_{\operatorname{Im}\Omega}$. A careful analysis of this operator is necessary as we want apply the Implicit Function Theorem for Banach spaces to this linear operator in order to describe  solutions (nearby  zero) of the non-linear equation $\operatorname{P}(f)=0$ in term of $\operatorname{Ker}\operatorname{DP[0]}$. McLean's results in \cite{mclean} implies  that the linearisation of the deformation operator at zero acting on $\smooth_{0}(M,T^{\ast}_{\wedge}M)$ is given by  the Hodge-deRham operator i.e. \[\operatorname{DP[0]}\Big|_{\smooth_{0}(M,T^{\ast}_{\wedge}M)}=d+d^{\ast}.\]

In this section we analyse the extension of this operator to edge-Sobolev spaces, its ellipticity and the Fredholm property.

Observe that on a collar neighborhood any $\Xi\in \smooth_{}\left(\bigwedge\nolimits^{k}T^{\ast}_{\wedge}M\right)$ can be written as 
\begin{equation}\label{decomposition}\Xi=dr\wedge r^{k-1} \Theta_{\Xcal} +dr\wedge\Theta_{\edge}+dr\wedge \sum\limits_{i=1}^{k-2} r^{i}\Lambda^{i}_{\Xcal,\edge}+\sum\limits_{j=1}^{k-1}r^{j}\tilde{\Lambda}^{j}_{\Xcal,\edge}+r^{k}\tilde{\Theta}_{\Xcal}+\tilde{\Theta}_{\edge} 
\end{equation}
where: \begin{enumerate}[i)]
\item $\Theta_{\Xcal}$ is a smooth section of the  bundle $\pi^{\ast}_{\Rpos\times\edge}(\bigwedge\nolimits^{k-1} T^{\ast}\Xcal);$

\item  $\Theta_{\edge}$ is a smooth section of the  bundle $\pi^{\ast}_{\Rpos\times\Xcal}(\bigwedge\nolimits^{k-1}T^{\ast}\edge);$

\item  $\Lambda_{\Xcal,\edge}^{i}$ is the wedge product of a smooth section of the pull-back bundle $\pi^{\ast}_{\Rpos\times\edge}(\bigwedge\nolimits^{i}T^{\ast}\Xcal)$ with a smooth section of the pull-back bundle \\$\pi^{\ast}_{\Rpos\times\Xcal}(\bigwedge\nolimits^{k-1-i}T^{\ast}\edge);$

\item  $\tilde{\Lambda}_{\Xcal,\edge}^{j}$ is the wedge product of a smooth section of the pull-back bundle $\pi^{\ast}_{\Rpos\times\edge}(\bigwedge\nolimits^{j}T^{\ast}\Xcal)$ with a smooth section of the bundle $\pi^{\ast}_{\Rpos\times\Xcal}(\bigwedge\nolimits^{k-j}T^{\ast}\edge);$

\item $\tilde{\Theta}_{\Xcal}$ is a smooth section of the  bundle $\pi^{\ast}_{\Rpos\times\edge}(\bigwedge\nolimits^{k} T^{\ast}\Xcal);$

\item $\tilde{\Theta}_{\edge}$ is a smooth section of the  bundle $\pi^{\ast}_{\Rpos\times\Xcal}(\bigwedge\nolimits^{k} T^{\ast}\edge).$

\end{enumerate}
Recall that $\pi^{}_{\Rpos\times\Xcal}$ and $\pi^{}_{\Rpos\times\edge}$ are the corresponding projections \[ \pi^{}_{\Rpos\times\Xcal}:\cone\times\edge\longrightarrow \edge \] \[ \pi^{}_{\Rpos\times\edge}:\cone\times\edge\longrightarrow \Xcal. \]
More generally, the bundle $\bigwedge^{\bullet}T^{\ast}_{\wedge}M$ can be decomposed in the following way: 
\begin{align*}
\bigwedge\nolimits^{\bullet}T^{\ast}_{\wedge}M &=dr\wedge \sum\limits_{k=0}^{m}r^{k} \pi^{\ast}_{\Rpos\times\edge}\left(\bigwedge\nolimits^{k} T^{\ast}\Xcal \right) \oplus dr\wedge \pi^{\ast}_{\Rpos\times\Xcal}\left(\bigwedge\nolimits^{\bullet}T^{\ast}\edge\right)
\\& \oplus dr\wedge \sum\limits_{k=1}^{m} r^{k}\pi^{\ast}_{\Rpos\times\edge}\left(\bigwedge\nolimits^{k}T^{\ast}\Xcal\right)\wedge\pi^{\ast}_{\Rpos\times\Xcal}
\left(\bigwedge\nolimits^{\bullet}T^{\ast}\edge\right)\\&
\oplus \sum\limits_{k=1}^{m}r^{k}\pi^{\ast}_{\Rpos\times\edge}\left(\bigwedge\nolimits^{k}T^{\ast}\Xcal\right)\wedge\pi^{\ast}_{\Rpos\times\Xcal}
\left(\bigwedge\nolimits^{\bullet}T^{\ast}\edge\right)\\&
\oplus \sum\limits_{k=1}^{m}r^{k}\pi^{\ast}_{\Rpos\times\edge}(\bigwedge\nolimits^{k} T^{\ast}\Xcal)\oplus \pi^{\ast}_{\Rpos\times\Xcal}(\bigwedge\nolimits^{\bullet} T^{\ast}\edge).
\end{align*}
With respect to this splitting we can compute an explicit expression for $d+d^{\ast}$ acting on $\smooth_{0}(M,\bigwedge^{\bullet}T^{\ast}_{\wedge}M)$ to prove the following proposition. 

\begin{proposition}\label{hodgeprop}
The operator $\operatorname{DP[0]}\Big|_{\smooth_{0}(M,T^{\ast}_{\wedge}M)}=d+d^{\ast}$ extends to a continuous linear operator acting between edge-Sobolev spaces  
\[ \begin{array}{cccc}
\operatorname{DP[0]}:& \mathcal{W}^{s,\gamma}_{}(M,\bigwedge^{\bullet}T^{\ast}_{\wedge}M) & \longrightarrow & \mathcal{W}^{s-1,\gamma-1}_{}(M,\bigwedge^{\bullet}T^{\ast}_{\wedge}M)
\end{array}. \]
\end{proposition}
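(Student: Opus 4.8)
\emph{Proof strategy.} The plan is to recognize $d+d^{\ast}$ as an edge-degenerate differential operator of order one acting on (sections of) an admissible bundle, and then to invoke Proposition~\ref{cont.edge}. Concretely, I would first record that $\bigwedge^{\bullet}T^{\ast}_{\wedge}M$ is admissible in the sense of Definition~\ref{admissibleE}, so that the bundle-valued spaces $\mathcal{W}^{s,\gamma}(M,\bigwedge^{\bullet}T^{\ast}_{\wedge}M)$ and the version of Proposition~\ref{cont.edge} for admissible bundles are available. This is read off from the decomposition of $\bigwedge^{\bullet}T^{\ast}_{\wedge}M$ displayed just before the statement: on the collar $[0,\varepsilon)\times\Xcal\times\edge$ every summand is of the form $r^{k}\,\pi^{\ast}_{\Rpos\times\edge}(\bigwedge^{k}T^{\ast}\Xcal)$, possibly wedged with $\pi^{\ast}_{\Rpos\times\Xcal}(\bigwedge^{j}T^{\ast}\edge)$ and/or with $dr$, and since multiplication by the smooth nowhere-vanishing function $r^{k}$ is a vector-bundle isomorphism, each summand (hence the whole bundle) is isomorphic on the collar to the pull-back of a bundle over $\Xcal$.

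Next I would exhibit the local form of $d$. The cleanest way is to write $\Xi\in\smooth_{0}(M,\bigwedge^{k}T^{\ast}_{\wedge}M)$ in the stretched coframe $\{dr,\;r\,d\sigma_{1},\dots,r\,d\sigma_{m},\;du_{1},\dots,du_{q}\}$, say $\Xi=\sum_{I}c_{I}\,\hat e_{I}$ with $c_{I}\in\smooth(\MM)$ and $\hat e_{I}=r^{m_{I}}e_{I}$, where $e_{I}$ is the corresponding coordinate monomial and $m_{I}$ the number of $d\sigma$-factors in $I$. A short computation of $d\Xi=\sum_{I}(dc_{I})\wedge e_{I}\,r^{m_{I}}$, together with $e_{J}=r^{-m_{J}}\hat e_{J}$, shows that the stretched-coframe coefficient of $\hat e_{J}$ in $d\Xi$ is $r^{-1}$ times a polynomial of degree $\le 1$ in $-r\partial_{r}$ and in the $rD_{u_{l}}$, with coefficients that are constants or first-order constant-coefficient operators in the $\sigma$-variables (coming from the fibrewise exterior derivative along $\Xcal$). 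This is exactly the shape \eqref{edgeop} with $l=1$, so $d\in\operatorname{Diff}^{1}_{\operatorname{edge}}(M,\bigwedge^{\bullet}T^{\ast}_{\wedge}M)$; note in particular that $d$ lowers the weight by $1$, matching the statement.

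For the codifferential $d^{\ast}$ — taken with respect to the edge metric $g_{_{M}}=r^{2}g_{_{\Xcal}}+dr^{2}+g_{_{\edge}}$, which is exactly the metric induced on $M$ by the edge embedding $\Phi(r,p,v)=(\psi(r,p),\tau(v))$ with $\psi(r,p)=r\,\theta(p)$ and $\abs{\theta}\equiv1$ on $S^{n-1}$ — I would argue in one of two equivalent ways. Either: $g^{\ast}_{_{M}}$ is a smooth, nondegenerate fibre metric on $T^{\ast}_{\wedge}M$ over the stretched manifold $\MM$ (the defining feature of the stretched cotangent bundle, cf.\ \cite{naza}), so the associated Hodge star $\ast_{g_{_{M}}}$ is a smooth bundle isomorphism of $\bigwedge^{\bullet}T^{\ast}_{\wedge}M$ over $\MM$, i.e.\ an element of $\operatorname{Diff}^{0}_{\operatorname{edge}}$; hence the usual formula for $d^{\ast}$ as $\pm\ast_{g_{_{M}}}\,d\,\ast_{g_{_{M}}}$ (sign depending on degree and $\dim M$) exhibits $d^{\ast}$ as a composition of edge-degenerate operators, so $d^{\ast}\in\operatorname{Diff}^{1}_{\operatorname{edge}}(M,\bigwedge^{\bullet}T^{\ast}_{\wedge}M)$. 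Or: compute $d^{\ast}$ directly from its standard coordinate expression in terms of $\sqrt{\abs{g_{_{M}}}}$ and the inverse metric, using that $\sqrt{\abs{g_{_{M}}}}=r^{m}$ times a smooth positive density while the inverse metric carries the factors $r^{-2}$ in the $\sigma$-directions, which after re-expression in the stretched coframe again produces precisely the form \eqref{edgeop} with $l=1$. Either way $d+d^{\ast}\in\operatorname{Diff}^{1}_{\operatorname{edge}}(M,\bigwedge^{\bullet}T^{\ast}_{\wedge}M)$, and Proposition~\ref{cont.edge} (for admissible bundles) gives the continuous extension $\mathcal{W}^{s,\gamma}(M,\bigwedge^{\bullet}T^{\ast}_{\wedge}M)\to\mathcal{W}^{s-1,\gamma-1}(M,\bigwedge^{\bullet}T^{\ast}_{\wedge}M)$; it is unique since $\smooth_{0}(M,\bigwedge^{\bullet}T^{\ast}_{\wedge}M)$ is dense in $\mathcal{W}^{s,\gamma}$ by construction.

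The step requiring the most care is the verification that \emph{both} $d$ and $d^{\ast}$ land in $\operatorname{Diff}^{1}_{\operatorname{edge}}$ with the correct weight shift: for $d$ this amounts to checking that no power of $r$ beyond the single overall $r^{-1}$ survives once one restores the stretched coframe (which is where the specific $r$-weights in the decomposition \eqref{decomposition} are used), and for $d^{\ast}$ it amounts to the fact that $g_{_{M}}$ is smooth and nondegenerate on $T^{\ast}_{\wedge}M$ over $\MM$ — equivalently, that $\operatorname{Diff}_{\operatorname{edge}}$ is closed under the formal adjoint with respect to the $L^{2}$-inner product of $g_{_{M}}$. Everything past that is the continuity already recorded in Proposition~\ref{cont.edge}.
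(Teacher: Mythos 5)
Your argument is correct and shares the paper's core strategy: reduce everything to the claim $d+d^{\ast}\in\operatorname{Diff}^{1}_{\operatorname{edge}}(M,\bigwedge^{\bullet}T^{\ast}_{\wedge}M)$ and then quote the continuity of edge-degenerate operators on edge-Sobolev spaces (proposition~\ref{cont.edge}). Where you differ is in the verification. The paper arranges the components of a form according to the splitting \eqref{decomposition} and writes out $d+d^{\ast}$ as an explicit block operator matrix whose entries ($\frac{1}{r}d_{\Xcal}$, $\frac{1}{r}d^{\ast}_{\Xcal}$, $d_{\edge}$, $d^{\ast}_{\edge}$, $\frac{k}{r}\pm\partial_{r}$, etc.) are visibly of the form \eqref{edgeop} with $l=1$; this explicit matrix is also what the paper leans on later for the symbolic computations (compare proposition~\ref{laplaceprop} and~\ref{symboliso}). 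You instead verify the coframe computation only for $d$ and then dispose of $d^{\ast}$ structurally, via the observation that $g_{_{M}}$ induces a fibre metric on $T^{\ast}_{\wedge}M$ that is smooth and nondegenerate up to $r=0$, so that $\ast_{g_{_{M}}}$ is a smooth bundle isomorphism over $\MM$ and $d^{\ast}=\pm\ast_{g_{_{M}}}d\,\ast_{g_{_{M}}}$ stays in the edge algebra. That is lighter and makes transparent why the weight shift is exactly one, at the cost of not producing the explicit matrices the paper reuses. One small caution: your claim that admissibility of $\bigwedge^{\bullet}T^{\ast}_{\wedge}M$ is ``read off'' from the decomposition is not quite automatic, since the summands $\pi^{\ast}_{\Rpos\times\Xcal}(\bigwedge^{j}T^{\ast}\edge)$ are pull-backs from $\edge$, not from $\Xcal$; they become pull-backs of (trivial) bundles over $\Xcal$ only under the paper's standing assumption (definition~\ref{admissibleE} and the identification with $\pi^{\ast}_{\Rpos\times\edge}E_{\Xcal}$, e.g.\ when $\edge$ is parallelizable), so you should invoke that assumption rather than derive it.
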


\begin{proof}
By using the splitting (\ref{decomposition}) and arranging in a vector the components of  the differential form $\Xi\in\smooth_{0}(M,\bigwedge^{k}T^{\ast}_{\wedge}M)$ in the following way 
\[\begin{pmatrix}
\Theta_{\Xcal},\Theta_{\edge}, \Lambda^{1}_{\Xcal,\edge},\dots ,\Lambda^{k-2}_{\Xcal,\edge} \\ 
\tilde{\Lambda}^{1}_{\Xcal,\edge},\dots , \tilde{\Lambda}^{k-1}_{\Xcal,\edge},\tilde{\Theta}_{\Xcal},\tilde{\Theta}_{\edge}
\end{pmatrix}, \]
a direct computation shows that the Hodge-deRham operator acting on $\Xi$ is given by \[d+d^{\ast}=
\left[
\begin{array}{c|c}
\textbf{A} & \textbf{B} \\
\hline
\textbf{C} & \textbf{D}
\end{array}
\right],
\]
where the operator matrices are given by

\[\textbf{A}=\begin{bsmallmatrix}
\frac{1}{r}(d_\Xcal+d^{\ast}_{\Xcal})  & 0  & 0 & 0 & \cdots &\cdots & -d^{\ast}_{\edge} \\ 
0 & -\frac{1}{r}(d_\edge+d^{\ast}_{\edge}) & -\frac{1}{r}d^{\ast}_{\Xcal} & 0 & \cdots & \cdots & 0 \\ 
0 & -\frac{1}{r}d^{\ast}_{\Xcal} & -(d_{\edge}+\frac{1}{r}d_{\Xcal}^{\ast}) & 0 & 0 &  \cdots & \vdots \\ 
\vdots & 0& -(d^{\ast}_{\edge}+\frac{1}{r}d^{}_{\Xcal}) & -(d^{}_{\edge}+\frac{1}{r}d^{\ast}_{\Xcal}) & 0 & \cdots & 0 \\ 
\vdots & \vdots & 0 & \ddots & \ddots & \ddots & 0 \\ 
\vdots & \vdots & \vdots & 0 &  -(d^{\ast}_{\edge}+\frac{1}{r}d^{}_{\Xcal}) & -(d_{\edge}+\frac{1}{r}d^{\ast}_{\Xcal}) & 0\\ 
\vdots & \vdots & \vdots & \vdots & 0 & -\frac{1}{r}d_{\Xcal} & -d_{\edge}\\ 
-d_{\edge} & 0 & 0 & \cdots & 0 &  0 & -\frac{1}{r}d_{\Xcal} \\ 
\end{bsmallmatrix}\]

\[\textbf{B}=\begin{bsmallmatrix}
 0 & \cdots  & \cdots & 0 & \frac{k}{r}+\partial_{r} & 0 \\ 
 0 & \cdots & \cdots   & 0  & 0 & \partial_{r} \\ 
 \frac{1}{r}+\partial_{r} & 0 & \cdots   & \vdots & 0 & 0 \\ 
0 & \ddots & \ddots  & 0 & \vdots & 0  \\ 
\vdots & \ddots &   & 0 & \vdots & 0  \\ 
0 & \cdots & 0 & \frac{k-1}{r}+\partial_{r} & 0 & 0 \\ 
\end{bsmallmatrix} 
\quad \quad \textbf{C}=\begin{bsmallmatrix}
0 &0 & \frac{1}{r}-\partial_{r} & 0  & \cdots & 0 \\ 
\vdots & 0 & \ddots  &  \ddots  & \ddots & \vdots \\ 
\vdots &\vdots & 0  &  \ddots& \ddots & 0 \\ 
\vdots & \vdots & \cdots & \cdots  & 0 & \frac{k-2}{r}-\partial_{r} \\ 
0 & 0 & \cdots &  \cdots  & \cdots & 0 \\ 
0 & 0 & \cdots &   \cdots & \cdots & 0 \\ 
\frac{k}{r}-\partial_{r} & 0 & \cdots  & \cdots & \cdots & 0 \\ 
0 & -\partial_{r} & 0  & \cdots & \cdots & 0
\end{bsmallmatrix}\]

\[\textbf{D}=\begin{bsmallmatrix}
d_{\edge}+d^{\ast}_{\edge} & \frac{1}{r}d^{\ast}_{\Xcal} & 0 & \cdots & \cdots & 0 & \frac{1}{r}d_{\Xcal} \\ 
 \frac{1}{r}d_{\Xcal} & d_{\edge}+d^{\ast}_{\edge} &  \frac{1}{r}d^{\ast}_{\Xcal} & 0 &\cdots  & \vdots & 0 \\ 
 0 & \ddots & \ddots & \ddots & \ddots & \vdots & 0 \\ 
 \vdots &  &  &  &  & \vdots & \vdots\\ 
  \vdots & \cdots &  \frac{1}{r}d_{\Xcal} & d_{\edge}+d^{\ast}_{\edge}  & \frac{1}{r}d^{\ast}_{\Xcal}& 0 & \vdots\\ 
 \vdots & \cdots & 0 &\frac{1}{r}d_{\Xcal}  & d_{\edge} & 0 & \vdots \\
 \vdots &\cdots  & \cdots &  0& \frac{1}{r}d_{\Xcal} & d_{\edge} & \vdots \\ 
 0 & \cdots & \cdots & 0 & d^{\ast}_{\edge}&\frac{1}{r}(d_{\Xcal}+d^{\ast}_{\Xcal}) & 0 \\ 
\frac{1}{r}d^{\ast}_{\Xcal} & 0 & \cdots & \cdots & 0 & 0 & d_{\edge}+d^{\ast}_{\edge}
\end{bsmallmatrix}. \]

Observe that each of the elements in these matrices is an element of the algebra $\operatorname{Diff}^{1}_{\operatorname{edge}}(M,\bigwedge^{\bullet}T^{\ast}_{\wedge}M)$. From this, it follows that  $d+d^{\ast}$ belongs to $\operatorname{Diff}^{1}_{\operatorname{edge}}\left(\bigwedge^{\bullet}T^{\ast}_{\wedge}M\right)$ which implies that $\operatorname{DP[0]}$ extends to a continuous linear operator between the corresponding edge-Sobolev spaces.
\end{proof}

In order to verify some properties of the symbolic structure of $d+d^{\ast}$ it shall be useful to have similar explicit expressions for the Hodge-Laplacian associated with the edge metric $g_{_{M}}$ acting on $\bigwedge^{\bullet}T^{\ast}_{\wedge}M$.
\begin{proposition}\label{laplaceprop}
The Hodge-Laplace operator $\Delta_{g_{_{M}}}\Big|_{\smooth_{0}(M,T^{\ast}_{\wedge}M)}$ extends to a continuous linear operator acting between edge-Sobolev spaces  
\[ \begin{array}{cccc}
\Delta_{g_{_{M}}}:& \mathcal{W}^{s,\gamma}_{}(M,\bigwedge^{\bullet}T^{\ast}_{\wedge}M) & \longrightarrow & \mathcal{W}^{s-2,\gamma-2}_{}(M,\bigwedge^{\bullet}T^{\ast}_{\wedge}M).
\end{array}\]
\end{proposition}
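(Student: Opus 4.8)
The plan is to obtain the statement as an immediate consequence of Proposition~\ref{hodgeprop} together with the algebra structure of $\operatorname{Diff}_{\operatorname{edge}}(M)$ and the continuity result Proposition~\ref{cont.edge}. Recall that the Hodge--Laplace operator associated with a Riemannian metric satisfies $\Delta_{g_{_{M}}}=(d+d^{\ast})^{2}$ on $\smooth_{0}(M,\bigwedge^{\bullet}T^{\ast}_{\wedge}M)$. By Proposition~\ref{hodgeprop} the Hodge--deRham operator $d+d^{\ast}$ belongs to $\operatorname{Diff}^{1}_{\operatorname{edge}}(M,\bigwedge^{\bullet}T^{\ast}_{\wedge}M)$; indeed, every entry of the block matrices $\mathbf{A},\mathbf{B},\mathbf{C},\mathbf{D}$ exhibited there is a first order edge-degenerate operator. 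Since $\operatorname{Diff}_{\operatorname{edge}}(M)$ is, by construction, a filtered algebra (it is the algebra generated by the function space $\mathcal{F}=\smooth(\MM)|_{M}$ and the module $\mathbf{V}$ of edge-degenerate vector fields, see section~\ref{examples}), the composition of an order $1$ edge-degenerate operator with an order $1$ edge-degenerate operator produces an operator in $\operatorname{Diff}^{2}_{\operatorname{edge}}(M)$. Hence $\Delta_{g_{_{M}}}=(d+d^{\ast})^{2}\in\operatorname{Diff}^{2}_{\operatorname{edge}}(M,\bigwedge^{\bullet}T^{\ast}_{\wedge}M)$.

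With this in hand the continuity assertion follows directly from the bundle version of Proposition~\ref{cont.edge}: an edge-degenerate operator of order $l$ extends to a bounded map $\mathcal{W}^{s,\gamma}(M,E)\to\mathcal{W}^{s-l,\gamma-l}(M,E)$ for any admissible bundle $E$ and any $s,\gamma$. Taking $l=2$ and $E=\bigwedge^{\bullet}T^{\ast}_{\wedge}M$ — which is admissible in the sense of Definition~\ref{admissibleE} because, by the decomposition of $\bigwedge^{\bullet}T^{\ast}_{\wedge}M$ displayed above, on a collar neighborhood it is a direct sum of pull-backs under $\pi_{\Rpos\times\edge}$ of form bundles over $\Xcal$, wedged with pull-backs under $\pi_{\Rpos\times\Xcal}$ of form bundles over $\edge$ — we obtain the desired continuous extension $\Delta_{g_{_{M}}}:\mathcal{W}^{s,\gamma}(M,\bigwedge^{\bullet}T^{\ast}_{\wedge}M)\to\mathcal{W}^{s-2,\gamma-2}(M,\bigwedge^{\bullet}T^{\ast}_{\wedge}M)$.

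For later use — in particular for verifying the symbolic structure in section~\ref{symbolsection} — it is worthwhile to record the explicit block form of $\Delta_{g_{_{M}}}$ with respect to the ordering of components fixed in the proof of Proposition~\ref{hodgeprop}. This is obtained by squaring the matrix $\left[\begin{smallmatrix}\mathbf{A}&\mathbf{B}\\\mathbf{C}&\mathbf{D}\end{smallmatrix}\right]$, i.e. $\Delta_{g_{_{M}}}=\left[\begin{smallmatrix}\mathbf{A}^{2}+\mathbf{B}\mathbf{C}&\mathbf{A}\mathbf{B}+\mathbf{B}\mathbf{D}\\\mathbf{C}\mathbf{A}+\mathbf{D}\mathbf{C}&\mathbf{C}\mathbf{B}+\mathbf{D}^{2}\end{smallmatrix}\right]$, together with the elementary commutation relations between $\partial_{r}$, multiplication by $r^{-1}$, and the operators $d_{\Xcal},d^{\ast}_{\Xcal},d_{\edge},d^{\ast}_{\edge}$ (the latter commute with $\partial_{r}$, while $[\partial_{r},r^{-1}]=-r^{-2}$). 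After collecting terms, each entry takes the form $r^{-2}\bigl(a_{2}(r,u)(-r\partial_{r})^{2}+a_{1}(r,u)(-r\partial_{r})+a_{0}(r,u)\bigr)$ with $\operatorname{Diff}(\Xcal)$- and $\operatorname{Diff}(\edge)$-valued coefficients, reconfirming membership in $\operatorname{Diff}^{2}_{\operatorname{edge}}(M)$ and exhibiting the Fuchs-type structure in the $r$ variable.

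The proof presents no serious obstacle; the argument rests entirely on the algebra property of $\operatorname{Diff}_{\operatorname{edge}}(M)$ and on Proposition~\ref{cont.edge}. The only points requiring (routine) care are the verification that $\bigwedge^{\bullet}T^{\ast}_{\wedge}M$ is admissible, so that the bundle-valued form of Proposition~\ref{cont.edge} applies, and — if one wants the explicit formula rather than merely the abstract membership in $\operatorname{Diff}^{2}_{\operatorname{edge}}(M)$ — the bookkeeping of the numerous entries of the squared block matrix and the powers of $r$ they carry. Neither affects the continuity statement itself.
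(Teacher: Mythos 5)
Your argument is correct, but it reaches the conclusion by a different route than the paper. The paper's proof is a direct computation: it writes out the Hodge--Laplacian explicitly as a block matrix $\bigl[\begin{smallmatrix}\mathbf{A}'&\mathbf{B}'\\ \mathbf{C}'&\mathbf{D}'\end{smallmatrix}\bigr]$ with respect to the same arrangement of components as in proposition~\ref{hodgeprop}, observes that every entry lies in $\operatorname{Diff}^{2}_{\operatorname{edge}}(M,\bigwedge^{\bullet}T^{\ast}_{\wedge}M)$, and then concludes continuity exactly as you do, via the mapping property of edge-degenerate operators (proposition~\ref{cont.edge}). You instead bypass the computation entirely: since $\Delta_{g_{_{M}}}=(d+d^{\ast})^{2}$ and $d+d^{\ast}\in\operatorname{Diff}^{1}_{\operatorname{edge}}$ by proposition~\ref{hodgeprop}, the filtered-algebra structure of $\operatorname{Diff}_{\operatorname{edge}}(M)$ gives $\Delta_{g_{_{M}}}\in\operatorname{Diff}^{2}_{\operatorname{edge}}$ at once, which is a cleaner and perfectly valid way to get the continuity statement (your check that composing two order-one edge operators stays edge-degenerate is the only thing to verify, and it is standard since $(-r\partial_r)r^{-1}=r^{-1}(-r\partial_r+1)$ keeps coefficients smooth up to $r=0$). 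What the paper's longer computation buys is the explicit entries of $\mathbf{A}'$ and $\mathbf{D}'$, which are used later in the proof of proposition~\ref{symboliso} to evaluate $\sigma^{2}_{b}(\Delta_{g_{_{M}}})$ and show it is an isomorphism; you anticipate this by noting that the explicit form can be recovered by squaring the block matrix of proposition~\ref{hodgeprop}, which is in effect what the paper records. One minor point: the admissibility of $\bigwedge^{\bullet}T^{\ast}_{\wedge}M$ that you invoke for the bundle version of proposition~\ref{cont.edge} rests on the paper's standing assumption (definition~\ref{admissibleE} and the identification of $T^{\ast}_{\wedge}M$ with $\pi^{\ast}_{\Rpos\times\edge}E_{\Xcal}$ on the collar), so it is worth stating that you are taking exterior powers of that assumed admissible bundle rather than deriving admissibility from the displayed decomposition alone; this does not affect the validity of your proof.
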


\begin{proof}
By arranging the components of $\Xi\in\smooth_{0}(M,\bigwedge^{k}T^{\ast}_{\wedge}M)$ in the same way as in the previous proposition, a direct computation shows that the Hodge-Laplacian acting on $k$-forms \[\Delta_{g_{_{M}}}:\smooth_{0}(M,\bigwedge\nolimits^{k}T^{\ast}_{\wedge}M)\longrightarrow\smooth_{0}(M,\bigwedge\nolimits^{k}T^{\ast}_{\wedge}M)\]
is given by
\[\Delta_{g_{_{M}}}=
\left[
\begin{array}{c|c}
\textbf{A}' & \textbf{B}' \\
\hline
\textbf{C}' &\textbf{ D}'
\end{array}
\right],
\]

where the operator matrices are given by
\[  \textbf{A}'=\]
\[ 
    \scalemath{0.62}{
    \begin{bmatrix}
     \mathsmaller{\frac{1}{r^2}\Delta_{\Xcal}+d^{\ast}_{\edge}d^{}_{\edge}-\partial_{r}^{2}+\frac{(k-1)(k-2)}{r^2}} & 0  & 0 & 0 & \cdots &\cdots & \frac{1}{r}(d^{\ast}_{\edge}d^{}_{\Xcal}+d^{}_{\Xcal}d^{\ast}_{\edge}) \\ 
0 & \mathsmaller{\Delta_{\edge}+\frac{1}{r^2}d^{\ast}_{\Xcal}d^{}_{\Xcal}-\partial_{r}^{2}} & \frac{1}{r}(d^{\ast}_{\Xcal} d^{}_{\edge}+d_{\edge}d^{\ast}_{\Xcal}) & 0 & \cdots & \cdots & 0 \\ 
0 & \frac{1}{r}(d^{\ast}_{\edge}d^{}_{\Xcal}+d^{}_{\Xcal}d^{\ast}_{\edge}) & \frac{1}{r^2}\Delta_{\Xcal}+\Delta_{\edge}-\partial^{2}_{r} & 0 &\cdots  &  \cdots & \vdots \\ 
\vdots & 0&  &  &  & & 0 \\ 
\vdots & \vdots & \ddots & \ddots &  & \ddots & 0 \\ 
\vdots & \vdots & &  &  &  & \frac{1}{r}(d^{\ast}_{\Xcal} d^{}_{\edge}+d_{\edge}d^{\ast}_{\Xcal})\\ 
\frac{1}{r}(d^{\ast}_{\Xcal} d^{}_{\edge}+d_{\edge}d^{\ast}_{\Xcal}) & 0 & \cdots & \cdots & 0 &  \frac{1}{r}(d^{\ast}_{\edge}d^{}_{\Xcal}+d^{}_{\Xcal}d^{\ast}_{\edge}) & \mathsmaller{\frac{1}{r^2}\Delta_{\Xcal}+\Delta_{\edge}-\partial^{2}_{r}+\frac{(k-2)^2-(k-2)}{r^2}} \\
    \end{bmatrix}
    }  
\]

\[\textbf{B}'=\begin{bsmallmatrix}
 0 & \cdots  & 0 & \frac{-1}{r}d^{\ast}_{\edge} & \frac{-2}{r^2}d^{\ast}_{\Xcal} & 0 \\ 
 \frac{-2}{r^2}d^{\ast}_{\Xcal} & 0 & \cdots   & 0  & 0 & \vdots \\ 
 0 &\ddots  & \ddots   & \vdots & \vdots & \vdots \\ 
\vdots & \ddots &   & 0 & \vdots & \vdots  \\ 
0 & \cdots & 0 & \frac{-2}{r}d^{\ast}_{\Xcal} & 0 &  0\\ 
\end{bsmallmatrix} \quad 
\textbf{C}'=\begin{bsmallmatrix}
0 & \frac{-2}{r^2}d_{\Xcal} & 0 & \cdots  & \cdots & 0 \\ 
\vdots & 0 &\ddots  &    & \ddots & \vdots \\ 
\vdots &\vdots & \ddots  &  & \ddots & 0 \\ 
\vdots & \vdots & \cdots &   & 0 & \frac{-2}{r^2}d_{\Xcal} \\ 
\frac{-2}{r^2}d_{\Xcal} & 0 & \cdots  & \cdots & \cdots & 0 \\ 
0 &0 & \cdots  & \cdots & \cdots & 0
\end{bsmallmatrix}\]

\[ 
    \scalemath{0.6}{
    \textbf{D}'=\begin{bmatrix}
\mathsmaller{\frac{1}{r^2}\Delta_{\Xcal}+\Delta_{\edge}-\partial^{2}_{r}} & \frac{1}{r}(d^{\ast}_{\Xcal}d_{\edge}+d_{\edge}d^{\ast}_{\Xcal}) & 0 & \cdots & \cdots & 0 & \frac{1}{r}(d^{\ast}_{\edge}d_{\Xcal}+d_{\Xcal}d^{\ast}_{\edge}) \\ 
\frac{1}{r}(d^{\ast}_{\edge}d_{\Xcal}+d_{\Xcal}d^{\ast}_{\edge}) &  &  &  &  & \vdots& 0 \\ 
 0 &\ddots  & \ddots &   & \ddots & \vdots & \vdots \\ 
 \vdots &  &  &  &  & \vdots & \vdots\\
 \vdots & \ddots &  & \ddots & \ddots & \vdots & \vdots\\
 \vdots &  &  &  &  & 0 & \vdots\\
 \vdots &  \cdots &  0 &  &   & \frac{1}{r}(d^{\ast}_{\Xcal}d_{\edge}+d_{\edge}d^{\ast}_{\Xcal}) & \vdots\\ 
 0 & \cdots & \cdots & 0 & \frac{1}{r}(d^{\ast}_{\edge}d_{\Xcal}+d_{\Xcal}d^{\ast}_{\edge}) & \mathsmaller{\frac{1}{r^2}\Delta_{\Xcal}+d^{\ast}_{\edge}d_{\edge}-\partial^{2}_{r} -\frac{k(k-1)}{r^2}} & 0 \\ 
\frac{1}{r}(d^{\ast}_{\Xcal}d_{\edge}+d_{\edge}d^{\ast}_{\Xcal}) & \cdots & \cdots & \cdots & 0 & 0 & \mathsmaller{\Delta_{\edge}+\frac{1}{r^2}d^{\ast}_{\Xcal}d_{\Xcal}-\partial^{2}_{r}}
 \end{bmatrix}
    }
\]

Again each of the operators in the matrices is an element of the algebra $\operatorname{Diff}^{2}_{\operatorname{edge}}(M,\bigwedge^{\bullet}T^{\ast}_{\wedge}M)$, hence the result follows. 
\end{proof}

\subsection{Symbolic structure of edge-degenerate differential operators}\label{symbolsection}
In this section we collect the basic facts about the symbolic structure of edge-degenerate differential operators the we use in the following sections. The reader is referred to \cite{schulze3} chapters 2 and 3 for a full presentation. 

Let's consider a cone-degenerate differential operator (see (\ref{coneop})) \[\operatorname{P}=r^{-l}\sum\limits_{i\leq l}a_{i}(r)(-r\partial_{r})^{i}.\] As an element in $\operatorname{Diff}(M)$ it has its classical principal homogeneous symbol given by \[\sigma^{l}(\operatorname{P})(r,x,\rho,\xi)=r^{-l}\sum\limits_{i+\abs{\alpha}= l}a_{i,\alpha}(r,x)(- \sqrt{-1} r\rho)^{i}\xi^{\alpha}\] with smooth coefficients $a_{i,\alpha}(r,x)$ up to $r=0$. In order to reflect the singular behavior on the symbolic structure two additional symbols are introduced. First we have the homogeneous boundary symbol \[\sigma^{l}_{b}(\operatorname{P})(r,x,\tilde{\rho},\xi):=\sum\limits_{i+\abs{\alpha}= l}a_{i,\alpha}(r,x)(- \sqrt{-1} \tilde{\rho})^{i}\xi^{\alpha}\] defined on the cotangent bundle of the stretched manifold $T^{\ast}\MM$ and smooth up to $r=0$. 

The second symbol is the Mellin symbol $\sigma^{l}_{M}(\operatorname{P})$. Recall that  any cone-degenerate operator $\operatorname{P}=r^{-l}\sum\limits_{i\leq l}a_{i}(r)(-r\partial_{r})^{i}$ is given in terms of the Mellin transformation as follows \[\operatorname{P}=r^{-l}\mathcal{M}^{-1}h(r,z)\mathcal{M},\]
where $h(r,z)=\sum\limits_{i\leq l}a_{i}(r)z^{i}.$ At the singular set i.e. when $r=0$ we have a holomorphic family of operators \[h(0,z):\CC\longrightarrow \mathcal{L}\left(H^{s}(\Xcal),H^{s-l}(\Xcal)\right).\] This holomorphic operator-valued function is the Mellin symbol of $\operatorname{P}$ i.e.  $\sigma^{l}_{M}(\operatorname{P})(z):=h(0,z).$ 

Now consider an edge-degenerate differential operator (see  (\ref{edgeop})  above) \[\operatorname{P}=r^{-l}\sum\limits_{i+\abs{\alpha}\leq l}a_{i,\alpha}(r,u)(-r\partial_{r})^{i}(rD_u)^{\alpha}.\] In the same way as the cone-degenerate case it has a classical homogeneous principal symbol and a homogeneous boundary symbol  \[\sigma^{l}(\operatorname{P})(r,x,u,\rho,\xi,\eta)=r^{-l}\sum\limits_{i+\abs{\alpha}+\abs{\beta}= l}a_{i,\alpha,\beta}(r,x,u)(-\sqrt{-1} r\rho)^{i}(r\eta)^{\alpha}\] \[\sigma^{l}_{b}(\operatorname{P})(r,x,u,\tilde{\rho},\xi,\tilde{\eta})=\sum\limits_{i+\abs{\alpha}+\abs{\beta}= l}a_{i,\alpha,\beta}(r,x,u)(-\sqrt{-1} \tilde{\rho})^{i}\tilde{\eta}^{\alpha}\] satisfying the relation \[\sigma^{l}_{b}(\operatorname{P})(r,x,u,\rho,\xi,\eta)=r^{l}\sigma^{l}(\operatorname{P})(r,x,r^{-1}\rho,\xi,r^{-1}\eta).\]
In the edge setting we have the edge symbol $\sigma^{l}_{\wedge}(\operatorname{P})$ that  is defined as an operator-valued function acting on spaces  one level below in the singular hierarchy, in this case those operators act on the cone-Sobolev spaces $\mathcal{K}^{s,\gamma}(\cone)$ as cone-degenerate operators. More precisely we have \[\sigma^{l}_{\wedge}(\operatorname{P}):T^{\ast}\edge\setminus \{ 0\}\longrightarrow\mathcal{L}\left(    \mathcal{K}^{s,\gamma}(\cone) ,\mathcal{K}^{s-l,\gamma-l}(\cone)
\right)  \]
given by
\[\sigma^{l}_{\wedge}(\operatorname{P})(u,\eta)=r^{-l}\sum\limits_{i+\abs{\alpha}\leq l}a_{i,\alpha}(0,u)(-r\partial_{r})^{i}(r\eta)^{\alpha}.\] 
This is a family of cone-degenerate operators parametrized by the cotangent bundle of the edge $\edge$. 

\subsection{Ellipticity on manifolds with conical and edge singularities}\label{ellipticsection}
In this subsection we present the definition and principal implications of ellipticity for cone and edge-degenerate differential operators. We use these results to study the regularity of our moduli spaces in section~\ref{regularity}. Further details can be found in \cite{schulze3} chapter 2 and 3.

\begin{definition}
A cone-degenerate operator of order $l$, $\operatorname{P}\in\operatorname{Diff}^{l}_{\operatorname{cone}}(M)$, is called elliptic with respect to the weight $\gamma$ if 
\begin{enumerate}[i)]
\item $\sigma^{l}_{b}(\operatorname{P})(\tilde{\rho},\xi)\neq 0$ on $T^{\ast}_{}\MM\setminus \lbrace 0 \rbrace$
\item the Mellin symbol $\sigma^{l}_{M}(\operatorname{P})(z)=h(0,z)$ defines a family of Banach space  isomorphisms in $\mathcal{L}\left( H^{s}(\Xcal),H^{s-l}(\Xcal)\right)$ for some $s\in\RR$ and all $z\in \Gamma_{\frac{\operatorname{dim}\Xcal+1}{2}-\gamma}$. 
\end{enumerate}
\end{definition}

\begin{definition}\label{admissible}
If $\gamma\in\RR$ such that the Mellin symbol is invertible along $\Gamma_{\frac{\operatorname{dim}\Xcal+1}{2}-\gamma}$ we say that $\gamma$ is an admissible weight.
\end{definition}
Now we have some important implications of being elliptic.
\begin{theorem}\label{ellipticthmcone} Let $\operatorname{P}$ be a cone-degenerate operator of order $l$.
\begin{enumerate}[i)]

\item (Fredholm property) $\operatorname{P}$ is elliptic with respect to the weight $\gamma$ if and only if \[\operatorname{P}: \mathcal{H}^{s,\gamma}(M)\longrightarrow\mathcal{H}^{s-l,\gamma-l}(M)\] is Fredholm for every $s\in \RR$. 

\item (Parametrix) If $\operatorname{P}$ is elliptic then there exists a parametrix of order $-l$ with asymptotics. 

\item (Elliptic regularity) If $\operatorname{P}$ is elliptic with respect to $\gamma$ and $\operatorname{P}f=g$ with $g\in \mathcal{H}^{s-l,\gamma-l}_{O}(M)$ and $f\in \mathcal{H}^{-\infty,\gamma}(M)$ for some $s\in\RR$ and some asymptotic type $O$ associated with $\gamma-l$ then $f\in \mathcal{H}^{s,\gamma}_{Q}(M)$ with an asymptotic type $Q$ associated with $\gamma$. 
\end{enumerate}
\end{theorem}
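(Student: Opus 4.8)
The plan is to deduce all three statements from the existence of a parametrix inside Schulze's cone pseudodifferential algebra on $\MM$; since this is a classical package I would ultimately cite \cite{schulze1,schulze2,schulze3}, but the structural argument runs as follows. \textbf{Parametrix (ii).} Assume $\operatorname{P}\in\operatorname{Diff}^{l}_{\operatorname{cone}}(M)$ is elliptic with respect to $\gamma$. I would build an operator $\operatorname{B}$ of order $-l$ in the cone algebra with $\operatorname{B}\operatorname{P}-\operatorname{Id}$ and $\operatorname{P}\operatorname{B}-\operatorname{Id}$ Green (smoothing with asymptotics) by a two-step symbolic inversion. Away from $r=0$ one inverts the boundary symbol $\sigma^{l}_{b}(\operatorname{P})$, nonvanishing on $T^{\ast}\MM\setminus\{0\}$ by hypothesis i), exactly as in the standard calculus on a compact manifold with boundary; this produces a first approximate inverse modulo operators of lower order together with operators supported near $r=0$. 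Near $r=0$ one uses a Mellin quantization: since $\sigma^{l}_{M}(\operatorname{P})(z)=h(0,z)$ is a holomorphic family of isomorphisms along $\Gamma_{\frac{\operatorname{dim}\Xcal+1}{2}-\gamma}$, its inverse $h(0,z)^{-1}$ is meromorphic with poles off that line, and one forms the corresponding Mellin operator with that symbol. The asymptotic types $O,Q$ in the statement are precisely the bookkeeping of these poles. One then corrects the remaining error by an asymptotic (Neumann-type) summation inside the algebra, using that the cone calculus is closed under asymptotic sums, arriving at a Green remainder.

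\textbf{Fredholm property (i).} For sufficiency: Green operators are compact on the scale $\mathcal{H}^{s,\gamma}(M)$ because they gain Sobolev smoothness and weight and land in subspaces with prescribed conormal asymptotics, so on the compact stretched manifold $\MM$ a Rellich-type argument makes them compact; hence $\operatorname{B}$ is a Fredholm inverse of $\operatorname{P}$ for every $s$. For necessity, suppose $\operatorname{P}:\mathcal{H}^{s,\gamma}(M)\to\mathcal{H}^{s-l,\gamma-l}(M)$ is Fredholm. Localizing away from the vertex reduces failure of i) to the classical failure of Fredholmness on a manifold with boundary, so i) must hold; localizing at $r=0$ and exploiting the group action $\kappa_{\lambda}$ together with a Mellin Weyl-sequence argument shows that if $\sigma^{l}_{M}(\operatorname{P})(z_{0})$ failed to be invertible for some $z_{0}\in\Gamma_{\frac{\operatorname{dim}\Xcal+1}{2}-\gamma}$ one could manufacture either an infinite-dimensional approximate kernel or cokernel, contradicting Fredholmness. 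This forces ii).

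\textbf{Elliptic regularity (iii).} Given $\operatorname{P}f=g$ with $g\in\mathcal{H}^{s-l,\gamma-l}_{O}(M)$ and $f\in\mathcal{H}^{-\infty,\gamma}(M)$, apply the parametrix: $f=\operatorname{B}g+(\operatorname{B}\operatorname{P}-\operatorname{Id})f=\operatorname{B}g+\operatorname{G}f$ with $\operatorname{G}$ Green. Since $\operatorname{B}$ has order $-l$ and maps the weighted space with asymptotics $O$ into $\mathcal{H}^{s,\gamma}_{Q}(M)$, where $Q$ is determined by $O$ and the poles of $h(0,z)^{-1}$ in the strip between the weight lines, and since $\operatorname{G}f\in\mathcal{H}^{\infty,\gamma}_{Q'}(M)$ for some asymptotic type $Q'$, combining the asymptotic types yields $f\in\mathcal{H}^{s,\gamma}_{Q}(M)$.

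\textbf{Main obstacle.} The genuinely hard step is the parametrix construction with control of asymptotics: the Mellin quantization and the asymptotics algebra that tracks how conormal expansions are produced and composed, and the proof that the formal Neumann series can be asymptotically summed to a Green remainder. This occupies several chapters of \cite{schulze1,schulze2,schulze3}, which I would invoke rather than reproduce in detail.
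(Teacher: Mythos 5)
Your proposal is correct and matches the paper's treatment: the paper does not prove this theorem at all, but simply quotes it from Schulze's cone calculus (``Further details can be found in \cite{schulze3} chapter 2 and 3''), and your sketch — parametrix via inversion of $\sigma^{l}_{b}$ plus Mellin quantization of $h(0,z)^{-1}$ with a Green remainder, Fredholmness from compactness of Green operators and a singular-sequence argument for necessity, and regularity from the parametrix mapping properties with asymptotics — is precisely the standard argument contained in those references. Since both you and the paper ultimately defer the technical construction to \cite{schulze1,schulze2,schulze3}, there is nothing further to reconcile.
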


\begin{remark}\label{remarkasymptoticscone}
Let's consider a cone-degenerate operator $\operatorname{P}$ and $f\in\mathcal{H}^{s,\gamma}(M)$ such that $\operatorname{P}f=0$. The parametrix with asymptotics implies that $(\mathfrak{P}\operatorname{P}-I)(f)$ $=f\in\mathcal{H}^{\infty,\gamma}_{O}(M)$. Therefore solutions of cone-degenerate differential equations are smooth on the regular part of $M$ and have conormal asymptotics expansions near the vertex of the cone.  
\end{remark}
In order to introduce the notion of ellipticity in the edge singular setting we need to make some assumptions. Assume that there exist vector bundles $J^{+}$ and $J^{-}$ over $\edge$ and operator families parametrized by $T^{\ast}\edge\setminus\{ 0\}$ acting as follows \[\sigma^{l}_{\wedge}(\operatorname{T})(u,\eta):\mathcal{K}^{s,\gamma}(\cone)\longrightarrow J^{+}_{u},\]  \[\sigma^{l}_{\wedge}(\operatorname{C})(u,\eta): J^{-}_{u} \longrightarrow \mathcal{K}^{s-l,\gamma-l}(\cone),\]  \[\sigma^{l}_{\wedge}(\operatorname{B})(u,\eta): J^{-}_{u} \longrightarrow J^{+}_{u};\] such that \begin{equation}\label{symbolblock}
\begin{array}{cccc}
\begin{bmatrix}
 \sigma^{1}_{\wedge}(\operatorname{P}) (u,\eta) & \sigma^{l}_{\wedge}(\operatorname{C})(u,\eta)  \\ 
\sigma^{l}_{\wedge}(\operatorname{T})(u,\eta) & \sigma^{l}_{\wedge}(\operatorname{B})(u,\eta)
\end{bmatrix}: & \begin{array}{c}
 \mathcal{K}^{s,\gamma}(\cone ) \\ 
 \oplus \\ 
J^{-}_{u}
 \end{array} & \longrightarrow & \begin{array}{c}
 \mathcal{K}^{s-l,\gamma-l}(\cone ) \\ 
 \oplus \\ 
J^{+}_{u}
 \end{array}
\end{array} 
\end{equation}
 is a family of continuous operators for every $(u,\eta)\in T^{\ast}\edge\setminus\{ 0\}.$
 
 The existence of the vector bundles $J^{\pm}$ and operators acting between the fibers and cone-Sobolev spaces will be discussed in section~\ref{symbol-linearop}. Here we only mention that there is a topological obstruction (see theorem~\ref{toprestric}) that must be satisfied in order to guarantee the existence of $J^{\pm}$ and the operators. 
 
\begin{definition}\label{ellipticedgeoperator}
An edge-degenerate differential operator $\operatorname{P}\in\operatorname{Diff}^{l}_{\operatorname{edge}}(M)$ of order $l$ for which \eqref{symbolblock} exists,  is called elliptic with respect to the weight $\gamma$ if 
\begin{enumerate}[i)]
\item $\sigma^{l}_{b}(\operatorname{P})\neq 0$ on $T^{\ast}_{}\MM\setminus \lbrace 0 \rbrace$
\item the operator matrix (\ref{symbolblock}) defines an invertible operator for some $s\in\RR$ and each $(u,\eta)\in T^{\ast}\edge\setminus\{0\}$.
\end{enumerate}
\end{definition}

\begin{theorem}\label{ellipticityforedge} Let $\operatorname{P}$ be an edge-degenerate operator of order $l$.
\begin{enumerate}[i)]
\item (Fredholm property) $\operatorname{P}$ is elliptic with respect to the weight $\gamma$ if and only if  the operator \begin{equation}\label{block}\mathbf{A}_{_{\operatorname{P}}}:=\begin{bmatrix}
 \operatorname{P} & \operatorname{C}  \\ 
\mathcal{T} & \operatorname{B}
\end{bmatrix}=\mathcal{F}^{-1}_{\eta\rightarrow u} \begin{bmatrix}
 \sigma^{1}_{\wedge}(\operatorname{P}) (u,\eta) & \sigma^{l}_{\wedge}(\operatorname{C})(u,\eta)  \\ 
\sigma^{l}_{\wedge}(\operatorname{T})(u,\eta) & \sigma^{l}_{\wedge}(\operatorname{B})(u,\eta)
\end{bmatrix}\mathcal{F}_{u' \rightarrow \eta}
\end{equation}
acting on the spaces \[\mathbf{A}_{\operatorname{P}}: \begin{array}{c}
\mathcal{W}^{s,\gamma}(M) \\ 
\oplus \\ 
H^{s}(\edge,J^{-})
\end{array}\longrightarrow \begin{array}{c}
\mathcal{W}^{s-l,\gamma-l}(M) \\ 
\oplus \\ 
H^{s-l}(\edge,J^{+})
\end{array}\] is Fredholm for every $s\in\RR$.

\item (Parametrix) If $\operatorname{P}$ is elliptic then there exists a parametrix of order $-l$ with asymptotics for $\mathbf{A}_{\operatorname{P}}$. 

\item (Elliptic regularity) If $\operatorname{P}$ is elliptic with respect to $\gamma$ and $\mathbf{A}_{\operatorname{P}}f=g$ with \[g\in \begin{array}{c}
\mathcal{W}^{s-l,\gamma-l}_{O}(M) \\ 
\oplus \\ 
H^{s-l}(\edge,J^{+})
\end{array}\quad \text{and}\quad f\in\begin{array}{c}
\mathcal{W}^{-\infty,\gamma}_{}(M) \\ 
\oplus \\ 
H^{-\infty}(\edge,J^{-})
\end{array}\] for some $s\in\RR$ and some asymptotic type $O$ associated with $\gamma-l$ then \[f\in\begin{array}{c}
\mathcal{W}^{s,\gamma}_{Q}(M) \\ 
\oplus \\ 
H^{s}(\edge,J^{-})
\end{array}\] with an asymptotic type $Q$ associated with $\gamma$. 
\end{enumerate}
\end{theorem}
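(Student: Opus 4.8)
The plan is to deduce all three statements from the construction of Schulze's edge pseudodifferential algebra and the fact that, by hypothesis, $\mathbf{A}_{\operatorname{P}}$ is elliptic in that calculus. First I would recall the structure of an edge $\seudo$ on $M$: it is assembled from a classical $\seudo$ on the open stretched manifold $\MM\setminus\partial\MM$, a Mellin operator near the edge (quantizing an operator-valued Mellin symbol that acts in the axial variable $r$ and is smooth up to $r=0$), and Green operators encoding the conormal asymptotic types. Its principal symbolic structure is the pair consisting of the homogeneous interior symbol $\sigma^{l}_{b}(\operatorname{P})$ on $T^{\ast}\MM\setminus\{0\}$ and the edge symbol $\sigma^{l}_{\wedge}(\operatorname{P})(u,\eta)$, an operator family on the cone-Sobolev spaces $\scone$ that is twisted-homogeneous of degree $l$ with respect to the group action $\kappa_{\lambda}$. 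Since the Atiyah--Bott type obstruction vanishes (section~\ref{symbol-linearop}; see also theorem~\ref{toprestric}), the block matrix \eqref{symbolblock} exists and is precisely the principal edge symbol of $\mathbf{A}_{\operatorname{P}}$ in the edge calculus \emph{with boundary and coboundary conditions} --- the analogue for edges of the Boutet de Monvel algebra. The whole point of completing $\operatorname{P}$ by $\operatorname{T},\operatorname{C},\operatorname{B}$ is that ellipticity of $\operatorname{P}$ in the sense of Definition~\ref{ellipticedgeoperator} says exactly that both $\sigma^{l}_{b}(\operatorname{P})$ and the matrix \eqref{symbolblock} are invertible.

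For part (ii) I would construct a parametrix by the usual symbolic iteration inside the algebra. One inverts the interior symbol and the edge symbol \eqref{symbolblock}; the inverse of the edge symbol is again an operator-valued symbol of order $-l$ (twisted homogeneity is preserved under inversion), and since the algebra is closed under such operators this yields a first approximation $\mathbf{B}_{0}$ of order $-l$ with $\mathbf{A}_{\operatorname{P}}\mathbf{B}_{0}-I$ and $\mathbf{B}_{0}\mathbf{A}_{\operatorname{P}}-I$ of order $-1$ in the calculus. Asymptotic summation of the resulting Neumann series --- legitimate because the edge algebra admits asymptotic summation of symbols and is closed under composition --- produces $\mathbf{B}$ of order $-l$ whose remainders $\mathbf{A}_{\operatorname{P}}\mathbf{B}-I$ and $\mathbf{B}\mathbf{A}_{\operatorname{P}}-I$ are Green operators; these are not merely smoothing but also shift the weight and carry conormal asymptotics, which is the meaning of ``parametrix with asymptotics.''

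Part (i) then follows because Green operators are compact between the weighted spaces $\mathcal{W}^{s,\gamma}(M)\oplus H^{s}(\edge,J^{-})$ and $\mathcal{W}^{s-l,\gamma-l}(M)\oplus H^{s-l}(\edge,J^{+})$ --- compactness coming from the compact embeddings of weighted edge-Sobolev spaces under simultaneous loss of order and gain of weight, together with the extra weight gain forced by the asymptotics. Hence $\mathbf{B}$ is a two-sided Fredholm inverse and $\mathbf{A}_{\operatorname{P}}$ is Fredholm for every $s$. For the converse I would argue by contraposition: if $\sigma^{l}_{b}(\operatorname{P})$ vanishes at some covector, or if \eqref{symbolblock} fails to be invertible at some $(u,\eta)$, one localizes near that point and uses the (twisted) homogeneity of the symbols to build a singular Weyl-type sequence --- bounded with no convergent subsequence along the kernel direction, or obstructing closed range along the cokernel direction --- contradicting the Fredholm property. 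Finally (iii) is immediate from the parametrix: from $\mathbf{A}_{\operatorname{P}}f=g$ one writes $f=\mathbf{B}g-(\mathbf{B}\mathbf{A}_{\operatorname{P}}-I)f$, the first term inheriting the Sobolev order and asymptotic type of $g$ through the mapping properties of $\mathbf{B}$, and the second being a Green operator applied to $f$, so $f$ lands in $\mathcal{W}^{s,\gamma}_{Q}(M)\oplus H^{s}(\edge,J^{-})$ with the asserted asymptotic type $Q$.

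The main obstacle is (ii): building the parametrix \emph{with asymptotics} requires the full operator-valued Mellin calculus, the Mellin quantization theorem near the edge, and careful bookkeeping of the asymptotic types --- in particular the smoothing Mellin plus Green operators that arise when poles of the inverted conormal symbol of the edge symbol cross the weight line $\Gamma_{\frac{\operatorname{dim}\Xcal+1}{2}-\gamma}$. All of this is developed in \cite{schulze3}, Chapters~2--3; once it is in place the Fredholm property, the existence of the parametrix, and elliptic regularity are formal consequences, so for our purposes it remains only to verify that $\operatorname{DP[0]}$, completed as in section~\ref{symbol-linearop}, meets the hypotheses of Definition~\ref{ellipticedgeoperator}.
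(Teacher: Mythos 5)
Your proposal is a faithful outline of the standard edge-calculus argument: the paper itself gives no proof of this theorem, stating it as a known result of Schulze's theory and deferring to \cite{schulze3}, chapters 2--3, which is exactly the machinery (symbolic inversion of the pair $\bigl(\sigma^{l}_{b},\ \text{block edge symbol}\bigr)$, asymptotic summation, compactness of Green remainders, and the parametrix identity for regularity) that you reconstruct. The only place where your sketch is thinner than the cited source is the ``only if'' direction of (i), where the necessity of invertibility of the edge symbol requires more than a localized Weyl-sequence argument, but since the paper treats the whole theorem as quoted background this does not affect its use here.
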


\begin{remark}\label{remarkasymptoticsedge}
Analogously to remark~\ref{remarkasymptoticscone} we have that solutions to the edge-degenerate equation $\mathbf{A}_{\operatorname{P}}f=0$ belong to $\mathcal{W}^{\infty,\gamma}_{O}(M)$, hence they are smooth and have conormal asymptotics near the edge $\edge$.
\end{remark}

\subsection{The symbolic structure of DP{[}0{]}}\label{symbol-linearop}

Recall from section~\ref{symbolsection} that the symbolic structure of the edge-degenerate operator $\operatorname{DP[0]}$ is given by the pair 
\[ \left(
\begin{array}{cc}
 \sigma^{1}_{b}\left(\operatorname{DP[0]}\right)  (r,\sigma,u,\tilde{\rho},\xi,\tilde{\eta}), &\sigma^{1}_{\wedge} \left(\operatorname{DP[0]}\right) (u,\eta)
\end{array} \right) \]
where $ \sigma^{1}_{b}\left(\operatorname{DP[0]}\right)$ is a bundle map on $\pi^{\ast}_{_{T^{\ast}\MM}}(\bigwedge^{\bullet}T^{\ast}_{\wedge}\MM).$ 

The edge symbol  $\sigma^{1}_{\wedge}\left(\operatorname{DP[0]}\right)$ is a family of continuous linear operators acting on  cone-Sobolev spaces and parametrized by the cosphere bundle over $\edge$:
\[\sigma^{1}_{\wedge}\left(\operatorname{DP[0]}\right):S^{\ast}\edge\longrightarrow \mathcal{L}\left( \mathcal{K}^{s,\gamma}(\Xcal^{\wedge}, \bigwedge\nolimits^{\bullet}T^{\ast}_{\wedge}M), \mathcal{K}^{s-1,\gamma-1}(\Xcal^{\wedge}, \bigwedge\nolimits^{\bullet}T^{\ast}_{\wedge}M)\right).\]
The ellipticity of the operator $\operatorname{DP[0]}$ shall require the invertibility of its symbolic structure. From proposition~\ref{hodgeprop} and ~\ref{laplaceprop} we obtain the first part of the desired result.

\begin{proposition}\label{symboliso}
\[\sigma^{1}_{b}(\operatorname{DP[0]})(r,\sigma, u, \tilde{\rho},\xi, \tilde{\eta}): \bigwedge\nolimits^{\bullet}T^{\ast}_{\wedge,(r,\sigma, u)}\MM\longrightarrow \bigwedge\nolimits^{\bullet}T^{\ast}_{\wedge,(r,\sigma, u)}\MM\]
is a bundle isomorphism for every non-zero $(r,\sigma, u, \tilde{\rho},\xi, \tilde{\eta})\in T^{\ast}\MM$ up to $r=0.$
\end{proposition}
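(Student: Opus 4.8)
The plan is to reduce the statement to the classical fact that the Hodge--deRham operator $d+d^{\ast}$ on a Riemannian manifold has principal symbol $\sigma^{1}(d+d^{\ast})(\xi) = i(e_{\xi} - \iota_{\xi^{\sharp}})$ on $\bigwedge^{\bullet}$, which is invertible for $\xi \neq 0$ because $(e_{\xi} - \iota_{\xi^{\sharp}})^{2} = -|\xi|^{2}\,\mathrm{Id}$ (Clifford relation). First I would recall, from the definition in section~\ref{symbolsection}, that the boundary symbol is obtained from the classical homogeneous principal symbol $\sigma^{1}(\operatorname{DP[0]})$ by the rescaling $\sigma^{1}_{b}(\operatorname{P})(r,\sigma,u,\tilde\rho,\xi,\tilde\eta) = r^{1}\,\sigma^{1}(\operatorname{P})(r,\sigma,u,r^{-1}\tilde\rho,\xi,r^{-1}\tilde\eta)$; since $r > 0$ on the open part and the expressions extend smoothly to $r=0$, invertibility of $\sigma^{1}_{b}$ is equivalent to invertibility of $\sigma^{1}$ away from the zero covector, and the rescaling just amounts to a change of the covector fibre coordinates, which preserves nonvanishing.

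Next I would identify $\sigma^{1}(\operatorname{DP[0]})$ explicitly. Since $\operatorname{DP[0]}\big|_{\smooth_{0}} = d + d^{\ast}$ acting on sections of $\bigwedge^{\bullet}T^{\ast}_{\wedge}M$, and since the edge metric $g_{_M} = r^{2}g_{_\Xcal} + dr^{2} + g_{_\edge}$ is an honest smooth Riemannian metric on the open manifold $M$, the operator $d+d^{\ast}$ is a genuine first-order differential operator there whose principal symbol at a covector $\zeta$ is $i(\zeta\wedge\,\cdot\; - \iota_{\zeta^{\sharp}})$. The only subtlety is that the splitting $\bigwedge^{\bullet}T^{\ast}_{\wedge}\MM$ and the matrix decomposition of Proposition~\ref{hodgeprop} are written in the $dr, r\,d\sigma_{k}, du_{l}$ frame (the frame dual to the edge-degenerate vector fields), so the covector $\zeta$ must be expanded in that same frame: $\zeta = \tilde\rho\, dr + \sum_{k}\xi_{k}\,(r\,d\sigma_{k}) + \sum_{l}\tilde\eta_{l}\,du_{l}$. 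In this frame the fibre metric is the standard one, so the Clifford identity still gives $\sigma^{1}_{b}(\operatorname{DP[0]})^{2} = -(|\tilde\rho|^{2} + |\xi|^{2}_{g_{\Xcal}} + |\tilde\eta|^{2}_{g_{\edge}})\,\mathrm{Id}$. Alternatively — and this is the route I would actually carry out to make the argument self-contained and match the paper's notation — I would read the symbol directly off the matrices $\mathbf{A},\mathbf{B},\mathbf{C},\mathbf{D}$ of Proposition~\ref{hodgeprop}: replacing each $d_{\Xcal}\mapsto i\,\xi\wedge$, $d^{\ast}_{\Xcal}\mapsto -i\,\iota_{\xi^{\sharp}}$, $d_{\edge}\mapsto i\,\tilde\eta\wedge$, $d^{\ast}_{\edge}\mapsto -i\,\iota_{\tilde\eta^{\sharp}}$, $\partial_{r}\mapsto i\tilde\rho$, and dropping all zero-order ($1/r$-free) terms, and then checking that the resulting constant-coefficient symbol matrix squares to $-(|\tilde\rho|^{2}+|\xi|^{2}+|\tilde\eta|^{2})\,\mathrm{Id}$.

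Then the conclusion is immediate: for $(\tilde\rho,\xi,\tilde\eta)\neq 0$ the scalar $|\tilde\rho|^{2}+|\xi|^{2}_{g_{\Xcal}}+|\tilde\eta|^{2}_{g_{\edge}}$ is strictly positive (here I use that $g_{\Xcal}$ and $g_{\edge}$ are honest Riemannian metrics, hence positive definite), so $\sigma^{1}_{b}(\operatorname{DP[0]})$ is invertible with inverse $-(|\tilde\rho|^{2}+|\xi|^{2}+|\tilde\eta|^{2})^{-1}\sigma^{1}_{b}(\operatorname{DP[0]})$, and all coefficients extend smoothly up to $r=0$ by Proposition~\ref{hodgeprop}. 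The main obstacle I anticipate is purely bookkeeping: one must be careful that the matrix entries of $\mathbf{A},\mathbf{D}$ contain the full bigraded Hodge--deRham operators $d_{\Xcal}+d^{\ast}_{\Xcal}$ and $d_{\edge}+d^{\ast}_{\edge}$ intertwined across the block structure, and that the principal symbol genuinely reassembles to the single Clifford element $i(e_{\zeta}-\iota_{\zeta^{\sharp}})$ on $\bigwedge^{\bullet}$ rather than something block-diagonal — i.e. one must verify that the off-block terms $\pm\tfrac{1}{r}d_{\Xcal}$, $\pm\tfrac{1}{r}d^{\ast}_{\Xcal}$ in $\mathbf{B},\mathbf{C}$ and $\mathbf{D}$ contribute exactly the cross terms of $(e_{\zeta}-\iota_{\zeta^{\sharp}})^{2}$ so that the squares and mixed products cancel to yield the scalar. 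This is a finite linear-algebra verification with no analytic content; invoking the abstract Clifford identity for $d+d^{\ast}$ on the open Riemannian manifold $(M,g_{_M})$ bypasses it entirely, which is why I would present that as the primary argument and the matrix computation as a remark.
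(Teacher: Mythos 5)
Your proposal is correct, and it rests on the same underlying mechanism as the paper's proof: invertibility of $\sigma^{1}_{b}(\operatorname{DP[0]})$ is extracted from the fact that its square is the scalar, positive-definite boundary symbol of the Hodge--Laplacian. The execution, however, is genuinely different. The paper routes the argument through proposition~\ref{laplaceprop}: it uses the multiplicativity of the boundary symbol, $\sigma^{2}_{b}(\Delta_{g_{_{M}}})=\sigma^{1}_{b}(\operatorname{DP[0]})\circ\sigma^{1}_{b}(\operatorname{DP[0]})$ (citing \cite{schulze3}), discards the order-one blocks $\textbf{B}',\textbf{C}'$, verifies by hand that the mixed terms such as $d^{\ast}_{\edge}d_{\Xcal}+d_{\Xcal}d^{\ast}_{\edge}$ have vanishing boundary symbol, and reads off that $\sigma^{2}_{b}(\Delta_{g_{_{M}}})$ is diagonal with positive entries of the form $\abs{\xi}^{2}_{g_{_{\Xcal}}}+\abs{\tilde{\eta}}^{2}_{g_{_{\edge}}}+\abs{\tilde{\rho}}^{2}$, hence an isomorphism off the zero section. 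You instead identify $\sigma^{1}_{b}(\operatorname{DP[0]})$ intrinsically as the Clifford symbol of $d+d^{\ast}$ written in the coframe $dr,\ r\,d\sigma_{k},\ du_{l}$, in which the fibre metric is $r$-independent and positive definite up to $r=0$, and combine the rescaling relation $\sigma^{1}_{b}(\operatorname{P})(\tilde{\rho},\xi,\tilde{\eta})=r\,\sigma^{1}(\operatorname{P})(r^{-1}\tilde{\rho},\xi,r^{-1}\tilde{\eta})$ with the Clifford identity; this bypasses proposition~\ref{laplaceprop} and the explicit block matrices altogether (your matrix check is offered only as a fallback), giving a shorter and more conceptual argument, at the price of having to justify that in the degenerate frame the boundary symbol really reassembles into the single Clifford element --- which is precisely the cross-term cancellation the paper checks explicitly. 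Your treatment of $r=0$ via smooth extension of the $r$-independent coefficients is the right fix for the fact that the equivalence with the interior symbol $\sigma^{1}$ only makes sense for $r>0$. One cosmetic slip: with the convention $\sigma^{1}(d)(\zeta)=i\,\zeta\wedge\cdot$ and $\sigma^{1}(d^{\ast})(\zeta)=-i\,\iota_{\zeta^{\sharp}}$, the square is $+\bigl(\abs{\tilde{\rho}}^{2}+\abs{\xi}^{2}_{g_{_{\Xcal}}}+\abs{\tilde{\eta}}^{2}_{g_{_{\edge}}}\bigr)\operatorname{Id}$ rather than the negative of it (consistent with the positive diagonal entries the paper finds for $\sigma^{2}_{b}(\Delta_{g_{_{M}}})$); the sign has no bearing on invertibility or on the formula for the inverse up to that sign.
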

 \begin{proof}
 To prove this result we shall use the symbolic structure of the Hodge-Laplace operator 
\[ \left(
\begin{array}{cc}
 \sigma^{2}_{b}(\Delta_{g_{_{M}}}) (r,\sigma,u,\tilde{\rho},\xi,\tilde{\eta}), & \sigma^{2}_{\wedge}(\Delta_{g_{_{M}}}) (u,\eta)
\end{array} \right).\]
From \cite{schulze3} theorem 3.4.56 we have the natural expected symbolic relation
\begin{equation}\label{laplacesymbol}
 \sigma^{2}_{b}(\Delta_{g_{_{M}}}) = \sigma^{1}_{b}(\operatorname{DP[0]})\circ  \sigma^{1}_{b}(\operatorname{DP[0]})
\end{equation}
\begin{equation} \label{laplaceedgesym}
 \sigma^{2}_{\wedge}(\Delta_{g_{_{M}}}) = \sigma^{1}_{\wedge}(\operatorname{DP[0]})\circ  \sigma^{1}_{\wedge}(\operatorname{DP[0]})
\end{equation}
 as $\operatorname{DP[0]}=d+d^{\ast}$ and $\Delta_{g_{M}}=(d+d^{\ast})\circ(d+d^{\ast})$.
Now, from the matrices representing $\Delta_{g_{_{M}}}$ in proposition~\ref{laplaceprop} we have that the elements in $\textbf{B}'$ and $\textbf{C}'$  are operators of order 1 hence they do not intervene in the computation of $\sigma^{2}_{b}(\Delta_{g_{M}})$. Hence let's focus on the operators in $\textbf{A}'$ and $\textbf{D}'$.

 Observe that for any  $\alpha\in\bigwedge^{\bullet}T^{\ast}_{\wedge}M$ 
\begin{align*}
 \sigma^{2}_{ b}(d^{\ast}_{\edge}d_{\Xcal}+d_{\Xcal}d^{\ast}_{\edge})(r,\sigma, u, \tilde{\rho},\xi, \tilde{\eta})(\alpha) & = \tilde{\eta}^{\ast}\lrcorner (\xi\wedge\alpha)+\xi\wedge( \tilde{\eta}^{\ast}\lrcorner\alpha)
\\& =(\tilde{\eta}^{\ast}\lrcorner\xi)\wedge\alpha - \xi\wedge( \tilde{\eta}^{\ast}\lrcorner\alpha)+ \xi\wedge( \tilde{\eta}^{\ast}\lrcorner\alpha)
\\& =0
\end{align*}
as $\tilde{\eta}^{\ast}\in T\edge$ and $\xi\in T^{\ast}\Xcal.$
Moreover
\begin{align*}
 \sigma^{2}_{ b}(d^{\ast}_{\edge}d_{\edge})( \tilde{\eta})(\Theta_{\Xcal}) & = \tilde{\eta}^{\ast}\lrcorner (\tilde{\eta}\wedge\Theta_{\Xcal})
\\& =\Theta_{\Xcal}+\tilde{\eta}^{}\wedge (\tilde{\eta}^{\ast}\lrcorner\Theta_{\Xcal})
\\& =\Theta_{\Xcal}
\end{align*}
and in the same way \[ \sigma^{2}_{ b}(d^{\ast}_{\Xcal}d_{\Xcal})( \xi)(\Theta_{\edge})=\Theta_{\edge}.\]
Therefore $ \sigma^{2}_{b}(\Delta_{g_{_{M}}})(r,\sigma,u,\tilde{\rho},\xi,\tilde{\eta})$ is a diagonal matrix with entries given by \[\sigma^{2}_{b}(\Delta_{\Xcal}+d^{\ast}_{\edge}d_{\edge}-\partial^{2}_{r})(r,\sigma, u, \tilde{\rho},\xi, \tilde{\eta})=\abs{\xi}_{g_{_{\Xcal}}}^{2}+1+\abs{\tilde{\rho}}^{2}\] \[\sigma^{2}_{b}(\Delta_{\edge}+d^{\ast}_{\Xcal}d_{\Xcal}-\partial^{2}_{r}) (r,\sigma, u, \tilde{\rho},\xi, \tilde{\eta})=\abs{\tilde{\eta}}_{g_{_{\edge}}}^{2}+1+\abs{\tilde{\rho}}^{2}\] and \[\sigma^{2}_{b}(\Delta_{\Xcal}+\Delta_{\edge}-\partial^{2}_{r})(r,\sigma, u, \tilde{\rho},\xi, \tilde{\eta})=\abs{\xi}_{g_{_{\Xcal}}}^{2}+\abs{\tilde{\eta}}_{g_{_{\edge}}}^{2}+\abs{\tilde{\rho}}^{2}.\]
Hence  \[\sigma^{2}_{b}(\Delta_{g_{_{M}}})(r,\sigma, u, \tilde{\rho},\xi, \tilde{\eta}):\bigwedge\nolimits^{\bullet}T^{\ast}_{\wedge,(r,\sigma,u)}\MM\longrightarrow \bigwedge\nolimits^{\bullet}T^{\ast}_{\wedge,(r,\sigma,u)}\MM\]
is an isomorphism for every non-zero $(r,\sigma, u, \tilde{\rho},\xi, \tilde{\eta})\in T^{\ast}\MM$ up to $r=0$. By \eqref{laplacesymbol} we have that \[\sigma^{1}_{b}(\operatorname{DP[0]})(r,\sigma, u, \tilde{\rho},\xi, \tilde{\eta}): \bigwedge\nolimits^{\bullet}T^{\ast}_{\wedge,(r,\sigma,u)}\MM\longrightarrow \bigwedge\nolimits^{\bullet}T^{\ast}_{\wedge,(r,\sigma,u)}\MM\] has the same property.
 \end{proof}

In order to obtain information about the invertibility of the edge symbol $\sigma^{1}_{\wedge}(\operatorname{DP[0]})$ we will use proposition~\ref{symboliso} together with theorem 2.4.18 and theorem 3.5.1 in \cite{schulze3}. These theorems state the existence of admissible weights $\gamma\in\RR$ such that $\sigma^{1}_{\wedge}(\operatorname{DP[0]})$ is a Fredholm operator on the corresponding cone-Sobolev spaces of any order. We adapt those theorems to our setting in the  following result. Its proof follows immediately from theorem 2.4.18 and theorem 3.5.1 in \cite{schulze3}.

\begin{theorem}\label{thmweight} 
The condition that 
  \[\sigma^{1}_{b}(\operatorname{DP[0]})(r,\sigma, u, \tilde{\rho},\xi, \tilde{\eta}): \bigwedge\nolimits^{\bullet}T^{\ast}_{\wedge,(r,\sigma,u)}\MM\longrightarrow \bigwedge\nolimits^{\bullet}T^{\ast}_{\wedge,(r,\sigma,u)}\MM\]
is an isomorphism for every non-zero $(r,\sigma, u, \tilde{\rho},\xi, \tilde{\eta})\in T^{\ast}_{\wedge}\MM $ up to $r=0$, implies that there exists a countable set $\Lambda\subset\CC$, where $\Lambda \cap K$ is finite for every $K\subset\subset\CC$, such that 
\[\sigma^{1}_{M}\left( \sigma^{1}_{\wedge}(\operatorname{DP[0]})(u,\eta)   \right)(z): H^{s}(\Xcal, \bigwedge\nolimits^{\bullet}T^{\ast}_{\wedge}M)\longrightarrow H^{s-1}(\Xcal, \bigwedge\nolimits^{\bullet}T^{\ast}_{\wedge}M)\]
is an isomorphism (invertible, linear, continuous operator) for every $z\in\CC\setminus\Lambda$ and all $s\in\RR$. 
This implies that there is a countable subset $D\subset\RR$ given by $D=\Lambda \cap \RR$, with the property that  $D\cap\lbrace z : a\leq \operatorname{Re}z \leq b \rbrace$ is finite for every $a\leq b$, such that 
\[\sigma^{1}_{\wedge}(\operatorname{DP[0]})(u,\eta): \mathcal{K}^{s,\gamma}(\cone, \bigwedge\nolimits^{\bullet}T^{\ast}_{\wedge}M)\longrightarrow\mathcal{K}^{s-1,\gamma-1}(\cone,\bigwedge\nolimits^{\bullet}T^{\ast}_{\wedge}M)\]
is a family of Fredholm operators for each $\gamma\in\RR\setminus D$ and $(u,\eta)\in S^{\ast}\edge$ with  $\eta\neq 0.$
\end{theorem}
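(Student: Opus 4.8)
The plan is to regard $\sigma^{1}_{\wedge}(\operatorname{DP[0]})(u,\eta)$, for each fixed $(u,\eta)\in S^{\ast}\edge$ with $\eta\neq 0$, as a cone-degenerate differential operator of order one on the infinite cone $\cone=\Rpos\times\Xcal$ acting on sections of the restriction of $\bigwedge^{\bullet}T^{\ast}_{\wedge}M$, to verify the two symbolic conditions that make such an operator elliptic, and then to quote the Fredholm criterion for cone-degenerate operators (Theorem~\ref{ellipticthmcone}, in the open-cone version of \cite{schulze3}, theorem~3.5.1, together with theorem~2.4.18 there for the statement about $\Lambda$). The term $(r\eta)^{\alpha}$ in $\sigma^{1}_{\wedge}(\operatorname{DP[0]})(u,\eta)=r^{-1}\sum_{i+\abs{\alpha}\leq 1}a_{i,\alpha}(0,u)(-r\partial_{r})^{i}(r\eta)^{\alpha}$ is what makes this operator elliptic at the conical exit $r=\infty$ whenever $\eta\neq 0$ (the leading exit symbol being exterior multiplication and contraction by $\eta$), and this is precisely why the hypothesis $\eta\neq 0$ cannot be dropped: it is what lets the cone calculus run on $\mathcal{K}^{s,\gamma}(\cone,\cdot)$ rather than on a compact cone.

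First I would check the boundary symbol condition and identify the conormal symbol. A short computation with the definitions shows that, for $r\geq 0$, the homogeneous boundary symbol of the cone-degenerate operator $\sigma^{1}_{\wedge}(\operatorname{DP[0]})(u,\eta)$ in the covariables $(\tilde\rho,\xi)$ dual to $r$ and to $\Xcal$ equals $\sigma^{1}_{b}(\operatorname{DP[0]})$ evaluated at $r=0$ with the edge covector replaced by $r\eta$; since $(\tilde\rho,\xi,r\eta)\neq 0$ whenever $(\tilde\rho,\xi)\neq 0$, Proposition~\ref{symboliso} gives invertibility of this boundary symbol up to $r=0$. For the conormal (Mellin) symbol $\sigma^{1}_{M}\big(\sigma^{1}_{\wedge}(\operatorname{DP[0]})(u,\eta)\big)(z)$, evaluating the Mellin symbol at $r=0$ annihilates every term carrying a positive power of $r$, in particular all $(r\eta)^{\alpha}$ terms and, reading the matrices of Proposition~\ref{hodgeprop}, also all the tangential $d_{\edge},d^{\ast}_{\edge}$ contributions; what survives is the holomorphic pencil $z\mapsto a_{1}z+a_{0}$, where $a_{1}$ is a constant bundle endomorphism (the $\pm\partial_{r}$-coefficients) and $a_{0}$ is the tangential Hodge--deRham operator on $\Xcal$ together with the scalar $r^{-1}$-coefficients. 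In particular this pencil depends neither on $\eta$ nor on $u$, so there is a single exceptional set $\Lambda$ to speak of; and for each fixed $z$ it is elliptic on the compact manifold $\Xcal$, its principal symbol being $\sigma^{1}_{b}(\operatorname{DP[0]})(0,x,u,0,\xi,0)$, which is invertible for $\xi\neq 0$ by Proposition~\ref{symboliso}, hence a Fredholm map $H^{s}(\Xcal,\bigwedge^{\bullet}T^{\ast}_{\wedge}M)\to H^{s-1}(\Xcal,\bigwedge^{\bullet}T^{\ast}_{\wedge}M)$ whose invertibility is independent of $s$ by elliptic regularity.

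Next I would extract from the invertibility of $\sigma^{1}_{b}(\operatorname{DP[0]})$ that the pencil $z\mapsto a_{1}z+a_{0}$ is parameter-dependent elliptic along every vertical line $\Gamma_{\beta}$ (with $\abs{\operatorname{Im}z}$ as the large parameter): writing $z=\beta+i\tau$ and comparing with $\sigma^{1}_{b}(\operatorname{DP[0]})(0,x,u,-\tau,\xi,0)$, the $\beta$-contribution is lower order and Proposition~\ref{symboliso} gives invertibility of the leading part for $(\tau,\xi)\neq 0$. Hence the pencil is invertible on $\Gamma_{\beta}$ for $\abs{z}$ large, and the analytic Fredholm theorem, applied to this holomorphic family of Fredholm operators on $\Xcal$, yields a discrete exceptional set $\Lambda\subset\CC$, with $\Lambda\cap K$ finite for every $K\subset\subset\CC$, outside of which $\sigma^{1}_{M}(\sigma^{1}_{\wedge}(\operatorname{DP[0]})(u,\eta))(z)$ is an isomorphism for all $s$; this is the first assertion, and is exactly theorem~2.4.18 of \cite{schulze3} in our situation.

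Finally, set $D:=\{\gamma\in\RR : \Gamma_{\frac{\operatorname{dim}\Xcal+1}{2}-\gamma}\cap\Lambda\neq\emptyset\}$, which meets every bounded interval in finitely many points because $\Lambda$ is discrete. For $\gamma\in\RR\setminus D$ the conormal symbol is invertible along $\Gamma_{\frac{\operatorname{dim}\Xcal+1}{2}-\gamma}$, so $\sigma^{1}_{\wedge}(\operatorname{DP[0]})(u,\eta)$ satisfies both conditions of the ellipticity definition for cone-degenerate operators and is in addition exit-elliptic at $r=\infty$ because $\eta\neq 0$; Theorem~\ref{ellipticthmcone}, in the form appropriate to the open cone (theorem~3.5.1 of \cite{schulze3}), then gives that
\[\sigma^{1}_{\wedge}(\operatorname{DP[0]})(u,\eta):\mathcal{K}^{s,\gamma}(\cone,\bigwedge\nolimits^{\bullet}T^{\ast}_{\wedge}M)\longrightarrow\mathcal{K}^{s-1,\gamma-1}(\cone,\bigwedge\nolimits^{\bullet}T^{\ast}_{\wedge}M)\]
is Fredholm for every $s\in\RR$ and every $(u,\eta)\in S^{\ast}\edge$ with $\eta\neq 0$. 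The step I expect to be the real obstacle is the passage from the single interior condition ``$\sigma^{1}_{b}(\operatorname{DP[0]})$ invertible up to $r=0$'' to the two facts that actually feed the cone calculus — parameter-dependent ellipticity of the conormal pencil along the weight lines, and ellipticity at the conical exit — since this is where the bookkeeping of Schulze's symbol hierarchy (and the precise identifications in theorems~2.4.18 and~3.5.1 of \cite{schulze3}) does the work; once those are in place the Fredholm conclusion is formal.
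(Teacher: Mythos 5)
Your proposal is correct and follows essentially the same route as the paper, which simply invokes theorems~2.4.18 and~3.5.1 of \cite{schulze3}: you verify their hypotheses explicitly (invertibility of $\sigma^{1}_{b}(\operatorname{DP[0]})$ via Proposition~\ref{symboliso}, identification of the conormal pencil, parameter-dependent ellipticity along weight lines plus analytic Fredholm theory for $\Lambda$, and exit ellipticity at $r=\infty$ from $\eta\neq 0$) and then quote the same two theorems for the Fredholm conclusion. Your formulation of $D$ as the set of weights $\gamma$ for which $\Gamma_{\frac{\operatorname{dim}\Xcal+1}{2}-\gamma}$ meets $\Lambda$ is in fact the condition actually needed, and its local finiteness comes from the finiteness of $\Lambda$ in vertical strips (uniform parameter-dependent ellipticity for $\operatorname{Re}z$ in compacts), not from discreteness of $\Lambda$ alone.
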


Theorem~\ref{thmweight} tell us that for an admissible weight $\gamma$, the wedge symbol $\sigma^{1}_{\wedge}(\operatorname{DP[0]})(u,\eta)$ defines a Fredholm operator for each $(u,\eta)\in S^{\ast}\edge$. However, if we require the ellipticity of $\operatorname{DP[0]}$ we need to have a family of invertible operators. In some cases this can be achieved by adding boundary and coboundary operators that defines an elliptic edge boundary value problem. This can be done in the following way. 

For each $(u,\eta)\in S^{\ast}\edge$ we have that $\sigma^{1}_{\wedge}(\operatorname{DP[0]})(u,\eta)$ is a Fredholm operator, then \[\operatorname{Ker}\left( \sigma^{1}_{\wedge}(\operatorname{DP[0]})(u,\eta)\right)\subset\mathcal{K}^{s,\gamma}(\cone, \bigwedge\nolimits^{\bullet}T^{\ast}_{\wedge}M) \] is a finite dimensional subspace. Let $N{(u,\eta)}=\operatorname{dim}\operatorname{Ker}\left( \sigma^{1}_{\wedge}(\operatorname{DP[0]})(u,\eta)\right)$ and choose an isomorphism \[\operatorname{k}(u,\eta):\CC^{N(u,\eta)}\longrightarrow \operatorname{Ker}\left( \sigma^{1}_{\wedge}(\operatorname{DP[0]})(u,\eta)\right),\]
then \[\begin{pmatrix}
\sigma^{1}_{\wedge}(\operatorname{DP[0]}) & \operatorname{k}
\end{pmatrix}(y,\eta):\]\[ \begin{array}{ccc}
\mathcal{K}^{s,\gamma}(\cone, \bigwedge\nolimits^{\bullet}T^{\ast}_{\wedge}M) & • & • \\ 
\oplus & \longrightarrow & \mathcal{K}^{s-1,\gamma-1}(\cone, \bigwedge\nolimits^{\bullet}T^{\ast}_{\wedge}M) \\ 
\CC^{N(u,\eta)} & • & •
\end{array} \]
is a surjective operator. 

Now, because the set of surjective operators is an open set and the space $S^{\ast}\edge$ is compact, there exists $N^{+}\in\mathbb{N}$ and $\operatorname{c}
\in\mathcal{L}\left(\CC^{N^{+}},  \mathcal{K}^{s-1,\gamma-1}(\cone, \bigwedge\nolimits^{\bullet}T^{\ast}_{\wedge}M)\right)$ such that   \begin{equation}\label{Fredholmedgesymbol}
\begin{pmatrix}
\sigma^{1}_{\wedge}(\operatorname{DP[0]}) & \operatorname{c}
\end{pmatrix}(y,\eta): \begin{array}{c}
\mathcal{K}^{s,\gamma}(\cone ,\bigwedge\nolimits^{\bullet}T^{\ast}_{\wedge}M) \\ 
\oplus \\ 
\CC^{N^{+}}
\end{array} \longrightarrow\mathcal{K}^{s-1,\gamma-1}(\cone,\bigwedge\nolimits^{\bullet}T^{\ast}_{\wedge}M) 
\end{equation}
is Fredholm and surjective for each $(y,\eta)\in S^{\ast}\edge$ (see \cite{schulze3} theorem 1.2.30 for further details).
Because $\begin{pmatrix}
\sigma^{1}_{\wedge}(\operatorname{DP[0]}) & \operatorname{c}
\end{pmatrix}(y,\eta)$ is Fredholm and surjective we have that the kernel of $\begin{pmatrix}
\sigma^{1}_{\wedge}(\operatorname{DP[0]}) & \operatorname{c}
\end{pmatrix}(y,\eta)$ has constant dimension equal to its index for every $(y,\eta)\in S^{\ast}\edge$:
\[\operatorname{dim}\operatorname{Ker}\begin{pmatrix}
\sigma^{1}_{\wedge}(\operatorname{DP[0]}) & \operatorname{c}
\end{pmatrix}(y,\eta)=\operatorname{Ind}\begin{pmatrix}
\sigma^{1}_{\wedge}(\operatorname{DP[0]}) & \operatorname{c}
\end{pmatrix}(y,\eta):=N^{-}\] for all $(u,\eta)\in S^{\ast}\edge.$
The finite dimensional spaces $\operatorname{Ker}\begin{pmatrix}
\sigma^{1}_{\wedge}(\operatorname{DP[0]}) & \operatorname{c}
\end{pmatrix}(y,\eta)$ define a smooth vector bundle  over $S^{\ast}\edge$ (see section 1.2.4 in \cite{schulze3}).

Now consider the trivial bundle of dimension $N^{+}$ over $S^{\ast}\edge$, here we denote it simply as $\CC^{N^{+}}.$ The formal difference of these vector bundles defines an element in the K-theory of $S^{\ast}\edge$
\[\left[\operatorname{Ker} \begin{pmatrix}
 \sigma^{1}_{\wedge}(\operatorname{DP[0]})  & \operatorname{c}
\end{pmatrix} \right]-\left[ \CC^{N^{+}} \right] \in K(S^{\ast}\edge).\]
This element in the K-group represents a topological obstruction to the existence of an elliptic edge boundary value problem for the operator $\operatorname{DP[0]}$. More precisely we have the following theorem. For its proof and more details about the obstruction of ellipticity in the edge calculus see \cite{naza} section 6.2.

\begin{theorem}\label{toprestric}
A necessary and sufficient condition for the existence of an elliptic edge problem for $\operatorname{DP[0]}$ is given by \begin{equation}\label{restriction}\left[\operatorname{Ker} \begin{pmatrix}
 \sigma^{1}_{\wedge}(\operatorname{DP[0]})  & \operatorname{c}
\end{pmatrix} \right]-\left[ \CC^{N^{+}} \right] \in \pi^{\ast}_{_{S^{\ast}\edge}}K(\edge)
\end{equation}
where  $\pi^{}_{_{S^{\ast}\edge}}:S^{\ast}\edge\longrightarrow\edge$ is the natural projection and $\pi^{\ast}_{_{S^{\ast}\edge}}K(\edge)$ is the subgroup of $K(S^{\ast}\edge)$ generated by vector bundles lifted from $\edge$ by means of  $\pi^{}_{_{S^{\ast}\edge}}$. 
\end{theorem}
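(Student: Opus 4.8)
The plan is to deduce both implications from the index bundle of the edge symbol $\sigma^{1}_{\wedge}(\operatorname{DP[0]})$, viewed as a family of Fredholm operators over the compact base $S^{\ast}\edge$ (Fredholmness being guaranteed by Proposition~\ref{symboliso} together with Theorem~\ref{thmweight}). The first step is to record that the completed row $\begin{pmatrix}\sigma^{1}_{\wedge}(\operatorname{DP[0]}) & \operatorname{c}\end{pmatrix}$ of \eqref{Fredholmedgesymbol} is Fredholm and surjective, so that its index bundle is represented by the honest bundle $\operatorname{Ker}\begin{pmatrix}\sigma^{1}_{\wedge}(\operatorname{DP[0]}) & \operatorname{c}\end{pmatrix}$; since adjoining the trivial summand $\CC^{N^{+}}$ to the domain shifts the index bundle by $[\CC^{N^{+}}]$, one obtains
\[\operatorname{ind}\bigl[\sigma^{1}_{\wedge}(\operatorname{DP[0]})\bigr]=\bigl[\operatorname{Ker}\begin{pmatrix}\sigma^{1}_{\wedge}(\operatorname{DP[0]}) & \operatorname{c}\end{pmatrix}\bigr]-\bigl[\CC^{N^{+}}\bigr]\in K(S^{\ast}\edge),\]
a class independent of the chosen completion. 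Thus \eqref{restriction} is equivalent to the statement that $\operatorname{ind}[\sigma^{1}_{\wedge}(\operatorname{DP[0]})]$ lies in the image of $\pi^{\ast}_{_{S^{\ast}\edge}}\colon K(\edge)\to K(S^{\ast}\edge)$, and I would prove that this membership is exactly what is forced by, and what permits, the existence of an elliptic edge problem.

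For necessity I would assume that \eqref{symbolblock} is an isomorphism for every $(u,\eta)\in S^{\ast}\edge$ with $J^{\pm}$ pulled back from $\edge$, and then invert the block and compose with the projection onto the first target component: this shows that $\begin{pmatrix}\sigma^{1}_{\wedge}(\operatorname{DP[0]}) & \sigma^{1}_{\wedge}(\operatorname{C})\end{pmatrix}\colon\mathcal{K}^{s,\gamma}(\cone)\oplus\pi^{\ast}_{_{S^{\ast}\edge}}J^{-}\to\mathcal{K}^{s-1,\gamma-1}(\cone)$ is surjective and that the bottom row restricts to a fibrewise isomorphism of its kernel onto $\pi^{\ast}_{_{S^{\ast}\edge}}J^{+}$; hence $\operatorname{Ker}\begin{pmatrix}\sigma^{1}_{\wedge}(\operatorname{DP[0]}) & \sigma^{1}_{\wedge}(\operatorname{C})\end{pmatrix}\cong\pi^{\ast}_{_{S^{\ast}\edge}}J^{+}$ as bundles over $S^{\ast}\edge$. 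Passing to index bundles and using additivity under the adjunction of $\pi^{\ast}_{_{S^{\ast}\edge}}J^{-}$ to the domain gives $\operatorname{ind}[\sigma^{1}_{\wedge}(\operatorname{DP[0]})]=\pi^{\ast}_{_{S^{\ast}\edge}}([J^{+}]-[J^{-}])$, which together with the identity above is \eqref{restriction}.

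For sufficiency I would assume \eqref{restriction}, i.e. $[\operatorname{Ker}\begin{pmatrix}\sigma^{1}_{\wedge}(\operatorname{DP[0]}) & \operatorname{c}\end{pmatrix}]=\pi^{\ast}_{_{S^{\ast}\edge}}(\mu+[\CC^{N^{+}}])$ for some $\mu\in K(\edge)$. Since $\edge$ is compact, standard facts of topological $K$-theory over a compact base yield a genuine bundle $J^{+}$ over $\edge$ and an integer $R$ with a bundle isomorphism $\operatorname{Ker}\begin{pmatrix}\sigma^{1}_{\wedge}(\operatorname{DP[0]}) & \operatorname{c}\end{pmatrix}\oplus\CC^{R}\cong\pi^{\ast}_{_{S^{\ast}\edge}}J^{+}$ over $S^{\ast}\edge$. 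I would then enlarge the coboundary symbol by a zero block, setting $\sigma^{1}_{\wedge}(\operatorname{C}):=\begin{pmatrix}\operatorname{c} & 0\end{pmatrix}$ acting from $\mathcal{K}^{s,\gamma}(\cone)\oplus J^{-}_{u}$ with $J^{-}:=\CC^{N^{+}+R}$ trivial over $\edge$, so that $\begin{pmatrix}\sigma^{1}_{\wedge}(\operatorname{DP[0]}) & \sigma^{1}_{\wedge}(\operatorname{C})\end{pmatrix}$ remains surjective with kernel bundle isomorphic to $\pi^{\ast}_{_{S^{\ast}\edge}}J^{+}$; fixing such an isomorphism $\beta$ and a continuous fibrewise projection $\Pi$ of $\mathcal{K}^{s,\gamma}(\cone)\oplus J^{-}_{u}$ onto this finite-rank kernel subbundle, I would define $\begin{pmatrix}\sigma^{1}_{\wedge}(\operatorname{T}) & \sigma^{1}_{\wedge}(\operatorname{B})\end{pmatrix}:=\beta\circ\Pi$. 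A fibrewise check—surjectivity from the top row together with $\beta$, injectivity because any vector in the full kernel lies in the kernel of the top row and is therefore annihilated by $\beta$—shows that \eqref{symbolblock} is then invertible for every $(u,\eta)\in S^{\ast}\edge$. Finally I would extend these symbols to $T^{\ast}\edge\setminus\{0\}$ by the twisted homogeneity demanded in the edge calculus and quantize them, $\Pi$ being realised as a Green symbol, to obtain operators $\operatorname{C},\operatorname{T},\operatorname{B}$ for which $\mathbf{A}_{\operatorname{DP[0]}}$ as in \eqref{block} is an elliptic edge problem by Theorem~\ref{ellipticityforedge}.

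The $K$-theoretic bookkeeping above is routine; the main obstacle is to perform this completion \emph{within} the edge symbol calculus. One has to verify that the projection $\Pi$ onto the kernel subbundle over $S^{\ast}\edge$ and the isomorphism $\beta$ can be arranged to have the twisted $\kappa$-homogeneity of genuine edge symbols and to be quantized as trace, potential and pseudodifferential operators enjoying the continuity and parametrix properties of Theorem~\ref{ellipticityforedge}; this is the part where the structure theory of the edge algebra (see \cite{schulze3} and \cite{naza}) does the real work.
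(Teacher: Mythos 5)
Your proposal cannot be matched against an in-paper argument, because the paper does not prove Theorem~\ref{toprestric} at all: the statement is quoted from the literature and its proof is explicitly deferred to \cite{naza}, Section~6.2 (the Atiyah--Bott obstruction in the edge calculus). What you have written is a correct outline of the standard index-bundle argument that underlies that cited result. Your identification of $\bigl[\operatorname{Ker}\begin{pmatrix}\sigma^{1}_{\wedge}(\operatorname{DP[0]}) & \operatorname{c}\end{pmatrix}\bigr]-[\CC^{N^{+}}]$ with the families index of $\sigma^{1}_{\wedge}(\operatorname{DP[0]})$ over the compact base $S^{\ast}\edge$ (hence its independence of the completion $\operatorname{c}$) is right, and your necessity direction --- invertibility of \eqref{symbolblock} with $J^{\pm}$ lifted from $\edge$ forces the index to equal $\pi^{\ast}_{_{S^{\ast}\edge}}([J^{+}]-[J^{-}])$, via the observation that the bottom row maps $\operatorname{Ker}$ of the top row isomorphically onto the lifted bundle --- is the standard argument. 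Your sufficiency construction is essentially the one the paper itself performs in the paragraph \emph{after} the theorem when it assumes \eqref{restriction}: stabilize $\operatorname{Ker}\begin{pmatrix}\sigma^{1}_{\wedge}(\operatorname{DP[0]}) & \operatorname{c}\end{pmatrix}$ by adding zero columns to $\operatorname{c}$ until it is isomorphic to a bundle lifted from $\edge$, then extend that isomorphism by zero on the orthogonal complement to obtain the row $\begin{pmatrix}\operatorname{t} & \operatorname{b}\end{pmatrix}$ of \eqref{symboltrace} rendering \eqref{symbolblock} invertible; the only discrepancy is cosmetic, since the paper calls the lifted bundle $J^{-}$ (it appears in the codomain of the trace condition) where you call it $J^{+}$. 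The one substantive step you do not discharge --- that the fibrewise projection onto the kernel subbundle and the chosen isomorphism can be arranged with the twisted $\kappa$-homogeneity of genuine trace/potential/Green edge symbols and then quantized so that Theorem~\ref{ellipticityforedge} applies --- is precisely the content of the cited reference; you flag this honestly, so your argument is an accurate reduction to that machinery rather than a self-contained proof, which is the same level of completeness as the paper itself.
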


Now, assume for the moment that the condition in theorem~\ref{toprestric} is satisfied. Then the bundle defined by  $\operatorname{Ker} \begin{pmatrix}
\sigma^{1}_{\wedge}(\operatorname{DP[0]}) & \operatorname{c}
\end{pmatrix}(y,\eta)$ is stably equivalent to a vector bundle $J^{-}$ lifted from $\edge$. Then, by adding zeros to $\operatorname{c}$ if needed,  we can assume that the vector bundle given by $\operatorname{Ker} \begin{pmatrix}
\sigma^{1}_{\wedge}(\operatorname{DP[0]}) & \operatorname{c}
\end{pmatrix}(y,\eta)$ is isomorphic to $J^{-}.$ By extending this isomorphism by zero on the orthogonal complement of $\operatorname{Ker}\begin{pmatrix}
\sigma^{1}_{\wedge}(\operatorname{DP[0]}) & \operatorname{c}
\end{pmatrix}(y,\eta)$ we obtain a map \begin{equation}\label{symboltrace}
\begin{pmatrix}
\operatorname{t}(u,\eta) & \operatorname{b}(u,\eta)
\end{pmatrix}: \begin{array}{c}
\mathcal{K}^{s,\gamma}(\cone ,\bigwedge\nolimits^{\bullet}T^{\ast}_{\wedge}M) \\ 
\oplus \\ 
\CC^{N^{+}}
\end{array} \longrightarrow J^{-}_{(u,\eta)}
\end{equation} 
such that 
\[ 
 \begin{bmatrix}
 \sigma^{1}_{\wedge}(\operatorname{DP[0]}) (u,\eta) & \operatorname{c}(u,\eta)  \\ 
\operatorname{t}(u,\eta) & \operatorname{b}(u,\eta)
\end{bmatrix}: 
\]

\[
\begin{array}{ccc}
 \begin{array}{c}
 \mathcal{K}^{s,\gamma}(\cone ,\bigwedge\nolimits^{\bullet}T^{\ast}_{\wedge}M) \\ 
 \oplus \\ 
\CC^{N^{+}}
 \end{array} & \longrightarrow & \begin{array}{c}
 \mathcal{K}^{s-1,\gamma-1}(\cone ,\bigwedge\nolimits^{\bullet}T^{\ast}_{\wedge}M) \\ 
 \oplus \\ 
J^{-}_{(u,\eta)}
 \end{array}
 \end{array}
 \]
is an invertible, linear operator for every $\eta\neq 0$. 

Then, the operator \[\mathbf{A}_{_{\operatorname{DP[0]}}}=\begin{bmatrix}
 \operatorname{DP[0]} &  \operatorname{C}  \\ 
\mathcal{T} &  \operatorname{B}
\end{bmatrix}=\mathcal{F}^{-1}_{\eta\rightarrow u} \begin{bmatrix}
 \sigma^{1}_{\wedge}(\operatorname{DP[0]}) (u,\eta) &  \operatorname{c}(u,\eta)  \\ 
 \operatorname{t}(u,\eta) &  \operatorname{b}(u,\eta)
\end{bmatrix}\mathcal{F}_{u' \rightarrow \eta}\] acting on the spaces \[
\begin{array}{cccc}
\begin{bmatrix}
 \operatorname{DP[0]} & \operatorname{C}  \\ 
\mathcal{T} &  \operatorname{B}
\end{bmatrix}: & \begin{array}{c}
 \mathcal{W}^{s,\gamma}(M,\bigwedge\nolimits^{\bullet}T^{\ast}_{\wedge}M) \\ 
 \oplus \\ 
H^{s}(\edge, \CC^{N^{+}})
 \end{array} & \longrightarrow & \begin{array}{c}
 \mathcal{W}^{s-1,\gamma-1}(M,\bigwedge\nolimits^{\bullet}T^{\ast}_{\wedge}M) \\ 
 \oplus \\ 
 H^{s-1}(\edge, J^{-})
 \end{array}
\end{array}\]
is an elliptic edge operator ( see definition~\ref{ellipticedgeoperator}) for all $s\in\RR$ and $\gamma$ the admissible weight chosen at the beginning. 

In order to prove the claim that condition \eqref{restriction} is satisfied we have the following theorem which is contained in \cite{naza} theorem 6.30.
\begin{theorem}\label{obstructionvanishing}
If the Atiyah-Bott obstruction vanishes for an edge-degenerate operator $\operatorname{A}$ on the stretched manifold $\MM$, then there exists an elliptic edge problem  for $\operatorname{A}$.
\end{theorem}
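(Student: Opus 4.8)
The plan is to reduce the statement to the construction carried out just before it in Section~\ref{symbol-linearop} and to the structural K-theoretic criterion of Theorem~\ref{toprestric}. Concretely, I would proceed as follows. Let $\operatorname{A}$ be an edge-degenerate operator on $\MM$ whose boundary principal symbol $\sigma^{l}_{b}(\operatorname{A})$ is an isomorphism off the zero section up to $r=0$; this is the hypothesis hidden in the phrase ``the Atiyah-Bott obstruction vanishes''. First I would invoke the analogues of Theorem~\ref{thmweight} (i.e. \cite{schulze3}, theorems 2.4.18 and 3.5.1) for $\operatorname{A}$ to produce an admissible weight $\gamma$ for which the edge symbol $\sigma^{l}_{\wedge}(\operatorname{A})(u,\eta)$ is a family of Fredholm operators $\mathcal{K}^{s,\gamma}(\cone)\to\mathcal{K}^{s-l,\gamma-l}(\cone)$ parametrized by $S^{\ast}\edge$. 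Then, exactly as in the paragraphs following Theorem~\ref{thmweight}, I would stabilize: using compactness of $S^{\ast}\edge$ and openness of the set of surjective Fredholm operators, adjoin a finite-rank coboundary map $\operatorname{c}\in\mathcal{L}(\CC^{N^{+}},\mathcal{K}^{s-l,\gamma-l}(\cone))$ so that $\bigl(\sigma^{l}_{\wedge}(\operatorname{A})\ \ \operatorname{c}\bigr)(u,\eta)$ is surjective, hence of constant index $N^{-}$, with kernels forming a smooth vector bundle over $S^{\ast}\edge$.

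Next I would form the K-theory class $\bigl[\operatorname{Ker}(\sigma^{l}_{\wedge}(\operatorname{A})\ \ \operatorname{c})\bigr]-[\CC^{N^{+}}]\in K(S^{\ast}\edge)$ and observe that ``the Atiyah-Bott obstruction vanishes'' means precisely that this class lies in the subgroup $\pi^{\ast}_{_{S^{\ast}\edge}}K(\edge)$ — this is the content of Theorem~\ref{toprestric}. Therefore the kernel bundle is stably equivalent to a bundle $J^{-}$ pulled back from $\edge$. After adding trivial summands to $\operatorname{c}$ if necessary (which does not disturb surjectivity, only enlarges $N^{+}$ and the kernel by the same trivial bundle), I may assume $\operatorname{Ker}(\sigma^{l}_{\wedge}(\operatorname{A})\ \ \operatorname{c})\cong J^{-}$ as bundles over $S^{\ast}\edge$. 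Extending that fibrewise isomorphism by zero on the orthogonal complement of the kernel yields a trace/boundary pair $\bigl(\operatorname{t}(u,\eta)\ \ \operatorname{b}(u,\eta)\bigr):\mathcal{K}^{s,\gamma}(\cone)\oplus\CC^{N^{+}}\to J^{-}_{(u,\eta)}$ such that the block operator family
\[
\begin{bmatrix}
\sigma^{l}_{\wedge}(\operatorname{A})(u,\eta) & \operatorname{c}(u,\eta)\\
\operatorname{t}(u,\eta) & \operatorname{b}(u,\eta)
\end{bmatrix}
\]
is fibrewise invertible for every $(u,\eta)$ with $\eta\neq0$.

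Finally I would quantize this symbol: set $\mathbf{A}_{\operatorname{A}}:=\mathcal{F}^{-1}_{\eta\to u}\bigl[\,\cdot\,\bigr]\mathcal{F}_{u'\to\eta}$ with the block above, obtaining an operator acting between $\mathcal{W}^{s,\gamma}(M)\oplus H^{s}(\edge,\CC^{N^{+}})$ and $\mathcal{W}^{s-l,\gamma-l}(M)\oplus H^{s-l}(\edge,J^{-})$ whose boundary symbol is $\sigma^{l}_{b}(\operatorname{A})$ (still an isomorphism by hypothesis) and whose edge symbol is the invertible block family. By Definition~\ref{ellipticedgeoperator} this is an elliptic edge problem for $\operatorname{A}$ with respect to $\gamma$, which is exactly the assertion. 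I expect the only genuinely delicate point to be the passage from the \emph{stable} equivalence $[\operatorname{Ker}]-[\CC^{N^{+}}]\in\pi^{\ast}K(\edge)$ to an \emph{honest} bundle isomorphism $\operatorname{Ker}(\sigma^{l}_{\wedge}(\operatorname{A})\ \ \operatorname{c})\cong J^{-}$ after a further finite-rank enlargement of $\operatorname{c}$ — i.e. checking that enlarging $N^{+}$ by a trivial bundle enlarges the kernel bundle by the same trivial bundle and that this suffices to kill the stabilization ambiguity over the compact base $S^{\ast}\edge$; all remaining steps are either cited verbatim from \cite{schulze3} and \cite{naza} or are the routine symbol-quantization bookkeeping already rehearsed in Section~\ref{symbol-linearop}.
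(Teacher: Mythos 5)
The paper does not actually prove Theorem~\ref{obstructionvanishing}: it is quoted as being contained in \cite{naza}, Theorem 6.30, and the only thing the paper verifies is that its hypothesis holds for $\operatorname{DP[0]}$ via \cite{patodi}. Your argument is therefore a reconstruction of the cited proof rather than a parallel to a proof given in the paper, but it is the right reconstruction: it is precisely the construction the paper itself rehearses for $\operatorname{DP[0]}$ in section~\ref{symbol-linearop} (choice of an admissible weight via the analogue of Theorem~\ref{thmweight}, stabilization by a finite-rank coboundary $\operatorname{c}$ using compactness of $S^{\ast}\edge$ and openness of the surjective operators, constancy of the index and hence a kernel bundle, passage from the stable condition \eqref{restriction} to an honest isomorphism with a bundle $J^{-}$ lifted from $\edge$ after enlarging $\operatorname{c}$, extension by zero on the orthogonal complement of the kernel to obtain an invertible block edge symbol, and quantization to an operator of the form \eqref{block}). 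Two remarks. First, your opening identification of ``the Atiyah--Bott obstruction vanishes'' with ellipticity of $\sigma^{l}_{b}(\operatorname{A})$ is off: the vanishing of the obstruction is exactly the K-theoretic condition \eqref{restriction} of Theorem~\ref{toprestric}, while ellipticity of $\sigma^{l}_{b}$ is a standing assumption needed only for the obstruction to be defined (it yields the Fredholm edge symbol for admissible weights); since you invoke the correct K-theoretic meaning where it matters, this is a terminological slip, not a gap. Second, the step you flag as delicate does work as you expect: appending $N$ zero columns to $\operatorname{c}$ replaces $\operatorname{Ker}\begin{pmatrix}\sigma^{l}_{\wedge}(\operatorname{A}) & \operatorname{c}\end{pmatrix}$ by its direct sum with the trivial bundle $\CC^{N}$ and $\CC^{N^{+}}$ by $\CC^{N^{+}+N}$, leaving the class in $K(S^{\ast}\edge)$ unchanged, and over the compact base $S^{\ast}\edge$ a stable isomorphism with a bundle lifted from $\edge$ becomes an actual isomorphism after adding a sufficiently large trivial summand (the enlarged bundle is still lifted from $\edge$); this is the same step the paper takes for granted in section~\ref{symbol-linearop}, with the remaining details residing in \cite{naza} and \cite{schulze3}.
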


In our case $\operatorname{DP[0]}$ is the Hodge-deRham operator, the vanishing of the Atiyah-Bott obstruction for this operator was proved by Atiyah-Patodi-Singer in \cite{patodi}, hence the topological condition  \eqref{restriction} is satisfied.

\section{Conormal deformations and regularity}\label{chaptermoduli}

\subsection{Conormal asymptotics}\label{asymp}
In this subsection we recall the basic facts of conormal asymptotics. For a complete presentation see \cite{schulze3} section 2.3.

A sequence  $O= \lbrace (p_j , m_j) \rbrace_{j\in\mathbb{N}}$ in $\CC\times\mathbb{Z}^{+}$ is called an asymptotic type for the weight data $\gamma\in\RR$ if \[\operatorname{Re}p_j <\frac{\operatorname{dim}\Xcal+1}{2}-\gamma\] and $\operatorname{Re}p_j \rightarrow -\infty$ when $j\rightarrow \infty.$   

\begin{definition}
Let  $O= \lbrace (p_j , m_j) \rbrace_{j\in\mathbb{N}}$ be an asymptotic type for the weight $\gamma\in\RR.$ The cone-Sobolev space with conormal asymptotics $O$, denoted by $\mathcal{K}^{s,\gamma}_{O}(\cone),$ is defined as the set of all $f\in \mathcal{K}^{s,\gamma}(\cone)$ such that for every $l\in\mathbb{N}$ there is $N(l)\in \mathbb{N}$ such that \[f(r,\sigma,y)-\omega(r)\sum\limits_{j=0}^{N(l)}\sum\limits_{k=0}^{m_j} c_{j,k}(\sigma) r^{-p_j}\operatorname{log}^{k}(r)\in \mathcal{K}^{s, \gamma+l}(\cone)\]
with $ c_{j,k}(\sigma)\in \smooth(\Xcal).$
\end{definition}

The space $\mathcal{K}^{s,\gamma}_{O}(\cone)$ has the structure of a Fr\'echet space given as an inductive limit of spaces with asymptotics of finite type $\mathcal{K}^{s,\gamma}_{O_k}(\cone)$ where $O_k=\lbrace (p_j,m_j)\in O: \frac{\operatorname{dim}\Xcal+1}{2}-\gamma-k<\operatorname{Re}p_j< \frac{\operatorname{dim}\Xcal+1}{2}-\gamma \rbrace$, see \cite{schulze2} sec. 8.1.1 for details. By using this inductive limit structure we define the edge-Sobolev space with conormal  asymptotics $O$ as the inductive limit of Fr\'echet spaces \[\mathcal{W}^{s,\gamma}\left(\RR^{q},\mathcal{K}^{s,\gamma}_{O}(\cone)\right):=\varprojlim\limits_{k} \mathcal{W}^{s,\gamma}\left(\RR^{q},\mathcal{K}^{s,\gamma}_{O_k}(\cone)\right).\] 
In particular,  if $f\in \mathcal{W}^{\infty,\gamma}_O (M)$ then for every $l\in\mathbb{N}$ there is $N(l)\in \mathbb{N}$ such that \[f(r,\sigma,y)-\omega(r)\sum\limits_{j=0}^{N(l)}\sum\limits_{k=0}^{m_j} c_{j,k}(\sigma) v_{j,k}(y) r^{-p_j}\operatorname{log}^{k}(r)\in \mathcal{W}^{\infty, \gamma+l}(M)\]
with $ c_{j,k}(\sigma)\in \smooth(\Xcal)$ and  $v_{j,k}(y)\in H^{\infty}(\edge)$, see \cite{schulze3} proposition  3.1.33.

\subsection{Conormal asymptotic embeddings}\label{subsectionPrelimRegularity}
Given a special Lagrangian submanifold of $\CC^{n}$ with edge singularity, $(M,\Phi)$ (see  ~\ref{examples}), in this section we define the moduli space of  special Lagrangian deformations of $(M,\Phi)$. Broadly speaking, we want to have in the moduli space all nearby special Lagrangian submanifolds with edge singularity. This rough idea has two aspects that must be considered for the moduli space. First, as the manifold $M$ is non-compact, the important aspect to consider when defining the concept of nearby submanifold is the behavior on the collar neighborhood $(0,\varepsilon)\times\Xcal\times\edge.$ Here we shall define the concept of nearby submanifold by means of its asymptotic behavior with respect to the conormal variable $r$ and weight $\gamma$. Second, the property of being special Lagrangian is completely determined by the equations \eqref{master}. As we mentioned in remark~\ref{remarkasymptoticscone} and~\ref{remarkasymptoticsedge} in section~\ref{ellipticsection}, solutions of the linearised deformation equation have conormal asymptotics (section~\ref{asymp}). This asymptotic behavior is transferred to the induced metric of the deformed submanifold making the induced metric asymptotic to the original edge metric $g_{_{M}}$ in a very special way that reflects the fact it comes from the solution of an edge-degenerate PDE on a singular space. All of these considerations are formalized in the following definition.
\begin{definition}\label{conormalembedding}
 Given an embedding $\Upsilon: M\longrightarrow \CC^{n}$ we say that $\Upsilon$ is  conormal asymptotic to  $(M,\Phi)$ with rate $\gamma$  if :
 \begin{enumerate}[i)]
 \item For every multi-index $\alpha$ we have  \[\abs{\partial ^{\alpha}_{(r,\sigma,u)}\left(\Upsilon (r,\sigma,u) -\Phi (r,\sigma,u)\right)} =O(r^{\gamma- \abs{\alpha}})\quad \forall(r,\sigma,u)\in(0,\varepsilon)\times\Xcal\times\edge;\]   
 \item $\Upsilon^{\ast}g_{_{\CC^{n}}}=r^{2}g_{_{\Xcal}}+dr^{2}+g_{_{\edge}}+\beta$ where $\beta$ is a symmetric 2-tensor on $\Upsilon(M)=M_{\Upsilon}$ such that their components $\beta_{ij}$ have conormal asymptotic expansions on the collar neighborhood $(0,\varepsilon)\times\Xcal\times\edge$ with respect to some asymptotic type associated to $\gamma$.  
\end{enumerate}
\end{definition}

Because we want to describe a small neighborhood of $(M,\Phi)$ in the moduli space by means of the Implicit Function Theorem~\ref{IFT} applied to a neighborhood of zero in edge-Sobolev spaces, we want to make sure that smooth elements in edge-Sobolev spaces with small norm will produce submanifolds. In order to show this, we will define a neighborhood of deformations i.e. we will define a small neighborhood of $M$ in $\CC^{n}$ such that small deformations  will be inside this neighborhood. Because of the geometric singularities of the manifold $M$ and the behavior of  the elements in edge-Sobolev spaces, this neighborhood will be constructed as an edge neighborhood to guarantee that small submanifolds induced by edge-degenerate forms will fit inside it.

\begin{proposition}
There exists an open edge neighborhood of the zero section in the normal bundle $\mathcal{N}\left((0,\varepsilon)\times\Xcal\times\edge\right)$ such that it is given by $V\times W$ where $V\subset\mathcal{N}((0,\varepsilon)\times\Xcal)\subset T\RR^{n}_x$ is an open conical set and $W\subset\mathcal{N}(\edge)\subset T\RR^{n}_y$ is an open set both of them being neighborhoods of the zero section in the corresponding normal bundles and diffeomorphic to an open edge set $\tilde{V}\times\tilde{W}\subset\xy\cong\CC^{n}$ with diffeomorphism given by the exponential map $\operatorname{exp}_{g_{_{\CC^{n}}}}$. Moreover, for every $\gamma>\frac{m+3}{2}$ and $s>\frac{q+m+1}{2}+ \mathfrak{c}_{\gamma}$, where $\mathfrak{c}_{\gamma}$ is the positive constant defined in \eqref{one-parameter bound}, there exists $\vartheta>0$ depending on $s$ and $\gamma$ such that \[\left\lbrace \mathcal{V}_{\Xi}\big|_{(0,\varepsilon)\times\Xcal\times\edge}:\Xi\in\mathcal{W}^{s,\gamma}(M, T^{\ast}_{\wedge}M) \text{ and } \norm{\Xi}_{s,\gamma}<\vartheta \right\rbrace\subset V\times W.\]  
\end{proposition}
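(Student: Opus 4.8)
The plan is to treat the geometric construction of the edge neighborhood and the analytic size estimate separately. Since $\Phi$ is an edge embedding, on the collar it splits as $\Phi(r,p,v)=(\psi(r,p),\tau(v))$ with $\psi:\cone\to\mathcal{C}\subset\RR^{n}_{x}$ and $\tau:\edge\to\RR^{n}_{y}$, so $\Phi((0,\varepsilon)\times\Xcal\times\edge)$ is the product submanifold $\psi((0,\varepsilon)\times\Xcal)\times\tau(\edge)$ of $\xy$. As $g_{_{\CC^{n}}}=g_{_{\RR^{2n}}}$ is a product metric for this orthogonal splitting, the flat normal bundle decomposes over the collar as $\mathcal{N}((0,\varepsilon)\times\Xcal\times\edge)=\mathcal{N}((0,\varepsilon)\times\Xcal)\oplus\mathcal{N}(\edge)$ with $\mathcal{N}((0,\varepsilon)\times\Xcal)\subset T\RR^{n}_{x}$ and $\mathcal{N}(\edge)\subset T\RR^{n}_{y}$. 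Because $\mathcal{C}=\Rpos\cdot\theta(\Xcal)$ is a cone over the compact embedded cross-section $\theta(\Xcal)\subset S^{n-1}$, the normal space of $\mathcal{C}$ at $r\theta(p)$ is independent of $r$, and by dilation invariance together with compactness of $\theta(\Xcal)$ the reach of $\mathcal{C}$ at a point $x=r\theta(p)$ is bounded below by $c\abs{x}=cr$ for some fixed $c>0$. I would then set
\[
V:=\{\,(x,w)\ :\ x=r\theta(p),\ 0<r<\varepsilon,\ p\in\Xcal,\ w\in\mathcal{N}((0,\varepsilon)\times\Xcal)_{x},\ \abs{w}<\delta r\,\}
\]
with $\delta<c$ fixed; this is an open conical neighborhood of the zero section on which $\operatorname{exp}_{g_{_{\CC^{n}}}}(x,w)=x+w$ is an injective immersion, hence a diffeomorphism onto an open conical set $\tilde V\subset\RR^{n}_{x}$. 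For $W$ I would apply Theorem~\ref{tubular} to the compact submanifold $\tau(\edge)\subset\RR^{n}_{y}$, obtaining a tube of some fixed radius $\delta'>0$ with $\operatorname{exp}_{g_{_{\CC^{n}}}}:W\to\tilde W$ a diffeomorphism; shrinking $\varepsilon$ if necessary, $V\times W$ is the asserted edge neighborhood.

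For the size estimate the central tool is a weighted Sobolev embedding for edge-Sobolev spaces. Using the isomorphism $S_{\gamma-\frac{m}{2}}$ of \eqref{S_transf} (which, under $r=e^{-t}$, turns the cone-Sobolev norm into a standard Sobolev norm at the cost of the weight factor $e^{-(\frac12-(\gamma-\frac{m}{2}))t}$), the definitions \eqref{localedgenorm} and \eqref{globaledgenorm} of $\mathcal{W}^{s,\gamma}(M)$, and the bound \eqref{one-parameter bound} on the operator norm of $\kappa_{\lambda}$, I would establish that for $s>\frac{q+m+1}{2}+\mathfrak{c}_{\gamma}$ every $f\in\mathcal{W}^{s,\gamma}(M)$ is continuous on the collar and satisfies
\[
\abs{f(r,\sigma,u)}\ \le\ C\,\norm{f}_{s,\gamma}\,r^{\gamma-\frac{m+1}{2}}\qquad\text{on }(0,\varepsilon)\times\Xcal\times\edge ,
\]
with $C=C(s,\gamma)$. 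By Proposition~\ref{localpropedge} and its proof, each component of $\mathcal{V}_{\Xi}$ on the collar, namely the functions $\tilde{\mathcal{C}}_{i}$ and $\mathcal{A}\theta^{\circ}_{i}+\tilde{\mathcal{B}}_{i}$ of \eqref{localexpression}, lies in $\mathcal{W}^{s,\gamma}(M)$ with norm bounded by a constant times $\norm{\Xi}_{s,\gamma}$; applying the embedding and invoking the hypothesis $\gamma>\frac{m+3}{2}$, which forces $\gamma-\frac{m+1}{2}>1$, gives
\[
\abs{\mathcal{V}_{\Xi}(r,\sigma,u)}\ \le\ C\,\norm{\Xi}_{s,\gamma}\,r^{\gamma-\frac{m+1}{2}}\ =\ C\,\norm{\Xi}_{s,\gamma}\,r^{\gamma-\frac{m+3}{2}}r\ \le\ C\,\varepsilon^{\gamma-\frac{m+3}{2}}\norm{\Xi}_{s,\gamma}\,r
\]
on $(0,\varepsilon)\times\Xcal\times\edge$, where $C$ is allowed to change from line to line.

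To conclude, since $M_{\Phi}$ is Lagrangian the section $\mathcal{V}_{\Xi}=J(\Phi_{\ast}(g^{\ast}_{M}(\Xi)))$ is normal to $M_{\Phi}$, and because the splitting $\xy=\RR^{n}_{x}\oplus\RR^{n}_{y}$ is compatible with $\mathcal{N}((0,\varepsilon)\times\Xcal\times\edge)=\mathcal{N}((0,\varepsilon)\times\Xcal)\oplus\mathcal{N}(\edge)$ on the collar, the $\RR^{n}_{x}$-part $\sum_{i}-\tilde{\mathcal{C}}_{i}\partial_{x_i}$ of $\mathcal{V}_{\Xi}$ automatically lies in $\mathcal{N}((0,\varepsilon)\times\Xcal)$ and the $\RR^{n}_{y}$-part $\sum_{i}(\mathcal{A}\theta^{\circ}_{i}+\tilde{\mathcal{B}}_{i})\partial_{y_i}$ in $\mathcal{N}(\edge)$ (cf.\ Lemma~\ref{localemma}). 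Each of these parts has norm at most $\abs{\mathcal{V}_{\Xi}(r,\sigma,u)}$, so by the last display the $\RR^{n}_{x}$-part has norm $<\delta r$ once $C\varepsilon^{\gamma-\frac{m+3}{2}}\norm{\Xi}_{s,\gamma}<\delta$, and the $\RR^{n}_{y}$-part has norm $<\delta'$ (using $r<\varepsilon$) once $C\varepsilon^{\gamma-\frac{m+1}{2}}\norm{\Xi}_{s,\gamma}<\delta'$. Letting $\vartheta$ be the minimum of these two thresholds, $\norm{\Xi}_{s,\gamma}<\vartheta$ forces $\mathcal{V}_{\Xi}\big|_{(0,\varepsilon)\times\Xcal\times\edge}$ into $V\times W$, which is the claim. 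The step I expect to be the main obstacle is the weighted Sobolev embedding with the precise exponent $\gamma-\frac{m+1}{2}$ and the correct appearance of $\mathfrak{c}_{\gamma}$, i.e.\ keeping track of the weight through the $S$-transform, the $\kappa_{\lambda}$-rescaling in \eqref{localedgenorm} and the partition of unity in \eqref{globaledgenorm}; the dilation-invariant reach estimate for the cone is the secondary technical point.
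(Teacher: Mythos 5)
Your proposal is correct and follows essentially the same route as the paper: the conical neighborhood $V$ obtained by scaling a uniform tubular neighborhood of $\Xcal\subset\RR^{n}$ under the cone dilation, a uniform tube $W$ around $\tau(\edge)$, and then the pointwise bound $\abs{\mathcal{A}},\abs{\tilde{\mathcal{B}}_i},\abs{\tilde{\mathcal{C}}_i}\lesssim\norm{\Xi}_{s,\gamma}\,r^{\gamma-\frac{m+1}{2}}$, which is exactly the paper's Proposition~\ref{edgeapplication} (so you need not re-derive it), combined with $\gamma>\frac{m+3}{2}$ to fit $\mathcal{V}_{\Xi}$ inside $V\times W$. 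Your direct factoring $r^{\gamma-\frac{m+1}{2}}=r^{\gamma-\frac{m+3}{2}}\,r$ is just a cleaner rendering of the paper's logarithmic inequality \eqref{gammaineq}, not a different argument.
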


\begin{proof}
First let's define the conical open neighborhood $V\subset\mathcal{N}((0,\varepsilon)\times\Xcal)\subset T\RR^{n}_x$. The tubular neighborhood theorem~\ref{tubular} applied to $\Xcal$ as a compact submanifold in $\RR^{n}$ gives us an open neighborhood of the zero section in $\mathcal{N}(\Xcal)$. Take $l_0>0$ to be  the maximum $l$ such that the uniform neighborhood $\lbrace X\in\mathcal{N}(\Xcal):\abs{X}_{g_{_{\RR^n}}}<l \rbrace$ is inside the tubular neighborhood. By applying the $\Rpos$-action defined on the cone $\cone$  to this uniform neighborhood we can obtain the desired open conical neighborhood $V$ of the zero section in the normal bundle $\mathcal{N}((0,\varepsilon)\times\Xcal)\subset T\RR^{n}_x$. Now, for any  section $\mathcal{V}$ of the normal bundle $\mathcal{N}(\Xcal)$ lying in $V$ we have  $\abs{\mathcal{V}(r,\sigma,u)}_{g_{_{\CC^n}}}< C_{1}\cdot  r$ for all $(r,\sigma,u)\in (0,\varepsilon)\times\Xcal\times\edge$, where the constant $C_1$ is independent of $\mathcal{V}$. The constant $C_1$ can be taken to be the maximum $l>0$ chosen above. Now choose a uniform tubular neighborhood of the zero section in the normal bundle $\mathcal{N}(\edge)$ given by $\lbrace Y\in\mathcal{N}(\edge):\abs{Y}_{g_{_{\RR^n}}}<\vartheta \rbrace$ for some $\vartheta>0$. Clearly this is possible because $\edge$ is compact. If necessary we can choose a smaller $\varepsilon$ such that $\vartheta>C_{1}\varepsilon$. Define $W$ as this uniform neighborhood of the zero section $W=\lbrace Y\in\mathcal{N}(\edge):\abs{Y}_{g_{_{\RR^n}}}<\vartheta \rbrace$. Then $V\times W$ is our open edge neighborhood of the zero section in $\mathcal{N}\left((0,\varepsilon)\times\Xcal\times\edge\right)$.

To prove the second part of the proposition let $\Xi\in\mathcal{W}^{s,\gamma}(M, T^{\ast}_{\wedge}M)$ with $s>\frac{q+m+1}{2}+ \mathfrak{c}_{\gamma}$ and consider its local expression in a neighborhood $(0,\varepsilon)\times\mathcal{U}\times\Omega\subset (0,\varepsilon)\times\Xcal\times\edge$ given by
\[\Xi\coordsimple=\CA\coordsimple dr+\sum\limits_{k=1}^{m}\CB\coordsimple rd\sigma_k+\sum\limits_{l=1}^{q}\cC\coordsimple du_{l},\]
where $\omega\phi_{j}\varphi_{\lambda}\CA$, $\omega\phi_{j}\varphi_{\lambda}\CB$ and $ \omega\phi_{j}\varphi_{\lambda}\cC$ belong to $\mathcal{W}^{s,\gamma}(M)$ as in lemma~\ref{localemma}. Then by proposition~\ref{edgeapplication}  there exists a constant $C>0$ depending only on $s$ and $\gamma$ such that \begin{equation}\label{boundforA}
\abs{\mathcal{A}(r,\sigma,u)}\leq C\norm{\Xi}_{s,\gamma}r^{\gamma-\frac{m+1}{2}}
\end{equation}
\begin{equation}\label{boundforB}
\abs{\mathcal{B}(r,\sigma,u)}\leq C\norm{\Xi}_{s,\gamma}r^{\gamma-\frac{m+1}{2}}
\end{equation}
\begin{equation}\label{boundforC}
\abs{\mathcal{C}(r,\sigma,u)}\leq C\norm{\Xi}_{s,\gamma}r^{\gamma-\frac{m+1}{2}}
\end{equation}
for all $(r,\sigma,u)\in (0,\varepsilon)\times\Xcal\times\edge$. Hence by lemma~\ref{localemma} there exists a constant $C'$ depending only on $s$ and $\gamma$  such that 
\begin{equation}\label{boundforC'}
\abs{\tilde{\mathcal{C}}(r,\sigma,u)}\leq C'\norm{\Xi}_{s,\gamma}r^{\gamma-\frac{m+1}{2}}
\end{equation} 
\begin{equation}
\abs{\mathcal{A}(r,\sigma,u)\theta_i+\tilde{\mathcal{B}_i}(r,\sigma,u)}\leq C'\norm{\Xi}_{s,\gamma}r^{\gamma-\frac{m+1}{2}}.
\end{equation}
Then, by \eqref{boundforC'} and because $0<\varepsilon<1$,  we have that \begin{equation}\label{boundforC'insideneigh}
\abs{\tilde{\mathcal{C}}(r,\sigma,u)}\leq C_{1}r
\end{equation}  for all $(r,\sigma,u)\in (0,\varepsilon)\times\Xcal\times\edge$ if \begin{equation}\label{gammaineq} 
\gamma>\operatorname{log}\left( \frac{C_1}{C'\norm{\Xi}_{s,\gamma}} \right) \frac{1}{\operatorname{log}(r)}+\frac{m+3}{2}.
\end{equation} 
Note that if  $\norm{\Xi}_{s,\gamma}$ is small enough, then   \eqref{gammaineq} is satisfied and this implies \eqref{boundforC'insideneigh}. More precisely, if  $\frac{C_1}{C'}\geq \norm{\Xi}_{s,\gamma}$ then \eqref{gammaineq} is satisfied  as $\operatorname{log}(r)<0$ for $r\leq\varepsilon$. Therefore $\frac{C_1}{C'}\geq \norm{\Xi}_{s,\gamma}$ implies \[\abs{\tilde{\mathcal{C}}(r,\sigma,u)}\leq C_{1}r.\]
Analogously, $\frac{C_1}{C'}>\norm{\Xi}_{s,\gamma}$ implies that
 \[\abs{\mathcal{A}(r,\sigma,u)\theta_i+\tilde{\mathcal{B}_i}(r,\sigma,u)}\leq C_{1}r\]
 for any $\gamma>\frac{m+3}{2}$. Then it follows from our chose of $\vartheta$ that  \[\abs{\mathcal{A}(r,\sigma,u)\theta_i+\tilde{\mathcal{B}_i}(r,\sigma,u)}\leq \vartheta\]for all $(r,\sigma,u)\in (0,\varepsilon)\times\Xcal\times\edge$.
\end{proof} 

Observe that the manifold  $M\setminus \left( (0,\varepsilon)\times\partial\MM\right)$ is compact hence we can extend our edge neighborhood $V\times W$ to this compact space to get a open neighborhood of the zero section in $\mathcal{N}(M)$ such that near the edge this neighborhood corresponds to the edge open neighborhood constructed above. We denote this neighborhood as $\mathfrak{A}$. Moreover this proposition implies that any smooth form $\Xi\in\mathcal{W}^{s,\gamma}(M,T^{\ast}_{\wedge}M)$ as above with $\norm{\Xi}_{s,\gamma}<\vartheta$ produces a smooth embedded submanifold inside the neighborhood of deformations $\mathfrak{A}$. This submanifold is defined by the embedding $\operatorname{exp}_{g_{_{\CC^{n}}}}(\mathcal{V}_{\Xi})\circ\Phi$. 

\section{Regularity of Deformations}\label{regularity}
Let's consider an elliptic edge problem (see section~\ref{symbol-linearop}) for the operator $\operatorname{DP[0]}$ acting on edge-Sobolev spaces with admissible weigh $\gamma>\frac{m+1}{2}$,

\[
\mathbf{A}_{\operatorname{DP[0]}}=
\begin{bmatrix}
 \operatorname{DP[0]} & \operatorname{C}  \\ 
\mathcal{T} & \operatorname{B}
\end{bmatrix}:\]

\[\begin{array}{ccc}
 \begin{array}{c}
 \mathcal{W}^{s,\gamma}(M,\bigwedge\nolimits^{\bullet}T^{\ast}_{\wedge}M) \\ 
 \oplus \\ 
H^{s}(\edge, \CC^{N^{+}})
 \end{array} & \longrightarrow & \begin{array}{c}
 \mathcal{W}^{s-1,\gamma-1}(M,\bigwedge\nolimits^{\bullet}T^{\ast}_{\wedge}M) \\ 
 \oplus \\ 
 H^{s-1}(\edge, J^{-})
 \end{array}
\end{array}.\]

Then, by augmenting the deformation operator $\operatorname{P}=\operatorname{P}_{\omega_{_{\CC^{n}}}}\oplus \operatorname{P}_{\operatorname{Im}\Omega}$ with the trace operator \[\mathcal{T}: \mathcal{W}^{s,\gamma}(M,\bigwedge\nolimits^{\bullet}T^{\ast}_{\wedge}M) \longrightarrow  H^{s-1}(\edge, J^{-}),\] we obtain a non-linear boundary value problem for $\operatorname{P}$: \[\begin{bmatrix}
\operatorname{P}_{\omega_{_{\CC^{N}}}}\oplus P_{\operatorname{Im}\Omega}  \\ 
\mathcal{T}
\end{bmatrix}: \mathfrak{A}\subset \mathcal{W}^{s,\gamma}(M,T^{\ast}_{\wedge}M)\longrightarrow \begin{array}{c}
 \mathcal{W}^{s-1,\gamma-1}(M,\bigwedge\nolimits^{\bullet}T^{\ast}_{\wedge}M) \\ 
 \oplus \\ 
 H^{s-1}(\edge, J^{-})
 \end{array}\]  whose linearisation at zero is given by \[\begin{bmatrix}
\operatorname{DP[0]}\\ 
\mathcal{T}
\end{bmatrix}:  \mathcal{W}^{s,\gamma}(M,T^{\ast}_{\wedge}M)\longrightarrow \begin{array}{c}
 \mathcal{W}^{s-1,\gamma-1}(M,\bigwedge\nolimits^{\bullet}T^{\ast}_{\wedge}M) \\ 
 \oplus \\ 
 H^{s-1}(\edge, J^{-})
 \end{array}.\]
In this section we consider some properties of solutions of the equation \begin{equation}\label{boundaryvalue}\begin{bmatrix}
\operatorname{P}_{\omega_{_{\CC^{N}}}}\oplus \operatorname{P}_{\operatorname{Im}\Omega}  \\ 
\mathcal{T} \end{bmatrix}\left(\Xi\right)=0
\end{equation} where  $\Xi\in    \mathcal{W}^{s,\gamma}_{}(M,T^{\ast}_{\wedge}M).$
We are mainly interested in those solutions given by the Implicit Function Theorem for Banach  spaces (when applicable)  i.e. we assume that $\Xi=\Xi_1+\Xi_2$ where $\Xi_1$ is solution of the linear boundary value problem  \begin{equation}\label{linearbvp}\begin{bmatrix}
\operatorname{DP[0]}\\ 
\mathcal{T}
\end{bmatrix}(\Xi_1)=0
 \end{equation} and $\Xi_2$ belongs to the Banach space complement in $\mathcal{W}^{s,\gamma}_{}(M,T^{\ast}_{\wedge}M)$ defined by a splitting (not unique)  induced by the finite dimensional space $\operatorname{Ker}\mathbf{A}_{\operatorname{DP[0]}}$.
First we have some straightforward  observations. The ellipticity of the operator $\mathbf{A}_{\operatorname{DP[0]}}$ (theorem~\ref{obstructionvanishing}) and the fact that \[\mathbf{A}_{\operatorname{DP[0]}}\begin{bmatrix}
\Xi_1 \\ 
0
\end{bmatrix}=0 \] implies that $\Xi_1\in  \mathcal{W}^{\infty,\gamma}(M,T^{\ast}_{\wedge}M)$ by elliptic regularity (theorem~\ref{ellipticityforedge}). Moreover, because $\mathcal{W}^{s,\gamma}(M,T^{\ast}_{\wedge}M)\subset H^{s}_{loc}(M,T^{\ast}_{\wedge}M)$ for all $s\in\RR$ (\cite{schulze2}, section 9.3, proposition  5), standard Sobolev embeddings (theorem~\ref{thmsobolevembe}) imply that $\Xi_1$ is smooth. The ellipticity of $\mathbf{A}_{\operatorname{DP[0]}}$ implies the existence of a parametrix  $\operatorname{B}_{\operatorname{DP[0]}}$ with asymptotics $O$ (theorem~\ref{ellipticityforedge}) i.e. 
\[
\operatorname{B}_{\operatorname{DP[0]}}\mathbf{A}_{\operatorname{DP[0]}}-\operatorname{I}:\begin{array}{cccc}
 & \begin{array}{c}
 \mathcal{W}^{s,\gamma}(M,\bigwedge\nolimits^{\bullet}T^{\ast}_{\wedge}M) \\ 
 \oplus \\ 
H^{s}(\edge, \CC^{N^{+}})
 \end{array} & \longrightarrow & \begin{array}{c}
 \mathcal{W}^{\infty,\gamma}_{O}(M,\bigwedge\nolimits^{\bullet}T^{\ast}_{\wedge}M) \\ 
 \oplus \\ 
 H^{\infty}(\edge, J^{-})
 \end{array}
\end{array}.\] Consequently, any element in the kernel of the operator $\mathbf{A}_{\operatorname{DP[0]}}$ belongs to $ \mathcal{W}^{\infty,\gamma}_{O}(M,\bigwedge\nolimits^{\bullet}T^{\ast}_{\wedge}M)$ for some asymptotic type $O$ associated to $\gamma$. In particular \begin{equation}\label{regularitylinearpart}
\Xi_1\in \mathcal{W}^{\infty,\gamma}_{O}(M,T^{\ast}_{\wedge}M).
\end{equation} 
Now let's consider the regularity properties of $\Xi_2$. 

Let $\Xi\in \mathcal{W}^{s,\gamma}_{}(M,T^{\ast}_{\wedge}M)$ such that
 \begin{equation}\label{masteroperator}(\operatorname{P}_{\omega_{_{\CC^{n}}}}\oplus \operatorname{P}_{\operatorname{Im}\Omega})(\Xi)=0. 
 \end{equation} Hence $\operatorname{exp}_{g_{_{\CC^{n}}}}(\mathcal{V}_{\Xi})\circ\Phi:M\longrightarrow\CC^{n}$ is a special Lagrangian submanifold. Harvey and Lawson pointed out in \cite{harveylawson} theorem 2.7 that $C^{2}$ special Lagrangian submanifolds in $\CC^{n}$ are real analytic, in particular they are smooth. Therefore, by choosing s large enough, \eqref{masteroperator} implies that $\Xi\in\smooth (M,T^{\ast}_{\wedge}M)$ which, together with \eqref{regularitylinearpart},  allow us to conclude that $\Xi_2$ is smooth.
 
Even though $\Xi_1+\Xi_2$ is solution of the non-linear edge boundary value problem \eqref{boundaryvalue}
we cannot conclude immediately that $\Xi$ has a conormal asymptotic expansion near the singular set $\edge$. The edge calculus tell us that solutions of the linearised equation \eqref{linearbvp}, here denoted by $\Xi_1$, have such asymptotics. It turns out that it is possible to prove that $\Xi_2$ also has a conormal expansion i.e. the whole solution of the non-linear edge boundary value problem has conormal expansion. In order to prove this we follow and adapt to our very specific setting in the next two propositions the general argument in \cite{rochon}  theorem 5.1. The author thanks Fr\'{e}d\'{e}ric Rochon for pointing out and explaining his work. 

Observe that 
\[\begin{bmatrix}
\operatorname{P}_{\omega_{_{\CC^{N}}}}\oplus \operatorname{P}_{\operatorname{Im}\Omega}  \\ 
\mathcal{T}
\end{bmatrix}(\Xi_1+\Xi_2)= \begin{bmatrix}
0  \\ 
0
\end{bmatrix}\] implies that $\mathcal{T}(\Xi_2)=0$ because $\mathcal{T}(\Xi_1)=0$ due to the fact that $\Xi_1$ is solution of the linearised equation  \eqref{linearbvp}. By writing the non-linear equation as $\operatorname{P}_{\omega_{_{\CC^{N}}}}\oplus \operatorname{P}_{\operatorname{Im}\Omega}=\operatorname{DP[0]}+\operatorname{Q}$ (see proposition~\ref{localprop} and \eqref{secondnonlinearterm}) where $\operatorname{Q}$ is a non-linear operator locally defined by the sum of products of 2 or more operators in $\operatorname{Diff}^{1}_{\operatorname{edge}}(M)$ acting on $\Xi_1$ or $\Xi_2$ we have \begin{equation}\label{decompositionnonlineareq}
(\operatorname{DP[0]}+\operatorname{Q})(\Xi_2)=-\operatorname{Q}(\Xi_1)-\sum\limits_{j\geq 2}\operatorname{Q}_{i_{1}}(\Xi_{\bullet})\cdots\operatorname{Q}_{i_{j}}(\Xi_{\bullet}).
\end{equation} In order to avoid cumbersome notation to keep track of the specific asymptotic types, we say that an element belongs to $\mathcal{W}^{s,\gamma}_{\operatorname{As}}(M,T^{\ast}_{\wedge}M)$ if it belongs to the edge-Sobolev space with some asymptotic type associated to $\gamma$.

\begin{proposition}\label{recursion} Let $\xi_1\in\mathcal{W}^{\infty,\gamma-1}_{\operatorname{\operatorname{As}}}(M,\bigwedge^{\bullet} T^{\ast}_{\wedge}M)$ and $\Xi_2\in\mathcal{W}^{\infty,\gamma}(M,T^{\ast}_{\wedge}M)$ such that 
\begin{equation}\label{regequation}
(\operatorname{DP[0]}+\operatorname{Q})(\Xi_2)=\xi_1.
\end{equation}
Assume that $\gamma$ is an admissible weight for $\operatorname{DP[0]}$ and there exists $\beta>0$ such that $\operatorname{Q}(\Xi_2)\in\mathcal{W}^{\infty,\gamma+\beta}(M,\bigwedge^{\bullet}T^{\ast}_{\wedge}M)$ and $\gamma+\beta+1$ is an admissible weight. Then $\Xi_2=E_1+E_2$ with $E_2\in\mathcal{W}^{\infty,\gamma}_{\operatorname{As}}(M,T^{\ast}_{\wedge}M)$ and $E_1\in\mathcal{W}^{\infty,\gamma+\beta+1}(M,T^{\ast}_{\wedge}M).$
\end{proposition}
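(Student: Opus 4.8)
The plan is to move the nonlinear term of \eqref{regequation} to the right, writing it as $\operatorname{DP[0]}(\Xi_2)=\xi_1-\operatorname{Q}(\Xi_2)$, and to strip this right-hand side off in two stages using the elliptic edge completions of $\operatorname{DP[0]}$ at the two admissible weights $\gamma+\beta+1$ and $\gamma$. Recall that the Atiyah--Bott obstruction vanishes for $d+d^{\ast}$ (theorem~\ref{obstructionvanishing}), so for every admissible weight $\delta$ there is an elliptic edge operator $\mathbf{A}^{(\delta)}_{\operatorname{DP[0]}}$ of the form \eqref{block} (with its own auxiliary bundles $J^{\pm}$ over $\edge$) which is Fredholm and admits a parametrix $\operatorname{B}^{(\delta)}$ of order $-1$ with asymptotics (theorem~\ref{ellipticityforedge}); in particular $\mathbf{A}^{(\delta)}_{\operatorname{DP[0]}}\operatorname{B}^{(\delta)}-\operatorname{I}$ and $\operatorname{B}^{(\delta)}\mathbf{A}^{(\delta)}_{\operatorname{DP[0]}}-\operatorname{I}$ are Green operators mapping the edge-Sobolev spaces into spaces with conormal asymptotics, and $\operatorname{B}^{(\delta)}$ carries $\mathcal{W}^{s-1,\delta-1}(M,\bigwedge^{\bullet}T^{\ast}_{\wedge}M)\oplus H^{s-1}(\edge,J^{+})$ into $\mathcal{W}^{s,\delta}(M,\bigwedge^{\bullet}T^{\ast}_{\wedge}M)\oplus H^{s}(\edge,J^{-})$.

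First I would build $E_1$ from the completion at weight $\gamma+\beta+1$. By hypothesis $-\operatorname{Q}(\Xi_2)\in\mathcal{W}^{\infty,\gamma+\beta}(M,\bigwedge^{\bullet}T^{\ast}_{\wedge}M)$, which is exactly the target weight of $\mathbf{A}^{(\gamma+\beta+1)}_{\operatorname{DP[0]}}$, so I set $(E_1,w):=\operatorname{B}^{(\gamma+\beta+1)}\bigl(-\operatorname{Q}(\Xi_2),0\bigr)$, obtaining $E_1\in\mathcal{W}^{\infty,\gamma+\beta+1}(M,T^{\ast}_{\wedge}M)$ and $w\in H^{\infty}(\edge,J^{-})$. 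Applying $\mathbf{A}^{(\gamma+\beta+1)}_{\operatorname{DP[0]}}$ and reading off the first component gives $\operatorname{DP[0]}(E_1)=-\operatorname{Q}(\Xi_2)+g_1$, where $g_1$ is the sum of the image of the Green remainder and of the coboundary term $\operatorname{C}w$; both of these carry a conormal asymptotic type associated with the weight $\gamma+\beta$, hence (since $\gamma+\beta\ge\gamma-1$) also one associated with $\gamma-1$, so $g_1\in\mathcal{W}^{\infty,\gamma-1}_{\operatorname{As}}(M,\bigwedge^{\bullet}T^{\ast}_{\wedge}M)$. Note also that $\gamma+\beta+1>\gamma$ forces $E_1\in\mathcal{W}^{\infty,\gamma}(M,T^{\ast}_{\wedge}M)$.

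Next I would set $E_2:=\Xi_2-E_1\in\mathcal{W}^{\infty,\gamma}(M,T^{\ast}_{\wedge}M)$. Subtracting the two identities for $\operatorname{DP[0]}$ yields $\operatorname{DP[0]}(E_2)=\xi_1-g_1$, which lies in $\mathcal{W}^{\infty,\gamma-1}_{\operatorname{As}}(M,\bigwedge^{\bullet}T^{\ast}_{\wedge}M)$, since $\xi_1$ has, by assumption, an asymptotic type associated with $\gamma-1$ and so does $g_1$. Feeding $(E_2,0)$ into the elliptic edge operator at weight $\gamma$ we get $\mathbf{A}^{(\gamma)}_{\operatorname{DP[0]}}(E_2,0)=\bigl(\operatorname{DP[0]}(E_2),\mathcal{T}(E_2)\bigr)$, whose first component lies in $\mathcal{W}^{\infty,\gamma-1}_{O}(M,\bigwedge^{\bullet}T^{\ast}_{\wedge}M)$ for an asymptotic type $O$ associated with $\gamma-1$ and whose second component lies in the smooth space $H^{\infty}(\edge,J^{+})$ by continuity of $\mathcal{T}$. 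Elliptic regularity for $\mathbf{A}^{(\gamma)}_{\operatorname{DP[0]}}$ (theorem~\ref{ellipticityforedge}) then gives $E_2\in\mathcal{W}^{\infty,\gamma}_{Q}(M,T^{\ast}_{\wedge}M)$ for an asymptotic type $Q$ associated with $\gamma$, i.e. $E_2\in\mathcal{W}^{\infty,\gamma}_{\operatorname{As}}(M,T^{\ast}_{\wedge}M)$. This gives $\Xi_2=E_1+E_2$ with the asserted properties.

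The routine inputs are the mapping properties of edge-degenerate differential operators and of the parametrix, Green, and coboundary operators on edge-Sobolev spaces, all available from the edge calculus of \cite{schulze3}. The point that must be handled with care is the bookkeeping of weights and asymptotic types, together with the fact that two distinct elliptic edge completions are used: one must check that $-\operatorname{Q}(\Xi_2)$ falls precisely into the target of the $(\gamma+\beta+1)$-problem so that inverting genuinely gains one unit of weight and produces $E_1$ at weight $\gamma+\beta+1$ (this is where admissibility of $\gamma+\beta+1$ is used), that the remainder $g_1$ coming from the associated Green and coboundary operators carries a genuine conormal asymptotic type rather than merely a finite-order one, so that its weight can be relaxed from $\gamma+\beta$ to $\gamma-1$ without losing the asymptotic structure, and that passing between the completions at the two weights --- with different $J^{\pm}$ and different $\mathcal{T},\operatorname{C},\operatorname{B}$ --- is harmless, which it is since $E_1$ and $E_2$ are each smooth and the trace terms take values in the smooth spaces $H^{\infty}(\edge,J^{\pm})$. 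Admissibility of $\gamma$ is used to ensure the $\gamma$-completion, and hence the elliptic regularity theorem, is available.
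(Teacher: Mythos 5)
Your argument is correct, but it follows a genuinely different route from the paper's. The paper does not use the parametrix to build $E_1$: instead it uses Fredholmness of $\mathbf{A}_{\operatorname{DP[0]},\gamma+\beta+1}$ (finite-dimensional cokernel) together with density of $\smooth_{0}$ to write $\bigl(-\operatorname{Q}(\Xi_2),0\bigr)=\mathbf{A}_{\operatorname{DP[0]},\gamma+\beta+1}(E_1,e_1)+(\mathfrak{F},f)$ with a smooth compactly supported error, and then upgrades $(E_1,e_1)$ by elliptic regularity; the full column vectors, including the extra edge components $e_1,e_2$, are carried along throughout. In the second step the paper applies $\mathbf{A}_{\operatorname{DP[0]},\gamma}$ to $(E_2,e_2)=(\Xi_2,0)-(E_1,e_1)$ and must compare the two completions directly, writing $\mathbf{A}_{\operatorname{DP[0]},\gamma}=\mathbf{A}_{\operatorname{DP[0]},\gamma+\beta+1}-G_{\operatorname{DP[0]},\gamma+\beta+1}+G_{\operatorname{DP[0]},\gamma}$ and invoking the mapping properties of Green operators (\cite{schulze3}, theorem 3.4.3) to see that the right-hand side lies in $\mathcal{W}^{\infty,\gamma-1}_{\operatorname{As}}$. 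You sidestep that comparison: constructing $E_1$ via the parametrix and reading off only the first row, you exploit that $\operatorname{DP[0]}$ itself is weight-independent, so $\operatorname{DP[0]}(E_2)=\xi_1-g_1$ is obtained without ever relating $\mathbf{A}_{\gamma}$ to $\mathbf{A}_{\gamma+\beta+1}$, and the trace component of $\mathbf{A}_{\gamma}(E_2,0)$ is handled by continuity of $\mathcal{T}$ on $\mathcal{W}^{\infty,\gamma}$. The price is that the burden shifts to the assertion that the Green remainder of $\mathbf{A}^{(\gamma+\beta+1)}\operatorname{B}^{(\gamma+\beta+1)}-\operatorname{I}$ and the coboundary term $\operatorname{C}w$ land in edge spaces with conormal asymptotics associated with $\gamma+\beta$ (hence with $\gamma-1$, since lowering the weight only enlarges the admissible half-plane); this is exactly the same package of edge-calculus mapping properties the paper cites, so nothing new is needed, and your version arguably makes the weight bookkeeping more transparent. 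Both proofs ultimately rest on the same two applications of elliptic regularity at the admissible weights $\gamma+\beta+1$ and $\gamma$.
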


\begin{proof}
Let's consider the Fredholm operator defined by $\operatorname{DP[0]}$ acting on edge-Sobolev spaces with weight $\gamma+\beta+1$
\[\mathbf{A}_{\operatorname{DP[0]},\gamma+\beta+1}: \]
\[\begin{array}{ccc}
 \begin{array}{c}
 \mathcal{W}^{s+1,\gamma+\beta+1}(M,\bigwedge\nolimits^{\bullet}T^{\ast}_{\wedge}M) \\ 
 \oplus \\ 
H^{s+1}(\edge, \CC^{N^{+}})
 \end{array} & \longrightarrow & \begin{array}{c}
 \mathcal{W}^{s,\gamma+\beta}(M,\bigwedge\nolimits^{\bullet}T^{\ast}_{\wedge}M) \\ 
 \oplus \\ 
 H^{s}(\edge, J^{-})
 \end{array}
\end{array}.\]
Because $\operatorname{Coker}\mathbf{A}_{\operatorname{DP[0]},\gamma+\beta+1}$ is finite dimensional and  $\smooth_{0}(M,\bigwedge\nolimits^{\bullet}T^{\ast}_{\wedge}M)$ is a dense subset of the edge-Sobolev spaces we have \[\begin{bmatrix}
-\operatorname{Q}(\Xi_2)\\ 0
\end{bmatrix}=\mathbf{A}_{\operatorname{DP[0]},\gamma+\beta+1}\begin{bmatrix}
E_1 \\ 
e_1
\end{bmatrix}+\begin{bmatrix}
\mathfrak{F}\\
f
\end{bmatrix}\] with \[\begin{bmatrix}
E_1 \\ 
e_1
\end{bmatrix}\in  \begin{array}{c}
 \mathcal{W}^{s+1,\gamma+\beta+1}(M,\bigwedge\nolimits^{\bullet}T^{\ast}_{\wedge}M) \\ 
 \oplus \\ 
H^{s+1}(\edge, \CC^{N^{+}})
 \end{array}\] and
$\mathfrak{F}\in\smooth_{0}(M,\bigwedge\nolimits^{\bullet}T^{\ast}_{\wedge}M)$, $ f\in\smooth_{0}(\edge,\CC^{N^{+}}).$
Observe that this implies \[\mathbf{A}_{\operatorname{DP[0]},\gamma+\beta+1}\begin{bmatrix}
E_1 \\ 
e_1
\end{bmatrix}\in \begin{array}{c}
 \mathcal{W}^{\infty,\gamma+\beta}(M,\bigwedge\nolimits^{\bullet}T^{\ast}_{\wedge}M) \\ 
 \oplus \\ 
H^{\infty}(\edge, \CC^{N^{+}})
 \end{array}\] as $\operatorname{Q}(\Xi_2)\in\mathcal{W}^{\infty,\gamma+\beta}(M,T^{\ast}_{\wedge}M)$. 
 
 Hence by elliptic regularity (theorem~\ref{ellipticityforedge}) \[\begin{bmatrix}
E_1 \\ 
e_1
\end{bmatrix}\in  \begin{array}{c}
 \mathcal{W}^{\infty,\gamma+\beta+1}(M,\bigwedge\nolimits^{\bullet}T^{\ast}_{\wedge}M) \\ 
 \oplus \\ 
H^{\infty}(\edge, \CC^{N^{+}})
 \end{array}.\]
 Now define $\begin{bmatrix}
E_2 \\  e_2 \end{bmatrix}:=\begin{bmatrix} \Xi_2\\ 0 \end{bmatrix}-\begin{bmatrix}
E_1 \\ 
e_1
\end{bmatrix}$, then  by \eqref{regequation} \[\mathbf{A}_{\operatorname{DP[0]},\gamma}\begin{bmatrix}
E_2 \\  e_2 \end{bmatrix}=\begin{bmatrix}
\xi_1 -\operatorname{Q}(\Xi_2)\\  0 \end{bmatrix}-\mathbf{A}_{\operatorname{DP[0]},\gamma}\begin{bmatrix}
E_1 \\ e_1 \end{bmatrix}.\]
Observe that as $\mathbf{A}_{\operatorname{DP[0]},\gamma}$ and $\mathbf{A}_{\operatorname{DP[0]},\gamma+\beta+1}$ are $2\times 2$ operator matrices with $\operatorname{DP[0]}$ in the upper left corner  they differ by Green operators with asymptotics  acting on the corresponding spaces (see \cite{schulze3} theorem 3.4.3). Hence we can write  $\mathbf{A}_{\operatorname{DP[0]},\gamma}=\mathbf{A}_{\operatorname{DP[0]},\gamma+\beta+1}-G_{\operatorname{DP[0]},\gamma+\beta+1}+G_{\operatorname{DP[0]},\gamma}$, where $G_{\operatorname{DP[0]},\gamma+\beta+1}$ is the Green operator matrix with the elliptic boundary conditions for $\operatorname{DP[0]}$ acting on spaces with  weight $\gamma+\beta+1$ and analogously  for $G_{\operatorname{DP[0]},\gamma}$. This implies \[\mathbf{A}_{\operatorname{DP[0]},\gamma}\begin{bmatrix}
E_2 \\  e_2 \end{bmatrix}=\begin{bmatrix}
\xi_1\\ 0
\end{bmatrix}+\begin{bmatrix}
\mathfrak{F}\\ {f}
\end{bmatrix}-(-G_{\operatorname{DP[0]},\gamma+\beta+1}+G_{\operatorname{DP[0]},\gamma})\begin{bmatrix}
E_1 \\  e_1 \end{bmatrix}\]
therefore, by the mapping properties of Green operators (see \cite{schulze3}, theorem 3.4.3) we have 
\[\mathbf{A}_{\operatorname{DP[0]},\gamma}\begin{bmatrix}
E_2 \\  e_2 \end{bmatrix}\in\mathcal{W}^{\infty,\gamma-1}_{\operatorname{As}}(M,\bigwedge\nolimits^{\bullet}T^{\ast}_{\wedge}M).\]
By elliptic regularity we conclude
\[\begin{bmatrix}
E_2 \\ 
e_2
\end{bmatrix}\in  \begin{array}{c}
 \mathcal{W}^{\infty,\gamma}_{\operatorname{As}}(M,\bigwedge^{\bullet}T^{\ast}_{\wedge}M) \\ 
 \oplus \\ 
H^{\infty}(\edge, \CC^{N^{+}})
 \end{array}\] and $\Xi_2=E_1+E_2$ as claimed.
\end{proof}

\begin{proposition}
Let $\Xi_1\in \mathcal{W}^{\infty,\gamma}_{\operatorname{As}}(M,T^{\ast}_{\wedge}M)$ and $\Xi_2\in \mathcal{W}^{\infty,\gamma}(M,T^{\ast}_{\wedge}M)$ with  an admissible weight $\gamma > \frac{m+5}{2}$ such that \begin{equation}\label{spliteq}
(\operatorname{DP[0]}+\operatorname{Q})(\Xi_1+\Xi_2)=0
\end{equation} and  $\operatorname{DP[0]}(\Xi_1)=0.$ Then  $\Xi_2\in \mathcal{W}^{\infty,\gamma}_{\operatorname{As}}(M,T^{\ast}_{\wedge}M).$
\end{proposition}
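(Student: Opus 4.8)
The plan is to run a bootstrap argument using Proposition~\ref{recursion} repeatedly, gaining a fixed amount $\beta$ of extra decay at each stage until the conormal asymptotics of $\Xi_1$ are absorbed into $\Xi_2$. First I would record the starting point: $\Xi_1\in\mathcal{W}^{\infty,\gamma}_{\operatorname{As}}(M,T^{\ast}_{\wedge}M)$ and $\operatorname{DP[0]}(\Xi_1)=0$, so subtracting from \eqref{spliteq} gives $(\operatorname{DP[0]}+\operatorname{Q})(\Xi_2)=-\operatorname{Q}(\Xi_1+\Xi_2)+\operatorname{Q}(\Xi_2)=:\xi_1$, and I would check that $\xi_1\in\mathcal{W}^{\infty,\gamma-1}_{\operatorname{As}}(M,\bigwedge^{\bullet}T^{\ast}_{\wedge}M)$. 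The key quantitative input is that $\operatorname{Q}$ is a sum of products of \emph{two or more} edge-degenerate first-order operators (see Proposition~\ref{localprop} and \eqref{secondnonlinearterm}): each such operator lowers the weight by $1$ but each factor lies in $\mathcal{W}^{\infty,\gamma}$, so by the Banach algebra property \eqref{Banachalg} a product of $k\ge 2$ factors lies in $\mathcal{W}^{\infty,k\gamma-k}$; since $\gamma>\frac{m+5}{2}>\frac{m+1}{2}+1$ we have $k\gamma-k=k(\gamma-1)>\gamma$, so in fact $\operatorname{Q}(\Xi_2)\in\mathcal{W}^{\infty,\gamma+\beta}(M,\bigwedge^{\bullet}T^{\ast}_{\wedge}M)$ for a definite $\beta=\beta(\gamma,m)>0$ (concretely one may take $\beta=\gamma-2>0$, coming from the $k=2$ term, which is the weakest).

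With this $\beta$ in hand I would invoke Proposition~\ref{recursion} (having first chosen the countable exceptional set of non-admissible weights and shifting $\gamma$ infinitesimally if needed so that $\gamma+\beta+1$ is also admissible — this is allowed since the bad set is discrete) to conclude $\Xi_2=E_1+E_2$ with $E_2\in\mathcal{W}^{\infty,\gamma}_{\operatorname{As}}$ and $E_1\in\mathcal{W}^{\infty,\gamma+\beta+1}$. The point is that $E_1$ has \emph{strictly better} weight than $\Xi_2$, while the piece we are trying to control, $E_2$, already has the asymptotics we want. Now I would iterate: write $\Xi_2=E_2^{(1)}+E_1^{(1)}$ with $E_1^{(1)}$ of weight $\gamma_1:=\gamma+\beta+1$; since $\Xi_1$ still has asymptotics associated to $\gamma<\gamma_1$, the equation satisfied by $E_1^{(1)}$ is of the same form with the roles unchanged except that the "known asymptotic part" now includes $E_2^{(1)}$, so Proposition~\ref{recursion} applied at weight $\gamma_1$ splits $E_1^{(1)}=E_2^{(2)}+E_1^{(2)}$ with $E_1^{(2)}$ of weight $\gamma_1+\beta+1$. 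Repeating, I produce $\Xi_2=\sum_{i=1}^{N}E_2^{(i)}+E_1^{(N)}$ where each $E_2^{(i)}\in\mathcal{W}^{\infty,\gamma}_{\operatorname{As}}$ (or better) and $E_1^{(N)}\in\mathcal{W}^{\infty,\gamma+N(\beta+1)}$.

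The final step is to observe that for $N$ large enough $\gamma+N(\beta+1)$ exceeds the largest exponent in the asymptotic type we are aiming for, so that an element of $\mathcal{W}^{\infty,\gamma+N(\beta+1)}$ is itself (trivially, with empty asymptotic type) in $\mathcal{W}^{\infty,\gamma}_{\operatorname{As}}$; hence $\Xi_2$, being a finite sum of elements of $\mathcal{W}^{\infty,\gamma}_{\operatorname{As}}(M,T^{\ast}_{\wedge}M)$, lies in $\mathcal{W}^{\infty,\gamma}_{\operatorname{As}}(M,T^{\ast}_{\wedge}M)$, which is closed under (finite) sums of asymptotic types. The main obstacle is bookkeeping the asymptotic types correctly through the iteration: one must be sure that the "$\operatorname{As}$" labels accumulated at each stage really do combine into a single admissible asymptotic type associated to $\gamma$, and that the admissibility (invertibility of the Mellin symbol $\sigma^1_M(\sigma^1_\wedge(\operatorname{DP[0]}))(z)$) holds at every shifted weight $\gamma+i(\beta+1)$ — both handled because the non-admissible weights form a discrete set (Theorem~\ref{thmweight}) and the Green-operator comparison of $\mathbf{A}_{\operatorname{DP[0]},\gamma'}$ at different weights (as in the proof of Proposition~\ref{recursion}, via \cite{schulze3} theorem 3.4.3) only ever adds asymptotics associated to the relevant weight. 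A secondary subtlety is verifying at each stage that the nonlinear remainder still gains the fixed $\beta$; this follows each time from the same Banach-algebra estimate on products of two or more edge-degenerate operators, since the factors never get \emph{worse} than weight $\gamma$.
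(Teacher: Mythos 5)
Your overall strategy --- rewrite the equation so the right-hand side carries the asymptotics coming from $\Xi_1$, then bootstrap with Proposition~\ref{recursion} to trade weight gain for conormal asymptotics --- is the same as the paper's, but two steps as written do not work. First, the weight gain for $\operatorname{Q}(\Xi_2)$ does not follow from the Banach algebra property \eqref{Banachalg}, which keeps the weight at $\gamma$, and your rule that a product of $k\geq 2$ factors lies in $\mathcal{W}^{\infty,k\gamma-k}$ is false: in these cone/edge spaces the product of two elements of weight $\gamma-1$ lies in $\mathcal{W}^{\infty,2(\gamma-1)-\frac{m+1}{2}}$ by \eqref{extrareg}/\eqref{morereg}, so the gain is $\beta=\gamma-\frac{m+5}{2}$, not $\gamma-2$. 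The positivity of this gain is exactly what the hypothesis $\gamma>\frac{m+5}{2}$ provides; your arithmetic would seem to require only $\gamma>2$ and thus misses the actual role of the assumption. (Also, rather than perturbing $\gamma$, which is fixed in the statement, one shrinks $\beta$ so that $\gamma+\beta+1$ is admissible, using that the non-admissible weights form a discrete set.) You also leave the claim $\xi_1\in\mathcal{W}^{\infty,\gamma-1}_{\operatorname{As}}$ unargued; the paper justifies it by observing that every product on the right-hand side contains a factor $\operatorname{Q}_{i}(\Xi_1)$, which has asymptotics, while the remaining factors obey the pointwise edge estimate \eqref{edgeestimate}, and multiplication then preserves the asymptotic type.

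Second, and more seriously, your termination step is wrong: it is not true that $\mathcal{W}^{\infty,\gamma+N(\beta+1)}(M,T^{\ast}_{\wedge}M)\subset\mathcal{W}^{\infty,\gamma}_{\operatorname{As}}(M,T^{\ast}_{\wedge}M)$ for $N$ large. Membership in a space with conormal asymptotics requires, for \emph{every} $l>0$, a partial conormal expansion modulo $\mathcal{W}^{\infty,\gamma+l}$; an asymptotic type has $\operatorname{Re}p_j\to-\infty$, so there is no ``largest exponent'' to overshoot, and an element lying merely in $\mathcal{W}^{\infty,\gamma+c}$ for one fixed $c$ need not admit any expansion at higher levels (only the flat space $\bigcap_{l}\mathcal{W}^{\infty,\gamma+l}$ would be trivially of this kind). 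Hence the recursion cannot stop after finitely many applications of Proposition~\ref{recursion}. The paper instead iterates indefinitely, producing for every $l>0$ an element $E\in\mathcal{W}^{\infty,\gamma}_{\operatorname{As}}(M,T^{\ast}_{\wedge}M)$ with $\Xi_2-E\in\mathcal{W}^{\infty,\gamma+l}(M,T^{\ast}_{\wedge}M)$, and it is this statement, valid for all $l$, that by the very definition of the spaces with asymptotics yields $\Xi_2\in\mathcal{W}^{\infty,\gamma}_{\operatorname{As}}(M,T^{\ast}_{\wedge}M)$. With the gain computed from \eqref{morereg} and the finite-$N$ shortcut replaced by the all-$l$ conclusion, your argument becomes the paper's proof.
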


\begin{proof}
Equation \eqref{spliteq} can be written as 
\begin{equation}\label{rearrangement}
(\operatorname{DP[0]}+\operatorname{Q})(\Xi_2)=-\operatorname{Q}(\Xi_1)-\sum\limits_{j\geq 2}\operatorname{Q}_{i_{1}}(\Xi_{\bullet})\cdots\operatorname{Q}_{i_{j}}(\Xi_{\bullet}),
\end{equation}
  (see \eqref{decompositionnonlineareq}). The right hand side of \eqref{rearrangement} consists of products where at least one of the operators in each of these products is acting on $\Xi_1$, let's say $\operatorname{Q}_{1}(\Xi_1)$. Now, the term $\operatorname{Q}_{1}(\Xi_1)$ has asymptotics (associated to $\gamma-1$) and the other elements  $\operatorname{Q}_{i_{k}}(\Xi_{\bullet})$ in the products satisfy the estimate  \eqref{edgeestimate} near the edge. Hence by \cite{schulze3} theorem 2.3.13, multiplication by elements in $\smooth_{0}(\MM)$ induces a continuous operator on Sobolev spaces with asymptotics (with possibly different asymptotic type but associated to the same weight). Therefore we conclude that the right hand side in \eqref{rearrangement} belongs to $\mathcal{W}^{\infty,\gamma-1}_{\operatorname{As}}(M,T^{\ast}_{\wedge}M).$
  
Now, the fact that $\gamma> \frac{m+5}{2}$  together with \eqref{morereg} implies that $\operatorname{Q}(\Xi_2)\in\mathcal{W}^{\infty,\gamma+\beta}_{}(M,T^{\ast}_{\wedge}M)$ for some $\beta>0$.
If necessary we can choose $\beta$ small enough such that $\gamma+\beta+1$ is an admissible weight for $\operatorname{DP[0]}$. Then, proposition~\ref{recursion} implies  $\Xi_2=E_1+E_2$ with $E_1$ belonging to $\mathcal{W}^{\infty,\gamma+\beta+1}(M,T^{\ast}_{\wedge}M)$ and $E_2\in \mathcal{W}^{\infty,\gamma}_{\operatorname{As}}(M,T^{\ast}_{\wedge}M).$

 Define $S_1:=\Xi_2-E_2=E_1$ and observe that \eqref{rearrangement} and a similar argument as above implies   \[(\operatorname{DP[0]}+\operatorname{Q})(S_1):=\xi_2\in\mathcal{W}^{\infty,\gamma-1}_{\operatorname{As}}(M,T^{\ast}_{\wedge}M).\] Moreover, \eqref{morereg} and $\gamma>\frac{m+5}{2}$ imply that $\operatorname{Q}(S_1)$ belongs to the edge space $\mathcal{W}^{\infty,\gamma+\beta+1+\beta'}(M,T^{\ast}_{\wedge}M) $ for some $\beta'>0$. Then by following the same argument as in proposition~\ref{recursion} we have \[S_1=E_3+E_4\] with  $E_3\in\mathcal{W}^{\infty,\gamma+\beta+\beta'+2}(M,T^{\ast}_{\wedge}M)$ and $E_4=\mathcal{W}^{\infty,\gamma}_{\operatorname{As}}(M,T^{\ast}_{\wedge}M).$ Hence we have found an element $E_2+E_4\in\mathcal{W}^{\infty,\gamma}_{\operatorname{As}}(M,T^{\ast}_{\wedge}M) $ such that \[\Xi_2-(E_2+E_4)=E_3\in\mathcal{W}^{\infty,\gamma+\beta+\beta'+2}(M,T^{\ast}_{\wedge}M).\]
We continue this recursion argument by setting $S_2=S_1-E_4=\Xi_2-E_2-E_4=E_3$ and  \[(\operatorname{DP[0]}+\operatorname{Q})(S_2)=\xi_3.\] Therefore by means of this iterative process  we conclude that for every $l>0$ there exists $E\in \mathcal{W}^{\infty,\gamma}_{\operatorname{As}}(M,T^{\ast}_{\wedge}M)$ such that $\Xi_2-E\in\mathcal{W}^{\infty,\gamma+l}(M,T^{\ast}_{\wedge}M).$ 
Therefore $\Xi_2\in \mathcal{W}^{\infty,\gamma}_{\operatorname{As}}(M,T^{\ast}_{\wedge}M).$
\end{proof}

\section{The Moduli space}\label{sectionTheModuliSpace}
In this section we define the moduli space we are interested in and prove the main result of this paper. Let $(M,\Phi)$ be a special Lagrangian submanifold with edge singularity i.e. $M$ is a manifold with edge singularity (section~\ref{examples}) and $\Phi:M\longrightarrow\CC^{n}$ is an edge special Lagrangian embedding (see definition~\ref{edgeembedding}).

\begin{definition}
Given an admissible weight $\gamma>\frac{m+5}{2}$, we define the moduli space  of conormal asymptotic special Lagrangian deformations of $(M,\Phi)$ with rate $\gamma$ and elliptic  boundary trace condition $\mathcal{T}$ as the space of smooth embeddings $\Upsilon:M\longrightarrow \CC^{n}$, isotopic to $\Phi$ and conormal asymptotic to $(M,\Phi)$ with rate $\gamma$, such that they satisfy the boundary condition $\mathcal{T} (\Upsilon) =0$ where $\mathcal{T}$ is a trace pseudo-differential operator: \begin{equation}\label{tracepseudo}
\mathcal{T}:\mathcal{W}^{s,\gamma}(M,T^{\ast}_{\wedge}M)\longrightarrow H^{s-1}(\edge,J^{-})
\end{equation} 
with $s>\operatorname{max}\left\lbrace \frac{m+1+q}{2}+\mathfrak{c}_{\gamma}, \frac{m+3+q}{2}\right\rbrace$ such that $\mathcal{T}$ belongs to a set of boundary conditions for an elliptic edge boundary value problem for the operator $\operatorname{DP[0]}$ on the edge-Sobolev space $\mathcal{W}^{s,\gamma}(M,T^{\ast}_{\wedge}M)$.
We denote this moduli space as $\mathfrak{M}(M, \Phi, \mathcal{T},\gamma)$.   
\end{definition}

Given a special Lagrangian submanifold with edge singularity $(M,\Phi)$, the moduli space of conormal asymptotic special Lagrangian deformations depends on the parameters $\gamma$ and $\mathcal{T}$. The role of the weight $\gamma$ is explained in the definition of conormal asymptotic embedding (see definition~\ref{conormalembedding}).  The existence of a trace operator $\mathcal{T}$ and its role in the elliptic theory of edge degenerate equations was discussed in section~\ref{symbol-linearop}. Here we want to include further details about $\mathcal{T}$. The edge symbol defining $\mathcal{T}$ was defined in \eqref{symboltrace} as a family of continuous linear maps $\operatorname{t}(u,\eta):\scone\longrightarrow J^{-}_{(u,\eta)}$ continuously parametrized by $T^{\ast}\edge\setminus\{ 0\}$. Note that as $J^{-}$ is a finite rank vector bundle, the operators $\operatorname{t}(u,\eta)$ are finite rank operators (in particular compact operators). The trace operator $\mathcal{T}$  was locally defined by \[\mathcal{T}=\operatorname{Op}(\operatorname{t}(u,\eta))=\mathcal{F}^{-1}_{\eta\rightarrow u}\operatorname{t}(u,\eta)\mathcal{F}^{}_{u'\rightarrow \eta}.\]

Now recall from section~\ref{symbol-linearop} that the fibers of the vector bundle $J^{-}$ consist mainly of isomorphic images of finite dimensional kernels of Fredholm operators acting on the extension of cone-Sobolev spaces defined by \eqref{Fredholmedgesymbol}. The operator-valued symbols $\operatorname{t}(u,\eta)$ correspond to the projection of the cone-Sobolev space $\scone$ onto the finite dimensional kernel of \eqref{Fredholmedgesymbol}. By considering a local trivialization of $J^{-}$ over an open subset $\Omega\subset\edge$ and using the fact that $\operatorname{t}(u,\eta)$ are projections, it is possible to prove the following proposition.   
\begin{proposition}
 Locally on $\Omega\subset\edge$, the trace operator $\mathcal{T}$ in \eqref{tracepseudo} acts on each of the components of the stretched cotangent bundle  $T^{\ast}_{\wedge}M$ as an integral operator with kernel in $\smooth(\Omega\times\cone\times\Omega)\otimes\CC^{N^{-}}$. 
\end{proposition}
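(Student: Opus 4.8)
The plan is to use the explicit structure of the operator-valued symbol $\operatorname{t}(u,\eta)$ as a family of finite-rank projections onto the kernel of \eqref{Fredholmedgesymbol}, together with the definition $\mathcal{T}=\operatorname{Op}(\operatorname{t}(u,\eta))=\mathcal{F}^{-1}_{\eta\to u}\operatorname{t}(u,\eta)\mathcal{F}_{u'\to\eta}$, and reduce the claim to a statement about the Schwartz kernel of such a pseudo-differential operator with operator-valued symbol. First I would fix a relatively compact coordinate neighborhood $\Omega\subset\edge$ over which $J^{-}$ is trivialized, so that $\operatorname{t}(u,\eta):\scone\to\CC^{N^{-}}$ can be written componentwise as $\operatorname{t}(u,\eta)v=\bigl(\langle v,\phi_1(u,\eta)\rangle,\dots,\langle v,\phi_{N^{-}}(u,\eta)\rangle\bigr)$, where each $\phi_i(u,\eta)\in\scone$ (or rather in the appropriate dual cone-Sobolev space) is the generator of the one-dimensional piece of the kernel, depending smoothly on $(u,\eta)\in T^{\ast}\Omega\setminus\{0\}$ and homogeneous of the relevant degree with respect to the $\kappa_\lambda$-action. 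The key point is that because $J^{-}$ has finite rank, $\operatorname{t}(u,\eta)$ is a \emph{smoothing} operator-valued symbol in the sense of the edge calculus: composing $\mathcal{T}$ with a component projection of $T^{\ast}_{\wedge}M$ lands in the ideal of Green/smoothing operators of the edge algebra, which by \cite{schulze3} (the characterization of Green operators with asymptotics, theorem~3.4.3 there) have Schwartz kernels in $\smooth(\Omega\times\cone\times\Omega)\otimes\CC^{N^{-}}$.

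Concretely, the key steps are: (i) write out $\mathcal{T}u(u) = (2\pi)^{-q}\int\!\!\int e^{\sqrt{-1}(u-u')\eta}\,\operatorname{t}(u,\eta)u(u')\,du'\,d\eta$ and substitute the componentwise inner-product form of $\operatorname{t}$, obtaining for each component of $J^{-}$ an oscillatory integral whose $\scone$-valued integrand is $e^{\sqrt{-1}(u-u')\eta}\phi_i(u,\eta)$; (ii) observe that the $r$- and $\sigma$-dependence enters only through $\phi_i(u,\eta)(r,\sigma)$, which is a smooth function on $\cone$ for each fixed $(u,\eta)$, with appropriate decay/growth controlled by the cone-Sobolev membership and the asymptotic type coming from the parametrix construction in section~\ref{symbol-linearop}; (iii) carry out the $\eta$-integration, using the homogeneity of $\phi_i$ under the group action $\kappa$ and the rapid decay obtained after one integration by parts in $\eta$ (localizing away from $\eta=0$ with the cut-off that is built into the definition of $\operatorname{Op}$ of an edge symbol), to show the resulting kernel $k_i(u,r,\sigma,u')$ is smooth jointly in all variables on $\Omega\times\cone\times\Omega$; (iv) reassemble the $N^{-}$ components into a kernel valued in $\CC^{N^{-}}$. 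The action on each component of the stretched cotangent bundle $T^{\ast}_{\wedge}M$ is handled by noting that in the local splitting \eqref{decomposition} the trace symbol acts diagonally (or at worst triangularly) with respect to the bundle components $\Theta_\Xcal,\Theta_\edge,\Lambda^i_{\Xcal,\edge},\dots$, so the same argument applies componentwise.

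The main obstacle will be step (iii): controlling the $\eta$-integral uniformly, in particular verifying that the operator-valued symbol estimates for $\operatorname{t}(u,\eta)$ — which in the edge calculus are phrased in terms of the $\kappa$-twisted symbol seminorms $\sup\|\kappa_{[\eta]}^{-1}(D_\eta^\beta D_u^\alpha \operatorname{t}(u,\eta))\kappa_{[\eta]}\|[\eta]^{-\mu+|\beta|}$ — are strong enough to push all derivatives through the integral and still obtain a \emph{smooth} (not merely conormal or distributional) kernel on the open cone. This is exactly where the finite-rankness of $J^{-}$ is essential: a finite-rank operator family is automatically a Green symbol of order $-\infty$, so all the symbol seminorms are finite with arbitrarily negative order, and the oscillatory integral converges absolutely after enough integrations by parts, yielding joint smoothness. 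I would invoke \cite{schulze3} theorem~3.4.3 (mapping properties and kernel characterization of Green operators in the edge algebra) to package this, so that the only thing genuinely to be checked by hand is that $\operatorname{t}(u,\eta)$, being the projection onto a finite-dimensional kernel bundle, indeed lies in the Green symbol class — which follows from the smooth dependence of the kernel bundle on $(u,\eta)$ established in section~\ref{symbol-linearop} (using \cite{schulze3} section~1.2.4). The remaining steps are then routine bookkeeping.
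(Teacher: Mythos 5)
The paper itself offers no argument for this proposition: it defers entirely to the more general kernel characterization in \cite{schulze3}, proposition 3.4.6, so your attempt to prove it by hand already goes beyond the text. Your skeleton --- write $\mathcal{T}=\operatorname{Op}(\operatorname{t}(u,\eta))$, express $\operatorname{t}(u,\eta)$ componentwise as pairings against generators $\phi_i(u,\eta)$ of the kernel bundle from section~\ref{symbol-linearop}, check that $\operatorname{t}$ lies in the trace/Green symbol class, and then invoke Schulze's characterization --- is compatible with that citation (note, though, that the paper uses theorem 3.4.3 of \cite{schulze3} only for mapping properties; the kernel statement is proposition 3.4.6). However, the justification you give for the key analytic step (iii) has a genuine flaw. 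Finite rank of $J^{-}$ does not make $\operatorname{t}(u,\eta)$ a symbol of order $-\infty$: as constructed in \eqref{symboltrace} the trace symbol is twisted homogeneous of a fixed finite order in $\eta$ (it completes an order-one elliptic edge problem), so its $\kappa$-twisted seminorms grow at a fixed polynomial rate and the $\eta$-integral is not absolutely convergent on symbol-order grounds. Moreover, integration by parts in $\eta$ gains decay only against powers of $u-u'$, which is the standard argument producing smoothness of a pseudo-differential kernel \emph{off} the diagonal; it cannot by itself yield the claimed joint smoothness across $u=u'$.

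The mechanism that actually gives a kernel in $\smooth(\Omega\times\cone\times\Omega)\otimes\CC^{N^{-}}$ is different, and your sketch should isolate it: the generators $\phi_i(u,\eta)$ span kernels of the elliptic cone family \eqref{Fredholmedgesymbol}, hence by cone elliptic regularity they are smooth on the open cone, carry conormal asymptotics at $r=0$, and decay rapidly as $r\to\infty$; and their $\eta$-dependence is governed by the group action, $\phi_i(u,\lambda\eta)=\kappa_{\lambda}\phi_i(u,\eta)$ for large $\abs{\eta}$. Consequently, for a \emph{fixed} point $(r,\sigma)$ of the open cone $\cone$, twisted homogeneity evaluates $\phi_i$ at the large argument $[\eta]r$, so the rapid decay at cone infinity converts into rapid decay in $\eta$, uniformly with all $(u,\eta)$-derivatives on compact subsets. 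It is this decay, not a fictitious order $-\infty$, that makes the oscillatory integral absolutely convergent and the kernel jointly smooth on $\Omega\times\cone\times\Omega$ (smoothness is claimed only on the open cone, so the conormal behavior at $r=0$ does not obstruct it). With that correction your argument becomes a proof of the special case of \cite{schulze3} proposition 3.4.6 that the paper simply cites; as written, step (iii) would fail.
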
 
This proposition  and its proof is contained in the more general result presented in \cite{schulze3} proposition 3.4.6. The reader is referred to that book for details.

Given a operator-valued trace symbol $\operatorname{t}(u,\eta)$, the trace operator $\mathcal{T}$ is unique module negligible operators from the point of view of ellipticity and smoothness. See \cite{naza} section 6.1 for details.  

\begin{theorem}\label{main}
Locally near $M$ the moduli space $\mathfrak{M}(M, \Phi, \mathcal{T},\gamma)$ is homeomorphic to the zero set of a smooth map $\mathfrak{G}$ between smooth manifolds $\mathcal{M}_1 $, $\mathcal{M}_2 $  given as  neighborhoods of zero in finite dimensional Banach spaces. The map $\mathfrak{G}: \mathcal{M}_1 \longrightarrow \mathcal{M}_2$ satisfies $\mathfrak{G}(0)=0$ and $\mathfrak{M}(M, \Phi, \mathcal{T},\gamma)$ near $M$ is a smooth manifold of finite dimension when $\mathfrak{G}$ is the zero map.
\end{theorem}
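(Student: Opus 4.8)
The plan is to apply the Implicit Function Theorem for Banach spaces (Theorem~\ref{IFT}) to the augmented non-linear operator
\[
\mathbf{P}:=\begin{bmatrix} \operatorname{P}_{\omega_{_{\CC^n}}}\oplus\operatorname{P}_{\operatorname{Im}\Omega}\\ \mathcal{T}\end{bmatrix}:\mathfrak{A}\subset\mathcal{W}^{s,\gamma}(M,T^{\ast}_{\wedge}M)\longrightarrow \mathcal{W}^{s-1,\gamma-1}(M,\textstyle\bigwedge^{2}T^{\ast}_{\wedge}M)\oplus\mathcal{W}^{s-1,\gamma-1}(M)\oplus H^{s-1}(\edge,J^{-}),
\]
which is smooth by the corollaries of section~\ref{subsectionNonLinearOp} and the fact that $\mathcal{T}$ is a (linear, continuous) trace pseudo-differential operator. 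First I would record that $\mathbf{P}(0)=0$, since $\Phi$ is special Lagrangian and $\mathcal{T}$ is linear. Next, the linearisation $\operatorname{D}\mathbf{P}[0]=\begin{bmatrix}\operatorname{DP[0]}\\\mathcal{T}\end{bmatrix}$ is, by McLean's computation together with Propositions~\ref{hodgeprop} and the construction in section~\ref{symbol-linearop}, the truncation of the elliptic edge operator $\mathbf{A}_{\operatorname{DP[0]}}$ obtained by forgetting the $H^{s}(\edge,\CC^{N^{+}})$ summand in the domain (equivalently, the coboundary operator $\operatorname{C}$ is dropped). Since $\gamma$ is admissible and the Atiyah--Bott obstruction vanishes for $d+d^{\ast}$ (Theorem~\ref{obstructionvanishing} applied via \cite{patodi}), $\mathbf{A}_{\operatorname{DP[0]}}$ is Fredholm by Theorem~\ref{ellipticityforedge}; a Fredholm operator stays Fredholm after composing with a finite-rank (hence compact) perturbation or after deleting a finite-dimensional summand from its domain, so $\operatorname{D}\mathbf{P}[0]$ has closed range of finite codimension and finite-dimensional kernel. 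A finite-codimensional closed subspace and a finite-dimensional subspace of a Banach space are always complemented, so conditions (ii)' (closed complemented range of finite codimension) and (iii) (the kernel splits) of the Fredholm version of Theorem~\ref{IFT} hold.

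Since $\operatorname{D}\mathbf{P}[0]$ need not be surjective, I would invoke the standard Lyapunov--Schmidt refinement of Theorem~\ref{IFT}: choose closed complements $\mathcal{W}^{s,\gamma}=\operatorname{Ker}\operatorname{D}\mathbf{P}[0]\oplus Z$ and $Y=\operatorname{Im}\operatorname{D}\mathbf{P}[0]\oplus \mathcal{C}$ with $\mathcal{C}$ finite-dimensional, let $\Pi$ be the projection onto $\operatorname{Im}\operatorname{D}\mathbf{P}[0]$, apply the surjective-case IFT to $\Pi\circ\mathbf{P}$ to obtain a smooth map $G:\mathcal{M}_1\to Z$ (with $\mathcal{M}_1$ a neighbourhood of $0$ in the finite-dimensional space $\operatorname{Ker}\operatorname{D}\mathbf{P}[0]$), and then set $\mathcal{M}_2:=\mathcal{C}$ and $\mathfrak{G}(x):=(\operatorname{Id}-\Pi)\,\mathbf{P}(x+G(x))$. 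Then $\mathbf{P}(x+G(x))=0$ iff $\mathfrak{G}(x)=0$, $\mathfrak{G}$ is smooth, $\mathfrak{G}(0)=0$, and when $\mathfrak{G}\equiv 0$ the zero set is all of $\mathcal{M}_1$, a smooth finite-dimensional manifold of dimension $\dim\operatorname{Ker}\operatorname{D}\mathbf{P}[0]$.

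The remaining — and genuinely substantive — step is to identify the zero set $\{x+G(x):\mathfrak{G}(x)=0\}$ with a neighbourhood of $M$ in $\mathfrak{M}(M,\Phi,\mathcal{T},\gamma)$, i.e.\ to produce the homeomorphism asserted in the statement. Here I would argue in both directions. Given $\Xi$ in the zero set, $\Xi\in\mathfrak{A}$ so by the proposition of section~\ref{subsectionPrelimRegularity} (the edge tubular neighbourhood, with the bound $\norm{\Xi}_{s,\gamma}<\vartheta$) the map $\operatorname{exp}_{g_{_{\CC^n}}}(\mathcal V_{\Xi})\circ\Phi$ is a smooth embedding, it is special Lagrangian because $(\operatorname{P}_{\omega_{_{\CC^n}}}\oplus\operatorname{P}_{\operatorname{Im}\Omega})(\Xi)=0$, it is isotopic to $\Phi$ through $t\mapsto\operatorname{exp}(t\mathcal V_{\Xi})\circ\Phi$, and it satisfies $\mathcal{T}(\Xi)=0$; moreover by the regularity results of section~\ref{regularity} (the two propositions there showing $\Xi=\Xi_1+\Xi_2$ has a full conormal asymptotic expansion associated to $\gamma$) together with Definition~\ref{conormalembedding} and the fact that the pulled-back metric depends smoothly and conormally on $\mathcal V_{\Xi}$, the embedding is conormal asymptotic to $(M,\Phi)$ with rate $\gamma$ — so it represents a point of $\mathfrak{M}(M,\Phi,\mathcal{T},\gamma)$. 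Conversely, any $\Upsilon\in\mathfrak{M}(M,\Phi,\mathcal{T},\gamma)$ sufficiently close to $\Phi$ lies in the tubular neighbourhood $\mathfrak{A}$, hence equals $\operatorname{exp}(\mathcal V_\Xi)\circ\Phi$ for a unique normal section $\mathcal V_\Xi$; the conormal-asymptotic condition on $\Upsilon$ and on $\Upsilon^{\ast}g_{_{\CC^n}}$ forces the dual form $\Xi=g_{_M}(J^{-1}\mathcal V_\Xi)$ to lie in $\mathcal{W}^{s,\gamma}(M,T^{\ast}_{\wedge}M)$ (using that the edge-Sobolev norm is controlled by conormal asymptotic bounds, cf.\ Proposition~\ref{localpropedge}), and the special Lagrangian plus trace conditions give $\mathbf{P}(\Xi)=0$, so $\Xi$ lies in the zero set. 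The hard part will be making this dictionary between "conormal asymptotic embedding" and "membership in the weighted edge-Sobolev space $\mathcal{W}^{s,\gamma}$" precise and bicontinuous — in particular checking that the correspondence $\Upsilon\leftrightarrow\Xi$ is a homeomorphism for the natural topologies (weighted $C^\infty$ / edge-Sobolev on one side, the quotient topology modulo reparametrisation on the other), and that shrinking neighbourhoods on the two sides match up. Modulo that identification, combining it with the IFT output gives the theorem.
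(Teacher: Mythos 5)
Your proposal is correct in substance, but it implements the reduction differently from the paper, and the comparison is worth recording. The paper never works with the truncated column operator $\begin{bmatrix}\operatorname{DP[0]}\\ \mathcal{T}\end{bmatrix}$ directly: it keeps the full elliptic block operator $\mathbf{A}_{\operatorname{DP[0]}}$ of section~\ref{symbol-linearop} (including the auxiliary variable $g\in H^{s}(\edge,\CC^{N^{+}})$ and the coboundary entries $\operatorname{C},\operatorname{B}$), enlarges the \emph{domain} by the finite-dimensional space $\operatorname{Coker}\mathbf{A}_{\operatorname{DP[0]}}$ to manufacture a surjective linearisation, applies Theorem~\ref{IFT} to the extended operator $\hat{\operatorname{P}}$, reads off the obstruction map $\mathfrak{G}_2:\mathcal{U}_1\subset\operatorname{Ker}\mathbf{A}_{\operatorname{DP[0]}}\to\mathcal{U}_2''\subset\operatorname{Coker}\mathbf{A}_{\operatorname{DP[0]}}$, and only at the very end intersects the resulting graph with the slice $\mathcal{W}^{s,\gamma}(M,T^{\ast}_{\wedge}M)\oplus\{0\}$ to recover solutions of the actual boundary value problem \eqref{boundaryvalue}. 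You instead perform a classical Lyapunov--Schmidt reduction on the truncated operator itself, justifying its Fredholmness by composing $\mathbf{A}_{\operatorname{DP[0]}}$ with the inclusion of the finite-codimensional summand (this is sound), projecting onto the image, and defining $\mathfrak{G}=(\operatorname{Id}-\Pi)\mathbf{P}(x+G(x))$. The two devices are equivalent ways of organising the same reduction; what the paper's version buys is that every operator appearing stays inside the elliptic edge calculus (the block operator is the object with a parametrix and elliptic regularity), at the cost of the final, somewhat delicate, restriction to the $g=0$ slice, while your version lands directly on $\operatorname{Ker}\begin{bmatrix}\operatorname{DP[0]}\\ \mathcal{T}\end{bmatrix}$ and avoids that intersection step, matching the statement of Theorem~\ref{main} more literally. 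Your last paragraph, identifying the zero set with $\mathfrak{M}(M,\Phi,\mathcal{T},\gamma)$ via the edge tubular neighbourhood and the regularity results of section~\ref{regularity}, addresses a point the paper's proof leaves implicit; your sketch of that dictionary (and your flag that bicontinuity of $\Upsilon\leftrightarrow\Xi$ needs checking) is at least as detailed as what the paper provides, so it is not a gap relative to the paper's own argument.
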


\begin{proof}
As $\gamma$ is an admissible weight we have that

\[ \mathbf{A}_{\operatorname{DP[0]}}=
\begin{bmatrix}
 \operatorname{DP[0]} & C  \\ 
\mathcal{T} & B
\end{bmatrix}:  \]
\[ \begin{array}{ccc}
 \begin{array}{c}
 \mathcal{W}^{s,\gamma}(M,\bigwedge\nolimits^{\bullet}T^{\ast}_{\wedge}M) \\ 
 \oplus \\ 
H^{s}(\edge, \CC^{N^{+}})
 \end{array} & \longrightarrow & \begin{array}{c}
 \mathcal{W}^{s-1,\gamma-1}(M,\bigwedge\nolimits^{\bullet}T^{\ast}_{\wedge}M) \\ 
 \oplus \\ 
 H^{s-1}(\edge, J^{-})
 \end{array}
\end{array}\]
is a Fredholm operator. Thus the cokernel is a finite dimensional space and it can be identified with a finite dimensional subspace in the codomain of $\mathbf{A}_{\operatorname{DP[0]}}$ (denoted as $\operatorname{Coker}\mathbf{A}_{\operatorname{DP[0]}}$) such that it splits the codomain in the following way  \begin{equation}
\begin{array}{c}
 \mathcal{W}^{s-1,\gamma-1}(M,\bigwedge\nolimits^{\bullet}T^{\ast}_{\wedge}M) \\ 
 \oplus \\ 
 H^{s-1}(\edge, J^{-})
 \end{array}=\operatorname{Im}\mathbf{A}_{\operatorname{DP[0]}}\oplus\operatorname{Coker}\mathbf{A}_{\operatorname{DP[0]}}.
\end{equation}
Now consider the Banach space  
 \[\left( \begin{array}{c}
 \mathcal{W}^{s,\gamma}(M,\bigwedge\nolimits^{\bullet}T^{\ast}_{\wedge}M) \\ 
 \oplus \\ 
H^{s}(\edge, \CC^{N^{+}})
 \end{array}\right)\oplus \operatorname{Coker}\mathbf{A}_{\operatorname{DP[0]}}.\]
 Consider the following extension $\hat{\operatorname{P}}$ of the deformation operator to this space 
 \[\hat{\operatorname{P}}:\left(\begin{array}{c}
 \mathcal{W}^{s,\gamma}(M,\bigwedge\nolimits^{\bullet}T^{\ast}_{\wedge}M) \\ 
 \oplus \\ 
H^{s}(\edge, \CC^{N^{+}})
 \end{array}\right) \oplus \operatorname{Coker}\mathbf{A}_{\operatorname{DP[0]}}\longrightarrow \begin{array}{c}
 \mathcal{W}^{s-1,\gamma-1}(M,\bigwedge\nolimits^{\bullet}T^{\ast}_{\wedge}M) \\ 
 \oplus \\ 
 H^{s-1}(\edge, J^{-})
 \end{array} \]
given by \[\hat{\operatorname{P}}\left( \begin{bmatrix}
 \Xi \\ 
 g
 \end{bmatrix}, \begin{bmatrix}  v\\ w    \end{bmatrix}  \right)=\mathbf{A}_{\operatorname{P}}\begin{bmatrix}
 \Xi \\ 
 g
 \end{bmatrix}+\begin{bmatrix}  v\\ w    \end{bmatrix},\]
where we are using the notation $\mathbf{A}_{\operatorname{P}}$ for the operator  $\begin{bmatrix}
 \operatorname{P} & \operatorname{C}  \\ 
\mathcal{T} & \operatorname{B}
\end{bmatrix}.$
 
Hence \[\operatorname{D\hat{\operatorname{P}}[0]}\left( \begin{bmatrix}
 \Xi \\ 
 g
 \end{bmatrix}, \begin{bmatrix}  v\\ w    \end{bmatrix}  \right)=\begin{bmatrix}
\operatorname{DP[0]} & \operatorname{C}  \\ 
\mathcal{T} & \operatorname{B}
\end{bmatrix}\begin{bmatrix}
 \Xi \\ 
 g
 \end{bmatrix}+\begin{bmatrix}  v\\ w    \end{bmatrix} \]
 and \[\operatorname{Ker}\operatorname{D\hat{\operatorname{P}}[0]}=\operatorname{Ker}\mathbf{A}_{\operatorname{DP[0]}}\times \lbrace 0 \rbrace.\] Observe that $\operatorname{D\hat{\operatorname{P}}[0]}$ is surjective and $\operatorname{Ker}\operatorname{D\hat{\operatorname{P}}[0]}$ is finite dimensional. Then 
 \[\left(\begin{array}{c}
 \mathcal{W}^{s,\gamma}(M,\bigwedge\nolimits^{\bullet}T^{\ast}_{\wedge}M) \\ 
 \oplus \\ 
H^{s}(\edge, \CC^{N^{+}})
 \end{array}\right)\oplus \operatorname{Coker}\mathbf{A}_{\operatorname{DP[0]}}=\left(\operatorname{Ker}\operatorname{D\hat{P}[0]}\oplus N\right) \oplus \operatorname{Coker}\mathbf{A}_{\operatorname{DP[0]}}\] for some closed subspace $N$.
 
 By the Implicit Function Theorem~\ref{IFT} there exists $\mathcal{U}_1\subset \operatorname{Ker}\mathbf{A}_{\operatorname{DP[0]}}$, $\mathcal{U}_2=\mathcal{U}'_2\times\mathcal{U}''_2\subset N\oplus \operatorname{Coker}\mathbf{A}_{\operatorname{DP[0]}}$ and a smooth map $\mathfrak{G}_1\times\mathfrak{G}_2:\mathcal{U}_1\longrightarrow\mathcal{U}'_2\times\mathcal{U}''_2$ such that \[\hat{\operatorname{P}}^{-1}(0)\cap(\mathcal{U}_1\times\mathcal{U}_2)=\left\lbrace \left( \begin{bmatrix} a \\ b  \end{bmatrix},\mathfrak{G}_1\left(\begin{bmatrix} a \\ b  \end{bmatrix}\right),\mathfrak{G}_2\left(\begin{bmatrix} a \\ b  \end{bmatrix}\right)\right):  \begin{bmatrix} a \\ b  \end{bmatrix}\in\mathcal{U}_1 \right\rbrace\] \[\subset \operatorname{Ker}\operatorname{D\hat{P}[0]}\oplus N\oplus \operatorname{Coker}\mathbf{A}_{\operatorname{DP[0]}}.\]
This give us a description of the elements in the null set of the non-linear operator $\hat{\operatorname{P}}$ in a neighborhood of zero in terms of elements in  $\operatorname{Ker}\operatorname{D\hat{P}[0]}.$ In order to pass to solutions of the deformation operator $\operatorname{P}$ in terms of  $\operatorname{Ker}\operatorname{DP[0]}$ we have the following. 

Observe that  \[\left( \begin{bmatrix} a \\ b  \end{bmatrix},\mathfrak{G}_1\left(\begin{bmatrix} a \\ b  \end{bmatrix}\right),\mathfrak{G}_2\left(\begin{bmatrix} a \\ b  \end{bmatrix}\right)\right)\in \hat{\operatorname{P}}^{-1}(0)\cap(\mathcal{U}_1\times\mathcal{U}_2),\]
implies \[\hat{P}\left( \begin{bmatrix} a \\ b  \end{bmatrix},\mathfrak{G}_1\left(\begin{bmatrix} a \\ b  \end{bmatrix}\right),\mathfrak{G}_2\left(\begin{bmatrix} a \\ b  \end{bmatrix}\right)\right)=\]\[\mathbf{A}_{P}\left( \begin{bmatrix} a \\ b  \end{bmatrix},\mathfrak{G}_1\left(\begin{bmatrix} a \\ b  \end{bmatrix}\right)\right)+\mathfrak{G}_2\left(\begin{bmatrix} a \\ b  \end{bmatrix}\right)=0.\]
Hence the term $\mathfrak{G}_2\left(\begin{bmatrix} a \\ b  \end{bmatrix}\right)$ (that belongs to $\operatorname{Coker}\mathbf{A}_{\operatorname{DP[0]}}$) represents an obstruction to lifting the infinitesimal solution $\begin{bmatrix} a \\ b  \end{bmatrix}$ to an authentic solution \[\left(\begin{bmatrix} a \\ b  \end{bmatrix},\mathfrak{G}_1\left(\begin{bmatrix} a \\ b  \end{bmatrix}\right)\right)\] of the non-linear operator $\mathbf{A}_{\operatorname{P}}$. Therefore if all obstructions vanish i.e. if \begin{equation}\label{obstructionmap}
\mathfrak{G}_2:\mathcal{U}_1\subset\operatorname{Ker}\mathbf{A}_{\operatorname{DP[0]}}\longrightarrow \mathcal{U}''_2\subset\operatorname{Coker}\mathbf{A}_{\operatorname{DP[0]}}
\end{equation}
is the zero map we have \[\mathbf{A}_{\operatorname{P}}^{-1}(0)\cap(\mathcal{U}_1\times\mathcal{U}'_2)=\left\lbrace \left( \begin{bmatrix} a \\ b  \end{bmatrix},\mathfrak{G}_1\left(\begin{bmatrix} a \\ b  \end{bmatrix}\right)\right):  \begin{bmatrix} a \\ b  \end{bmatrix}\in\mathcal{U}_1 \right\rbrace.\]
and the set $\mathbf{A}_{\operatorname{P}}^{-1}(0)\cap(\mathcal{U}_1\times\mathcal{U}'_2)$ is diffeomorphic to $\mathcal{U}_1\subset\operatorname{Ker}\mathbf{A}_{\operatorname{DP[0]}}.$

Consequently, if the obstructions vanish, small solutions of the non-linear boundary value problem \eqref{boundaryvalue} are given by
\begin{align*}
& \mathbf{A}_{\operatorname{P}}^{-1}(0)\cap(\mathcal{U}_1\times\mathcal{U}'_2)\cap\left(\begin{array}{c}
 \mathcal{W}^{s,\gamma}(M,T^{\ast}_{\wedge}M) \\ 
 \oplus \\ 
\lbrace 0 \rbrace
 \end{array}\right)
 \\&  =\left\lbrace \left(\Xi_1,\mathfrak{G}_1(\Xi_1)\right): \begin{bmatrix}
\operatorname{P}_{\omega_{_{\CC^{N}}}}\oplus \operatorname{P}_{\operatorname{Im}\Omega}  \\ 
\mathcal{T}
\end{bmatrix}(\Xi_1+\mathfrak{G}_1(\Xi_1))=0 \right\rbrace.
\end{align*}

This is a non-empty open neighborhood of zero  in  $\mathcal{W}^{s,\gamma}(M,T^{\ast}_{\wedge}M)$ diffeomorphic to an open set of the finite dimensional space  \begin{equation}\label{bvpfinal}
\operatorname{Ker}\begin{bmatrix} \operatorname{DP[0]} \\ \mathcal{T}  \end{bmatrix}.
\end{equation}
Thus we can conclude that when  $\mathfrak{G}_2$ is the zero map the moduli space is a smooth manifold of finite dimension less or equal to the dimension of the kernel of the linear boundary value problem \[  \begin{bmatrix} \operatorname{DP[0]} \\ \mathcal{T}  \end{bmatrix}: \mathcal{W}^{s,\gamma}(M,T^{\ast}_{\wedge}M)\longrightarrow \begin{array}{c}
\mathcal{W}^{s-1,\gamma-1}(M,\bigwedge^{\bullet}T^{\ast}_{\wedge}M) \\ 
\oplus \\ 
H^{s-1}(\edge, J^{-})
\end{array}. \]

\end{proof}

\section{Conclusions and final remarks}\label{sectionConclusions}
In this paper we have put the problem of deforming special Lagrangain submanifolds with edge singularities into the framework of the edge calculus developed by B.-W. Schulze. Our main theorem, theorem~\ref{main} in section~\ref{sectionTheModuliSpace},  says that when  the map \eqref{obstructionmap}  \[\mathfrak{G}_2:\mathcal{U}_1\subset\operatorname{Ker}\mathbf{A}_{\operatorname{DP[0]}}\longrightarrow \mathcal{U}''_2\subset\operatorname{Coker}\mathbf{A}_{\operatorname{DP[0]}}\]
is the zero map, the moduli space $\mathfrak{M}(M, \Phi, \mathcal{T},\gamma)$ is a smooth manifold of finite dimension. For every small solution $\Xi_1$ of the linearised boundary value problem $\mathbf{A}_{\operatorname{DP[0]}}$, the map $\mathfrak{G}_2$ gives us an obstruction \[\mathfrak{G}_2(\Xi_2)\in\mathcal{U}''_2\subset
\operatorname{Coker}\mathbf{A}_{\operatorname{DP[0]}},\] to lift the linearised solution to a solution of the non-linear deformation operator with boundary condition. When the obstruction space $\mathcal{U}''_2$ vanishes it follows immediately that there are no obstructions, as the map $\mathfrak{G}_2$ is trivially the zero map, and the moduli space is smooth and finite dimensional. 

A careful analysis of the obstruction space is needed to determine under which conditions it vanishes. In \cite{joyceII}, Joyce analyzed the obstruction space of the moduli space of deformations of special Lagrangian submanifolds with conical singularities. He found that the obstruction space depends only on the cones that model the singularities. In the edge singular case we expect a similar result i.e. the obstruction space  depends only on the geometric structures that model the singularity, namely,  the cone $\cone$ and the edge $\edge$.
  
If the obstruction space vanishes (therefore $\operatorname{Coker}\mathbf{A}_{\operatorname{DP[0]}}=\{ 0\}$)  the moduli space is a smooth manifold of finite dimension. The next step is to determine its expected dimension. From theorem~\ref{main} we only know that $\operatorname{dim}\mathfrak{M}(M, \Phi, \mathcal{T},\gamma)\leq \operatorname{dim}\operatorname{Ker}\mathbf{A}_{\operatorname{DP[0]}}=\operatorname{Ind}\mathbf{A}_{\operatorname{DP[0]}}$.
In order to compute the dimension we need to consider the index of edge-degenerate operators and Hodge theory in the edge singular context. In this direction we consider the material related to index theory in \cite{naza} chapter 5 quite relevant for this purpose. Moreover, some elements of Hodge theory on manifolds with edge singularities have been studied in \cite{mazzeo} and \cite{schulze4}. These references might be helpful to compute the expected dimension of our moduli space $\mathfrak{M}(M, \Phi, \mathcal{T},\gamma)$.

\begin{acknowledgements}
This work started in a series of informal conversations the author had with Spiro Karigiannis (U. of Waterloo) during the Geometric Analysis Colloquium at Fields Institute in the Winter of 2014. The author thanks Spiro Karigiannis for his interest and becoming co-supervisor of this research project. The author thanks Fr\'ed\'eric Rochon at UQAM for the suggestions he made and the hospitality during the author's visit to UQAM. The author was supported by the University of Western Ontario. Thanks to Tatyana Barron for all her support and assistance at UWO during this research project. 

The author was supported by the University of Western Ontario through a Western   Graduate Research Scholarship. The results in this paper were obtained during his years as a graduate student at the department of Mathematics.
\end{acknowledgements}

\newpage

\appendix 
\section{Vector-valued Sobolev embeddings}\label{vecor-valued emb}
In section~\ref{chapterdeformationtheory} and~\ref{chaptermoduli} we used results from this appendix. Most of the results here are based on vector-valued Sobolev embeddings. In the first section of this appendix we recall the basics of vector-valued Sobolev embeddings and derive some consequences related to cone and edge-Sobolev spaces. In the second part we prove  the estimate \eqref{extrareg} following a similar result of Witt and Dreher \cite{witt}. This estimate implies the Banach Algebra property of our edge-Sobolev spaces on $M$, \eqref{Banachalg}, and the regularity of the product of elements in $\mathcal{W}^{s,\gamma}(M)$, \eqref{morereg}. In order to simplify the notation we will denote $a\approx b$ and $a\lesssim b$ if $a=\kappa b$ or $a\leq \kappa b$ respectively with a positive constant $\kappa$ depending only on $s$ and $\gamma$.

Let's consider the classical Sobolev spaces $W^{m,p}(\RR^{q})$ (see \cite{brezis} for a detailed introduction). A classical tool in the analysis of partial differential equations on $\RR^{q}$ is the set of Sobolev embeddings,
see \cite{brezis} section 9.3.  
\begin{theorem}\label{thmsobolevembe}
Let $m\in\ZZ$, $m>1$ and $p\in[1,+\infty).$ \begin{align}&\text{If}\quad  \frac{q}{p}>m \quad \text{then }\quad W^{m,p}(\RR^{q})\hookrightarrow L^{k}(\RR^{q}) \quad \text{where } \frac{1}{q}=\frac{1}{p}-\frac{m}{q}.\\&\text{If}\quad  \frac{q}{p}=m \quad \text{then }\quad W^{m,p}(\RR^{q})\hookrightarrow L^{k}(\RR^{q}) \quad \text{for all }\quad k\in [p,+\infty).   \\& \text{If}\quad  \frac{q}{p}<m \quad \text{then }\quad W^{m,p}(\RR^{q})\hookrightarrow L^{\infty}(\RR^{q})\text{ and}\quad W^{m,p}(\RR^{q})\hookrightarrow \mathcal{C}^{r}(\RR^{q})
\label{boundedembedding}
 \end{align}
 where $r=[s-\frac{q}{2}]$ i.e. $r$ is the integer part of $s-\frac{q}{2}$. 
\end{theorem}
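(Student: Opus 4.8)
The plan is to prove this classical chain of Sobolev embeddings on $\RR^q$ by the standard route (compare \cite{brezis} \S9.3), reducing the general case to the case of a single derivative by iteration. By density of $\smooth_0(\RR^q)$ in $W^{m,p}(\RR^q)$ it suffices to establish the relevant inequalities for $u\in\smooth_0(\RR^q)$ and then pass to the limit; throughout, $\lesssim$ hides a constant depending only on $m,p,q$.

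\textbf{Step 1: the Gagliardo--Nirenberg--Sobolev inequality ($m=1$, $1\le p<q$).} For $u\in\smooth_0(\RR^q)$ one has $|u(x)|\le\int_{\RR}|\partial_{x_i}u|\,dx_i$ for every index $i$, hence $|u(x)|^{q/(q-1)}\le\prod_{i=1}^{q}\bigl(\int_{\RR}|\partial_{x_i}u|\,dx_i\bigr)^{1/(q-1)}$. Integrating successively in $x_1,\dots,x_q$ and applying the generalized H\"older inequality with $q-1$ factors at each stage (the Loomis--Whitney argument) yields $\|u\|_{L^{q/(q-1)}}\lesssim\prod_i\|\partial_{x_i}u\|_{L^1}^{1/q}\lesssim\|\nabla u\|_{L^1}$. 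Applying this to $|u|^{t-1}u$ with $t=p(q-1)/(q-p)$ and using H\"older with exponents $p$ and $p/(p-1)$ on the right-hand side upgrades it to $\|u\|_{L^{p^\ast}}\lesssim\|\nabla u\|_{L^p}$ with $1/p^\ast=1/p-1/q$. Combined with $\|u\|_{L^p}\le\|u\|_{W^{1,p}}$ this gives $W^{1,p}\hookrightarrow L^{p^\ast}$.

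\textbf{Step 2: the borderline and supercritical one-derivative cases.} If $p=q$, apply Step 1 to $|u|^{t-1}u$ with $t=k(q-1)/q$ for a given finite $k\ge q$: this bounds $\|u\|_{L^k}$ in terms of $\|u\|_{L^{k-q/(q-1)}}$ and $\|\nabla u\|_{L^q}$, and finitely many iterations reduce the exponent to $\le q$, where $\|\cdot\|_{L^q}\le\|\cdot\|_{W^{1,q}}$; interpolating with $L^q$ then gives $W^{1,q}\hookrightarrow L^{k}$ for every finite $k\in[q,\infty)$. If $p>q$, prove Morrey's inequality: for a ball $B_r(x)$ one has $\frac{1}{|B_r(x)|}\int_{B_r(x)}|u(y)-u(x)|\,dy\lesssim\int_{B_r(x)}|\nabla u(y)|\,|y-x|^{1-q}\,dy$, and H\"older with exponent $p$ produces $|u(x)-u(y)|\lesssim|x-y|^{1-q/p}\|\nabla u\|_{L^p}$ together with an $L^\infty$-bound, i.e.\ $W^{1,p}\hookrightarrow \mathcal{C}^{0,1-q/p}(\RR^q)\subset L^\infty(\RR^q)$.

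\textbf{Step 3: iteration to general $m$, and the main obstacle.} Set $1/p_j=1/p-j/q$. If $q/p>m$, apply Step 1 to $u$ and its derivatives $m$ times, $W^{m,p}\hookrightarrow W^{m-1,p_1}\hookrightarrow\cdots\hookrightarrow L^{p_m}$, landing on exactly $1/k=1/p-m/q$ with $k=p_m$. If $q/p=m$, iterate $m-1$ times to reach $W^{1,q}$ and conclude by the borderline case of Step 2, obtaining $L^k$ for all $k\in[p,\infty)$ after interpolation with $L^p$. If $q/p<m$, iterate until the first index $j_0$ with $p_{j_0}>q$, apply Morrey to $W^{m-j_0,p_{j_0}}$ and, more generally, to the derivatives $\partial^\beta u$ with $|\beta|\le r$, where $r$ is the integer part appearing in \eqref{boundedembedding}, obtaining $W^{m,p}\hookrightarrow \mathcal{C}^{r}(\RR^q)\subset L^\infty(\RR^q)$. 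The genuinely delicate points will be the exponent bookkeeping in this iteration---keeping every intermediate $p_j$ admissible, landing on the exact critical exponent, and isolating the borderline case separately---together with the finite bootstrapping in Step 2; the Loomis--Whitney integration and the ball-average estimate are the standard technical cores of the argument.
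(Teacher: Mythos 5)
Your argument is correct and is exactly the standard route: the paper does not prove this theorem at all but simply quotes it from \cite{brezis} \S 9.3, and your Gagliardo--Nirenberg--Sobolev/Morrey iteration is precisely the proof given there (you even silently fix the paper's typo, reading $\frac{1}{k}=\frac{1}{p}-\frac{m}{q}$ in the subcritical case). The only loose end is the one you flag yourself --- when $q/p<m$ an intermediate exponent $p_j$ may land exactly on $q$, in which case one first uses the borderline embedding into $L^{k}$ for some finite $k>q$ and then applies Morrey --- but that is routine bookkeeping, not a gap.
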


In this appendix we are interested in the vector-valued version of this theorem i.e. given a Banach space $\mathfrak{B}$ we want a version for the $\mathfrak{B}$-valued Sobolev spaces $W^{m,p}(\RR^{q},\mathfrak{B}).$ There are many books and monographs dealing with vector-valued spaces of all kinds like $L^{p}(\RR^{q},\mathfrak{B}),\mathcal{C}^{k}(\RR^{q},\mathfrak{B})$ and $\mathcal{S}(\RR^{q},\mathfrak{B})$, see \cite{treves},  \cite{jarchow}, \cite{aman}. In many cases they work in the more general context where  $\mathfrak{B}$ is a Fr\'{e}chet or locally convex Hausdorff space. For our specific purposes  we follow closely \cite{Kreuter}. Here Kreuter analyzes carefully the validity of theorem~\ref{thmsobolevembe} for the spaces $W^{m,p}(\RR^{q},\mathfrak{B})$ where $\mathfrak{B}$ is a Banach space.

Recall that the vector-valued space of distributions is defined as the space of continuous operators from  $\smooth_{0}(\RR^{q})$ to $\mathfrak{B}$ i.e. we have $\mathcal{D'}(\RR^{q},\mathfrak{B}):=\mathcal{L}(\smooth_{0}(\RR^{q}),\mathfrak{B}).$ The vector-valued $L^{p}$-spaces, $L^{p}(\RR^{q},\mathfrak{B})$, are defined by means of the Bochner integral. The Bochner integral is constructed by means of $\mathfrak{B}$-valued step functions in a similar way to the standard Lebesgue integral. See \cite{helmut} appendix A.4 for details.  The vector-valued $\mathcal{C}^{k}$-spaces, $\mathcal{C}^{k}(\RR^{q},\mathfrak{B})$, are defined with respect to the Fr\'{e}chet derivative. 
The vector-valued Sobolev space is defined by \begin{equation}
W^{m,p}(\RR^{q},\mathfrak{B}):=\left\lbrace  f\in L^{p}(\RR^{q},\mathfrak{B}):\partial^{\alpha}f\in L^{p}(\RR^{q},\mathfrak{B}) \quad \forall \quad\abs{\alpha}\leq m \right\rbrace
\end{equation}
where the derivatives of $f$ are taken in the distribution sense i.e. weak derivatives. 

 Here we recall the definition of the Radon-Nikodym property and some results related to it.  It turns out that  the key property that $\mathfrak{B}$ must satisfy in order to have vector-valued Sobolev embeddings for $W^{m,p}(\RR^{q},\mathfrak{B})$ is the Radon-Nikodym property. For extended details the reader is referred to \cite{Kreuter} chapter 2.

 \begin{definition} A Banach space $\mathfrak{B}$ has the Radon-Nikodym property if every Lipschitz continuous function $f:I\longrightarrow \mathfrak{B}$ is differentiable almost everywhere, where $I\subset\RR$ is an arbitrary interval.
\end{definition}

\begin{proposition}
Every reflexive space has the Radon-Nikodym property. In particular the spaces $L^{p}(\RR^{q})$ with $1<p<\infty$ and Hilbert spaces have the Radon-Nikodym property.
\end{proposition}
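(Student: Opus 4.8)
The plan is to establish the reflexive case directly from the paper's definition of the Radon-Nikodym property and then read off the two examples. Fix a reflexive Banach space $\mathfrak{B}$, an interval $I\subset\RR$, and a Lipschitz map $f:I\to\mathfrak{B}$ with constant $L$; the goal is a.e.\ differentiability of $f$. Since every interval is a countable union of compact subintervals and a countable union of null sets is null, I would reduce at once to $I=[0,1]$, and after extending $f$ to a Lipschitz map on $\RR$ (e.g.\ by the McShane extension theorem) I may form difference quotients freely. Next I would reduce to the \emph{separable} reflexive case: because $f$ is continuous, $f([0,1])$ is separable, hence so is its closed linear span $\mathfrak{B}_0\subset\mathfrak{B}$; a closed subspace of a reflexive space is reflexive, so $\mathfrak{B}_0$ is separable and reflexive, and it suffices to differentiate $f$ as a map into $\mathfrak{B}_0$. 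Thus assume $\mathfrak{B}$ separable and reflexive.

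I would then produce the derivative as a weak limit of difference quotients. Set $g_n(t):=n\bigl(f(t+1/n)-f(t)\bigr)$; then $\lVert g_n(t)\rVert\le L$, so $(g_n)$ is a bounded sequence in $L^{2}\bigl([0,1],\mathfrak{B}\bigr)$, which is reflexive because $\mathfrak{B}$ is (the standard fact that $L^{p}(\mu,X)$ is reflexive for $1<p<\infty$ when $X$ is reflexive; alternatively one argues weak sequential compactness of its unit ball by hand). Passing to a subsequence, $g_{n_k}\rightharpoonup g$ weakly in $L^{2}([0,1],\mathfrak{B})$ with $\lVert g(t)\rVert\le L$ a.e. To identify $g$, for $\varphi\in\smooth_{0}\bigl((0,1)\bigr)$ and $\phi\in\mathfrak{B}^{*}$ a change of variables gives $\int_0^1\langle\phi,g_n(t)\rangle\varphi(t)\,dt=\int_0^1\langle\phi,f(t)\rangle\,n\bigl(\varphi(t-1/n)-\varphi(t)\bigr)\,dt\to-\int_0^1\langle\phi,f(t)\rangle\varphi'(t)\,dt$, while weak convergence forces the left side to $\int_0^1\langle\phi,g(t)\rangle\varphi(t)\,dt$; by Hahn-Banach, $g$ is the distributional derivative of $f$, and since $g\in L^{1}([0,1],\mathfrak{B})$ this yields the Bochner-integral representation $f(t)=f(0)+\int_0^t g(s)\,ds$ for all $t\in[0,1]$.

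To conclude I would invoke the Lebesgue differentiation theorem for Bochner integrals, which holds in an arbitrary Banach space: by the Pettis measurability theorem $g$ is essentially separably valued, and applying the scalar Lebesgue theorem to $s\mapsto\lVert g(s)-x_j\rVert$ for a countable dense set $\{x_j\}$ of its essential range shows that a.e.\ $t$ satisfies $h^{-1}\int_t^{t+h}\lVert g(s)-g(t)\rVert\,ds\to 0$. At such $t$ the representation above gives $h^{-1}\bigl(f(t+h)-f(t)\bigr)\to g(t)$ in norm, so $f$ is differentiable a.e., i.e.\ $\mathfrak{B}$ has the Radon-Nikodym property. The two examples are then immediate: Hilbert spaces are reflexive by the Riesz representation theorem, and $L^{p}(\RR^{q})$ with $1<p<\infty$ is reflexive because the canonical map into its bidual is surjective under the identifications $(L^{p})^{*}\cong L^{p'}$ and $(L^{p'})^{*}\cong L^{p}$; hence both classes inherit the Radon-Nikodym property from the reflexive case.

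The step I expect to be most delicate is the second one: reflexivity of $\mathfrak{B}$ enters \emph{only} there, through weak sequential compactness of bounded sets in $L^{2}([0,1],\mathfrak{B})$, and one must be careful either to cite the reflexivity of $L^{p}(\mu,X)$ in exactly the form needed or to verify it directly, and then to identify the weak limit with the distributional derivative of $f$ cleanly. By contrast, the reduction to the separable case, the Lebesgue differentiation input, and the two concrete examples are routine.
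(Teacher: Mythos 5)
The paper itself gives no argument for this proposition: it is quoted as a standard result with a pointer to \cite{Kreuter}, so your self-contained proof from the paper's Lipschitz-differentiability definition of the Radon--Nikodym property is a genuinely different route. Your overall scheme --- reduce to a compact interval and to the separable reflexive subspace $\mathfrak{B}_0$ spanned by the image, extract a weak limit $g$ of the difference quotients, identify $g$ with the distributional derivative, write $f(t)=f(0)+\int_0^t g(s)\,ds$ as a Bochner integral, and finish with the vector-valued Lebesgue differentiation theorem --- is sound, and the two examples are handled correctly. (A minor quibble: McShane's extension theorem is for real-valued functions; for a $\mathfrak{B}$-valued Lipschitz map on an interval you can simply extend by constants beyond the endpoints, which is all you need.)

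The one step you should not leave as a bare citation is exactly the one you flagged: weak sequential compactness of $(g_n)$ in $L^{2}([0,1],\mathfrak{B})$. The statement that $L^{p}(\mu,X)$ is reflexive for $1<p<\infty$ when $X$ is reflexive is true, but its standard proof goes through the duality $L^{p}(\mu,X)^{*}\cong L^{p'}(\mu,X^{*})$, which is itself established by invoking the Radon--Nikodym property of the reflexive space $X^{*}$ --- i.e.\ the very proposition you are proving; cited as a black box, your argument risks circularity. Two clean repairs: (i) bypass the $L^{2}$ compactness entirely --- since $\mathfrak{B}_0$ is separable reflexive, $\mathfrak{B}_0^{*}$ is separable; each scalar function $t\mapsto\langle\phi_k,f(t)\rangle$, for $\phi_k$ in a countable dense subset of $\mathfrak{B}_0^{*}$, is Lipschitz and hence differentiable a.e., and for a.e.\ $t$ the map $\phi_k\mapsto\frac{d}{dt}\langle\phi_k,f(t)\rangle$ is bounded by $L\|\phi_k\|$, so by reflexivity it is represented by an element $g(t)\in\mathfrak{B}_0=\mathfrak{B}_0^{**}$; Pettis measurability gives strong measurability of $g$, the scalar fundamental theorem of calculus plus density gives $f(t)=f(0)+\int_0^t g(s)\,ds$, and your Lebesgue-point argument then finishes unchanged; or (ii) follow the classical textbook route: a separable reflexive space is a separable dual space (the dual of its own separable dual), and separable dual spaces have the Radon--Nikodym property by the Dunford--Pettis theorem. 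With either repair your remaining steps stand as written.
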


\begin{corollary}\label{vectorvaluedsobolev}
The Sobolev embeddings in theorem~\ref{thmsobolevembe} are valid for the spaces $W^{m,p}(\RR^{q},L^{p}(\RR^{q}))$ with $1<p<\infty$ and $W^{m,p}(\RR^{q},\mathfrak{H})$ where $\mathfrak{H}$ is a Hilbert space.
\end{corollary}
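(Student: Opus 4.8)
The plan is to obtain the corollary by combining the general vector-valued Sobolev embedding theorem of Kreuter~\cite{Kreuter} with the structural fact, recorded in the Proposition immediately preceding this corollary, that every reflexive Banach space has the Radon-Nikodym property. Concretely, Kreuter establishes that whenever a Banach space $\mathfrak{B}$ has the Radon-Nikodym property, all three embeddings of theorem~\ref{thmsobolevembe} hold verbatim for the Bochner--Sobolev spaces $W^{m,p}(\RR^{q},\mathfrak{B})$: the subcritical embedding $W^{m,p}(\RR^{q},\mathfrak{B})\hookrightarrow L^{k}(\RR^{q},\mathfrak{B})$ with $\frac{1}{k}=\frac{1}{p}-\frac{m}{q}$ when $\frac{q}{p}>m$; the borderline embedding into $L^{k}(\RR^{q},\mathfrak{B})$ for all $k\in[p,\infty)$ when $\frac{q}{p}=m$; and the embedding into $L^{\infty}(\RR^{q},\mathfrak{B})$ and $\mathcal{C}^{r}(\RR^{q},\mathfrak{B})$ with $r=[s-\frac{q}{2}]$ when $\frac{q}{p}<m$. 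Thus the corollary reduces to checking that the two families of target spaces satisfy the Radon-Nikodym hypothesis.

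First I would recall, at least conceptually, why the Radon-Nikodym property is precisely the hypothesis that lets the scalar proofs go through. Both the Gagliardo--Nirenberg--Sobolev inequality and Morrey's inequality rest on recovering a function from its weak partial derivatives by integration along coordinate lines, i.e. on the statement that a $W^{1,p}$ function admits a representative absolutely continuous on almost every line parallel to each axis (the ACL property), together with the fundamental theorem of calculus for the resulting one-dimensional integrals. For Bochner-integrable $\mathfrak{B}$-valued functions this absolute-continuity-plus-differentiation statement is equivalent to $\mathfrak{B}$ having the Radon-Nikodym property; granting it, the scalar arguments transcribe line by line with absolute values replaced by $\mathfrak{B}$-norms and Lebesgue integrals replaced by Bochner integrals — the iterated $L^{1}$ estimate for the product of one-dimensional integrals in the subcritical case, and the oscillation estimate over balls in the supercritical case. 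In the write-up this is simply cited from~\cite{Kreuter}, but it is the conceptual core of the result.

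Second I would verify the hypothesis for the two target spaces. For $1<p<\infty$ the space $L^{p}(\RR^{q})$ is reflexive, and every Hilbert space $\mathfrak{H}$ is reflexive; by the Proposition preceding the corollary, reflexivity implies the Radon-Nikodym property. Hence both $L^{p}(\RR^{q})$ with $1<p<\infty$ and any Hilbert space $\mathfrak{H}$ fall under Kreuter's theorem, and the embeddings of theorem~\ref{thmsobolevembe} hold for $W^{m,p}(\RR^{q},L^{p}(\RR^{q}))$ and $W^{m,p}(\RR^{q},\mathfrak{H})$, which is exactly the assertion.

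The only genuinely delicate point — the one I would expect to absorb most of the effort if a self-contained argument were wanted instead of a citation — is the passage from the scalar inequalities to their Bochner-valued analogues, namely establishing the vector-valued fundamental theorem of calculus and the ACL characterization of $W^{1,p}(\RR^{q},\mathfrak{B})$ under the Radon-Nikodym hypothesis. Everything downstream, including the iteration from first order to general order $m$, the chaining of the one-dimensional estimates, and the final H\"older bound giving the $\mathcal{C}^{r}$ embedding, is routine once that foundation is in place.
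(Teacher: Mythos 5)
Your proposal is correct and follows essentially the same route as the paper: cite Kreuter's result that the Radon--Nikodym property of the target Banach space is the hypothesis under which the scalar Sobolev embeddings of theorem~\ref{thmsobolevembe} carry over to $W^{m,p}(\RR^{q},\mathfrak{B})$, and then invoke the preceding proposition (reflexivity implies the Radon--Nikodym property) for $L^{p}(\RR^{q})$ with $1<p<\infty$ and for Hilbert spaces. The additional discussion of the ACL characterization and the vector-valued fundamental theorem of calculus is accurate background but not needed beyond the citation, which is exactly how the paper treats it.
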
 

As a consequence of these vector-valued results we have the following applications to cone and edge-Sobolev spaces. 

\begin{proposition}\label{coneapplication}
If $f\in\mathcal{H}^{s,\gamma}(\cone)$ (see \eqref{nearconespace}) and $s>\frac{m+1}{2}$ then there exists $C>0$ depending only on $s$ and $\gamma$ such that we have the following estimate on $(0,1)\times\Xcal$ \begin{equation}\label{coneestimate} \abs{\partial^{\alpha'}_{r}\partial^{\alpha''}_{\sigma}f(r,\sigma)}\leq C\norm{f}_{\mathcal{H}^{s,\gamma}(\cone)}r^{\gamma-\frac{m+1}{2}-\abs{\alpha'}}
\end{equation}for all $ (r,\sigma)\in (0,1)\times\Xcal$ and $\abs{\alpha'}+\abs{\alpha''}\leq [s-\frac{m+1}{2}] .$
\end{proposition}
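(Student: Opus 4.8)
The statement is a vector-valued Sobolev embedding estimate for the local cone-Sobolev space $\mathcal{H}^{s,\gamma}(\cone)$, so the natural route is to reduce it to the flat local model $\mathcal{H}^{s,\gamma}(\Rpos\times\RR^{m})$ via the partition of unity and coordinate charts, then transport the estimate through the Banach-space isomorphism $S_{\gamma-\frac{m}{2}}$ to the standard Sobolev space $H^{s}(\RR^{m+1})$, where Theorem~\ref{thmsobolevembe} (and its vector-valued version, Corollary~\ref{vectorvaluedsobolev}) applies. Concretely, first I would fix a chart $I\times\chi_{_\lambda}:\Rpos\times\mathcal{U}_{_\lambda}\to\Rpos\times\RR^m$ and a subordinate cut-off $\varphi_{_\lambda}$, reducing \eqref{coneestimate} to the claim that for $g=(I\times\chi_{_\lambda}^{\ast})^{-1}\varphi_{_\lambda}f\in\mathcal{H}^{s,\gamma}(\Rpos\times\RR^m)$ one has $|\partial_r^{\alpha'}\partial_\sigma^{\alpha''}g(r,\sigma)|\lesssim\|g\|_{\mathcal{H}^{s,\gamma}(\Rpos\times\RR^m)}\,r^{\gamma-\frac{m+1}{2}-|\alpha'|}$ for $|\alpha'|+|\alpha''|\le[s-\frac{m+1}{2}]$.

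The core of the argument is the change of variables $t=-\log r$, i.e. the map $S_{\gamma-\frac m2}$ of \eqref{S_transf}: with $G(t,x):=S_{\gamma-\frac m2}(g)(t,x)=e^{-(\frac12-(\gamma-\frac m2))t}g(e^{-t},x)$ one has $\|G\|_{H^s(\RR^{m+1})}\approx\|g\|_{\mathcal{H}^{s,\gamma}(\Rpos\times\RR^m)}$. Since $s-\frac{m+1}{2}>0$ and $\frac{q}{p}=\frac{m+1}{2}<s$ in the notation of \eqref{boundedembedding} with $q\rightsquigarrow m+1$, $p=2$, the embedding $H^{s}(\RR^{m+1})\hookrightarrow\mathcal{C}^{r}(\RR^{m+1})$ holds with $r=[s-\frac{m+1}{2}]$, so $\|G\|_{\mathcal{C}^{r}}\lesssim\|G\|_{H^s}\lesssim\|g\|_{\mathcal{H}^{s,\gamma}}$; in particular every mixed derivative $\partial_t^{\alpha'}\partial_x^{\alpha''}G$ with $|\alpha'|+|\alpha''|\le r$ is bounded by $C\|g\|_{\mathcal{H}^{s,\gamma}}$. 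The remaining step is purely computational: invert the substitution. Writing $g(r,\sigma)=r^{\frac12-(\gamma-\frac m2)}G(-\log r,\sigma)=r^{\gamma-\frac{m+1}{2}}\,r^{-\gamma+\frac m2+\frac12}\cdot r^{\frac12-(\gamma-\frac m2)}$—I must be careful with the exponent bookkeeping here—one gets $g(r,\sigma)=r^{-(\frac12-(\gamma-\frac m2))}G(-\log r,\sigma)$; differentiating and using $\partial_r=-\frac1r\partial_t$ under $t=-\log r$, each $r$-derivative produces one extra factor of $r^{-1}$ together with sums of lower-order $t$-derivatives of $G$, and the prefactor $r^{-(\frac12-(\gamma-\frac m2))}=r^{\gamma-\frac{m+1}{2}}\cdot r^{?}$ combines to give exactly the weight $r^{\gamma-\frac{m+1}{2}-|\alpha'|}$, while $\sigma$-derivatives cost nothing in $r$. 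Each term is bounded by $C\|g\|_{\mathcal{H}^{s,\gamma}}$ by the previous step, with $C$ depending only on $s,\gamma$ (through the embedding constant and the combinatorial coefficients from the Faà di Bruno/Leibniz expansion).

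Finally I would reassemble: sum over the finitely many charts $\lambda$ using \eqref{nearconespace}, noting $f=\sum_\lambda\varphi_{_\lambda}f$ on a neighbourhood of the support and that the transition functions of $\Xcal$ and their derivatives are bounded on the compact overlaps (so pulling the estimate back to $(0,1)\times\Xcal$ only changes the constant), to obtain \eqref{coneestimate} on $(0,1)\times\Xcal$. The only genuine subtlety—more bookkeeping than obstacle—is tracking the exponent of $r$ correctly through $S_{\gamma-\frac m2}$ and through repeated $r\partial_r$ versus $\partial_r$ differentiation, and making sure the range $|\alpha'|+|\alpha''|\le[s-\frac{m+1}{2}]$ matches exactly the order of differentiability supplied by $\mathcal{C}^r$ with $r=[s-\frac{m+1}{2}]$. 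No vector-valued machinery is strictly needed for this scalar statement, but the same argument will be reused in Proposition~\ref{edgeapplication} with $\mathfrak{B}=\mathcal{H}^{s,\gamma}(\cone)$ (a Hilbert space, hence with the Radon–Nikodym property), which is why Corollary~\ref{vectorvaluedsobolev} is invoked there.
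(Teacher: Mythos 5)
Your proposal follows essentially the same route as the paper's proof: localize via the partition of unity on $\Xcal$, transport through $S_{\gamma-\frac{m}{2}}$ to $H^{s}(\RR^{m+1})$, apply the embedding \eqref{boundedembedding} to bound derivatives up to order $[s-\frac{m+1}{2}]$, and invert the substitution $r=e^{-t}$, each $\partial_r$ costing a factor $r^{-1}$ against the prefactor $r^{\gamma-\frac{m+1}{2}}$. The momentary sign slip in the exponent bookkeeping is self-corrected (indeed $g(r,\sigma)=r^{\gamma-\frac{m+1}{2}}G(-\log r,\sigma)$), so the argument is correct and matches the paper's.
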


\begin{proof}
We can work locally on $\Rpos\times\mathcal{U}_{\lambda}$ where $\lbrace \mathcal{U}_{\lambda}\rbrace$ is a finite open covering of $\Xcal$, $\lbrace\varphi_{\lambda}\rbrace$ is a subbordinate partition of unity and we consider $\omega\varphi_{\lambda}f$. For simplicity we write just $f$ instead of $\omega\varphi_{\lambda}f$. At the end we take the smallest constant among those obtained for each element in the finite covering.
Take $f\in\mathcal{H}^{s,\gamma}(\cone)$ then by \eqref{S_transf} we have $S_{\gamma-\frac{m}{2}}f\in H^{s}(\RR^{1+m}).$ Therefore if $s>\frac{m+1}{2}$ by \eqref{boundedembedding} we have $S_{\gamma-\frac{m}{2}}f\in L^{\infty}(\RR^{1+m})$ and
 \begin{equation}
\sup_{(t,\sigma)\in\RR^{1+m}}\abs{\partial^{\alpha}_{(t,\sigma)}(S_{\gamma-\frac{m}{2}}f)(t,\sigma)}\lesssim\norm{S_{\gamma-\frac{m}{2}}f}_{H^{s}(\RR^{1+m})}\lesssim \norm{f}_{\mathcal{H}^{s,\gamma}(\cone)} 
\end{equation} 
for all $\abs{\alpha}\leq [s-\frac{m+1}{2}]$.
Now by definition (see \eqref{S_transf}) \[(S_{\gamma-\frac{m}{2}}f)(t,x)=e^{-(\frac{1}{2}-(\gamma-\frac{m}{2}))t}f(e^{-t},x)\]with $r=e^{-t}$. Thus \eqref{coneestimate} follows immediately.
\end{proof}

 In general, if $\mathfrak{B}$ is a Banach space and $\lbrace \kappa_{\lambda}\rbrace _{\lambda\in\Rpos}\in\mathcal{C}\left(\Rpos,\mathcal{L}(\mathfrak{B})\right)$ is a continuous one-parameter group of invertible operators we have that there exist positive constants $K,\mathfrak{c}$ such that \begin{equation}\label{one-parameter bound}
\begin{array}{ccc}
\norm{\kappa_{\lambda}}_{\mathcal{L}(\mathfrak{B})} & \leq & \left\{ \begin{array}{c}
K\lambda^{\mathfrak{c}} \text{ for } \lambda\geq1 \\ 
K\lambda^{-\mathfrak{c}} \text{ for } 0<\lambda\leq1
\end{array}  \right.
\end{array}.
\end{equation} See \cite{schulze3} proposition 1.3.1 for details.

When $\mathfrak{B}=\mathcal{H}^{s,\gamma}(\cone)$ and $(\kappa_{\lambda}f)(r,\sigma)=\lambda^{\frac{m+1}{2}}f(\lambda r,\sigma)$ we can use~\ref{S_transf} to compute $\norm{\kappa_{\lambda}}_{\mathcal{H}^{s,\gamma}(\cone)}=\lambda^{\gamma}$ (see \cite{schulze1} section 1.1). By the proof of proposition 1.3.1 in \cite{schulze3} it is easy to see that the constant $\mathfrak{c}$ in \eqref{one-parameter bound} depends only on the weight $\gamma$. When $\mathfrak{B}=\mathcal{H}^{s,\gamma}(\cone)$ we denote this constant by $\mathfrak{c}_{\gamma}$.

As a consequence of \eqref{one-parameter bound}, we have the following continuous embeddings 
\begin{align}
& \mathcal{W}^{s,\gamma}(\cone\times\RR^{q})
\longrightarrow H^{s-\mathfrak{c}_{\gamma}}(\RR^{q},\mathcal{K}^{s,\gamma}(\cone))\label{edge-classical embedding} \\& H^{s}(\RR^{q},\mathcal{K}^{s,\gamma}(\cone))\longrightarrow \mathcal{W}^{s+\mathfrak{c}_{\gamma},\gamma}(\cone\times\RR^{q})
\end{align}
for all $s\in\RR$ where  $H^{s}(\RR^{q},\mathcal{K}^{s,\gamma}(\cone))$ is the standard vector-valued Sobolev space with norm given by \[ \norm{f}_{H^{s}(\RR^{q},\mathcal{K}^{s,\gamma}(\cone))}:=\left(\int\limits_{\RR^q}[\eta]^{2s}\norm{\mathcal{F}_{u\rightarrow\eta}f(\eta)}^{2}_{\mathcal{K}^{s,\gamma}(\cone)}d\eta\right)^{\frac{1}{2}}.\]
The reader is refer to \cite{schulze3} proposition 1.3.1 and remark 1.3.21 for  details.

\begin{proposition}\label{edgeapplication}
If $g\in\mathcal{W}^{s,\gamma}(M)$ (see \eqref{globaledgenorm}) and $s>\frac{m+1+q}{2}+\mathfrak{c}_{\gamma}$ where $\mathfrak{c}_{\gamma}$ is the constant defined in \eqref{one-parameter bound}, then there exists $C'>0$ depending only on $s$ and $\gamma$ such that we have the following estimate on $(0,1)\times\Xcal\times\edge$ \begin{equation}\label{edgeestimate} \abs{\partial^{\alpha'}_{r}\partial^{\alpha''}_{(\sigma,u)}g(r,\sigma,u)}\leq C'\norm{g}_{\mathcal{W}^{s,\gamma}(M)}r^{\gamma-\frac{m+1}{2}-\abs{\alpha'}}
\end{equation}for all $ (r,\sigma,u)\in (0,1)\times\Xcal\times\edge$ and $\abs{\alpha'}+\abs{\alpha''}\leq [s-\frac{m+1}{2}].$
\end{proposition}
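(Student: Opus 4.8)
\textbf{Proof proposal for Proposition~\ref{edgeapplication}.}

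The plan is to reduce the estimate on the manifold with edge $M$ to a vector-valued Sobolev embedding on $\RR^q$ with values in the cone-Sobolev space $\mathcal{H}^{s,\gamma}(\cone)$, and then to invoke the scalar cone estimate of Proposition~\ref{coneapplication} fiberwise. As in the proof of Proposition~\ref{coneapplication}, I would work locally: fix a finite open cover $\{\Omega_j\}$ of $\edge$ with subordinate partition of unity $\{\phi_j\}$, and for each $j$ replace $g$ by $\omega\phi_j g$ supported in a single edge neighborhood $(0,\varepsilon)\times\Xcal\times\Omega_j$, taking at the end the smallest of the finitely many constants produced. After this reduction it suffices to estimate a single term, which by the definition~\eqref{globaledgenorm} of the norm $\norm{\cdot}_{\mathcal{W}^{s,\gamma}(M)}$ lies in $\mathcal{W}^{s,\gamma}(\cone\times\RR^q)$, the model edge space.

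\smallskip

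First I would use the continuous embedding \eqref{edge-classical embedding},
\[
\mathcal{W}^{s,\gamma}(\cone\times\RR^{q})\longrightarrow H^{s-\mathfrak{c}_{\gamma}}\bigl(\RR^{q},\mathcal{K}^{s,\gamma}(\cone)\bigr),
\]
so that $g$ is regarded as an element of a standard Hilbert-space-valued Sobolev space over $\RR^q$ of order $s-\mathfrak{c}_\gamma$, with values in $\mathcal{K}^{s,\gamma}(\cone)$ (which contains $\mathcal{H}^{s,\gamma}(\cone)$ near $r=0$ via the cut-off $\omega$). Since $\mathcal{K}^{s,\gamma}(\cone)$ is a Hilbert space, it has the Radon--Nikodym property, so Corollary~\ref{vectorvaluedsobolev} applies: under the hypothesis $s-\mathfrak{c}_\gamma>\tfrac{q}{2}$, which is implied by $s>\tfrac{m+1+q}{2}+\mathfrak{c}_\gamma$, the bounded embedding \eqref{boundedembedding} gives
\[
H^{s-\mathfrak{c}_{\gamma}}\bigl(\RR^{q},\mathcal{K}^{s,\gamma}(\cone)\bigr)\hookrightarrow \mathcal{C}^{r}\bigl(\RR^{q},\mathcal{K}^{s,\gamma}(\cone)\bigr),
\qquad r=\bigl[s-\mathfrak{c}_\gamma-\tfrac{q}{2}\bigr],
\]
and in particular, for every multi-index $\alpha''$ in the edge variables $u$ with $\abs{\alpha''}\le r$,
\[
\sup_{u\in\RR^q}\bigl\|\partial^{\alpha''}_{u}g(\cdot,\cdot,u)\bigr\|_{\mathcal{K}^{s,\gamma}(\cone)}\lesssim \norm{g}_{H^{s-\mathfrak{c}_\gamma}(\RR^q,\mathcal{K}^{s,\gamma}(\cone))}\lesssim\norm{g}_{\mathcal{W}^{s,\gamma}(M)}.
\]
Thus each $u$-derivative of $g$, evaluated at a fixed $u$, is a function on the cone whose $\mathcal{H}^{s,\gamma}(\cone)$-norm is controlled by $\norm{g}_{\mathcal{W}^{s,\gamma}(M)}$ uniformly in $u$.

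\smallskip

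Next I would apply Proposition~\ref{coneapplication} to the fiber function $\partial^{\alpha''}_{u}g(\cdot,\cdot,u)\in\mathcal{H}^{s,\gamma}(\cone)$. Because the $u$-derivative has only decreased the order of differentiability available on the cone by $\abs{\alpha''}$, and since $s-\abs{\alpha''}\ge s-r>\tfrac{m+1}{2}$ under our hypothesis on $s$, Proposition~\ref{coneapplication} yields a constant $C$ depending only on $s$ and $\gamma$ with
\[
\bigl|\partial^{\alpha'}_r\partial^{\alpha''_\Xcal}_{\sigma}\bigl(\partial^{\alpha''_\edge}_{u}g\bigr)(r,\sigma,u)\bigr|\le C\,\bigl\|\partial^{\alpha''_\edge}_{u}g(\cdot,\cdot,u)\bigr\|_{\mathcal{H}^{s,\gamma}(\cone)}\,r^{\gamma-\frac{m+1}{2}-\abs{\alpha'}}
\]
for all $\abs{\alpha'}+\abs{\alpha''_\Xcal}+\abs{\alpha''_\edge}\le[s-\tfrac{m+1}{2}]$, uniformly in $u$. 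Combining this with the uniform bound from the previous step, and writing $\alpha''$ for the combined $(\sigma,u)$ multi-index, gives exactly \eqref{edgeestimate} with $C'$ depending only on $s$ and $\gamma$, and summing (taking the smallest constant) over the finite cover $\{\Omega_j\}$ completes the proof.

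\smallskip

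The main obstacle I expect is bookkeeping the loss of derivatives and the constant $\mathfrak{c}_\gamma$ so that the condition $s>\tfrac{m+1+q}{2}+\mathfrak{c}_\gamma$ really does furnish enough smoothness simultaneously for (i) the vector-valued embedding into $\mathcal{C}^r(\RR^q,\mathcal{K}^{s,\gamma}(\cone))$, which costs $\mathfrak{c}_\gamma$ from \eqref{edge-classical embedding} and then $\tfrac q2$ from \eqref{boundedembedding}, and for (ii) the fiberwise cone estimate, which requires $s-\abs{\alpha''_\edge}>\tfrac{m+1}{2}$; one must also check that differentiating in $u$ genuinely produces an element of $H^{s-\mathfrak{c}_\gamma-\abs{\alpha''_\edge}}(\RR^q,\mathcal{K}^{s,\gamma}(\cone))$ with the $\mathcal{K}^{s,\gamma}(\cone)$-norm intact (rather than merely $\mathcal{K}^{s-\abs{\alpha''_\edge},\gamma}(\cone)$), which is where it matters that weak $u$-derivatives commute with the Mellin--Fourier description of the edge norm and do not touch the cone variables $(r,\sigma)$. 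Everything else is a routine concatenation of the cited embeddings.
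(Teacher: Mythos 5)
Your proposal is correct and follows essentially the same route as the paper's own proof: localize with the cut-off and partitions of unity, pass from $\mathcal{W}^{s,\gamma}(\cone\times\RR^{q})$ to $H^{s-\mathfrak{c}_{\gamma}}(\RR^{q},\mathcal{H}^{s,\gamma}(\cone))$ via \eqref{edge-classical embedding}, apply the vector-valued Sobolev embedding of Corollary~\ref{vectorvaluedsobolev} (using that the cone-Sobolev space is a Hilbert space, hence has the Radon--Nikodym property) to obtain a uniform-in-$u$ bound on the $\mathcal{H}^{s,\gamma}(\cone)$-norm, and then conclude fiberwise exactly as in Proposition~\ref{coneapplication} with the substitution $r=e^{-t}$. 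The only difference is that you track the $u$-derivatives explicitly through the $\mathcal{C}^{r}$-embedding, whereas the paper's proof records only the $L^{\infty}$ bound and leaves that bookkeeping implicit.
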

\begin{proof}
We work locally on $(0,1)\times\mathcal{U}_{\lambda} \times\Omega_{j}$ as in proposition~\ref{localpropedge} section~\ref{subsectionNonLinearOp}.
Take $g\in\mathcal{W}^{s,\gamma}(M)$. Again by \eqref{boundedembedding} and  $s>\frac{m+1}{2}$ we have that for each $u\in\RR^{q}$ $$(S_{\gamma-\frac{m}{2}}g)(u)\in H^{s}(\RR^{1+m})$$ and  \begin{equation}\label{semiestimate1}
\norm{(S_{\gamma-\frac{m}{2}}g)(u)}_{L^{\infty}(\RR^{m+1})}\lesssim \norm{g(u)}_{\mathcal{H}^{s,\gamma}(\cone)}.
\end{equation}
Now \eqref{edge-classical embedding} implies $g\in H^{s-\mathfrak{c}_{\gamma}}(\RR^{q},\mathcal{H}^{s,\gamma}(\cone))$ and $s>\frac{q}{2}+\mathfrak{c}_{\gamma}$ together with \eqref{boundedembedding} and corollary~\ref{vectorvaluedsobolev} implies that we have a continuous embedding  \[H^{s-\mathfrak{c}_{\gamma}}(\RR^{q},\mathcal{H}^{s,\gamma}(\cone))\hookrightarrow L^{\infty}(\RR^{q},\mathcal{H}^{s,\gamma}(\cone))\] as  $\mathcal{H}^{s,\gamma}(\cone)$ is a Hilbert space, see definition~\ref{localcone} in section~\ref{sectionanalysis}.

Consequently \begin{align}
\norm{g}_{ L^{\infty}(\RR^{q},\mathcal{H}^{s,\gamma}(\cone))}& =\sup_{u\in\RR^{q}}\left\lbrace  \norm{g(u)}_{\mathcal{H}^{s,\gamma}} \right\rbrace \\&\lesssim \norm{g}_{H^{s-\mathfrak{c}_{\gamma}}(\RR^{q},\mathcal{H}^{s,\gamma}(\cone))}\\& \lesssim \norm{g}_{\mathcal{W}^{s,\gamma}(\cone\times\RR^{q})}. \label{semiestimate2}
\end{align} 
Hence \eqref{semiestimate1}, \eqref{semiestimate2} and the change of variable $r=e^{-t}$ implies \eqref{edgeestimate} as in proposition~\ref{coneapplication}.
\end{proof}

\section{Banach Algebra property of edge-Sobolev spaces }
In \cite{witt} Witt and Dreher used a variant of the edge-Sobolev spaces we use in this paper. In that paper they are interested in applications to weakly hyperbolic equations. Their spaces are defined on $(0,T)\times\RR^{n}$. In this context they proved (proposition 4.1 in \cite{witt}) that their edge-Sobolev spaces have the structure of a Banach algebra. With some modifications and by using vector-valued Sobolev embeddings it is possible to extend their result to our edge-Sobolev spaces on $M$. This extension follows closely the proof of Witt and Dreher. For completeness we include the details of this extension in our context as we used the estimates \eqref{morereg} and \eqref{Banachalg}  in Chapter 3 and 4. 

\begin{proposition}
Let $f,g\in \mathcal{W}^{s,\gamma}(M)$
with $s\in\mathbb{N}$ and $s>\frac{q+m+3}{2}$.  Then $fg\in\mathcal{W}^{s,2\gamma-\frac{m+1}{2}}(M)$ and we have the following estimate 
\begin{equation}\label{extrareg}
\norm{fg}_{\mathcal{W}^{s,2\gamma-\frac{m+1}{2}}(M)}\leq C\norm{f}_{\mathcal{W}^{s,\gamma}(M)}\norm{g}_{\mathcal{W}^{s,\gamma}(M)}
\end{equation}
with a constant $C>0$ depending only on $s$ and $\gamma$.
\end{proposition}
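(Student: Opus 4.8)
The strategy is to reduce the global statement on $M$ to the local model $\cone\times\RR^q$ via the partition of unity defining $\mathcal{W}^{s,\gamma}(M)$, and there to combine the pointwise decay estimate of Proposition~\ref{edgeapplication} with a Leibniz-type expansion of derivatives, following the argument of Witt and Dreher in \cite{witt}. First I would note that away from the edge, where both $f$ and $g$ are supported in a fixed compact subset of the smooth double $2\MM$, the claim is just the classical fact that $H^s(\RR^{m+q})$ is a Banach algebra for $s>\frac{m+q}{2}$, so the term $\norm{(1-\omega)fg}_{H^s(2\MM)}$ is controlled by $\norm{f}_{\mathcal{W}^{s,\gamma}(M)}\norm{g}_{\mathcal{W}^{s,\gamma}(M)}$ without difficulty. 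The substance of the proof is therefore the estimate near the edge, i.e.\ controlling $\norm{\omega\phi_j f g}_{\mathcal{W}^{s,2\gamma-\frac{m+1}{2}}(\cone\times\RR^q)}$.

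\textbf{Local reduction and the key estimate.} Working in an edge chart $(0,1)\times\mathcal{U}_\lambda\times\Omega_j$ and writing $h=fg$, I would use the transformation $S_{\gamma-\frac m2}$ from \eqref{S_transf} and the embedding \eqref{edge-classical embedding} to pass to the standard vector-valued Sobolev picture, so that it suffices to bound $\norm{S_{2\gamma-\frac{m+1}{2}-\frac m2}(h)}$ in the relevant norm. The point is the elementary identity $S_{2\gamma - \frac{m+1}{2} - \frac m2}(fg) = e^{t/2}\,S_{\gamma-\frac m2}(f)\cdot S_{\gamma-\frac m2}(g)$ once one tracks the exponential weights: multiplying two functions of conormal order $\gamma$ produces conormal order $2\gamma - \frac{m+1}{2}$ precisely because of the extra half-density factor $r^{\frac{m+1}{2}}$ built into the $\kappa$-action and the definition of the cone-Sobolev norm. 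Then I would expand $\partial^\alpha_{(t,\sigma,u)}(fg)$ by the Leibniz rule into a sum over $\beta+\delta=\alpha$ of $\partial^\beta f\cdot\partial^\delta g$, and for each term estimate one factor in $L^2$ (it carries the derivatives of order $\le s$) and the other in $L^\infty$ via the Sobolev embedding $H^{s'}\hookrightarrow L^\infty$ available because $s - \mathfrak c_\gamma - \frac{q}{2} > \frac{m+1}{2}$; Proposition~\ref{edgeapplication} and Proposition~\ref{coneapplication} provide exactly the needed uniform bounds $\abs{\partial^{\alpha'}_r\partial^{\alpha''}_{(\sigma,u)}f}\lesssim \norm{f}_{\mathcal{W}^{s,\gamma}(M)} r^{\gamma-\frac{m+1}{2}-\abs{\alpha'}}$, and the $r$-powers conspire correctly so that the product lands in the space with weight $2\gamma-\frac{m+1}{2}$. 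Summing the finitely many Leibniz terms and the finitely many charts, and absorbing the partition-of-unity and cutoff functions (which act as $\mathcal{S}(\RR^q)$-multipliers, continuous on the edge spaces by \cite{schulze3} theorem 1.3.34), yields \eqref{extrareg}.

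\textbf{Main obstacle.} The genuinely delicate point is the bookkeeping of the weight exponents: one must verify that, under the $S$-transformation and the $\kappa$-rescaling hidden in \eqref{localedgenorm}, the product of two elements of weight $\gamma$ really has weight $2\gamma-\frac{m+1}{2}$ and not merely $2\gamma$, and that the threshold $s>\frac{q+m+3}{2}$ is exactly what makes the $L^\infty$-factor in the Leibniz split available with room to spare (one needs $s-\mathfrak c_\gamma$ derivatives in the $\eta$-variable and still $>\frac{m+1}{2}$ in the cone variables, after one derivative has been spent). I expect the cleanest route is to first prove the pointwise product estimate $\abs{\partial^{\alpha'}_r\partial^{\alpha''}_{(\sigma,u)}(fg)}\lesssim \norm f\,\norm g\, r^{2\gamma-\frac{m+1}{2}\cdot 2 - \abs{\alpha'}}\cdot r^{\frac{m+1}{2}}$ directly from Proposition~\ref{edgeapplication} applied to each factor, and then feed this back into the $\mathcal{W}^{s,\cdot}$-norm; the remaining work is to upgrade the pointwise bound to an $L^2$-bound in the edge norm, which is where the vector-valued Sobolev embeddings of Corollary~\ref{vectorvaluedsobolev} enter. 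Everything else is routine, the Leibniz expansion and the triangle inequality over the finite atlas.
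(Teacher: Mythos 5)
Your overall architecture (localize via the partition of unity, pass to the $S$-picture, split the product with one factor in $L^2$ and one in $L^\infty$) is in the right spirit, but there is a genuine gap in how you obtain the $L^\infty$ control. You propose to bound the sup-norm factor by Proposition~\ref{edgeapplication} (equivalently, by first passing to the vector-valued picture through the embedding \eqref{edge-classical embedding}), and you yourself state that this needs $s-\mathfrak{c}_{\gamma}-\frac{q}{2}>\frac{m+1}{2}$. That inequality is \emph{not} implied by the hypothesis $s>\frac{q+m+3}{2}$: the threshold in the proposition gives only one derivative to spare, whereas the constant $\mathfrak{c}_{\gamma}$ from \eqref{one-parameter bound} depends on $\gamma$ and can be arbitrarily large (for the cone spaces $\norm{\kappa_{\lambda}}\approx\lambda^{\gamma}$). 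Moreover, to return from the vector-valued space $H^{s'}(\RR^{q},\mathcal{K}^{s',2\gamma-\frac{m+1}{2}}(\cone))$ to the target edge norm you would lose a further $\mathfrak{c}_{2\gamma-\frac{m+1}{2}}$ derivatives, since the edge norm is twisted by $\kappa_{[\eta]}$. So your route proves \eqref{extrareg} only under a strictly stronger hypothesis of the form $s>\frac{q+m+1}{2}+\mathfrak{c}_{\gamma}$, not under the stated one. The paper's proof (following Witt--Dreher) never invokes $\mathfrak{c}_{\gamma}$: it writes the $\mathcal{W}^{s,2\gamma-\frac{m+1}{2}}$-norm out explicitly in the $(t,\sigma,\eta)$-variables, replaces your physical-space Leibniz split by Lemma 4.6 of \cite{witt}, a paraproduct-type estimate for $\norm{[\eta]^{s-\frac{m+1}{2}}\widehat{fg}}_{L^{2}_{\eta}}$ whose third term $\norm{\Lambda\hat f/[\eta]}\cdot\norm{\Lambda\hat g}$ is exactly what consumes the single extra derivative in $s>\frac{q+m+3}{2}$, and then bounds the \emph{weighted} sup-norms $\norm{e^{-(\frac{m+1}{2}-\gamma)t}f}_{L^{\infty}}$ directly by $\norm{f}_{\mathcal{W}^{s,\gamma}(M)}$ using only $H^{s}(\RR^{q}_{u})\hookrightarrow L^{\infty}$ for fixed $(t,\sigma)$ and the vector-valued embedding $W^{s,2}(\RR^{m+1},L^{2}(\RR^{q}_{\eta}))\hookrightarrow L^{\infty}(\RR^{m+1},L^{2}(\RR^{q}_{\eta}))$ of Corollary~\ref{vectorvaluedsobolev}; no group-action constant enters anywhere, which is precisely why the clean threshold suffices.

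A smaller but relevant slip: your ``elementary identity'' carries a spurious factor. From \eqref{S_transf} one computes $S_{\gamma-\frac m2}(f)\cdot S_{\gamma-\frac m2}(g)=e^{-(m+1-2\gamma)t}(fg)(e^{-t},\cdot)=S_{(2\gamma-\frac{m+1}{2})-\frac m2}(fg)$ exactly, with no $e^{t/2}$; with your extra growing factor the product would not land in the weight $2\gamma-\frac{m+1}{2}$ at all. Since you flagged the weight bookkeeping as the delicate point, note that the correct (factor-free) identity is what both explains the weight shift and is used implicitly in the paper's computation. Finally, be aware that a naive Leibniz expansion in physical space does not interact simply with the edge norm because of the $\kappa_{[\eta]}$-twisting in \eqref{localedgenorm}; this is why the paper performs the product splitting on the Fourier side in $\eta$ rather than termwise on derivatives.
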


\begin{proof}
By means of finite open covers and partitions of unity on $\Xcal$ and $\edge$ we need to estimate in terms of $\omega\varphi_{\lambda}\phi_{j}f$ and $\omega\varphi_{\lambda}\phi_{j}g$ as in proposition~\ref{localprop} section\ref{subsectionNonLinearOp}. To avoid unnecessary long expressions we will denote them simply by $f$ and $g$. To save space in long expressions we use the notation $\hat{f}$ to denote the Fourier transformation with respect to the conormal variable $\eta$ i.e. $\hat{f}=\mathcal{F}_{u\rightarrow\eta}f.$ We will estimate on an open set $(0,\varepsilon)\times\mathcal{U}_{\lambda}\times\Omega_{j}$. Then the global estimate is obtained by adding these terms.
Take $f,g\in\smooth_{0}(M)$.  By definition of our edge-Sobolev \eqref{localedgenorm} norm and by \eqref{S_transf} we have  

\begin{align}\label{decompositionint}
&\norm{fg}^{2}_{\mathcal{W}^{s,2\gamma-\frac{m+1}{2}}(M)}  \approx  \int\limits_{\RR^{q}_{\eta}} [\eta]^{2s}\norm{ \kappa^{-1}_{[\eta]}\mathcal{F}_{u\rightarrow\eta}(fg)(\eta)}_{\mathcal{H}^{s,2\gamma-\frac{m+1}{2}}(\Rpos\times\RR^{m})}^{2}d\eta &
\\& \approx \sum\limits_{\abs{\alpha}\leq s} \int\limits_{\RR^{q}_{\eta}}\int\limits_{\RR^{}_{t}}\int\limits_{\RR^{m}_{\sigma}}[\eta]^{2s-(m+1)}\big|\partial^{\alpha}\left(e^{-(\frac{m+1}{2}-\gamma)2t}\mathcal{F}_{u\rightarrow\eta}(fg)([\eta]^{-1}e^{-t},\right.
\\&\sigma,\eta)\left. \right)\big|^{2} d\sigma dt d\eta\label{decompositionint2}
\end{align}
Here we will estimate the term with $\alpha=0$. The estimates on the other terms $\alpha\neq0$ are similar. For each term in \eqref{decompositionint2} we have
\begin{align}
&\int\limits_{\RR^{q}_{\eta}}\int\limits_{\RR^{}_{t}}\int\limits_{\RR^{m}_{\sigma}}[\eta]^{2s-(m+1)}\abs{e^{-(\frac{m+1}{2}-\gamma)2t}\mathcal{F}_{u\rightarrow\eta}(fg)}^{2}d\eta =
 \\&  \int\limits_{\RR^{}_{t}} \int\limits_{\RR^{m}_{\sigma}}e^{-(\frac{m+1}{2}-\gamma)4t}\norm{[\eta]^{s-\frac{m+1}{2}}\mathcal{F}_{u\rightarrow\eta}(fg)}_{L^{2}(\RR^{q}_{\eta})}^{2}d\sigma dt. \label{vectorL2}
\end{align} 

Now, the hypothesis $s>\frac{q+m+3}{2}$ allows us to use lemma 4.6 in \cite{witt}. Basically, this lemma  implies that for fixed $(t,\sigma)$ we have the following estimate
\begin{align*}
  &\norm{\Lambda (\eta)\widehat{fg(t,\sigma)}(\eta)}_{L^{2}(\RR^{q}_{\eta})}\leq  \norm{f(t,\sigma)(u)}_{L^{\infty}(\RR^{q}_{u})}\cdot\norm{\Lambda(\eta) \hat{g}(t,\sigma)(\eta)}_{L^{2}(\RR^{q}_{\eta})}
\\& +\norm{g(t,\sigma)(u)}_{L^{\infty}(\RR^{q}_{u})}\cdot\norm{\Lambda(\eta) \hat{f}(r,\sigma)(\eta)}_{L^{2}(\RR^{q}_{\eta})}\\&+C_{0}\norm{\Lambda(\eta)\hat{f}(t,\sigma)(u)\slash [\eta]}_{L^{2}(\RR^{q}_{\eta})}\cdot\norm{\Lambda(\eta) \hat{g}(t,\sigma)(\eta)}_{L^{2}(\RR^{q}_{\eta})}
\end{align*}with $C_0>0$ and $\Lambda(\eta)=[\eta]^{s-\frac{m+1}{2}}$.

Applying this estimate to  \eqref{vectorL2} we have
\begin{align}
& \left( \int\limits_{\RR^{}_{t}} \int\limits_{\RR^{m}_{\sigma}}e^{-(\frac{m+1}{2}-\gamma)4t}\norm{[\eta]^{s-\frac{m+1}{2}}\mathcal{F}_{u\rightarrow\eta}(fg)}_{L^{2}(\RR^{q}_{\eta})}^{2}d\sigma dt \right)^{\frac{1}{2}} \label{longterm}
\\& \leq \Bigg( \int\limits_{\RR^{}_{t}} \int\limits_{\RR^{m}_{\sigma}}\bigg(e^{-(\frac{m+1}{2}-\gamma)2t}\norm{f(t,\sigma)(u)}_{L^{\infty}(\RR^{q}_{u})}\cdot\norm{\Lambda(\eta) \hat{g}(t,x)(\eta)}_{L^{2}(\RR^{q}_{\eta})}+
 \\&  + e^{-(\frac{m+1}{2}-\gamma)2t}\norm{g(t,\sigma)(u)}_{L^{\infty}(\RR^{q}_{u})}\cdot\norm{\Lambda(\eta) \hat{f}(t,\sigma)(\eta)}_{L^{2}(\RR^{q}_{\eta})}
 \\&  +  e^{-(\frac{m+1}{2}-\gamma)2t}
 C_{0}\norm{\Lambda(\eta)\hat{f}(t,\sigma)(u)\slash [\eta]}_{L^{2}(\RR^{q}_{\eta})}
 \\&  \cdot\norm{\Lambda(\eta) \hat{g}(t,\sigma)(\eta)}_{L^{2}(\RR^{q}_{\eta})}\Bigg)^{2}    d\sigma dt \Bigg)^{\frac{1}{2}}.
 \end{align}
 By the Minkowski inequality we have that \eqref{longterm} is less or equal to the following terms
\begin{align}
&  \Bigg( \int\limits_{\RR^{}_{t}} \int\limits_{\RR^{m}_{\sigma}}e^{-(\frac{m+1}{2}-\gamma)4t}\norm{f(t,\sigma)(u)}^{2}_{L^{\infty}(\RR^{q}_{u})}\cdot\norm{\Lambda(\eta) \hat{g}(t,\sigma)(\eta)}^{2}_{L^{2}(\RR^{q}_{\eta})}d\sigma dt \Bigg)^{\frac{1}{2}}
 \\&  +   \Bigg( \int\limits_{\RR^{}_{t}} \int\limits_{\RR^{m}_{\sigma}}e^{-(\frac{m+1}{2}-\gamma)4t}\norm{g(t,\sigma)(u)}^{2}_{L^{\infty}(\RR^{q}_{u})}\cdot\norm{\Lambda(\eta) \hat{f}(t,\sigma)(\eta)}^{2}_{L^{2}(\RR^{q}_{\eta})}d\sigma dt \Bigg)^{\frac{1}{2}}
 \\&  +  \Bigg( \int\limits_{\RR^{}_{t}} \int\limits_{\RR^{m}_{\sigma}}e^{-(\frac{m+1}{2}-\gamma)4t}
 C_{0}\norm{\Lambda(\eta)\hat{f}(t,\sigma)(u)\slash [\eta]}^{2}_{L^{2}(\RR^{q}_{\eta})}
   \\& \cdot\norm{\Lambda(\eta) \hat{g}(t,\sigma)(\eta)}^{2}_{L^{2}(\RR^{q}_{\eta})}  d\sigma dt \Bigg)^{\frac{1}{2}},
\end{align}
hence, by the inequality in \eqref{longterm}, we have
 \begin{align}
 &  \left( \int\limits_{\RR^{}_{t}} \int\limits_{\RR^{m}_{\sigma}}e^{-(\frac{m+1}{2}-\gamma)4t}\norm{[\eta]^{s-\frac{m+1}{2}}\mathcal{F}_{u\rightarrow\eta}(fg)}_{L^{2}(\RR^{q}_{\eta})}^{2}d\sigma dt \right)^{\frac{1}{2}}
 \\& \leq\norm{e^{-(\frac{m+1}{2}-\gamma)t}f(t,\sigma,u)}^{}_{_{L^{\infty}(\RR^{m+1}\times\RR^{q}_{u})}}
  \\&  \cdot\norm{e^{-(\frac{m+1}{2}-\gamma)t}\Lambda(\eta) \hat{g}(t,\sigma)(\eta)}^{}_{_{L^{2}(\RR^{m+1},L^{2}(\RR^{q}_{\eta}))
 }}\label{termf}
   \\& + \norm{e^{-(\frac{m+1}{2}-\gamma)t}g(t,\sigma,u)}^{}_{_{L^{\infty}(\RR^{m+1}\times\RR^{q}_{u})}}
    \\&   \cdot\norm{e^{-(\frac{m+1}{2}-\gamma)t}\Lambda(\eta) \hat{f}(t,\sigma)(\eta)}^{}_{_{L^{2}(\RR^{m+1},L^{2}(\RR^{q}_{\eta}))}}\label{termg}
 \\& + C_{0}\norm{e^{-(\frac{m+1}{2}-\gamma)t}\Lambda(\eta)\hat{f}(t,\sigma)(u)\slash [\eta]}^{}_{_{L^{\infty}(\RR^{m+1},L^{2}(\RR^{q}_{\eta}))}}
    \\&\cdot\norm{e^{-(\frac{m+1}{2}-\gamma)t} \Lambda(\eta) \hat{g}(t,\sigma)(\eta)}^{}_{_{L^{2}(\RR^{m+1},L^{2}(\RR^{q}_{\eta}))}}\label{termlambda}.
\end{align} 
The edge-Sobolev norm of $f$ and $g$ written as in \eqref{vectorL2} implies that 

\begin{equation}\label{gbound}
\norm{e^{-(\frac{m+1}{2}-\gamma)t}\Lambda(\eta) \hat{g}(t,\sigma)(\eta)}^{}_{L^{2}(\RR^{m+1},L^{2}(\RR^{q}_{\eta}))}\leq \norm{g}_{\mathcal{W}^{s,\gamma}(M)}
\end{equation}
and
\begin{equation}\label{fbound}
\norm{e^{-(\frac{m+1}{2}-\gamma)t}\Lambda(\eta) \hat{f}(t,\sigma)(\eta)}^{}_{L^{2}(\RR^{m+1},L^{2}(\RR^{q}_{\eta}))}\leq \norm{f}_{\mathcal{W}^{s,\gamma}(M)},
\end{equation}
hence we only need to deal with the $L^{\infty}$ terms.

To analyze the $L^{\infty}$ terms recall that by hypothesis $s>\frac{q}{2}$ so we have the standard continuous Sobolev embedding $H^{s}(\RR^{q})\hookrightarrow L^{\infty}(\RR^{q})$. Consequently for fixed $(t,\sigma)$ we have 
\begin{align}
\norm{ e^{-(\frac{m+1}{2}-\gamma)t}g(t,\sigma)(\eta) }^{2}_{L^{\infty}(\RR^{q}_{u})}&
 \lesssim \norm{ e^{-(\frac{m+1}{2}-\gamma)t}g(t,\sigma)(\eta) }^{2}_{H^{s}(\RR^{q}_{u})}
\\&=\norm{ \langle \eta \rangle^{s} e^{-(\frac{m+1}{2}-\gamma)t}\hat{g}(t,\sigma)(\eta) }^{2}_{L^{2}(\RR^{q}_{\eta})},
\end{align}
therefore 
\begin{align}
&\norm{ e^{-(\frac{m+1}{2}-\gamma)t}g(t,\sigma)(\eta) }^{2}_{L^{\infty}(\RR^{m+1}\times\RR^{q}_{u})}
 \\&\lesssim \norm{ \langle \eta \rangle^{s} e^{-(\frac{m+1}{2}-\gamma)t}\hat{g}(t,\sigma)(\eta) }^{2}_{L^{\infty}(\RR^{m+1}, L^{2}(\RR^{q}_{\eta}))}
\\& \lesssim \norm{ \langle \eta \rangle^{s} e^{-(\frac{m+1}{2}-\gamma)t}\hat{g}(t,\sigma)(\eta) }^{2}_{W^{s,2}(\RR^{m+1}, L^{2}(\RR^{q}_{\eta}))}\label{vectorembedding}
\\& =\sum_{\abs{\beta}\leq s} \norm{ \langle \eta \rangle^{s} \partial^{\beta}_{(t,\sigma)}\Big( e^{-(\frac{m+1}{2}-\gamma)t}\hat{g}(t,\sigma)(\eta) \Big)}^{2}_{L^{2}(\RR^{m+1}, L^{2}(\RR^{q}_{\eta}))}
\\& \leq \norm{g}_{\mathcal{W}^{s,\gamma}(M)}^{2}.\label{boundg}
\end{align} In \eqref{vectorembedding} we have used the vector-valued version of the standard Sobolev embedding (see section \ref{vecor-valued emb}).
In the same way we obtain 
 \begin{align}
 &\norm{ e^{-(\frac{m+1}{2}-\gamma)t}f(t,\sigma)(\eta) }^{2}_{L^{\infty}(\RR^{m+1}\times\RR^{q}_{u})}
 \\& \lesssim  \norm{f}_{\mathcal{W}^{s,\gamma}(M)}^{2}.\label{boundf}
 \end{align}
Then \eqref{boundg} and \eqref{boundf} implies that \eqref{termf} and \eqref{termg} are bounded by \begin{equation}\label{finalestimate}C(s,\gamma) \norm{f}_{\mathcal{W}^{s,\gamma}(M)}^{2} \norm{g}_{\mathcal{W}^{s,\gamma}(M)}^{2}. 
\end{equation}Thus the only term remaining is \eqref{termlambda}. 

Again using the vector-valued Sobolev embedding we have 
\begin{align}
&\norm{e^{-(\frac{m+1}{2}-\gamma)t}\Lambda(\eta)\hat{f}(t,\sigma)(u)\slash [\eta]}^{2}_{L^{\infty}(\RR^{m+1},L^{2}(\RR^{q}_{\eta}))}
\\& \lesssim \norm{e^{-(\frac{m+1}{2}-\gamma)t}\Lambda(\eta)\hat{f}(t,\sigma)(u)\slash [\eta]}^{2}_{W^{s,2}(\RR^{m+1},L^{2}(\RR^{q}_{\eta}))}
\\& =\sum_{\abs{\beta}\leq s}\norm{\frac{\Lambda(\eta)}{ [\eta]}\partial^{\beta}_{(t,x)}\bigg( e^{-(\frac{m+1}{2}-\gamma)t}\hat{f}(t,\sigma)(u)\bigg)}^{2}_{L^{2}(\RR^{m+1},L^{2}(\RR^{q}_{\eta}))}
    \\&  \lesssim \norm{f}^{2}_{\mathcal{W}^{s,\gamma}(M)}\label{final-f-bound}
\end{align} 
as $\abs{\frac{\Lambda(\eta)}{[\eta]}}^{2}=[\eta]^{2s-(m+1)}\cdot [\eta]^{-2}\lesssim [\eta]^{2s-(m+1)}.$
Thus \eqref{final-f-bound} and \eqref{gbound} implies that \eqref{termlambda} is bounded by \eqref{finalestimate}.
\end{proof}
\begin{corollary}
If $s\in\mathbb{N}$ with $s>\frac{q+m+3}{2}$ and $\gamma \geq\frac{m+1}{2}$ then the  edge Sobolev space $\mathcal{W}^{s,\gamma}(M)$ is a Banach algebra under point-wise multiplication i.e.  given $f,g\in \mathcal{W}^{s,\gamma}(M)$ we have
\begin{equation}\label{Banachalg}
\norm{fg}_{\mathcal{W}^{s,\gamma}(M)}\leq C'\norm{f}_{\mathcal{W}^{s,\gamma}(M)}\norm{g}_{\mathcal{W}^{s,\gamma}(M)}
\end{equation}
with a constant $C'$ depending only on $s$ and $\gamma$.
\end{corollary}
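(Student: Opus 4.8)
The plan is to deduce the Banach algebra estimate \eqref{Banachalg} from the preceding multiplicative estimate \eqref{extrareg} by combining it with a weight-monotonicity embedding for the edge-Sobolev spaces on the compact manifold $M$. The key observation is purely arithmetic: the hypothesis $\gamma\geq\frac{m+1}{2}$ gives
\[
2\gamma-\frac{m+1}{2}=\gamma+\Big(\gamma-\frac{m+1}{2}\Big)\geq\gamma ,
\]
so the target weight $\gamma':=2\gamma-\frac{m+1}{2}$ appearing in \eqref{extrareg} is \emph{not smaller} than $\gamma$. Thus it suffices to establish, for $\gamma'\geq\gamma$, a continuous inclusion $\mathcal{W}^{s,\gamma'}(M)\hookrightarrow\mathcal{W}^{s,\gamma}(M)$ with operator norm bounded by a constant $C_1$ depending only on $s$, $\gamma$ (and the fixed data $m$, $q$ and the cut-off $\omega$); for then, for any $f,g\in\mathcal{W}^{s,\gamma}(M)$,
\[
\norm{fg}_{\mathcal{W}^{s,\gamma}(M)}\leq C_1\norm{fg}_{\mathcal{W}^{s,\gamma'}(M)}\leq C_1C\,\norm{f}_{\mathcal{W}^{s,\gamma}(M)}\norm{g}_{\mathcal{W}^{s,\gamma}(M)},
\]
and one sets $C'=C_1C$. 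Since $\mathcal{W}^{s,\gamma}(M)$ is complete by definition, this exhibits it as a Banach algebra under pointwise multiplication.

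For the embedding step I would argue locally according to the two pieces of the defining norm \eqref{globaledgenorm}. Away from the edge the weight plays no role, so $\norm{(1-\omega)f}_{H^{s}(2\MM)}$ is literally the same in both norms. Near the edge it is enough, by the structure of \eqref{localedgenorm} and the fact that the group $\{\kappa_{\lambda}\}$ is given by the same formula for every weight, to prove the fibrewise inclusion $\mathcal{K}^{s,\gamma'}(\cone)\hookrightarrow\mathcal{K}^{s,\gamma}(\cone)$ with a constant independent of the covariable $\eta$; integrating the resulting pointwise bound against $[\eta]^{2s}\,d\eta$ then transfers it to $\mathcal{W}^{s,\gamma'}(\cone\times\RR^{q})$ and $\mathcal{W}^{s,\gamma}(\cone\times\RR^{q})$, hence globally. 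For the near-vertex summand $[\omega]\mathcal{H}^{s,\gamma}(\cone)$, applying the isomorphism $S_{\gamma-\frac{m}{2}}$ of \eqref{S_transf} turns the inclusion into multiplication on $H^{s}(\RR^{m+1})$ by the function $(t,x)\mapsto e^{-(\gamma'-\gamma)t}$ restricted to the half-space $t>-\log\varepsilon_2$ cut out by $\omega$; on that region this function and all its derivatives are bounded (it decays as $t\to+\infty$), so it is a multiplier on $H^{s}(\RR^{m+1})$ for $s\in\mathbb{N}$ by the standard multiplier theorem, with norm controlled by $\gamma'-\gamma$. The away-from-vertex summand $[1-\omega]H^{s}_{\operatorname{cone}}(\cone)$ is weight-independent. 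This yields the fibrewise inclusion with the required uniform constant; alternatively, this monotonicity of cone- and edge-Sobolev spaces on compact singular manifolds is recorded in \cite{schulze2,schulze3} and may simply be cited.

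The main (and essentially the only) point requiring care is this embedding $\mathcal{W}^{s,\gamma'}(M)\hookrightarrow\mathcal{W}^{s,\gamma}(M)$ for $\gamma'\geq\gamma$: one must keep the constant uniform in $\eta$ and note that the argument genuinely relies on the compactness of $M$, since the cut-off $\omega$ localizes to $r<\varepsilon_2$, where positive powers of $r$ — equivalently the multiplier $e^{-(\gamma'-\gamma)t}$ — are harmless, whereas this weight-monotonicity fails for the full cone space $\mathcal{H}^{s,\gamma}(\cone)$ because of the behaviour as $r\to\infty$. Everything else is a one-line combination of \eqref{extrareg} with this inclusion together with the inequality $2\gamma-\frac{m+1}{2}\geq\gamma$; no analytic ingredient beyond the previous proposition is needed.
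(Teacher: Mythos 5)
Your proposal is correct and follows essentially the same route as the paper: apply the estimate \eqref{extrareg} to get $fg\in\mathcal{W}^{s,2\gamma-\frac{m+1}{2}}(M)$ and then use that $\gamma\geq\frac{m+1}{2}$ is equivalent to $2\gamma-\frac{m+1}{2}\geq\gamma$, so the result follows from the weight-monotone inclusion of edge-Sobolev spaces. The only difference is that you spell out (correctly) the continuous embedding $\mathcal{W}^{s,\gamma'}(M)\hookrightarrow\mathcal{W}^{s,\gamma}(M)$ for $\gamma'\geq\gamma$, which the paper invokes implicitly with ``follows immediately.''
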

\begin{proof}

\end{proof}
By \eqref{extrareg} we have $fg\in\mathcal{W}^{s,2\gamma-\frac{m+1}{2}}$. Note that $\gamma \geq\frac{m+1}{2}$ if and only if $2\gamma-\frac{m+1}{2}\geq\gamma$ from which the corollary follows immediately.
\begin{corollary}
Let $f,g\in \mathcal{W}^{s,\gamma}(M)$ such that  $s\in\mathbb{N}$ with $s>\frac{q+m+3}{2}$ and $\gamma >\frac{m+1}{2}$. Then \begin{equation}\label{morereg} f g\in \mathcal{W}^{s,\gamma+\beta}(M) 
\end{equation} for $\beta>0$ given by $\beta=\gamma-\frac{m+1}{2}$.
\end{corollary}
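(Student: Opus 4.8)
The plan is to read this corollary off directly from the preceding proposition, namely the estimate \eqref{extrareg}, which already asserts $fg\in\mathcal{W}^{s,2\gamma-\frac{m+1}{2}}(M)$ for $f,g\in\mathcal{W}^{s,\gamma}(M)$ under the hypotheses $s\in\mathbb{N}$, $s>\frac{q+m+3}{2}$. The only work is arithmetic: set $\beta:=\gamma-\frac{m+1}{2}$. First I would note that the hypothesis $\gamma>\frac{m+1}{2}$ is exactly what makes $\beta>0$, so $\beta$ is a legitimate positive shift of the weight. Next I would observe the identity $\gamma+\beta=\gamma+\bigl(\gamma-\frac{m+1}{2}\bigr)=2\gamma-\frac{m+1}{2}$, so that the target space $\mathcal{W}^{s,\gamma+\beta}(M)$ coincides with the space $\mathcal{W}^{s,2\gamma-\frac{m+1}{2}}(M)$ appearing in \eqref{extrareg}.

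With these two elementary observations in hand, the conclusion is immediate: applying the proposition that establishes \eqref{extrareg} to the given $f,g\in\mathcal{W}^{s,\gamma}(M)$ (the hypotheses $s\in\mathbb{N}$ and $s>\frac{q+m+3}{2}$ are assumed, and $\gamma>\frac{m+1}{2}$ implies in particular $s>\frac{q+m+3}{2}>\frac{m+1}{2}$ is not even needed separately) yields $fg\in\mathcal{W}^{s,2\gamma-\frac{m+1}{2}}(M)=\mathcal{W}^{s,\gamma+\beta}(M)$, which is the claim. There is essentially no obstacle here; the substantive analytic content—the vector-valued Sobolev embeddings of Appendix~\ref{vecor-valued emb}, Kreuter's results on the Radon–Nikodym property, and the bilinear estimate adapted from Witt–Dreher—is entirely absorbed into the proof of \eqref{extrareg}, and this corollary is merely the bookkeeping step that records the resulting gain of $\beta=\gamma-\frac{m+1}{2}$ in the weight in the normalized form used in Section~\ref{regularity} (cf. the hypotheses of Proposition~\ref{recursion} and the iteration in the proof of the regularity of $\Xi_2$).
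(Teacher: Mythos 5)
Your proposal is correct and matches the paper's own proof: both simply apply the estimate \eqref{extrareg} to get $fg\in\mathcal{W}^{s,2\gamma-\frac{m+1}{2}}(M)$ and then observe that $2\gamma-\frac{m+1}{2}=\gamma+\beta$ with $\beta=\gamma-\frac{m+1}{2}>0$ precisely because $\gamma>\frac{m+1}{2}$. Nothing further is needed.
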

\begin{proof}
By \eqref{extrareg} we have $fg\in\mathcal{W}^{s,2\gamma-\frac{m+1}{2}}$. Moreover $\gamma >\frac{m+1}{2}$ implies $2\gamma-\frac{m+1}{2}=\gamma+\beta$ with $\beta=\gamma-\frac{m+1}{2}>0$.
\end{proof}

\newpage
\bibliographystyle{alpha}

\bibliography{final_version_reviewed_ARXIV}

\end{document}